 \definecolor{darkgreen}{HTML}{336633}
 \definecolor{darkred}{HTML}{993333}
\newcommand{\arxiv}[1]{\href{http://arxiv.org/abs/#1}{\tt
    arXiv:\nolinkurl{#1}}}
\theoremstyle{plain}
\newtheorem{thm}{Theorem}
\newtheorem*{thm*}{Theorem}
\newtheorem{lem}[thm]{Lemma}
\newtheorem{prop}[thm]{Proposition}
\newtheorem{cor}[thm]{Corollary}
\newtheorem{df-prop}[thm]{Definition-Proposition}
\theoremstyle{definition}
\theoremstyle{remark}
\newtheorem{rem}[thm]{Remark}
\newtheorem{ex}[thm]{Example}
\def\bbQ{\mathbb{Q}}
\def\bbT{\mathbb{T}}
\def\onto{\twoheadrightarrow}
\def\mod{\operatorname{-mod}\nolimits}
\newcommand{\hf}{{\Small \frac12}}
\def\Hom{\operatorname{Hom}\nolimits}
\def\End{\operatorname{End}\nolimits}
\def\Res{\operatorname{Res}\nolimits}
\def\Ind{\operatorname{Ind}\nolimits}
\def\Ext{\operatorname{Ext}\limits}
\def\ep{\epsilon}
\def\gl{\mathfrak{gl}}
\def\la{\lambda}
\def\pn{\mf{pe} (n)}
\def\ov{\overline}
\newcommand{\mc}{\mathcal}
\newcommand{\mf}{\mathfrak}
\newcommand{\C}{\mathbb C}
\newcommand{\oa}{{\bar 0}}
\newcommand{\ob}{{\bar 1}}
\newcommand{\cF}{\mathcal{F}}
\newcommand{\fg}{\mathfrak{g}}
\newcommand{\fh}{\mathfrak{h}}
\newcommand{\cO}{\mathcal{O}}
\newcommand{\h}{\mathfrak{h}}
\newcommand{\N}{\mathbb{Z}_{\geq 0}}
\newcommand{\ch}{\mathrm{ch}}
\newcommand{\rad}{\mathrm{rad}}
\newcommand{\Coind}{{\rm Coind}}
\newcommand{\g}{\mathfrak{g}}
\newcommand{\fl}{\mathfrak{l}}
\newcommand{\fp}{\mathfrak{p}}
\newcommand{\fu}{\mathfrak{u}}
\newcommand{\Real}{\mathrm{Re}}
\newcommand{\nb}{{\nabla}}
\newcommand{\zmnz}{{\mathbb Z^{m|n}_{\zeta-}}}
\newcommand{\Z}{{\mathbb Z}}
  \newcommand{\add}{{\mathrm{add}}}
     \def\Ann{{\text{Ann}}}
       \def\Id{{\text{Id}}}
                \def\vpre{{\nu}\text{-pres}}
    \def\Opres{\mc O^{\nu\text{-pres}}}
        \def\mofA{{\text{mof-}A}}
                       \def\gmofA{{\text{gmof-}A}}
                   \def\mofB{{\text{mof-}B}}
                    \def\gmofB{{\text{gmof-}B}}
                    \def\dotoo{{\dot{\ov{\mc O}}}}
          \def\OI{{\ov{\mc O}}}
         \def\Lnua{{\Lambda(\nu)}}
          \def\Lnub{{\Lambda^+(\nu)}}
          \def\xnula{{\Lnua_{>\la}}}
\begin{document}

\numberwithin{equation}{section}

\title[Whittaker categories for Lie superalgebras]{Whittaker categories, properly stratified categories and Fock space categorification for Lie superalgebras}

\author[Chen]{Chih-Whi Chen}
\address{Department of Mathematics, National Central University, Chung-Li, Taiwan 32054} \email{cwchen@math.ncu.edu.tw}
\author[Cheng]{Shun-Jen Cheng}
\address{Institute of Mathematics, Academia Sinica, and National Center of Theoretical Sciences, Taipei, Taiwan 10617} \email{chengsj@math.sinica.edu.tw}
\author[Mazorchuk]{Volodymyr Mazorchuk}
\address{Department of Mathematics, Uppsala University, P.O. Box 480, SE-75106 Uppsala, Sweden}
\email{mazor@math.uu.se}

\date{}

\begin{abstract}
  	 We study various categories of Whittaker modules over a type I	Lie superalgebra realized as cokernel categories that fit into the framework of  properly stratified categories. These categories are the target of the Backelin functor $\Gamma_\zeta$. We show that these categories can be described, up  to equivalence,  as Serre quotients of the BGG category $\mc O$ and of certain singular categories of Harish-Chandra $(\g,\g_\oa)$-bimodules. We also show that $\Gamma_\zeta$ is a realization of the Serre quotient functor. We further investigate a $q$-symmetrized Fock space over a quantum group of type A and prove that, for general linear Lie superalgebras our Whittaker categories, the functor $\Gamma_\zeta$ and various realizations of Serre quotients and Serre quotient functors categorify this $q$-symmetrized Fock space and its $q$-symmetrizer. In this picture, the canonical and dual canonical bases in this $q$-symmetrized Fock space correspond to tilting and simple objects in these Whittaker categories, respectively.
\end{abstract}

\maketitle

\tableofcontents

\noindent
\textbf{MSC 2010:} 17B10 17B55

\noindent
\textbf{Keywords:}  Lie algebra, Lie superalgebra, Whittaker module, Whittaker category, properly stratified category, Fock space categorification.
\vspace{5mm}

\section{Introduction}\label{sec1}

\subsection{Background}

Representation theory of Lie algebras is an important and rapidly developing branch of modern algebra.
One of the most interesting objects of study in this theory is the Bernstein-Gelfand-Gelfand (BGG) category $\mathcal{O}$
associated to a triangular decomposition of the Lie algebra in question. For a complex semisimple Lie algebra,
this category was introduced in \cite{BGG0, BGG}. It has numerous remarkable properties and applications; see
\cite{Hu08} and references therein. Many of these properties and applications have their origin in the
Kazhdan-Lusztig conjecture formulated in  \cite{KL1} and proved in \cite{BB, BK}.
This remarkable combinatorial point of view of category $\mathcal{O}$ is based on certain properties of the Hecke
algebra of the Weyl group associated to the Lie algebra in question and it establishes quite surprising and
deep connections between representations of semisimple Lie algebras and other areas like combinatorics and geometry.

Motivated by the theory of Whittaker models, Kostant initiated in \cite{Ko78} the study of
the so-called {\em Whittaker vectors} and the corresponding modules over complex semisimple
Lie algebras. Each Whittaker vector is associated to a character $\zeta$ of the nilpotent
radical in the triangular decomposition.
For each {\em non-singular} character $\zeta$, that is, a character that does not vanish on any simple root vector, Kostant constructed a family of simple modules $Y_{\zeta,\chi}$
additionally indexed by a central character $\chi$. The action of the Cartan subalgebra on these simple modules is not
semisimple and hence these simple modules do not belong to category $\mathcal{O}$. Nevertheless, there is a
relationship to category $\mc O$ as these modules can be realized, for example, as uniquely defined submodules of the vector space duals
of Verma modules. Whittaker modules have been extensively studied in various contexts, see
\cite{Mc, Mc2, MS, B, BM, CoM, ALZ, Bro, R1} and references therein.

In \cite{MS},  Mili{\v{c}}i{\'c} and Soergel introduced a certain category $\mc N(\zeta)$ for semisimple Lie algebras
which is especially suitable for  the study and classification of simple Whittaker modules. The categories $\mc N(\zeta)$, with varying parameter $\zeta$, contain both
Kostant's simple modules and modules in category $\mc O$. This approach
resulted in  a complete classification of simple Whittaker modules, based on two earlier papers \cite{Mc, Mc2} by McDowell.
An important role in this approach was played by the so-called {\em standard Whittaker modules} $M(\la,\zeta)$, which  specialize to
the classical Verma modules in the case $\zeta=0$.

Similarly to category $\mc O$, each object in $\mc N(\zeta)$ has finite length; see \cite[Theorem~2.6]{MS}.
In \cite{MS}, it is shown that the composition multiplicities for  $M(\la,\zeta)$ associated to
an integral weight $\la$ can be computed using Kazhdan-Lusztig polynomials, just like for Verma  modules in category $\mc O$.
The corresponding result in full generality was obtained by Backelin in \cite{B} using a certain exact functor
$\Gamma_\zeta$ from $\mc O$ to $\mc N(\zeta)$.

\subsection{Whittaker categories}
While the multiplicities of simple Whittaker modules in standard Whittaker modules  in the case of semisimple Lie algebras are, by now, well-understood,
the general structural properties of various Whittaker categories require more study. Often it is helpful
to restrict  one's attention to some full subcategory of $\mc N(\zeta)$ which has strong connections with
other classical objects of study in representation theory.

The category $\mc N(\zeta)$ itself was studied in detail in \cite{BR}. In particular, in \cite[Sections~7,8]{BR}
	it was shown that the full subcategories in the regular blocks of $\mc N(\zeta)$ consisting of modules annihilated by kernels of central characters are highest weight categories. 
	However, the corresponding   subcategories in singular blocks of $\mc N(\zeta)$ have more complicated structures. In the present paper, we provide a suitable full subcategory of $\mc N(\zeta)$, containing these subcategories and the category $\mc O$ and its generalizations, for which such a reciprocity holds.

To explain the definition of our Whittaker category, we start by
 recalling the  equivalence between
$\mc N(\zeta)$ and a certain category of Harish-Chandra bimodules constructed by  Mili{\v{c}}i{\'c} and Soergel in \cite{MS}. The principal notion behind this construction
is that of a {\em cokernel category}. It  connects various full subcategories of $\mc O$ with certain singular
categories of Harish-Chandra bimodules. The cokernel category in question is the full  subcategory of $\mc N(\zeta)$
which consists of all modules admitting a two-step presentation by summands of modules of the form
$E\otimes M(\la,\zeta)$, where $E$ is  a finite-dimensional $\g$-module and $\la$ is a dominant weight.
We denote this cokernel category by $\text{Coker}(\mc F\otimes M(\la,\zeta))$.

Such cokernel subcategories inside category $\mc O$ were studied, in particular, in  \cite{FKM00} and \cite{MaSt04,MaSt08},
under the name of {\em $\mathcal{S}$-subcategories of $\mc O$}, or alternatively, under the name of
{\em categories of  projectively presentable modules} and denoted by  $\mc O^{\nu\text{-pres}}$, see Section \ref{sect::41}.
The stuctural properties of blocks of  $\mc O^{\nu\text{-pres}}$ are governed by the notion of
{\em (properly) stratified algebras},  see  \cite{Dl,CPS2} for the latter and also \cite{BS} for 
generalizations. The framework of cokernel  categories
and  stratified algebras provides  a uniform description for a number of generalizations of category $\mc O$.
In particular, the cokernel category $\text{Coker}(\mc F\otimes M(\la,\zeta))$  turns out to provide a suitable
setup for our study of Whittaker modules.

\subsection{Whittaker modules for Lie superalgebras}

A Cartan-Killing type classification of finite-dimensional complex simple Lie superalgebras
was obtained by Kac in \cite{Ka1, Ka2}. Over the last decades the representation  theory of Lie superalgebras has been a rapidly developing part of representation theory, see, e.g.,
\cite{Se96, Br1, CLW1, BW, CLW2, Br04q} and references therein, for various aspects of
the theory of finite-dimensional representations and, more generally, category $\mc O$.

While Whittaker modules for Lie algebras received a great amount of attention in the last decade,
their super analogues were studied in detail only very recently,
see \cite{BCW, BrG, Ch21, Ch212}. The category $\mc N(\zeta)$ and standard
Whittaker modules can be defined, in a natural way, in the general setup of quasireductive
Lie superalgebras of type I; see Section~\ref{sect::21} for the definitions.  As in the Lie algebra case,
   Backelin's functor $\Gamma_\zeta$, which extends naturally to an exact functor from $\mc O$ to $\mc N(\zeta)$ for Lie superalgebras, plays a similar role in the multiplicity problem for standard Whittaker modules. More precisely, $\Gamma_\zeta$ maps Verma modules to standard Whittaker modules and simple highest weight modules to simple Whittaker modules (or zero) over Lie superalgebras; see \cite[Theorem 20]{Ch21}. It turns out that the problem about composition multiplicities for standard Whittaker modules can be reduced to that for Verma modules, as established in \cite[Theorem C]{Ch21}.


One of the motivations for the present paper is to describe a suitable analogue of
the cokernel category $\text{Coker}\left(\mc F\otimes M(\la,\zeta)\right)$ in the setup of
Lie superalgebras. This gives rise to a category which we denote by $\mc W(\zeta)$.
We study various aspects of the structure theory of this category, in particular,
we describe its stratified structure in detail. The category $\mc W(\zeta)$ turns out to
have a remarkably rich representation theory.

Whittaker modules and their generalization were also studied in the setup of
(finite) $W$-algebras; see, e.g., \cite{BrG,BGK,Lo09, Lo10, Pr07,Xi,ZS} and references therein.   It is   worth pointing out that  the category $\mc N(\zeta)$ above is equivalent to the category $\mc O$ for a $W$-algebra in the sense of  \cite{Lo09} when $\g$ is a semisimple Lie algebra; see  \cite[Theorem 4.1, Proposition 4.2]{Lo09} and Section \ref{rem::40}.

\subsection{Kazhdan-Lusztig conjecture and categorifications}

The Kazhdan-Lusztig conjecture for a semisimple  Lie algebra $\mathfrak{g}$ can be
formulated using the categorification of the right regular module over the Hecke algebra
for the Weyl group of $\mathfrak{g}$ via the action of projective functors on
the Grothendieck group of the principal block of category $\mc O$.
For a fixed parabolic subgroup of $W$,
there are two families of parabolic Hecke modules, studied in \cite[Section~3]{So}.
One of these families specializes to the permutation module
while the other one specializes to the induced sign module.
The family that specializes to the induced sign module decategorifies the
action of projective functors on the regular block of the corresponding
parabolic category $\mc O$, see \cite{RC}. Via parabolic category $\mc O$,
this family is further related to the theory
parabolic Kazhdan-Lusztig  polynomials; see \cite{KL1, Deo87, Deo91, BGS96}.

The family that specializes to the permutation module decategorifies the
action of projective functors on the regular block of the corresponding
category  $\mc O^{\nu\text{-pres}}$ of projectively presentable modules,
as shown in \cite{MaSt08}.

In  \cite{FKK}, Frenkel, Khovanov and Kirillov gave a formulation of the Kazhdan-Lusztig conjectures for Lie algebras of type $A$ in terms of canonical bases \`a la Lusztig on tensor powers of the fundamental representation of the quantum group of type $A$. As a consequence, the Kazhdan-Lusztig polynomials can be read off from the coefficients of the canonical bases.

In the setup of Lie superalgebras, there have been numerous attempts to obtain
results towards the  solution of the irreducible character problem in category $\mc O$.
The most significant step is Brundan's formulation of a Kazhdan-Lusztig type conjecture for the general linear Lie superalgebra $\gl(m|n)$ in \cite{Br1}. This conjecture was proved by
Lam, Wang and the second author in \cite{CLW2}, see also  \cite{BLW}.
The conjecture is formulated as a Fock space categorification of the super category $\mc O$ in which
certain canonical bases then play a crucial role. We refer to \cite{Br04q, BW, Bao17, CLW1, ChWa12} for treatments
of some of the other simple classical Lie superalgebras.

\subsection{Goals}
In the present paper, we construct a $q$-symmetrized Fock space consisting of a tensor product of the $q$-symmetric tensors of the natural and its (restricted) dual module of the infinite-rank quantum group of type $A$. Equivalently, it is the image of a certain $q$-symmetrizer $S_\zeta$ acting on the Fock space consisting of a tensor product of a tensor power of the natural module and a tensor power of its dual module. This $q$-symmetrization can be intuitively paralleled with the construction of
a permutation module. Making another parallel with Lie algebras, it is natural to expect that
the $q$-symmetrized Fock space should be related to combinatorics of various cokernel categories.
In particular, it should be important for the study of Whittaker modules.

In the paper, we construct and study various categories and modules over
quasireductive Lie superalgebras that are  equivalent to $\mc W(\zeta)$.
We show how, in the case of the Lie superalgebra $\gl(m|n)$, the combinatorics of these categories can be understood via the $q$-symmetrized
Fock space  and how the action of projective functors on these categories can be used to categorify
the $q$-symmetrized Fock space. As a part of this story, we develop a theory of (dual)
canonical bases arising from $q$-symmetric tensors. We also use our categorification results
and various categorical equivalences to formulate and prove Brundan-Kazhdan-Lusztig type conjectures
for these categories.

Throughout the paper,  for a given quasireductive Lie superalgebra of type I, we analyze the  stratified structure of a certain {\em Serre quotient category}
$\ov{\mc O}$ of category $\mc O$ in the sense of Gabriel \cite{Gabriel}. This Serre quotient gives one of
the equivalent incarnations of $\mc W(\zeta)$, generalizing some results in \cite{MaSt04, FKM00} to the
setup of Lie superalgebras. There is a canonical {\em Serre quotient functor} $\pi$ from $\mc O$ to
$\ov{\mc O}$. which is determined uniquely by certain universal property.

We prove that the Backelin  functor $\Gamma_\zeta$ is a realization of the functor $\pi$, see also \cite{BrG}
where the case of  $\mf{gl}(m|n)$ with a  non-singular $\zeta$ was considered. Let us explain the strategy of our proof in more
detail. The first step is to study the functor $F_\zeta$ obtained by by composing the functors
used in \cite[Theorem~5.9]{BG} and \cite[Theorem~5.1]{MS}. In particular, we show that the functor $F_\zeta$
has the universal property which determines $\pi$. We further show that the functors $F_\zeta$ and
$\Gamma_\zeta$ have the same codomain. Then, we  employ the tools from projective functors, developed
in \cite{MaMe12} and \cite{Kh}, to prove that $F_\zeta$ and $\Gamma_\zeta$ are isomorphic. By
uniqueness of the Serre quotient functor, we conclude that all $F_\zeta$, $\Gamma_\zeta$ and $\pi$ have
equivalent codomain categories, and, finally, we show that they are isomorphic, up to equivalences of
these categories.

Using this explicit realization of $\Gamma_\zeta$ via the Serre quotient, the construction of simple,
standard, costandard, projective and tilting modules in $\mc W(\zeta)$ can be realized as the images
of certain modules in $\mc O$ under $\Gamma_\zeta$. The realization is also new for reductive Lie algebras and results in a number of interesting observations
about $\Gamma_\zeta$ and other applications    and generalizations, see  Sections~\ref{re::37}, \ref{rem::40},  \ref{ref::41} and Remark \ref{rem::44}.   In particular, we investigate the adjoints of $\Gamma_\zeta$ and  the connection with the representation theory of finite $W$-algebras and develop an analogue of Soergel's Struktursatz for $\Gamma_\zeta$.

We also construct tilting modules in $\mc W(\zeta)$ with respect  to its stratified structure  for any quasireductive Lie superalgebra of type I. In our categorification picture  for $\mf{gl}(m|n)$, the representation theoretical counterparts of  the
canonical and dual canonical bases in the $q$-symmetrized Fock space are given by the tilting and irreducible Whittaker modules in $\mc W(\zeta)$, respectively. Therefore, the corresponding Kazhdan-Lusztig polynomials determined by these bases admit the usual interpretation as
composition multiplicities, alternatively, multiplicities of standard subquotients
in standard filtrations. We further prove that both $\Gamma_\zeta$ and $\pi$
categorify the $q$-symmetrizer $S_\zeta$.  This also provides a new categorification picture for $\gl(m)$ by means of their corresponding Whittaker categories.

\subsection{Structure of the paper}

This paper is organized as follows. In Section \ref{sect::prel}, we provide
some background material on quasireductive Lie superalgebras. In Section \ref{sect::3}, we develop
the theory of $q$-symmetric tensors in Fock space for quantum groups and the theory of
(dual) canonical bases on $q$-symmetric tensors.

Section~\ref{sect::4} is devoted to the description, study and comparison of several
(equivalent) abelian incarnations of the Whittaker category $\mc W(\zeta)$.
In Section \ref{sect::5}, we  describe the stratified structure of these categories in detail.

In Section \ref{sect::6}, we focus on the study of tilting modules in both $\OI$ and $\mc W(\zeta)$.
We describe explicitly all tilting modules as objects of $\OI$ and use this description to
establish a Ringel self-duality  result for $\OI$. In Section \ref{sect::7}, we combine all the
above results to establish the categorification of (a topological completion of)
the $q$-symmetrized Fock space via the action of projective functors on the categories studied
in Section~\ref{sect::4}.  In particular, we use the functor $\Gamma_\zeta$, the
tilting modules and the simple modules in $\mc W(\zeta)$ to categorify the $q$-symmetrizer,
the canonical basis  and the dual canonical bases, respectively.   In Section \ref{sect::81}, we give a description of structural modules in $\mc O^{\nu\text{-pres}}$. Section \ref{App::9} offers a realization of $\OI$ in terms of the category of  finite-dimensional locally unital modules over a locally unital algebra.

\vskip 0.5cm
{\bf Acknowledgments}. The first two authors are partially supported by MOST grants of the R.O.C. They also
	acknowledge support from the National Center for Theoretical Sciences of the R.O.C. The third author is supported by the Swedish Research Council. {We thank A.~Brown and A.~Romanov for pointing out an inaccuracy in the first version.}

\section{Preliminaries} \label{sect::prel}

Throughout the paper, the symbols $\Z$, $\Z_{\geq 0}$, and $\Z_{\leq 0}$ stand
for the sets of all, non-negative and non-positive integers, respectively. All vector spaces,
algebras and tensor products are assumed to be over the field $\C$ of complex numbers.

\subsection{} \label{sect::21}
For a complex Lie superalgebra $\g$ we denote the category of (left) $\g$-modules by $\g$-Mod. Furthermore, set $\g\mod$ to be the full subcategory of all finitely generated $\g$-modules. We denote the universal enveloping algebra of $\g$ by $U(\g)$ and its center by $Z(\g)$.

Throughout, we let $\g =\g_\oa \oplus \g_\ob$ be a finite-dimensional quasireductive Lie superalgebra. This means that $\g_\oa$ is a reductive Lie algebra
and $\g_\ob$ is semisimple as a $\g_\oa$-module. We note that such Lie superalgebras are called classical in \cite{CCC,Ma}. Also, we assume that $\g$ is equipped with a compatible $\Z$-grading:
\begin{align}
	&\g=\g_{-1}\oplus \g_{0}\oplus \g_{1},\label{typeI}
	\end{align} which we refer to as a {\em type-I grading} in the paper. Note that $\g_{0}=\g_\oa$. We further set $\mf g_{\leq 0}:=\g_0\oplus \g_{-1}$ and $\g_{\geq 0}:=\g_0\oplus\g_{1}$.

 One of the most interesting classes in Kac's list \cite{Ka1} is the following series of type I Lie superalgebras:
\begin{align}
	&(\text{Type } {\bf A}):~\gl(m|n),~\mf{sl}(m|n),~\mf{psl}(n|n), \label{eq::claA}  \\
	&(\text{Type } {\bf C}):~\mf{osp}(2|2n),\label{eq::claC} \\
	&(\text{Type } {\bf P}):~\mf{p}(n),~[\mf{p}(n),\mf{p}(n)]. \label{eq::claP}
\end{align}

\subsection{} We fix  a triangular decomposition
\begin{align}
&\mf g_\oa=\mf n^-_\oa\oplus \mf h \oplus \mf n_\oa^+ \label{eqtraigoa}
\end{align}   of $\mf g_\oa$ with the corresponding Borel subalgebra $\mf b_\oa = \mf h\oplus \mf n_\oa^+$ such that $\mf b:=\mf b_\oa+\mf g_1$ is a Borel subalgebra of $\g$ in the sense of \cite{Ma} (see also  \cite[Section 1.3]{CCC}). This means that,
setting $\mf n^\pm_\ob =\mf g_{\pm 1}$, we obtain a triangular decomposition
\begin{align}
&\mf g=\mf n^-\oplus \mf h \oplus \mf n^+ \label{supereqtraigoa}
\end{align}
of $\g$ that extends \eqref{eqtraigoa}.
Here $\mf n^\pm = \mf n^\pm_\oa\oplus \mf n^\pm_\ob$.
The  subalgebra $\fh$ of $\g_\oa$ is referred to as a {\em Cartan subalgebra} of $\g$.  We then have the set $\Phi\subseteq \h^\ast$ of roots of $\mf g$
with respect to $\mf h $ and the corresponding root space decomposition
\begin{align}
&\g=\bigoplus_{\alpha\in \Phi\cup \{0\}}\g_\alpha,\label{eq::rootdec}
\end{align} where $\g_\alpha:=\{X\in \g|~[h,X] =\alpha(h)X, \forall h\in \h\}$, for $\alpha \in \h^\ast$.

 For a given subalgebra $\mf s\subseteq \g$, let $\Phi(\mf s)$ denote the set of roots in $\mf s$. We let $$\Phi^\pm:=\Phi(\mf n^\pm)\quad\text{and}\quad\Phi^\pm_\oa:=\Phi(\mf n_\oa^\pm)$$ denote the sets of positive/negative and even positive/negative roots, respectively. The basis of simple
roots in $\Phi^+_\oa$  is denoted by $\Pi_0$.

Furthermore, we define the partial order $\leq$ on $\h^\ast$ to be the transitive closure of the relations
\begin{align*}
&\la\pm \alpha \leq \la, \text{ for }\alpha\in \Phi^\mp.
\end{align*}

\subsection{}\label{sec:domtyp}  We denote by $\rho$ the Weyl vector, i.e., $\rho=\rho_\oa-\rho_\ob$, where
$\rho_\oa:=\hf\sum_{\alpha\in\Phi^+_\oa}\alpha$ and $\rho_\ob=\hf\sum_{\beta\in\Phi^+_\ob}\beta$. Also, we set $\rho_{\pm 1}$ to be the half of the sum of all roots in $\g_{\pm 1}$.  We remark that $\rho_{\ov 1} = \rho_1$. The Weyl group $W$ of $\mf g$ is defined to be the Weyl group of $\g_\oa$ with respect to the triangular decomposition \eqref{eqtraigoa}. The group $W$ acts on $\h^\ast$ by definition. The corresponding {\em dot-action} of $W$ on $\h^\ast$ is defined as \begin{align}
	&w\cdot \la :=  w(\la+\rho) - \rho= w(\la+\rho_\oa) - \rho_\oa, \label{dotaction}
	\end{align}   for any $\la \in \h^\ast$. For any $\alpha \in \Pi_0$, we let $s_\alpha\in W$ denote the corresponding simple reflection.

We fix a $W$-invariant non-degenerate bilinear form $\langle \cdot, \cdot \rangle$ on $\h^\ast$, which we assume to be induced from an invariant non-degenerate bilinear form on $\g$, provided that the latter exists.
We set $\alpha^\vee:= 2\alpha/\langle\alpha,\alpha\rangle$, for any even root $\alpha$.  A weight $\la\in \h^\ast$ is called {\em integral} if $\langle \la, \alpha^\vee\rangle \in \Z$, for all simple even roots $\alpha$. We denote by  $\Lambda\subseteq \h^\ast$ the set of all integral weights.  An integral weight $\la$ is called {\em dominant} (respectively, {\em anti-dominant}) if $\langle \la+\rho_\oa, \alpha^\vee \rangle \in \Z_{\geq 0}$ (respectively, $\langle \la+\rho_\oa, \alpha^\vee \rangle \in \Z_{\leq 0}$) and  {\em regular} if and only if  $\langle \la+\rho_\oa, \alpha^\vee \rangle \neq 0$, for all $\alpha \in \Phi_\oa^+$. In the case when $\langle\cdot,\cdot\rangle$ is induced from a form on $\g$, a weight $\la\in\h^\ast$ is called {\em typical}, if $\langle\la+\rho,\alpha\rangle\not=0$, for all $\alpha\in\Phi_\ob$ with $\langle\alpha,\alpha\rangle=0$.

\subsection{}\label{sec:lambdanu}
Throughout the present paper, we fix a  dominant and integral weight $\nu\in \h^\ast$ and set	
 \begin{align*}
&{\Pi_\nu}:=\{\alpha \in \Pi_0|~\langle \nu+\rho_\oa, \alpha \rangle =0\}.
\end{align*}
 Let $\mf l_\nu$ be the Levi subalgebra generated by $\mf h$ and all $\g_{\pm \alpha}$, where $\alpha\in {\Pi_\nu}$. Denote by $W_\nu$ the Weyl group of $\mf l_\nu$, that is, $W_\nu$ is generated by all $s_\alpha$, where $\alpha \in {\Pi_\nu}$.
 Let $\Lnua$ be the set of all integral and $W_\nu$-anti-dominant  weights, namely,
 \begin{align*}
 	&\Lnua:=\{\la\in \Lambda|~\langle \la+\rho_\oa, \alpha^\vee\rangle \in \Z_{\leq 0},\text{ for all }\alpha \in {\Pi_\nu}\}.
 	\end{align*}

Let   ${\bf ch} \mf n^+_\oa:=\{\zeta\in (\mf n_\oa^+)^\ast|~\zeta([\mf n_\oa^+, \mf n_\oa^+])=0\}$ be the set of characters of $\mf n_\oa^+$.
Following \cite{Ch21}, we fix a character $\zeta \in {\bf ch} \mf n_\oa^+$ with
\begin{align*}
	&\zeta(\g_\alpha) \not= 0 \Longleftrightarrow \alpha\in \Pi_\nu, \text{ for any } \alpha\in \Pi_0.
\end{align*}
Define $\mf l_\zeta:=\mf l_\nu$  to be the Levi subalgebra in a parabolic decomposition of $\g$ in the sense of \cite{Ma} (see also \cite[Section 1.4]{CCC}). Namely, there is a vector $H\in \h^\ast$ such that
\begin{align*}
	&\fl_\nu=\bigoplus_{\Real\, \alpha(H)=0}\g_\alpha,\quad\fu^+:=\bigoplus_{\Real\,\alpha(H)>0} \fg_\alpha \subseteq \mf n^+, \quad \fu^-:=\bigoplus_{\Real\, \alpha(H)<0} \fg_\alpha \subseteq \mf n^-.
\end{align*}
We denote the parabolic subalgebra $\mf p:=\mf l_\nu\oplus\mf u^+$. Note that $\fp_\ob= \mf g_1$.
In the paper, we define $W_\zeta:=W_\nu$.


\subsection{} \label{sect::24}
We consider  the BGG category $\mc O = \mc O(\mf g, \mf b)$ of $\g$-modules with respect to the triangular decomposition \eqref{supereqtraigoa}. In this paper we only consider the ``integral'' subcategory of $\mc O$, i.e., the full subcategory consisting of all modules having integral weight spaces. Henceforth, unless otherwise specified, by $\mc O$ and other similar notations we shall always mean the corresponding integral subcategories. We denote by $\mc O_\oa$ the corresponding BGG category of $\g_\oa$-modules with respect to the triangular decomposition \eqref{eqtraigoa}. We denote by $\mc F$ and $\mc F_\oa$ the categories of finite-dimensional $\g$- and $\g_\oa$-modules, respectively.

Let $\mf a$ be a subalgebra of $\g$ such  that $\mf g_\oa \subseteq \mf a \subseteq \mf g$.  We denote by $\Res^{\mf g}_{\mf a}(-)$ the restriction functor from $\mf g$-Mod to $\mf a$-Mod. Then $\Res^{\mf g}_{\mf a}(-)$ has the following left and right adjoint functors, respectively:
$$\Ind^{\g}_{\mf a}(-)=U(\g)\otimes_{U(\mf a)}-\qquad\mbox{and}\qquad \Coind^{\g}_{\mf a}(-)=\Hom_{U(\mf a)}(U(\g),-).$$
By \cite[Theorem~2.2]{BF}, see also \cite{Go}, we have the following isomorphism of functors:
\begin{align}
	&\Ind_{\mf a}^{\mf g}(-)\;\cong\; \Coind_{\mf a}^{\g}(\wedge^{\dim(\mf g/\mf a)}(\mf g/\mf a)\otimes -). \label{eq::adj}
\end{align} 
Suppose that $\mf a$ has a Borel subalgebra  $\mf b^{\mf a}\subseteq \mf a$ such that $\mf b^{\mf a}_\oa = \mf b_\oa$. Then we use $\mc O(\mf a)\cong\mc O(\mf a, \mf b^{\mf a})$ to denote the BGG category for $\mf a$ with respect to $\mf b^{\mf a}$, if the latter is clear from the context. In this case,  $\Ind_{\mf a}^\g, ~\Coind_{\mf a}^\g$ and $\Res_{\mf a}^{\g}$ are well-defined exact functors between $\mc O(\mf a)$ and $\mc O$. In the case of a reductive Lie algebra $\g=\g_\oa$, we have an anti-involution of
$\g$ described in \cite[Section 0.5]{Hu08}. We denote by $(\cdot)^\vee$ the
corresponding simple-preserving duality on $\mathcal{O}$;  see also \cite[Section 3.2]{Hu08}.

Let $ M^{\mf a}(\la)$ and $L^{\mf a}(\la)\in \mc O(\mf a)$ denote the Verma module
and its simple quotient corresponding to $\la$, respectively. Also, let $P^{\mf a}(\la)$ be the projective cover of $L^{\mf a}(\la)$ in $\mc O(\mf a)$. We use the notation $P(\la), L(\la), M(\la)$ (respectively, $P_0(\la), L_0(\la), M_0(\la)$) in the case when $\mf a =\mf g$ (respectively, $\mf a = \mf g_\oa$). Also, we let $T(\la)$ and $T_0(\la)$ denote the tilting modules with highest weight $\la$ in $\mc O$ and $\mc O_\oa$, respectively. We refer to \cite[Section 3.3]{CCC} for more details.

For any $\g$-module $M$ having a composition series and any simple
$\g$-module $L$, by $[M : L]$ we denote the composition
multiplicity of $L$ in $M$. For given $M\in \mc O$ and $\la \in \h^\ast$, we denote the  weight space
of $M$ corresponding to $\lambda$ by $$M_\la: = \{m\in M|~hm=\la(h)m, \forall h\in \h\}.$$  We  let $\ch M$ denote the character of $M$, namely,
\begin{align}
&\ch M:= \sum_{\la \in \h^\ast} \dim M_\la e^{\la}. \label{eq::wtspace}
\end{align}

For $\alpha\in \Pi_0$, a simple module $L(\la)$ is said to be {\em $\alpha$-free} if every non-zero vector $f_{\alpha}\in \g_{-\alpha}$ acts freely on $L(\la)$. We note that $\la \in \Lnua$ if and only if $L(\la)$ is $\alpha$-free for all $\alpha\in {\Pi_\nu}$.

\subsection{}
For a $\g$-module $M$, we denote by $\mc F\otimes M$ the full subcategory of
the category of $\g$-modules consisting of all $\g$-modules of the form
$V\otimes M$, where $V\in \mc F$.

Given a full subcategory $\mc C$ of $\g$-modules, we denote by
\begin{itemize}
\item $\langle \mc C\rangle$ the full subcategory of
the category of all $\g$-modules consisting of all subquotients
of all modules in~$\mc C$;
\item $\add(\mc C)$ the {\em additive closure} of $C$, that is
the full subcategory of the category of all $\g$-modules
consisting of all modules isomorphic to finite direct sums
of direct summands of objects in $\mc C$;
\item $\text{Coker}(\mc C)$ the full subcategory of the category
of all $\g$-modules, which consists of all modules $M$ that
have a presentation of the form
$X\rightarrow Y\rightarrow M\rightarrow 0,$
where $X$ and $Y$ are in $\mc C$.
\end{itemize}

For a given $\g$-module $M$ that has a composition series, the socle $\text{soc}(M)$ of $M$ is defined as the sum of all simple submodules in $M$. Dually, the radical $\rad(M)$ of $M$ is defined to be the intersection of all maximal submodules. The top of $M$ is defined as $\text{top}(M):=M/\rad M$.

\subsection{}
Let $\la\in \h^\ast$ and let $\chi_\la: Z(\g)\rightarrow \C$ {(respectively $\chi^{\g_\oa}_\la: Z(\g_\oa)\rightarrow \C$)} denote the central character of $\g$ {(respectively $\g_\oa$)} associated to $\la$.

Denote by $\g\mod_Z$  the full subcategory of $\g\mod$ on which $Z(\g)$ acts locally finitely. As in the Lie algebra situation, we define {\em projective (endo)functors} on $\g\mod_Z$ as direct summands of functors of the form $V\otimes -$, where $V\in \mc F$.  The category of projective functors is denoted by ${\mc Proj}$. In the case of a semisimple Lie algebra $\g=\g_\oa$, projective functors have been investigated in \cite{BG}.

We note that every functor in ${\mc Proj}$ preserves $\mc O$. Let $\la \in \h^\ast$ be a generic, typical, dominant, integral and regular weight as in \cite[Section 5.3]{MaMe12}.  In \cite[Theorem 5.1]{MaMe12}, it was proved that the triple ($\mc O$,\,$M(\la)$,\,${\mc Proj}$) is a {\em category with full projective functors}  in the sense of \cite[Definition 1]{Kh}:
\begin{itemize}
	\item[(i)] Every object $M\in \mc O$ is a quotient of $T(M(\la))$, for some $T\in {\mc Proj}$.
	\item[(ii)] For $T, T'\in {\mc Proj}$, the evaluation map $$\text{ev}_{M(\la)}: \Hom_{\mc Proj}(T,T')\rightarrow\Hom_{\mc O}(T(M(\la)), T'(M(\la)))$$ is surjective.
\end{itemize}

A functor $G$ from $\mc O$ to a full abelian subcategory $\mc C$ of $\g\mod_Z$ is said to functorially commute with projective functors if for every $T\in {\mc Proj}$ there is an isomorphism $\eta_T: T\circ G \rightarrow G\circ T$ such that for any natural transformation $\alpha: T\rightarrow T'$ of projective functors the following diagram is commutative (see also \cite[Section 3.3]{MaMe12}):
	$$\xymatrixcolsep{3pc} \xymatrix{
	T\circ G  \ar[r]^-{\alpha_{G}}  \ar@<-2pt>[d]_{\eta_T} &   T'\circ G   \ar@<-2pt>[d]_{\eta_{T'}} \\
	G\circ T \ar[r]^{G(\alpha)} & G\circ T'}$$
The following lemma is a consequence of \cite[Proposition 4]{Kh}.
\begin{lem} \label{lem::fpfO} Let $\la$ be as above. 	Let $\mc C$ be a full abelian subcategory of $\g\mod_Z$ that is invariant under projective functors. Suppose that $S,S':\mc O\rightarrow \mc C$ are two exact functors that functorially commute with projective functors. Then $S\cong S'$ if and only if $S(M(\la))\cong S'(M(\la))$.
\end{lem}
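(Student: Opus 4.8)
The plan is to derive Lemma~\ref{lem::fpfO} from \cite[Proposition~4]{Kh} by checking that the hypotheses of that proposition are met by the triple $(\mc O, M(\la), {\mc Proj})$ together with the target category $\mc C$. First I would recall, from the discussion preceding the lemma, that by \cite[Theorem~5.1]{MaMe12} the triple $(\mc O, M(\la), {\mc Proj})$ is a category with full projective functors in the sense of \cite[Definition~1]{Kh}: every object of $\mc O$ is a quotient of $T(M(\la))$ for some $T\in{\mc Proj}$, and the evaluation map $\mathrm{ev}_{M(\la)}$ is surjective. This is precisely the setting in which \cite[Proposition~4]{Kh} applies, so the bulk of the work is a translation of terminology rather than new argument.

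The forward implication is trivial: if $S\cong S'$ as functors, then evaluating the isomorphism at the object $M(\la)$ gives $S(M(\la))\cong S'(M(\la))$. For the reverse implication, suppose $\phi\colon S(M(\la))\xrightarrow{\sim} S'(M(\la))$ is an isomorphism in $\mc C$. The key point is that $\mc C$ is invariant under projective functors, $S$ and $S'$ are exact, and both functorially commute with projective functors via families of isomorphisms $\eta^S_T\colon T\circ S\to S\circ T$ and $\eta^{S'}_T\colon T\circ S'\to S'\circ T$ compatible with all natural transformations $\alpha\colon T\to T'$. Given this data, \cite[Proposition~4]{Kh} produces a natural isomorphism $S\cong S'$ extending $\phi$: concretely, for an arbitrary $M\in\mc O$ one chooses a presentation $T_1(M(\la))\to T_0(M(\la))\to M\to 0$ coming from the surjectivity in (i) and the fact (from (ii)) that the map $M(\la)\mapsto M$ lifts to a morphism of projective functors, and then one transports $\phi$ along these presentations using $\eta^S$, $\eta^{S'}$ and the naturality squares to define the component $S(M)\to S'(M)$; exactness of $S$ and $S'$ and the five lemma show this component is an isomorphism, and the compatibility of the $\eta$'s with natural transformations of projective functors shows these components are natural in $M$.

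The main obstacle I anticipate is not conceptual but bookkeeping: one must verify that the notion of ``category with full projective functors'' and ``exact functor functorially commuting with projective functors'' as used here matches exactly the hypotheses of \cite[Proposition~4]{Kh}, including the subtle compatibility condition encoded in the commuting square in the definition of functorial commutation. In particular, one needs the isomorphisms $\eta_T$ to be natural in $T$ with respect to \emph{all} morphisms in ${\mc Proj}$, not merely the identity, since this is what guarantees well-definedness and naturality of the constructed isomorphism $S\to S'$ when different presentations of the same $M\in\mc O$ are compared. Once these compatibilities are in place, the conclusion follows formally, and one simply cites \cite[Proposition~4]{Kh} for the existence and uniqueness of the extension.
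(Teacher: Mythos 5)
Your argument is exactly the paper's: the paper offers no explicit proof, stating only (immediately before the lemma) that it "is a consequence of [Kh, Proposition 4]," having already recorded that $(\mc O, M(\la), {\mc Proj})$ is a category with full projective functors by [MaMe12, Theorem 5.1]. Your elaboration of how Khomenko's proposition applies and how one transports the isomorphism along presentations is a correct unpacking of the same citation.
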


\section{Quantum groups and canonical bases} \label{sect::3}

\subsection{Quantum group $U_q(\gl_\infty)$}

Let $q$ be an indeterminate. The quantum group $U_q({\mathfrak
g\mathfrak l}_\infty)$ is the associative algebra over
$\bbQ(q)$ generated by $E_a, F_a, K_a, K^{-1}_a$, where $a \in \Z$, subject to
the following relations ($a,b\in\Z$):
\begin{eqnarray*}
 K_a K_a^{-1} &=& K_a^{-1} K_a =1, \\
 K_a K_b &=& K_b K_a, \\
 K_a E_b K_a^{-1} &=& q^{\delta_{a,b} -\delta_{a,b+1}} E_b, \\
 K_a F_b K_a^{-1} &=& q^{\delta_{a,b+1}-\delta_{a,b}}
 F_b, \\
 E_a F_b -F_b E_a &=& \delta_{a,b} \frac{K_{a,a+1}
 -K_{a+1,a}}{q-q^{-1}}, \\
 E_a^2 E_b +E_b E_a^2 &=& (q+q^{-1}) E_a E_b E_a,  \quad \text{if } |a-b|=1, \\
 E_a E_b &=& E_b E_a,  \,\qquad\qquad\qquad \text{if } |a-b|>1, \\
 F_a^2 F_b +F_b F_a^2 &=& (q+q^{-1}) F_a F_b F_a,  \quad\, \text{if } |a-b|=1,\\
 F_a F_b &=& F_b F_a,  \qquad\ \qquad\qquad \text{if } |a-b|>1.
\end{eqnarray*}
Here $K_{a,a+1} :=K_aK_{a+1}^{-1}$. For $r\ge 1$, we have the
divided powers $E_a^{(r)} =E_a^{r}/[r]!$ and
$F_a^{(r)}=F_a^{r}/[r]!$, where $[r] =(q^r-q^{-r})/(q-q^{-1})$ and
$[r]!=[1][2]\cdots [r]$. The quantum group $U_q(\mathfrak{sl}_\infty)$ is the subalgebra of $U_q({\mathfrak g\mathfrak l}_\infty)$ generated by $E_a$ and $F_a$, for $a\in\Z$. Of relevance for constructing canonical bases later on is the $\Z[q,q^{-1}]$-form $U_q(\mathfrak{sl}_\infty)_\Z$ containing the divided powers as generators in \cite[Sections 3.1.13 and 23.2.1]{Lu}.

Setting $\ov{q}=q^{-1}$ induces an automorphism on $\bbQ(q)$ denoted
by $^{-}$. Define the bar involution on $U_q({\mathfrak g\mathfrak
l}_\infty)$ to be the anti-linear automorphism ${}^-: U_q({\mathfrak
g\mathfrak l}_\infty)\rightarrow U_q({\mathfrak g\mathfrak
l}_\infty)$ determined by $\ov{E_a}= E_a$, $\ov{F_a}=F_a$, and
$\ov{K_a}=K_a^{-1}$. Here {\em anti-linear} means that
$\ov{fu}=\ov{f}\ov{u}$, for $f\in\bbQ(q)$ and $u\in U_q(\gl_\infty)$.

Let $\mathbb V$ be the natural $U_q({\mathfrak g\mathfrak
l}_\infty)$-module with basis $\{v_a\}_{a\in\Z}$ and let $\mathbb W
:=\mathbb V^*$, be the restricted dual module with basis
$\{w_a\}_{a\in\Z}$ such that $\langle w_a ,v_b\rangle = (-q)^{-a}
\delta_{a,b}$.  The actions of $U_q(\gl_\infty)$ on $\mathbb V$ and
$\mathbb W$ are then given as follows:
\begin{align*}
&K_av_b=q^{\delta_{ab}}v_b,\qquad E_av_b=\delta_{a+1,b}v_a,\ \ \quad
F_av_b=\delta_{a,b}v_{a+1},\\
&K_aw_b=q^{-\delta_{ab}}w_b,\quad E_aw_b=\delta_{a,b}w_{a+1},\quad
F_aw_b=\delta_{a+1,b}w_{a}.
\end{align*}
We shall use the following comultiplication $\Delta$ on $U_q(\gl_\infty)$:
\begin{eqnarray*}
 \Delta (E_a) &=& 1 \otimes E_a + E_a \otimes K_{a+1, a}, \\
 \Delta (F_a) &=& F_a \otimes 1 +  K_{a, a+1} \otimes F_a,\\
 \Delta (K_a) &=& K_a \otimes K_{ a},
\end{eqnarray*}
This $\Delta$ is consistent with the one in \cite{Kas}, but
differs from the one in \cite{Lu}.

For $m, n\in\N$, we denote
the set of integer-valued functions on $\{1,\ldots,m+n\}$ by $\Z^{m|n}$. We
shall also identify $f\in\Z^{m|n}$ with the $(m+n)$-tuple
$\left(f(1),f(2),\ldots,f(m+n)\right)$. We call $f$ {\em dominant} provided that
\begin{displaymath}
f(1)\ge f(2)\ge\cdots\ge f(m)\quad\text{ and }\quad f(m+1)\le f(m+2)\le\cdots\le f(m+n).
\end{displaymath}
A function in $\Z^{m|n}$ satisfying the above conditions with $\le$ and $\ge$ interchanged, will be called {\em anti-dominant}. Denote the subsets of dominant and anti-dominant functions by $\Z^{m|n}_+$ and $\Z^{m|n}_-$, respectively.

\subsection{Fock space $\bbT^{m|n}$}

For $m,n\in\N$, we define the following
tensor space over $\bbQ(q)$:
\begin{equation}  \label{eq:Fock}
{\bbT}^{m|n} :={\mathbb V}^{\otimes m}\otimes {\mathbb
W}^{\otimes n}.
\end{equation}
The quantum group $U_q(\gl_\infty)$ acts on $\mathbb T^{m|n}$ via the
co-multiplication $\Delta$.

For $f\in \Z^{m|n}$, we define
\begin{equation}  \label{eq:Mf}
M_f :=v_{f(1)}\otimes
v_{f(2)}\otimes\cdots \otimes v_{f(m)}\otimes
w_{f(m+1)}\otimes\cdots\otimes w_{f(m+n)}.
\end{equation}
We refer to $\{M_f |f\in
\Z^{m|n}\}$ as the {\em standard monomial basis} for ${\mathbb
T}^{m|n}$.

\subsection{Bruhat order}\label{super:Bruhat}
In this section, we introduce a partial order on $\Z^{m|n}$ that, through the bijection defined in \eqref{eq:wt:bij}, translates to the Bruhat order for  the Lie superalgebra $\gl(m|n)$.

Let $\texttt{P}$ be the free abelian group with basis
$\{\varepsilon_r\vert r\in\Z\}$. We define a partial order on $\texttt{P}$ by
declaring $\nu\ge\mu$, for $\nu,\mu\in \texttt{P}$, if $\nu-\mu$ is
a non-negative integral linear combination of
$\varepsilon_r-\varepsilon_{r+1}$, $r \in \Z$.

Set $b_i=0$, for $i=1,\cdots,m$, and $b_j=1$, for $j=m+1,\cdots,m+n$. For $f\in\Z^{m|n}$ and $1\le j\le m+n$, we define
\begin{align*}
\text{wt}^j(f)
 :=\sum_{j\le i}(-1)^{b_i}\varepsilon_{f(i)}\in \texttt{P},\quad
\text{wt}(f):=\text{wt}^{1}(f)\in \texttt{P}.
\end{align*}
We define the {\em Bruhat ordering} $\preceq$ on the integral lattice $\Z^{m|n}$,
in terms of the partially ordered set
$(\texttt{P}, \leq)$, as follows: $g\preceq f$ if and only if
$\text{wt}(g)=\text{wt}(f)$ and $\text{wt}^j(g)\le\text{wt}^j(f)$, for all $j$.
Note that, when $n=0$,
this is the usual Bruhat ordering on the integral weight lattice of $\gl(m)$.

\subsection{Hecke algebra of type $A$}

For $k\in\N$, denote by $\mf{S}_{k}$ the symmetric group on $\{{1},{2},\ldots,{k}\}$. The group $\mf{S}_{k}$ is generated
by the simple transpositions $s_{1} =({1},{2})$, $s_{2}
=({2},{3}), \ldots$, $s_{k-1} =({k-1},{k})$.

The Iwahori-Hecke algebra associated to $\mf{S}_k$ is the associative $\mathbb Q(q)$-algebra
$\mathcal H_{k}$ generated by $H_i$, where $1 \le i \le k-1$, subject to
the relations
\begin{align*}
&(H_i -q^{-1})(H_i +q) = 0,\\
&H_i H_{i+1} H_i = H_{i+1} H_i H_{i+1},\\
&H_i H_j = H_j H_i, \quad\text{for } |i-j| >1.
\end{align*}
Associated to $\sigma \in \mf{S}_{k}$ with a reduced
expression $\sigma=s_{i_1} \cdots s_{i_r}$, we define the element
 $
H_\sigma :=H_{i_1} \cdots H_{i_r}. $ The bar involution $^{\overline{\ }}$ on $\mathcal
H_{k}$ is the unique anti-linear automorphism such that
$\overline{q} =q^{-1}$ and $\overline{H_\sigma} =H_{\sigma^{-1}}^{-1}$,
for all $\sigma \in \mf{S}_{k}$. Set
\begin{align}\label{formula:S0}
S_0:= \sum_{\sigma \in \mf{S}_{k}}
q^{\ell(w_0)-\ell(\sigma)} H_\sigma,
\end{align}
where $w_0$ denotes the longest element in $\mf{S}_k$. It is
well known, see, e.g., \cite[Proposition~2.9]{So}, that
\begin{equation}  \label{eq:S0inv}
\ov{S_0}=S_0.
\end{equation}
Furthermore, for all $\sigma \in \mf{S}_{k}$, we have
\begin{align}\label{eq:S0H}
S_0 H_\sigma = q^{-\ell(\sigma)} S_0 = H_\sigma S_0.
\end{align}

Let $\texttt{V}$ be either $\mathbb V$ or $\mathbb W$ and consider the tensor spaces $\texttt V^{\otimes k}$. In either case, we index the tensor
factors by $\{1,2,\ldots,k\}$. Now, for a function $f: \{1,2,\ldots,k\}\rightarrow\Z$, recall from
\eqref{eq:Mf} that
$M_f = \texttt{v}_{f(1)} \otimes \cdots \otimes
\texttt{v}_{f(k)},$ where $\texttt{v}=v$ for $\texttt{V}=\mathbb
V$ and $\texttt{v}=w$ for $\texttt{V}=\mathbb W$. The
algebra $\mathcal H_{k}$ acts on $\texttt V^{\otimes k}$ on the right by following formulas
\begin{eqnarray} \label{eq:heckeaction}
 M_f H_i = \left\{
 \begin{array}{ll}
 M_{f\cdot s_i}, & \text{if } f \prec f \cdot s_i,  \\
 q^{-1} M_{f}, & \text{if } f = f \cdot s_i, \\
 M_{f \cdot s_i} - (q-q^{-1}) M_f, & \text{if } f \succ f \cdot
 s_i.
 \end{array}
 \right.
\end{eqnarray}

\begin{rem}
Suppose that $f:\{1,2,\ldots,k\}\rightarrow\Z$ with $f(i)>f(i+1)$. Then $f\cdot s_i\prec f$ in the case of $\mathbb V^{\otimes k}$, while $f\prec f\cdot s_i$ in the case of $\mathbb W^{\otimes k}$.
\end{rem}

It is known that there is a quantized version of Schur-Weyl duality (\cite{Jim}), that is, the actions of $U_q(\gl_\infty)$ and $\mathcal H_{k}$ on the tensor
spaces $\mathbb V^{\otimes k}$ (or $\mathbb W^{\otimes k}$) commute with each other.

\begin{ex}\label{ex:sym:can}  Let $\mathbb T^k = \mathbb T^{k|0}$ or $\mathbb T^{0|k}$.
Consider the subspace $\mathbb T^{k}S_0$ of $\mathbb T^{k}$. Because of the commuting actions of $U_q(\gl_\infty)$ and $\mc H_k$ on $\mathbb T^k$, we see that $\mathbb T^{k}S_0$ is a $U_q(\gl_\infty)$-module.  We denote it by $\texttt{S}^k(\texttt V)$. Let $f$ be anti-dominant with stabilizer $W_f:=\{\sigma\in\mf{S}_k|f\cdot\sigma=f\}$ and let $w_0^f$ denote the longest element in $W_f$. {We have $W_f=\prod_{i}\mf S_{m_i}$ and we set $[|W_f|]:=\prod_{i}[m_i]!$. Similar notations apply in the sequel.} Furthermore,
 let $W^f$ denote the set of shortest length representatives of the cosets $W_f\backslash\mf{S}_k$ with longest element $^fw_0$. We compute
\begin{align*}
M_f S_0&=M_f \sum_{\sigma\in W_f}q^{\ell(w_0^f)-\ell(\sigma)}H_\sigma\sum_{\tau\in W^f}q^{\ell(^fw_0)-\ell(\tau)}H_\tau\\
&=M_f \sum_{\sigma\in W_f}q^{\ell(w_0^f)-2\ell(\sigma)}\sum_{\tau\in W^f}q^{\ell(^fw_0)-\ell(\tau)}H_\tau\\
&=[|W_f|]M_f \sum_{\tau\in W^f}q^{\ell(^fw_0)-\ell(\tau)}H_\tau\\
&=[|W_f|]\sum_{\tau\in W^f}q^{\ell(^fw_0)-\ell(\tau)}M_{f\cdot \tau}.
\end{align*}
In the penultimate equality above, we have used the formula
\begin{align*}
\sum_{\sigma\in\mf S_k}q^{\ell(w_0)-2\ell(\sigma)}=[k]!,
\end{align*}
which, in turn, follows from the formula $\sum_{\sigma\in\mf S_k}q^{\ell(\sigma)}=\prod_{i=1}^{k-1}(1+q+\cdots+q^i)$.
Thus,
\begin{align}\label{eq:can:sym}
\widetilde{M}_f:=\frac{1}{[|W_f|]} M_f S_0=\sum_{\tau\in W^f}q^{\ell(^fw_0)-\ell(\tau)}M_{f\cdot\tau}
\end{align}
lies in the $Z[q,q^{-1}]$-span of the $M_f$ and we see that $\{\widetilde{M}_f|f\in\Z^k_-\}$ is a basis for $\texttt{S}^k{\texttt V}$.
\end{ex}

\subsection{Bar involution and canonical bases on $\mathbb T^{m|n}$}\label{sec:bar:inv}

Combining the right actions of the Hecke algebra $\mc{H}_m$ and $\mc{H}_n$ on $\mathbb V^{\otimes m}$ and $\mathbb W^{\otimes n}$, respectively, we obtain a right action of $\mc{H}_{m|n}:=\mc{H}_m\times\mc{H}_n$ on $\mathbb T^{m|n}$.

The space $\mathbb T^{m|n}$ has a bar involution that can be defined via Lusztig's quasi-$\mc R$-matrix as in \cite[Proposition 3.6]{CLW2} (see also \cite[Theorem 2.14]{Br1}). To formulate precise statements on (dual) canonical bases we need to work with a certain topological completion of $\mathbb T^{m|n}$ as in \cite[\S2-d]{Br1} (or \cite[Section 3.2]{CLW2}). This completion $\widehat{\mathbb T}^{m|n}$ allows for certain forms of infinite linear combinations of the monomial basis elements $M_f$ respecting the Bruhat order. As this will not be important for our purpose we shall not define this completion precisely here, and instead refer the reader to loc.~cit.~for the details. A consequence is that the standard monomial basis is a topological basis with respect to this topological completion.

Indeed, the bar involution leaves invariant the $\Z[q,q^{-1}]$-lattice  topologically spanned by the monomial basis $M_f$, $f\in\Z^{m|n}$, which we state below (\cite[Theorem 2.14]{Br1}, \cite[Proposition 3.6]{CLW2}).

\begin{prop}
Let $f\in\Z^{m|n}$. There exists an anti-linear bar map ${}^{\overline{\ }}:\widehat{\mathbb T}^{m|n}\rightarrow \widehat{\mathbb T}^{m|n}$ such that
\begin{align*}
\overline{M_f}=M_f+\sum_{g\prec f}r_{gf}(q) M_g,
\end{align*}
where $r_{gf}(q)\in\Z[q,q^{-1}]$ and the sum is possibly infinite. Furthermore, the bar involution is compatible with the bar involutions of $U_q(\gl_\infty)$ and $\mc{H}$, i.e., for $m\in\widehat{\mathbb T}^{m|n}$, $u\in U_q(\gl_\infty)$ and $H\in\mc{H}_{m|n}$ we have $\overline{umH}=\bar{u}\overline{m}\overline{H}$.
\end{prop}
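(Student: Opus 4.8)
The plan is to build the bar involution on the completed tensor space $\widehat{\mathbb T}^{m|n}$ out of Lusztig's quasi-$\mathcal R$-matrix, following the now-standard strategy of \cite[Theorem~2.14]{Br1} and \cite[Proposition~3.6]{CLW2}. First I would recall that $\mathbb V$ and $\mathbb W$ each carry a bar involution $\psi_{\mathbb V}$, $\psi_{\mathbb W}$ fixing the monomial bases $\{v_a\}$, $\{w_a\}$, and that these are $\bar{u}$-semilinear in the sense $\psi(um)=\bar u\,\psi(m)$ for the single factors. The issue is that the naive tensor product $\psi_{\mathbb V}^{\otimes m}\otimes\psi_{\mathbb W}^{\otimes n}$ is \emph{not} compatible with the coproduct $\Delta$, because $\Delta$ is not cocommutative and $\bar{\phantom{x}}$ does not intertwine $\Delta$ with itself but rather with $\Delta^{\mathrm{op}}$. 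The remedy is to twist by the quasi-$\mathcal R$-matrix $\Theta=\sum_\mu \Theta_\mu$, an element of a completion of $U_q(\mathfrak n^-)\otimes U_q(\mathfrak n^+)$ satisfying $\Theta\,\Delta(u)=\Delta^{\mathrm{op}}(\bar u\text{-twist})\,\Theta$ in the usual sense (Lusztig \cite{Lu}). Concretely I would define, for a pure tensor, $\overline{M_f}:=\Theta^{(m+n)}\cdot\bigl(\psi_{\mathbb V}^{\otimes m}\otimes\psi_{\mathbb W}^{\otimes n}\bigr)(M_f)$, where $\Theta^{(m+n)}$ denotes the appropriate iterated application of $\Theta$ across all $\binom{m+n}{2}$ pairs of tensor factors, in a fixed order; one checks the order-independence using the hexagon-type identities for $\Theta$.

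Next I would verify the three asserted properties. \textbf{Well-definedness on the completion:} each $\Theta_\mu$ with $\mu\neq0$ strictly lowers weight in one factor and raises it in a later factor, so applying $\Theta^{(m+n)}$ to $M_f$ produces $M_f$ plus a (possibly infinite) sum of $M_g$ with $g\prec f$ in the Bruhat order — exactly the triangularity claimed, with coefficients $r_{gf}(q)\in\mathbb Z[q,q^{-1}]$ since the entries of $\Theta$ lie in the $\mathbb Z[q,q^{-1}]$-form and the bar map on each factor preserves the integral lattice. This is where the topological completion $\widehat{\mathbb T}^{m|n}$ of \cite[\S2-d]{Br1} is genuinely needed: the sum $\sum_{g\prec f}r_{gf}(q)M_g$ need not be finite, but it is locally finite with respect to the Bruhat order, hence a legitimate element of $\widehat{\mathbb T}^{m|n}$. \textbf{Anti-linearity:} immediate from anti-linearity of each $\psi$ on the factors and the fact that $\Theta$ has coefficients in $\mathbb Z[q,q^{-1}]$, which the bar map on $\mathbb Q(q)$ fixes termwise. \textbf{Involutivity} (i.e.\ $\overline{\overline{M_f}}=M_f$): this follows from Lusztig's identity $\Theta\cdot\bar\Theta^{\mathrm{op}}=1$ (equivalently $\sum_\mu \Theta_\mu\,\overline{\Theta_\mu}^{\mathrm{flip}}=1$), combined with $\psi^2=\mathrm{id}$ on each factor; unwinding the definition of $\overline{(-)}$ applied twice collapses the double $\Theta$-twist to the identity.

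For compatibility, I would argue in two halves. Compatibility with $U_q(\gl_\infty)$, i.e.\ $\overline{um}=\bar u\,\overline m$ for $u\in U_q(\gl_\infty)$, $m\in\widehat{\mathbb T}^{m|n}$: this is precisely the defining intertwining property of the quasi-$\mathcal R$-matrix — $\Theta$ was built so that conjugating $\Delta^{(m+n)}(u)$ past it converts it to $\overline{\Delta^{(m+n)}(u)}$ acting after the factorwise $\psi$'s — so the verification is a direct computation using $\Theta\Delta(u)=\overline{\Delta(u)}\,\Theta$ iterated across the tensor string (this is exactly how \cite[Proposition~3.6]{CLW2} proceeds, and I would cite it). Compatibility with the right Hecke action of $\mathcal H_{m|n}=\mathcal H_m\times\mathcal H_n$, i.e.\ $\overline{mH}=\overline m\,\overline H$: here I would use that the Hecke generators act on \emph{adjacent} tensor factors within $\mathbb V^{\otimes m}$ or within $\mathbb W^{\otimes n}$ by the formulas \eqref{eq:heckeaction}, that $\bar{\phantom{x}}$ on $\mathcal H_k$ sends $H_\sigma$ to $H_{\sigma^{-1}}^{-1}$, and that $\Theta$-conjugation is compatible with the $R$-matrix braiding that implements $\mathcal H$; the point is a local check on two adjacent factors, reducing to the rank-one computation in $\mathbb V\otimes\mathbb V$ (and in $\mathbb W\otimes\mathbb W$) of how $\bar{\phantom{x}}$, $\Theta$, and $H_i$ interact, which is a finite explicit matrix identity.

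The main obstacle is the second half of the last paragraph — compatibility with the Hecke action — together with the bookkeeping needed to make the iterated quasi-$\mathcal R$-matrix $\Theta^{(m+n)}$ unambiguous and to track its triangularity across a long tensor string with mixed $\mathbb V$/$\mathbb W$ factors (the two factor types have oppositely oriented Bruhat orders, cf.\ the Remark after \eqref{eq:heckeaction}, so one must be careful that "lowering weight" in a $\mathbb W$-factor still pushes $g$ downward in the combined Bruhat order $\preceq$ on $\mathbb Z^{m|n}$). Everything else is either a direct quotation of Lusztig's quasi-$\mathcal R$-matrix formalism or a routine semilinearity check; since \cite[Theorem~2.14]{Br1} and \cite[Proposition~3.6]{CLW2} establish essentially this statement (the only novelty here being that we phrase it uniformly for the mixed space $\mathbb T^{m|n}$ and record the extra Hecke-compatibility), I would in the actual write-up keep this short and cite those references for the bulk of the verification.
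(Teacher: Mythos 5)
Your proposal takes essentially the same approach as the paper: the paper does not actually give a proof here but simply records the statement with citations to \cite[Proposition 3.6]{CLW2} and \cite[Theorem 2.14]{Br1}, where the bar involution on $\widehat{\mathbb T}^{m|n}$ is constructed exactly as you describe, by iterating Lusztig's quasi-$\mathcal R$-matrix across the tensor string and passing to the Bruhat-order completion. Your sketch correctly identifies all the points (triangularity with $\Z[q,q^{-1}]$-coefficients, involutivity via $\Theta\bar\Theta=1$, compatibility with the coproduct and with the local Hecke action on adjacent like factors) that those references verify, and your closing remark that one should keep it short and cite them is precisely what the paper does.
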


The Bruhat order defined in Section \ref{super:Bruhat} satisfies the so-called finite interval property (see, e.g., \cite[Lemma 2.4]{CLW2}) so that, by \cite[Lemma 24.2.1]{Lu}, we have the following (see, e.g., \cite{Br1,CLW2}).

\begin{prop}\label{prop:ex:can1}
The $\bbQ(q)$-vector space $\widehat{\mathbb T}^{m|n}$ has unique
bar-invariant topological bases
\begin{align*}
\{T_f|f\in\Z^{m|n}\}\text{ and }\{L_f|f\in\Z^{m|n}\}
\end{align*}
such that
\begin{align*}
T_f=M_f+\sum_{g\prec f}t_{gf}(q)
M_g,
 \qquad
L_f=M_f+\sum_{g\prec f}\ell_{gf}(q)
M_g,
\end{align*}
with $t_{gf}(q)\in q\Z[q]$, and $\ell_{gf}(q)\in
q^{-1}\Z[q^{-1}]$, for $g\prec f$. (We will also write
$t_{ff}(q)=\ell_{ff}(q)=1$, $t_{gf}=\ell_{gf}=0$ for $g\npreceq f$.)
\end{prop}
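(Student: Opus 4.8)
The plan is to deduce the existence and uniqueness of the two bases directly from Lusztig's general Lemma \cite[Lemma 24.2.1]{Lu}, which applies to a module with a distinguished topological basis, an anti-linear involution fixing that lattice up to strictly-lower-order terms, and a partially ordered index set satisfying the finite interval property. First I would assemble the three ingredients. The partial order is the Bruhat order $\preceq$ on $\Z^{m|n}$ introduced in Section \ref{super:Bruhat}, and I would invoke \cite[Lemma 2.4]{CLW2} to note that every interval $\{g \mid g \preceq f\}$ is finite; this is the finite interval property. The distinguished topological basis is the standard monomial basis $\{M_f \mid f \in \Z^{m|n}\}$ of $\widehat{\mathbb T}^{m|n}$, which is a topological basis by construction of the completion (as recalled in Section \ref{sec:bar:inv}). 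The involution is the bar map ${}^{\overline{\ }}$ from the preceding Proposition, which satisfies $\overline{M_f} = M_f + \sum_{g \prec f} r_{gf}(q) M_g$ with $r_{gf}(q) \in \Z[q,q^{-1}]$, so it fixes the $\Z[q,q^{-1}]$-lattice $\Z[q,q^{-1}]\langle M_f \rangle$ and is ``unitriangular'' with respect to $\preceq$.

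Next I would verify that the bar map is an involution, i.e. $\overline{\overline{M_f}} = M_f$; this is immediate from anti-linearity combined with the fact that the $\overline{M_g}$ for $g \preceq f$ span the same finite-dimensional subspace as the $M_g$ for $g \preceq f$ (by the finite interval property the triangular system is genuinely finite), so the composite of two anti-linear unitriangular maps on this finite-dimensional space is linear, unitriangular, and equals the identity on the generators — a standard argument. With the involution established as a genuine bar operator in the sense of Lusztig's setup, Lemma 24.2.1 of \cite{Lu} applies verbatim and yields, for each choice of sign, a unique bar-invariant element
\begin{align*}
T_f = M_f + \sum_{g \prec f} t_{gf}(q) M_g, \qquad t_{gf}(q) \in q\Z[q],
\end{align*}
and similarly a unique
\begin{align*}
L_f = M_f + \sum_{g \prec f} \ell_{gf}(q) M_g, \qquad \ell_{gf}(q) \in q^{-1}\Z[q^{-1}].
\end{align*}
(The two cases are the two standard normalizations $q\Z[q]$ versus $q^{-1}\Z[q^{-1}]$ of the off-diagonal coefficients; Lusztig's lemma is stated for one of them and the other follows by applying it to the same data with $q$ and $q^{-1}$ interchanged, or equally well by running the same recursive solvability argument.) That $\{T_f\}$ and $\{L_f\}$ are topological bases of $\widehat{\mathbb T}^{m|n}$ follows since the change-of-basis matrix to $\{M_f\}$ is unitriangular with respect to a partial order with finite lower intervals, hence invertible over $\Z[q,q^{-1}]$ after completion. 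Finally I would record the conventions $t_{ff} = \ell_{ff} = 1$ and $t_{gf} = \ell_{gf} = 0$ for $g \npreceq f$, which are just bookkeeping.

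The only genuine point requiring care — and the step I expect to be the main obstacle — is making sure the recursion underlying Lemma 24.2.1 actually terminates and produces coefficients in $\Z[q,q^{-1}]$ (indeed in $q\Z[q]$ or $q^{-1}\Z[q^{-1}]$) rather than formal infinite series with unbounded denominators. This is exactly where the finite interval property of the Bruhat order $\preceq$ is essential: solving for $t_{gf}$ in decreasing order of $g$ within the finite interval $[*, f]$ involves only finitely many previously-determined coefficients at each stage, and the classical argument (the ``$p + \bar p$'' splitting: if $\overline{M_f} - M_f = \sum_{g \prec f} r_{gf} M_g$ then one inductively corrects by the unique $p_{gf} \in q\Z[q]$ with $p_{gf} - \overline{p_{gf}} = r_{gf} + (\text{lower corrections})$) goes through unchanged. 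Once one is comfortable that the situation is literally an instance of Lusztig's lemma, the proposition is formal; the work is entirely in checking the hypotheses, which the two cited results \cite[Lemma 2.4]{CLW2} and the preceding Proposition supply.
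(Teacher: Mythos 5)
Your overall route is exactly the paper's: establish the finite interval property, note the unitriangular anti-linear bar involution on $\widehat{\mathbb T}^{m|n}$, and invoke \cite[Lemma 24.2.1]{Lu}. The paper presents the proposition as an immediate consequence of those two cited facts with no further argument, so in spirit you and the paper agree. However, two of your supplementary verifications contain errors that would need to be repaired.

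First, the finite interval property is the statement that each closed interval $[g,f]=\{h\mid g\preceq h\preceq f\}$ is finite; it is \emph{not} the statement that the lower set $\{g\mid g\preceq f\}$ is finite. The latter is false in general for the Bruhat order on $\Z^{m|n}$, and the need for the topological completion $\widehat{\mathbb T}^{m|n}$ stems precisely from the fact that an atypical $f$ can admit an infinite descending chain under $\preceq$, so that $\overline{M_f}$ and the (dual) canonical basis elements can be genuinely infinite sums. Second, and more seriously, your claim that $\overline{\overline{M_f}}=M_f$ ``is immediate from anti-linearity combined with the fact that the $\overline{M_g}$ for $g\preceq f$ span the same finite-dimensional subspace as the $M_g$'' fails on two counts: that space is not finite-dimensional (as above), and even on a finite-dimensional space a unitriangular anti-linear map is not automatically an involution. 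For instance, on a two-element poset $2\prec 1$ set $\sigma(M_2)=M_2$ and $\sigma(M_1)=M_1+M_2$; then $\sigma$ is anti-linear and unitriangular, but $\sigma^2(M_1)=M_1+2M_2\neq M_1$. The involution property is a substantive input to Lusztig's lemma; it is part of what the construction via the quasi-$\mathcal{R}$-matrix (the identity $\Theta\overline{\Theta}=1$) delivers in \cite{Br1,CLW2}, and it should simply be cited from there, just as the bar map itself is.
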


The bases $\{T_f|f\in\Z^{m|n}\}$ and $\{L_f|f\in\Z^{m|n}\}$ are referred to as {\em canonical basis} and {\em dual canonical basis} for $\mathbb T^{m|n}$ or rather $\widehat{\mathbb T}^{m|n}$,
respectively, and $t_{gf}(q)$ and $\ell_{gf}(q)$
are called {\em Brundan-Kazhdan-Lusztig (BKL) polynomials}. It can be shown that the canonical basis elements indeed lie in $\mathbb{T}^{m|n}$.

\begin{rem}\label{rem:can:sk}
Let $f\in\Z_-^{k|0}$ (or $f\in\Z_-^{0|k}$) with stabilizer $W_f$. Proposition \ref{prop:ex:can1} implies that the element $\widetilde{M}_f$ in \eqref{eq:can:sym} is the canonical basis element $T_{f\cdot w_0}$ in $\mathbb T^{k|0}$ (or $\mathbb T^{0|k}$). Since the $\Z[q,q^{-1}]$-lattice spanned by the standard monomial basis elements is invariant under $U_q(\mf{sl}_\infty)_\Z$, it follows that the $\Z[q,q^{-1}]$-lattice spanned by the canonical basis elements is also invariant under $U_q(\mf{sl}_\infty)_\Z$. Since $\texttt{S}^k(\texttt V)$ is a $U_q(\gl_\infty)$-module, it follows that the $\Z[q,q^{-1}]$-span of $\{\widetilde{M}_f|f\text{ anti-dominant}\}$ is a $\Z[q,q^{-1}]$-lattice inside $\texttt{S}^k(\texttt V)$ invariant under $U_q(\mf{sl}_\infty)_\Z$.
\end{rem}

\subsection{$q$-symmetric tensors}\label{sec:qsymm}
 Let $W_\zeta$ be a (parabolic) subgroup of $\mf{S}_m\times\mf{S}_n$ generated by simple reflections.  The notation $W_\zeta$ match that introduced in Section \ref{sec:lambdanu} for the case $\g=\mf{gl}(m|n)$, and it will be used for our corresponding categorification picture.

Recall the Hecke algebra $\mc{H}_{m|n}$ from Section \ref{sec:bar:inv} and let $\mc H_\zeta$ be the Hecke subalgebra associated with $W_\zeta$. Let
\begin{align}\label{eq:q:symm}
S_\zeta:=\sum_{\sigma\in {W_\zeta}} q^{\ell(w_0^\zeta)-\ell(\sigma)}H_\sigma,
\end{align}
where $w_0^\zeta$ is the longest element in $W_\zeta$. The bar involution on $\mc{H}_{m|n}$ restricts to the bar involution on $\mc{H}_\zeta$, and hence, regarding $S_\zeta$ as an element in $\mc{H}_{m|n}$, we see from \eqref{eq:S0inv} and \eqref{eq:S0H} that
\begin{align*}
\ov{S_\zeta}=S_\zeta\quad \text{and}\quad S_\zeta H_\sigma=q^{-\ell(\sigma)}S_\zeta=H_\sigma S_\zeta ,\ \sigma\in W_\zeta.
\end{align*}

We consider the following $U_q(\gl_\infty)$-submodule of $\mathbb T^{m|n}$:
\begin{align}\label{eq:Fock:z}
\mathbb T^{m|n}_\zeta:=\mathbb T^{m|n} S_\zeta.
\end{align}
Let $\Z^{m|n}_{\zeta-}$ denote the set of elements in $\Z^{m|n}$ that are anti-dominant with respect to $W_\zeta$. For $f\in\Z^{m|n}_{\zeta-}$, we let $W_f:=\{\sigma\in W_\zeta|f\cdot\sigma=f\}$ be the stabilizer subgroup inside $W_\zeta$ and $W^f$ the set of shortest length representatives of the coset $W_f\backslash W_\zeta$.

Define two sets of monomial bases for $\mathbb T^{m|n}_\zeta$:
\begin{align}\label{eq:Nf}
\widetilde{N}_f:=M_f S_\zeta;\qquad \widetilde{M}_f:=\frac{1}{[|W_f|]}M_f S_\zeta,\quad f\in\Z^{m|n}_{\zeta-}.
\end{align}
Both bases lie in the $\Z[q,q^{-1}]$-span of $\{M_g|g\in\Z^{m|n}\}$ and are invariant under $U_q(\mf{sl}_\infty)_\Z$.
For $f\in\Z^{m|n}_{\zeta-}$, we also define
\begin{align}\label{def:Nf}
N_f:=\frac{[|W_\zeta|]}{[|W_f|]}\widetilde{N}_f.
\end{align}

Suppose that $f\in\Z^{m|n}$, but not necessarily in $\Z^{m|n}_{\zeta-}$. Let $\tau\in W_\zeta$ be of shortest length such that $f\cdot\tau\in\Z^{m|n}_{\zeta-}$. Then, by \eqref{eq:S0H}, we have
\begin{align}\label{eq:MfS}
M_f S_\zeta= M_{f\cdot\tau} H_{\tau^{-1}} S_\zeta = q^{-\ell(\tau)} M_{f\cdot\tau}S_\zeta = q^{-\ell(\tau)} \widetilde{N}_{f\cdot\tau}.
\end{align}
Similarly, we have the identity $\frac{1}{[|W_f|]}M_f S_\zeta=q^{-\ell(\tau)}\widetilde{M}_{f\cdot\tau}$.

Since the $\Z[q,q^{-1}]$-lattices spanned by $\{\widetilde{N}_f|f\in\Z^{m|n}_{\zeta-}\}$ and $\{\widetilde{M}_f|f\in\Z^{m|n}_{\zeta-}\}$ inside $\mathbb T^{m|n}_\zeta$ are invariant under $U_q(\mf{sl}_\infty)_\Z$, the bar involution on $\widehat{\mathbb T}^{m|n}$ restricts to a bar involution on the similar topological completion $\widehat{\mathbb T}^{m|n}_\zeta$ of $\mathbb T^{m|n}$. We have the following (c.f.~\cite[Section 5.2]{CLW2}).

\begin{lem} The bar involution on $\widehat{\mathbb T}^{m|n}$ restricts to a bar involution on $\widehat{\mathbb T}^{m|n}_\zeta$ such that,
for $f\in\Z^{m|n}_{\zeta-}$, we have:
\begin{itemize}
\item[(1)] $\overline{\widetilde{N}}_f\in \widetilde{N}_f+\sum_{g\prec f}\Z[q,q^{-1}] \widetilde{N}_g$,
\item[(2)] $\overline{\widetilde{M}}_f\in \widetilde{M}_f+\sum_{g\prec f}\Z[q,q^{-1}] \widetilde{M}_g$,
\item[(3)] $\overline{N}_f\in {N}_f+\sum_{g\prec f}\Z[q,q^{-1}] {N}_g.$
\end{itemize}
\end{lem}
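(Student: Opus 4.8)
The plan is to deduce everything from the already-established bar involution on $\widehat{\mathbb T}^{m|n}$ (the Proposition just before the statement) together with the compatibility identity $\overline{umH}=\bar u\,\overline m\,\overline H$ and the relations $\ov{S_\zeta}=S_\zeta$, $S_\zeta H_\sigma=q^{-\ell(\sigma)}S_\zeta=H_\sigma S_\zeta$. First I would observe that since $\ov{S_\zeta}=S_\zeta$, the compatibility of the bar map with the right $\mc H_{m|n}$-action shows that for any $x\in\widehat{\mathbb T}^{m|n}$ one has $\overline{xS_\zeta}=\overline{x}\,S_\zeta$; in particular the bar map preserves $\widehat{\mathbb T}^{m|n}_\zeta=\widehat{\mathbb T}^{m|n}S_\zeta$, so it restricts to an anti-linear involution on $\widehat{\mathbb T}^{m|n}_\zeta$. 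This gives the first sentence of the lemma for free.

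Next I would prove (1). Fix $f\in\Z^{m|n}_{\zeta-}$. Then $\widetilde N_f=M_fS_\zeta$, so $\overline{\widetilde N}_f=\overline{M_f}\,S_\zeta=\big(M_f+\sum_{g\prec f}r_{gf}(q)M_g\big)S_\zeta=\widetilde N_f+\sum_{g\prec f}r_{gf}(q)\,M_gS_\zeta$. The point is now to rewrite each $M_gS_\zeta$ in terms of the $\widetilde N_h$ with $h\in\Z^{m|n}_{\zeta-}$. By \eqref{eq:MfS}, if $\tau\in W_\zeta$ is the shortest element with $g\cdot\tau\in\Z^{m|n}_{\zeta-}$ then $M_gS_\zeta=q^{-\ell(\tau)}\widetilde N_{g\cdot\tau}$. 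So I need the combinatorial fact that $g\prec f$ and $f\in\Z^{m|n}_{\zeta-}$ force $g\cdot\tau\preceq f$; I expect this to follow from the monotonicity of the $\mc H_{m|n}$-action on the Bruhat order (the action formulas \eqref{eq:heckeaction} together with the Remark on $\mathbb V$ versus $\mathbb W$), namely that right multiplication by $H_\sigma$, $\sigma\in W_\zeta$, only moves $M_g$ to combinations of $M_h$ with $h$ in the $W_\zeta$-Bruhat-interval around $g$, and that applying $S_\zeta$ lands us at the unique $\zeta$-anti-dominant representative $g\cdot\tau$, which is $\preceq$ the $\zeta$-anti-dominant $f$ because $g\preceq f$. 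Collecting coefficients (finitely many $g$ map to each fixed $h$, or rather the sums are finite in the topological sense thanks to the finite interval property), one obtains $\overline{\widetilde N}_f\in\widetilde N_f+\sum_{h\prec f}\Z[q,q^{-1}]\widetilde N_h$, which is (1). Parts (2) and (3) then follow immediately: $\widetilde M_f=\tfrac1{[|W_f|]}\widetilde N_f$ with $[|W_f|]\in\Z[q,q^{-1}]$ a unit-free scalar common to $f$ and — no, not common; but since $\widetilde M_f$ and $\widetilde M_h$ are scalar multiples of $\widetilde N_f$, $\widetilde N_h$ by elements of $\Z[q,q^{-1}]$ (more precisely $\widetilde M_f=\widetilde N_f/[|W_f|]$ and one checks $[|W_h|]$ divides the relevant coefficients using that $\overline{\widetilde M}_f=\overline{M_f}S_\zeta/[|W_f|]$ and the computation in Example \ref{ex:sym:can}), the triangularity passes over; likewise $N_f=\tfrac{[|W_\zeta|]}{[|W_f|]}\widetilde N_f$.

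Concretely for (2) and (3) I would avoid divisibility headaches by rerunning the (1)-computation directly: $\overline{\widetilde M}_f=\tfrac1{[|W_f|]}\overline{M_f}S_\zeta=\widetilde M_f+\sum_{g\prec f}r_{gf}(q)\tfrac1{[|W_f|]}M_gS_\zeta$ and then use the identity stated right after \eqref{eq:MfS}, namely $\tfrac1{[|W_f|]}M_gS_\zeta=q^{-\ell(\tau)}\widetilde M_{g\cdot\tau}$ — wait, that identity as written has $[|W_f|]$ where it should read with the stabilizer of $g$; I would instead note $[|W_g|]=[|W_{g\cdot\tau}|]$ and use $\tfrac1{[|W_g|]}M_gS_\zeta=q^{-\ell(\tau)}\widetilde M_{g\cdot\tau}$, so that $r_{gf}(q)\tfrac1{[|W_f|]}M_gS_\zeta=r_{gf}(q)\tfrac{[|W_g|]}{[|W_f|]}q^{-\ell(\tau)}\widetilde M_{g\cdot\tau}$; here I need the additional observation that $\tfrac{[|W_g|]}{[|W_f|]}\in\Z[q,q^{-1}]$ whenever $g\prec f$ — this holds because $g\prec f$ with $f$ $\zeta$-anti-dominant forces $W_g\supseteq W_f$ (a smaller element in Bruhat order has a larger $\zeta$-stabilizer), so $[|W_f|]$ divides $[|W_g|]$ as products of $q$-factorials. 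That gives (2), and (3) is the same with the extra global scalar $[|W_\zeta|]$. The main obstacle, then, is not any deep structural fact but pinning down precisely these two Bruhat-combinatorial statements — that $g\preceq f$ implies $g\cdot\tau\preceq f$ for the shortest $\zeta$-anti-dominantizer $\tau$, and that $g\preceq f$ (with $f$ $\zeta$-anti-dominant) implies $W_f\subseteq W_g$ — and making sure the possibly-infinite sums are handled within the topological completion via the finite interval property; everything else is bookkeeping with $\ov{S_\zeta}=S_\zeta$ and \eqref{eq:S0H}.
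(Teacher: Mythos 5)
Your treatment of the preamble and of part~(1) is essentially the paper's: apply the bar map to $\widetilde N_f = M_fS_\zeta$, use $\overline{S_\zeta}=S_\zeta$ together with compatibility of the bar map with the right $\mathcal H_{m|n}$-action, expand $\overline{M_f}$ using the earlier Proposition, and fold each $M_gS_\zeta$ back to $\widetilde N_{g\cdot\tau}$ via \eqref{eq:MfS}. The Bruhat fact you flag --- that $g\prec f$ forces $g\cdot\tau\preceq f$ --- is indeed what is used; it holds because, by \eqref{eq:heckeaction} and the Remark contrasting $\mathbb V$ with $\mathbb W$, the $W_\zeta$-anti-dominant representative is the unique $\preceq$-minimal element in its $W_\zeta$-orbit, so $g\cdot\tau\preceq g\prec f$. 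Part~(3) is deduced from~(2) the same way in both treatments, so that is fine too.

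The genuine gap is in your argument for~(2). You reduce it to the claim that $g\prec f$ with both in $\Z^{m|n}_{\zeta-}$ forces $W_f\subseteq W_g$, hence $[|W_f|]\mid[|W_g|]$, so that the coefficient $r_{gf}(q)\,[|W_g|]/[|W_f|]\,q^{-\ell(\tau)}$ is a Laurent polynomial term by term. That containment is false. Take $m=n=2$, $W_\zeta=\mf S_2\times\mf S_2$, $g=(0,1,0,0)$ and $f=(1,1,1,0)$. Both are $\zeta$-anti-dominant, $\text{wt}(g)=\text{wt}(f)=\varepsilon_1-\varepsilon_0$, and one checks $\text{wt}^j(g)\le\text{wt}^j(f)$ for $j=1,2,3,4$ (the differences at $j=2,3$ equal $\varepsilon_0-\varepsilon_1$), so $g\prec f$. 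Yet $W_g=\{e\}\times\mf S_2$ and $W_f=\mf S_2\times\{e\}$, so $W_g\not\supseteq W_f$; neither stabilizer contains the other. So there is no monotonicity of $\zeta$-stabilizers along $\preceq$, and the term-by-term divisibility you invoke is not justified. (Even where the ratio happens to be a Laurent polynomial, your reason for it is wrong; in general one would at best need to collect contributions from the whole $W_\zeta$-orbit of each $g$, which your argument does not do.) The paper proves~(2) by an entirely different, structural route: it observes that $\widetilde M_f$ is a decomposable tensor whose factors are standard monomial vectors in $\texttt V^{\otimes p}$ or canonical basis elements in $\texttt S^q(\texttt V)$, each of which spans a $\Z[q,q^{-1}]$-lattice stable under $U_q(\mf{sl}_\infty)_\Z$ (Remark~\ref{rem:can:sk}), and then appeals to the integrality of the quasi-$\mc R$-matrix on tensor products of based modules to conclude that $\overline{\widetilde M}_f$ stays in the $\Z[q,q^{-1}]$-span of the $\widetilde M_g$. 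This circumvents the divisibility question altogether, and you would need to adopt something of this kind to repair~(2).
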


\begin{proof}
For $f\in\Z^{m|n}_{\zeta-}$, suppose that
$\overline{M}_f = M_f+\sum_{g\prec f}r_{gf}M_g$ with $r_{gf}\in\Z[q,q^{-1}]$.
Then, by \eqref{eq:MfS}, we have
\begin{align*}
\overline{\widetilde{N}}_f&=\overline{M_f S_\zeta}=\overline{M}_f S_\zeta = M_fS_\zeta+\sum_{g\prec f}r_{gf}M_gS_\zeta
=\widetilde{N}_f + \sum_{g\prec f; g\in\Z^{m|n}_{\zeta -}}r'_{gf} \widetilde{N}_g.
\end{align*}
Since $r_{gf}\in\Z[q,q^{-1}]$, we have $r'_{fg}\in\Z[q,q^{-1}]$ by \eqref{eq:MfS}. This gives (1).

To prove (2), we note that $\widetilde{M}_f$, for $f\in\Z^{m|n}_{\zeta-}$, is a decomposable tensor with tensor factors consisting of either standard monomial basis elements in $\texttt{V}^{\otimes p}$ or canonical basis elements in $\texttt{S}^q(\texttt V)$, where again $\texttt{V}$ denotes either $\mathbb V$ or $\mathbb W$. Now, by Remark \ref{rem:can:sk}, the $\Z[q,q^{-1}]$-lattice spanned by either such factors is invariant under $U_q(\mf{sl}_\infty)_\Z$. From the explicit form of the quasi-R-matrix in \cite[Section 3.1]{CLW2} used to construct the bar involution on tensor products of ``based'' modules, it follows that $\overline{\widetilde{M}}_f$ lies in the $\Z[q,q^{-1}]$-lattice of the $\widetilde{M}_g$'s.

Finally, (3) follows now by applying the same argument to prove (1) using (2).
\end{proof}

We can now invoke \cite[Lemma 24.2.1]{Lu} again and obtain the following.

\begin{prop}
The $\bbQ(q)$-vector space $\widehat{\mathbb T}^{m|n}_\zeta$ has unique
bar-invariant topological bases of the form
\begin{align*}
\{\mc{ T}'_f|f\in\Z^{m|n}_{\zeta-}\},\quad\{\mc{ T}_f|f\in\Z^{m|n}_{\zeta-}\}\quad\text{and}\quad\{\mc{ L}_f|f\in\Z^{m|n}_{\zeta-}\}
\end{align*}
such that
\begin{align*}
\mc T'_f=\widetilde{M}_f+\sum_{g\prec f}\texttt{t}'_{gf}(q)\widetilde{M}_g,
\quad
\mc T_f={N}_f+\sum_{g\prec f}\texttt{t}_{gf}(q){N}_g,
 \quad
\mc L_f=\widetilde{N}_f+\sum_{g\prec f}\texttt{l}_{gf}(q)\widetilde{N}_g,
\end{align*}
with $\texttt{t}'_{gf}(q),\texttt{t}_{gf}\in q\Z[q]$, and $\texttt{l}_{gf}(q)\in
q^{-1}\Z[q^{-1}]$, for $g\prec f$ in $\Z^{m|n}_{\zeta-}$.
\end{prop}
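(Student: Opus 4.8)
The plan is to obtain all three families by three applications of Lusztig's standard lemma on bar-invariant bases, \cite[Lemma 24.2.1]{Lu}, one for each of the $\Z[q,q^{-1}]$-lattices
\[
\mc L^{(1)}:=\bigoplus_{f\in\Z^{m|n}_{\zeta-}}\Z[q,q^{-1}]\,\widetilde N_f,\quad
\mc L^{(2)}:=\bigoplus_{f\in\Z^{m|n}_{\zeta-}}\Z[q,q^{-1}]\,N_f,\quad
\mc L^{(3)}:=\bigoplus_{f\in\Z^{m|n}_{\zeta-}}\Z[q,q^{-1}]\,\widetilde M_f
\]
sitting inside $\widehat{\mathbb T}^{m|n}_\zeta$. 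To invoke that lemma one needs: (a) the index poset $(\Z^{m|n}_{\zeta-},\preceq)$, with $\preceq$ the restriction of the Bruhat order of Section~\ref{super:Bruhat}, has the finite interval property; (b) the bar involution preserves each of $\mc L^{(1)},\mc L^{(2)},\mc L^{(3)}$; and (c) on each lattice the bar involution is unitriangular, i.e. $\overline{B_f}\in B_f+\sum_{g\prec f}\Z[q,q^{-1}]B_g$ for the relevant basis. Points (b) and (c) are precisely parts (1), (2) and (3) of the preceding Lemma, for $\{\widetilde N_f\}$, $\{\widetilde M_f\}$ and $\{N_f\}$ respectively; that each of these is genuinely a $\bbQ(q)$-basis of $\mathbb T^{m|n}_\zeta$ was recorded in \eqref{eq:Nf}, \eqref{def:Nf}.

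\textbf{Checking (a) and the support of the triangular tails.} For the finite interval property I would observe that for $f,h\in\Z^{m|n}_{\zeta-}$ the interval $\{g\in\Z^{m|n}_{\zeta-}\mid h\preceq g\preceq f\}$ is contained in $\{g\in\Z^{m|n}\mid h\preceq g\preceq f\}$, which is finite by \cite[Lemma~2.4]{CLW2}; hence it is finite. One must also make sure the tails in the preceding Lemma are supported on indices \emph{in} $\Z^{m|n}_{\zeta-}$ that are strictly below $f$: this is built into that Lemma's proof through \eqref{eq:MfS}, and it rests on the observation — read off from the Hecke action \eqref{eq:heckeaction} and the remark following it — that for any $g\in\Z^{m|n}$, if $\tau\in W_\zeta$ is of shortest length with $g\cdot\tau\in\Z^{m|n}_{\zeta-}$ then $g\cdot\tau\preceq g$, since sorting the $W_\zeta$-blocks of $g$ towards anti-dominance only ever swaps adjacent out-of-order entries and each such swap is non-increasing for $\preceq$ on both $\mathbb V$- and $\mathbb W$-factors. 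Combined with $g\prec f$ and transitivity this gives $g\cdot\tau\prec f$, so the leading term $\widetilde N_{g\cdot\tau}$ (resp. $\widetilde M_{g\cdot\tau}$, $N_{g\cdot\tau}$) of $M_gS_\zeta$ is indexed by an element of $\Z^{m|n}_{\zeta-}$ below $f$, as required.

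\textbf{Conclusion.} With (a)--(c) in hand, \cite[Lemma~24.2.1]{Lu} yields, for each of $\mc L^{(1)},\mc L^{(2)},\mc L^{(3)}$, a unique bar-invariant family of the prescribed triangular shape: $\mc L_f\in\widetilde N_f+\sum_{g\prec f}q^{-1}\Z[q^{-1}]\widetilde N_g$, $\mc T_f\in N_f+\sum_{g\prec f}q\Z[q]N_g$ and $\mc T'_f\in\widetilde M_f+\sum_{g\prec f}q\Z[q]\widetilde M_g$; whether one places the tails in $q\Z[q]$ or in $q^{-1}\Z[q^{-1}]$ is simply a matter of which of the two symmetric conclusions of the lemma one reads off. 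Uniqueness inside $\widehat{\mathbb T}^{m|n}_\zeta$, and the fact that each of the three families is again a topological basis, follow as usual: the transition matrix from $\{\mc T'_f\}$, $\{\mc T_f\}$ or $\{\mc L_f\}$ to the corresponding standard topological basis is column-finite with respect to $\preceq$ and unitriangular, hence invertible over the completed coefficient ring.

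\textbf{Main point of care.} I do not anticipate a genuine obstacle; the argument is bookkeeping once the preceding Lemma is available. The one place needing attention is the interaction with the topological completion $\widehat{\mathbb T}^{m|n}_\zeta$: Lusztig's lemma is usually phrased for honestly free modules, so one has to justify its use after completing and check that the sums $\sum_{g\prec f}$ make sense in the topology. This is handled exactly as in \cite[\S2-d]{Br1} and \cite[Section~3.2]{CLW2}, using that $\preceq$ has finite intervals so that only finitely many terms of any bounded "Bruhat distance" from $f$ occur; I would simply point to that standard argument rather than reproduce it.
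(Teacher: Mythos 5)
Your proposal is correct and follows essentially the same route as the paper, which simply invokes \cite[Lemma 24.2.1]{Lu} after the preceding unitriangularity lemma. You spell out more of the bookkeeping — the finite interval property of $(\Z^{m|n}_{\zeta-},\preceq)$, the observation that $g\cdot\tau\preceq g$ so the tails stay supported below $f$ inside $\Z^{m|n}_{\zeta-}$ (a fact the paper's preceding lemma uses implicitly via \eqref{eq:MfS}), and the interaction with the topological completion — but these are exactly the details the paper's one-line proof presupposes, not a different argument.
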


As usual, we adopt the convention
$\texttt{t}'_{ff}(q)=\texttt{t}_{ff}(q)=\texttt{l}_{ff}(q)=1$, $\texttt{t}'_{gf}=\texttt{t}_{gf}=\texttt{l}_{gf}=0$ for $g\npreceq f$, and also refer these polynomials as BKL-polynomials.

\subsection{Canonical Bases on $q$-symmetric tensors}

In this section we shall compare the bases $\{T_f\}$ and $\{L_f\}$ with the bases $\{\mathcal T_f\}$ and $\{\mathcal L_f\}$.

Suppose that $f\in\Z^{m|n}_{\zeta-}$ and $\mc{T}'_f$ is the corresponding canonical basis element in $\widehat{\mathbb T}^{m|n}_\zeta$. We have
\begin{align}\label{can:Mtilde}
\begin{split}
\mc T'_f &= \widetilde{M}_f+\sum_{g\prec f}\texttt{t}'_{gf}(q)\widetilde{M}_g\\
&=\sum_{x\in W^f}q^{\ell(^fw_0)-\ell(x)}M_{fx} + \sum_{g\prec f}\sum_{y\in W^g} \texttt{t}'_{gf}(q)q^{\ell(^gw_0)-\ell(y)}M_{gx}.
\end{split}
\end{align}
Since $\texttt{t}'_{gf}(q)\in q\Z[q]$, it follows that all coefficients lie in $q\Z[q]$, except for the coefficient of $M_{f ^fw_0}=M_{fw^\zeta_0}$. Since it is bar-invariant, this implies, by the uniqueness property of canonical basis, that $\mc T'_f = T_{fw_0^\zeta}$, i.e., it is the canonical basis element of $\widehat{\mathbb T}^{m|n}$ corresponding to $fw_0^\zeta$.

Now, if we write
\begin{align*}
T_{fw^\zeta_0}=M_{fw_0^\zeta} + \sum_{g' \prec fw_0^\zeta} t_{g'fw_0^\zeta}(q) M_{g'},
\end{align*}
then, comparing with \eqref{can:Mtilde}, we conclude that
\begin{align*}
\texttt{t}'_{gf} = t_{g\cdot w_0^\zeta,f\cdot w_0^\zeta},\qquad f,g\in\Z^{m|n}_{\zeta-}.
\end{align*}

We have the following.
\begin{prop}\label{pi:can:basis}
Let $f\in\Z^{m|n}_{\zeta-}$. Recalling the definition \eqref{def:Nf}, we have
$$T_{fw_0^\zeta}S_\zeta=\mc T'_f S_\zeta=N_f+\sum_{g\prec f}t_{g\cdot w_0^\zeta,f\cdot w_0^\zeta}N_g,\quad g\in\Z^{m|n}_{\zeta-}.$$
In particular, $\mc T'_fS_\zeta=\mc T_f$ and we have $\texttt{t}_{gf} = t_{g\cdot w_0^\zeta,f\cdot w_0^\zeta}$, for $f,g\in\Z^{m|n}_{\zeta-}$.
\end{prop}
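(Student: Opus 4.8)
The plan is to apply the $q$-symmetrizer $S_\zeta$ to the canonical-basis expansion $\mc T'_f=\widetilde{M}_f+\sum_{g\prec f}\texttt{t}'_{gf}(q)\widetilde{M}_g$ (the sum running over $g\in\Z^{m|n}_{\zeta-}$) and to evaluate each term, exploiting that $S_\zeta$ acts on $\widehat{\mathbb T}^{m|n}$, up to a scalar, as an idempotent. The first step is to record the identity $S_\zeta^2=[|W_\zeta|]\,S_\zeta$ in $\mc H_{m|n}$. Starting from $S_\zeta=\sum_{\sigma\in W_\zeta}q^{\ell(w_0^\zeta)-\ell(\sigma)}H_\sigma$ and using $H_\sigma S_\zeta=q^{-\ell(\sigma)}S_\zeta$ for $\sigma\in W_\zeta$ (Section~\ref{sec:qsymm}), one obtains $S_\zeta^2=\big(\sum_{\sigma\in W_\zeta}q^{\ell(w_0^\zeta)-2\ell(\sigma)}\big)S_\zeta$. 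Since $W_\zeta=\prod_i\mf S_{m_i}$, with both $\ell$ and $\ell(w_0^\zeta)=\sum_i\ell(w_0^{(i)})$ additive over the parabolic factors, the scalar factors as $\prod_i\big(\sum_{\sigma_i\in\mf S_{m_i}}q^{\ell(w_0^{(i)})-2\ell(\sigma_i)}\big)=\prod_i[m_i]!=[|W_\zeta|]$, by the Poincar\'e polynomial identity recorded in Example~\ref{ex:sym:can}.

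Granting this, the definitions \eqref{eq:Nf} and \eqref{def:Nf} give $\widetilde{M}_g\,S_\zeta=N_g$ for every $g\in\Z^{m|n}_{\zeta-}$: indeed $\widetilde{M}_g S_\zeta=\tfrac{1}{[|W_g|]}M_g S_\zeta^2=\tfrac{[|W_\zeta|]}{[|W_g|]}M_g S_\zeta=\tfrac{[|W_\zeta|]}{[|W_g|]}\widetilde{N}_g=N_g$. Applying $S_\zeta$ to the expansion of $\mc T'_f$ term by term (the right $\mc H_{m|n}$-action extends continuously to $\widehat{\mathbb T}^{m|n}$, so it distributes over the possibly infinite sum, and every index $g$ lies in $\Z^{m|n}_{\zeta-}$, so $\widetilde{M}_g$ and $N_g$ are defined) then yields $\mc T'_f S_\zeta=N_f+\sum_{g\prec f}\texttt{t}'_{gf}(q)N_g$. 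Since it was established just before the statement that $\mc T'_f=T_{fw_0^\zeta}$ and $\texttt{t}'_{gf}=t_{g\cdot w_0^\zeta,\,f\cdot w_0^\zeta}$, substituting gives the displayed identity $T_{fw_0^\zeta}S_\zeta=\mc T'_f S_\zeta=N_f+\sum_{g\prec f}t_{g\cdot w_0^\zeta,\,f\cdot w_0^\zeta}N_g$.

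For the "in particular" part I would invoke the uniqueness in the Proposition of Section~\ref{sec:qsymm} characterizing $\mc T_f$: the element $\mc T'_f S_\zeta$ is bar-invariant, because $\overline{\mc T'_f}=\mc T'_f$, $\overline{S_\zeta}=S_\zeta$, and the bar involution is compatible with the right $\mc H_{m|n}$-action (Section~\ref{sec:bar:inv}); and by the previous paragraph it equals $N_f+\sum_{g\prec f}\texttt{t}'_{gf}(q)N_g$ with $\texttt{t}'_{gf}(q)\in q\Z[q]$ for $g\prec f$ in $\Z^{m|n}_{\zeta-}$. Hence $\mc T'_f S_\zeta=\mc T_f$, and comparing coefficients in the basis $\{N_g\mid g\in\Z^{m|n}_{\zeta-}\}$ (a basis since it is obtained from $\{\widetilde{N}_g\}$ by rescaling, see \eqref{def:Nf}) gives $\texttt{t}_{gf}=\texttt{t}'_{gf}=t_{g\cdot w_0^\zeta,\,f\cdot w_0^\zeta}$.

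I expect no serious obstacle: once the near-idempotency $S_\zeta^2=[|W_\zeta|]\,S_\zeta$ is in place, the argument is bookkeeping. The points deserving care are that combinatorial identity (immediate from the parabolic factorization of $W_\zeta$ together with Example~\ref{ex:sym:can}) and the discipline of keeping every summation index inside $\Z^{m|n}_{\zeta-}$, so that the symbols $\widetilde{M}_g$, $N_g$ and the triangularity with respect to $\prec$ are all meaningful.
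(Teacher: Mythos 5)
Your proposal is correct and follows essentially the same route as the paper: apply $S_\zeta$ to the expansion $\mc T'_f=\widetilde M_f+\sum_{g\prec f}\texttt{t}'_{gf}\widetilde M_g$, use $S_\zeta^2=[|W_\zeta|]S_\zeta$ to convert each $\widetilde M_g S_\zeta$ into $N_g$, substitute $\texttt{t}'_{gf}=t_{g\cdot w_0^\zeta,f\cdot w_0^\zeta}$, and identify the result with $\mc T_f$ by bar-invariance and uniqueness. Your explicit derivation of the near-idempotency of $S_\zeta$ is a helpful elaboration of what the paper leaves implicit in the line-by-line computation.
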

\begin{proof}
We compute
\begin{align*}
T_{fw_0^\zeta}S_\zeta=&{\mc T'_f} S_\zeta=\widetilde{M}_f S_\zeta+\sum_{g\prec f}\texttt{t}'_{gf}\widetilde{M}_g S_\zeta
=\frac{1}{[|W_f|]}{M}_fS^2_\zeta+\sum_{g\prec f}\texttt{t}'_{gf}\frac{1}{[|W_g|]}{M}_g S^2_\zeta\\
=&\frac{1}{[|W_f|]}{M}_fS^2_\zeta+\sum_{g\prec f}\texttt{t}'_{gf}\frac{1}{[|W_g|]}{M}_g S^2_\zeta
=\frac{[|W_\zeta|]}{[|W_f|]}{M}_fS_\zeta+\sum_{g\prec f}\texttt{t}'_{gf}\frac{[|W_\zeta|]}{[|W_g|]}{M}_g S_\zeta\\
=&\frac{[|W_\zeta|]}{[|W_f|]}\widetilde{N}_f+\sum_{g\prec f}\texttt{t}'_{gf}\frac{[|W_\zeta|]}{[|W_g|]}\widetilde{N}_g=N_f+\sum_{g\prec f}\texttt{t}'_{gf}N_g =N_f+\sum_{g\prec f}t_{g\cdot w_0^\zeta,f\cdot w_0^\zeta}N_g.
\end{align*}
\,
The identity $\mc T'_fS_\zeta=\mc T_f$ follows from the bar-invariance of $\mc T'_fS_\zeta$ and the characterization of the canonical basis element $\mc T_f$.
\end{proof}

Now let $L_f$ be the dual canonical basis element in $\mathbb T^{m|n}$ corresponding to a $W_\zeta$-anti-dominant weight, i.e., $f\in\Z^{m|n}_{\zeta-}$. We write
\begin{align*}
L_f=M_f+\sum_{g\prec f}\ell_{gf}(q) M_g,\qquad \ell_{gf}(q)\in q^{-1}\Z[q^{-1}].
\end{align*}
By \eqref{eq:MfS}, we have
\begin{align*}
L_fS_\zeta&=M_fS_\zeta+\sum_{g\prec f}\ell_{gf}(q) M_gS_\zeta\\
&= \widetilde{N}_f + \sum_{g\prec f;g\in\Z^{m|n}}\ell_{gf}(q)q^{-\ell(\tau_g)}\widetilde{N}_{g\cdot\tau(g)},
\end{align*}
where $\tau_g\in W^g$ is such that $g\cdot\tau_g\in\Z^{m|n}_{\zeta-}$. Since $L_fS_\zeta$ is bar-invariant, it follows by uniqueness of dual canonical basis that $L_fS_\zeta = \mc{L}_f$, for $f\in\Z^{m|n}_{\zeta-}$. A similar calculation as before for the canonical basis, shows that
\begin{align}\label{formula:ll}
\texttt{l}_{gf} = \sum_{x\in W^g}\ell_{g\cdot x,f}q^{-\ell(x)},\qquad f,g\in\Z^{m|n}_{\zeta-}.
\end{align}

Now suppose that $f\in\Z^{m|n}$, but not in $\zmnz$. Let $e\not=\tau\in W_\zeta$ be the shortest length element such $f\cdot \tau\in\zmnz$ and write the corresponding canonical basis element in $\mathbb T^{m|n}$ as
\begin{align*}
L_{f\cdot\tau}=M_{f\cdot\tau}+\sum_{g\prec f\cdot x}\ell_{g,f\cdot\tau}M_g.
\end{align*}
Applying $S_\zeta$ to the right, we get a bar-invariant element in $\mathbb T^{m|n}_\zeta$ of the form
\begin{align*}
L_{f\cdot\tau} S_\zeta=q^{-\ell(\tau)}\widetilde{N}_{f\cdot\tau}+\sum_{g\prec f\cdot\tau}\ell_{g,f\cdot\tau}M_gS_\zeta,
\end{align*}
which, by \eqref{eq:MfS}, lies in the $q^{-1}\Z[{q^{-1}}]$-span of the $\widetilde{N}_g$, $g\in\zmnz$. But such an expression can only be bar-invariant, if it is zero.

Let $\phi_\zeta:\widehat{\mathbb T}^{m|n}\rightarrow\widehat{\mathbb T}^{m|n}_\zeta$ be the $U_q(\gl_\infty)$-module homomorphism defined by
\begin{align}\label{def:phiz}
\phi_\zeta(M_f):=M_fS_\zeta,\qquad f\in\Z^{m|n}.
\end{align}
We summarize our discussion in the following.

\begin{thm}\label{thm:szeta:fock} Let $\phi_\zeta:\widehat{\mathbb T}^{m|n}\rightarrow\widehat{\mathbb T}^{m|n}_\zeta$ be the map in \eqref{def:phiz}. For $f\in\Z^{m|n}$, let $\tau$ be the minimum length element in $W_\zeta$ such that $f\cdot\tau\in\zmnz$. Then we have
\begin{itemize}
\item[(1)] $\phi_\zeta\left( M_f\right)=q^{-\ell(\tau)}\widetilde{N}_{f\cdot\tau}$ and $\phi(\widetilde{M}_f)=q^{-\ell(\tau)}N_f$.
\item[(2)] $\phi_\zeta(T_{f\cdot w_0^\zeta})=\mc T_f$, for $f\in\Z^{m|n}_{\zeta-}$.
\item[(3)] $\phi_\zeta\left(L_f\right)=\begin{cases}
\mc L_f,&\text{ if } f\in\zmnz,\\
0,&\text{ otherwise.}
\end{cases}$
\end{itemize}
\end{thm}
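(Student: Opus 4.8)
The plan is to verify the three statements directly by unwinding the definitions established in the preceding subsections; all the hard analytic content has already been isolated in Example~\ref{ex:sym:can}, in Proposition~\ref{pi:can:basis}, and in the computations leading up to the theorem, so the task here is essentially to assemble these facts. First I would treat part~(1). For $f\in\Z^{m|n}$, pick the minimum length $\tau\in W_\zeta$ with $f\cdot\tau\in\zmnz$; this exists and is unique since $\zmnz$ is precisely the set of shortest coset representatives for $W_\zeta$ acting on the right. Then $H_{\tau^{-1}}S_\zeta = q^{-\ell(\tau)}S_\zeta$ by the second identity in the displayed formula following \eqref{eq:q:symm} (the analogue of \eqref{eq:S0H} for $S_\zeta$), so $M_f S_\zeta = M_{f\cdot\tau}H_{\tau^{-1}}S_\zeta = q^{-\ell(\tau)}M_{f\cdot\tau}S_\zeta = q^{-\ell(\tau)}\widetilde N_{f\cdot\tau}$, which is exactly \eqref{eq:MfS}; by definition \eqref{def:phiz} this is $\phi_\zeta(M_f)$. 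For the second half of (1), divide by $[|W_f|]$: using $\widetilde M_f = \frac{1}{[|W_f|]}M_f S_\zeta$ and the analogous relation $\frac{1}{[|W_f|]}M_f S_\zeta = q^{-\ell(\tau)}\widetilde M_{f\cdot\tau}$ recorded just after \eqref{eq:MfS}, one gets $\phi_\zeta(\widetilde M_f) = q^{-\ell(\tau)}\widetilde M_{f\cdot\tau}S_\zeta$ — wait, more carefully: $\phi_\zeta$ was only defined on the monomial basis $M_f$ and extended $\bbQ(q)$-linearly, so $\phi_\zeta(\widetilde M_f) = \frac{1}{[|W_f|]}\phi_\zeta(M_f S_\zeta)$ must be computed by first expanding $M_f S_\zeta$ in the $M_g$ basis; applying Proposition~\ref{pi:can:basis}-type bookkeeping (i.e.\ $S_\zeta^2 = [|W_\zeta|]S_\zeta$ up to the normalizations baked into $N_f$), one obtains $\phi_\zeta(\widetilde M_f) = q^{-\ell(\tau)}N_f$ by comparing with \eqref{def:Nf}. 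I would write this out as a short chain of equalities mirroring the proof of Proposition~\ref{pi:can:basis}.

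Next, part~(2): for $f\in\zmnz$, the discussion immediately preceding the theorem already shows $\mc T'_f = T_{f w_0^\zeta}$ in $\widehat{\mathbb T}^{m|n}$, via the bar-invariance of $\mc T'_f$ and the uniqueness characterization of canonical bases in Proposition~\ref{prop:ex:can1} — one checks that in the expansion \eqref{can:Mtilde} of $\mc T'_f$ into the monomial basis $\{M_g\}$ every coefficient lies in $q\Z[q]$ except the coefficient of $M_{f w_0^\zeta}$, which equals $1$. Then Proposition~\ref{pi:can:basis} gives directly $\phi_\zeta(T_{f w_0^\zeta}) = T_{f w_0^\zeta}S_\zeta = \mc T'_f S_\zeta = \mc T_f$. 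So part~(2) is an immediate citation of the two results already proved; I would state it as such rather than reproving anything.

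Finally, part~(3) is the one genuine case analysis. If $f\in\zmnz$, then $\tau = e$ and, by the computation in the paragraph after \eqref{formula:ll}, $L_f S_\zeta = \widetilde N_f + \sum_{g\prec f;\,g\in\Z^{m|n}}\ell_{gf}(q)q^{-\ell(\tau_g)}\widetilde N_{g\cdot\tau_g}$, which is bar-invariant (as $L_f$ and $S_\zeta$ are each fixed by the compatible bar involutions) and has monomial-basis leading term $\widetilde N_f$ with lower terms in $q^{-1}\Z[q^{-1}]$; by the uniqueness in the Proposition characterizing $\{\mc L_f\}$ this forces $L_f S_\zeta = \mc L_f$, hence $\phi_\zeta(L_f) = \mc L_f$. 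If $f\notin\zmnz$, let $e\neq\tau\in W_\zeta$ be shortest with $f\cdot\tau\in\zmnz$; then $L_f S_\zeta = q^{-\ell(\tau)}\widetilde N_{f\cdot\tau} + \sum_{g\prec f}\ell_{gf}(q)M_g S_\zeta$ lies in the $q^{-1}\Z[q^{-1}]$-span of the $\widetilde N_g$ (using \eqref{eq:MfS} to rewrite each $M_g S_\zeta$), but it is also bar-invariant, and a bar-invariant element of $q^{-1}\Z[q^{-1}]\text{-span}\{\widetilde N_g\}$ must vanish, since the bar involution preserves the $\widetilde N$-lattice and fixes no nonzero element all of whose $\widetilde N$-coefficients lie in $q^{-1}\Z[q^{-1}]$ (the leading coefficient $q^{-\ell(\tau)}\widetilde N_{f\cdot\tau}$ with $\ell(\tau)\geq 1$ cannot be bar-fixed). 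Hence $\phi_\zeta(L_f) = 0$. I do not expect a serious obstacle: the only point requiring a little care is making sure that $\phi_\zeta$, defined on the $M_f$ and extended linearly, is correctly evaluated on $\widetilde M_f$ and on $L_f$ by first expanding these in the monomial basis and then applying \eqref{eq:MfS} term by term — so I would be explicit that $\phi_\zeta(M_f) = M_f S_\zeta$ really is multiplication-by-$S_\zeta$ on all of $\widehat{\mathbb T}^{m|n}$ (by linearity and density of the monomial topological basis), which makes all three parts into routine identities in $\mc H_{m|n}$-modules.
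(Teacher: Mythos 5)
Your proposal is correct and follows essentially the same route as the paper: Theorem~\ref{thm:szeta:fock} is stated as a summary of the computations in Section~\ref{sec:qsymm} and the paragraphs immediately preceding it, and you have correctly reassembled exactly those ingredients—\eqref{eq:MfS} and $S_\zeta^2 = [|W_\zeta|]S_\zeta$ for (1), the identification $\mc T'_f = T_{f w_0^\zeta}$ and Proposition~\ref{pi:can:basis} for (2), and the bar-invariance argument for (3). Your observation that $\phi_\zeta$ coincides with right multiplication by $S_\zeta$ on all of $\widehat{\mathbb T}^{m|n}$ (by linearity on the topological monomial basis) is the right way to make this precise, and in part (3) you correctly compute $L_f S_\zeta$ rather than $L_{f\cdot\tau} S_\zeta$, which cleans up an apparent slip in the paper's indexing in the paragraph after \eqref{formula:ll}.
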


\section{Equivalence of categories} \label{sect::4}

In this section, we establish equivalences between several abelian categories of $\g$-modules.  These categories will provide categorifications of $q$-symmetric tensors introduced in Section \ref{sect::3}.  Recall that $\nu$ denotes a dominant and integral weight.
Also, recall the set $\Lambda(\nu)$ from Section~\ref{sec:lambdanu}.

\subsection{Projectively presentable modules} \label{sect::41}

A projective module $Q \in \mc O$ is said to be $\nu$-{\em admissible} if
each indecomposable direct summand of $Q$ is isomorphic to
$P(\la)$, for some  $\la \in \Lnua$.  Let $\Opres$ denote the full subcategory of $\mc O$ consisting of modules $M$ that have  a two step presentation of the form
\begin{align*}
&Q_2\rightarrow Q_1\rightarrow M\rightarrow 0,
\end{align*}
where both $Q_1$ and $Q_2$ are $\nu$-admissible projectives in $\mc O$. Similarly, we define the full subcategory $\mc O^{\vpre}_\oa$ of $\mc O_\oa$. The category $\Opres_\oa$ has been studied in detail in \cite{MaSt04}, see also the recent papers \cite{MaPW20,KoMa21}.

The category $\Opres$ is closed under taking cokernels but it is not closed
under taking kernels. However, $\Opres$ admits the structure of an abelian
category via an equivalence to a certain Serre quotient category
which we introduce in Section~\ref{Subsect::Fro}; see also \cite[Section~2.3]{MaSt04} for the abelian structure of $\Opres_\oa$.

\subsection{Serre quotients}
\subsubsection{General definition}
For a given abelian category $\mc C$ with Serre subcategory $\mc B$, we recall the {\em Serre quotient category} $\mc C/\mc B$ from  \cite[Chapitre III]{Gabriel}. Namely,  the  objects  in $\mc C/\mc B$ are the same as those of $\mc C$, and, for $X,Y\in \mc C$, the morphisms from $X$ to $Y$ in $\mc C/\mc B$ are defined as
\begin{align*}
&\Hom_{\mc C/\mc B}(X,Y):= \lim\limits_{\longrightarrow}\Hom_{\mc C}(X', Y/Y'),
\end{align*}    where $X'$ (respectively $Y'$) runs over all subobjects of $X$ (respectively $Y$) such that $X/X', Y'\in \mc B$. We refer to \cite{Gabriel} for the definition of the composition of two morphisms in $\mc C/\mc B$.

By \cite[Proposition III.1.1]{Gabriel}, the category $\mc C/\mc B$ is abelian and there is an exact canonical quotient functor $Q: \mc C\rightarrow \mc C/\mc B$ such that
\begin{itemize}
\item $Q$ is the identity on objects,
\item $Q$ acts on morphisms from $X$ to $Y$ via the canonical map
from $\Hom_{\mc C}(X,Y)$ to $\lim\limits_{\longrightarrow}\Hom_{\mc C}(X', Y/Y')$.
\end{itemize}
We record the following universal property of Serre quotient categories (see also
\cite[Lemma~A.1.2]{CP19}).

\begin{lem}\label{lem::unipro}
	\cite[Corollaires III.1.2 and III.1.3]{Gabriel}
	Let $\mc D$ be an abelian category with an exact functor $F: \mc C\rightarrow \mc D$ such that $F(X) = 0$ for any $X\in \mc B$. Then there is a unique exact functor $F': \mc C/\mc B\rightarrow \mc D$ such that $F= F'\circ Q$. Moreover, for any $f\in \Hom_{\mc C/\mc B}(X,Y)$ we have $F'(f)=F(g)$ if $f$ is represented by $g\in \Hom_{\mc C}(X',Y/Y')$.
\end{lem}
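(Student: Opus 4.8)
The statement to prove is Lemma~\ref{lem::unipro}, which is the universal property of the Serre quotient category: given an exact functor $F\colon \mc C\to\mc D$ vanishing on the Serre subcategory $\mc B$, there is a unique exact $F'\colon\mc C/\mc B\to\mc D$ with $F=F'\circ Q$, and $F'$ can be computed on morphisms by evaluating $F$ on any representative.

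The plan is to construct $F'$ directly on the level of objects and morphisms and then check that it is well-defined, exact, and unique. On objects there is no choice: since $Q$ is the identity on objects, we must set $F'(X):=F(X)$. The work is entirely on morphisms. First I would recall that a morphism $f\in\Hom_{\mc C/\mc B}(X,Y)$ is by definition an element of the direct limit $\varinjlim \Hom_{\mc C}(X',Y/Y')$ over subobjects $X'\subseteq X$ with $X/X'\in\mc B$ and subobjects $Y'\subseteq Y$ with $Y'\in\mc B$; so $f$ is represented by some honest morphism $g\colon X'\to Y/Y'$ in $\mc C$. Applying $F$ and using that $F$ kills $\mc B$, the inclusion $X'\hookrightarrow X$ becomes an isomorphism $F(X')\xrightarrow{\sim}F(X)$ (its kernel and cokernel lie in $\mc B$, hence vanish under $F$ by exactness), and similarly the projection $Y\twoheadrightarrow Y/Y'$ becomes an isomorphism $F(Y)\xrightarrow{\sim}F(Y/Y')$. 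So one defines $F'(f)$ to be the composite $F(X)\xrightarrow{\sim}F(X')\xrightarrow{F(g)}F(Y/Y')\xleftarrow{\sim}F(Y)$, i.e. $F'(f):=F(j_{Y'})^{-1}\circ F(g)\circ F(i_{X'})^{-1}$ where $i_{X'}$, $j_{Y'}$ are the canonical inclusion and projection.

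Next I would verify this is independent of the choice of representative $g$. Two representatives of the same element of the direct limit agree after passing to a common refinement $X''\subseteq X'$, $Y''\subseteq Y'$ (meaning the diagrams relating $g$ restricted/corestricted agree), and the canonical maps $F(X'')\to F(X')\to F(X)$ and $F(Y/Y'')\to F(Y/Y')$ are all isomorphisms killing $\mc B$-subquotients, so the two composites coincide; this is a routine diagram chase. Then I would check functoriality: $F'(\id_X)=\id_{F(X)}$ is immediate, and $F'$ respects composition by recalling Gabriel's definition of composition in $\mc C/\mc B$ (one composes representatives after refining to a common domain/codomain) and again chasing the diagram, using that $F$ turns the relevant connecting maps into isomorphisms. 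Exactness of $F'$ then follows from exactness of $F$ together with the fact that $Q$ is exact and surjective on objects: a short exact sequence in $\mc C/\mc B$ lifts (up to the equivalence built into the quotient) to one obtained by applying $Q$ to a suitable sequence in $\mc C$, on which $F'\circ Q=F$ is exact; alternatively one checks $F'$ preserves kernels and cokernels directly. The identity $F=F'\circ Q$ holds by construction since $Q(g)$ for $g\in\Hom_{\mc C}(X,Y)$ is represented by $g$ itself (with $X'=X$, $Y'=0$), whence $F'(Q(g))=F(g)$. Uniqueness: any exact $G$ with $G\circ Q=F$ must satisfy $G(X)=F(X)$ on objects, and for a morphism $f$ represented by $g\colon X'\to Y/Y'$ one has in $\mc C/\mc B$ the factorization $f = Q(j_{Y'})^{-1}\circ Q(g)\circ Q(i_{X'})^{-1}$ (the canonical maps become isomorphisms in the quotient), so applying $G$ and using $G\circ Q=F$ forces $G(f)=F'(f)$. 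The final clause of the lemma — that $F'(f)=F(g)$ whenever $f$ is represented by $g\in\Hom_{\mc C}(X',Y/Y')$ — is then just a restatement of the construction once one notes that under the identifications $F(X')\cong F(X)$, $F(Y/Y')\cong F(Y)$ the map $F'(f)$ \emph{is} $F(g)$.

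The only genuinely delicate point — and the one I expect to be the main obstacle to write cleanly — is the bookkeeping with the direct limit: making precise that every morphism in the quotient has a representative, that any two representatives become equal on a common refinement, and that composition is compatible with refinement, so that all the ``$F$ turns this canonical map into an isomorphism'' assertions can be applied coherently. Since this is precisely the content of \cite[Corollaires III.1.2 and III.1.3]{Gabriel} (and the surrounding discussion in \cite[Chapitre III]{Gabriel}), the cleanest exposition is to set up the direct-limit description carefully, isolate the single lemma ``$F$ inverts the canonical maps $i_{X'}$ and $j_{Y'}$'', and then let that lemma do all the work in the well-definedness, functoriality, exactness, and uniqueness arguments. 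No further ingredients beyond exactness of $F$, the hypothesis $F|_{\mc B}=0$, and the abelianness of $\mc C$, $\mc D$ are needed.
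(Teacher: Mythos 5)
The paper does not give its own proof of this lemma; it cites \cite[Corollaires III.1.2 and III.1.3]{Gabriel} and moves on. Your proposal is a correct reconstruction of the standard argument behind that citation: defining $F'$ on objects by $F'(X)=F(X)$, using exactness of $F$ and $F\vert_{\mc B}=0$ to observe that $F$ inverts the canonical maps $i_{X'}\colon X'\hookrightarrow X$ (cokernel in $\mc B$) and $j_{Y'}\colon Y\twoheadrightarrow Y/Y'$ (kernel in $\mc B$), defining $F'$ on a morphism represented by $g$ via the resulting zig-zag, and then checking well-definedness, functoriality, exactness, and uniqueness via the direct-limit bookkeeping. One remark: your uniqueness step relies on the factorization $f = Q(j_{Y'})^{-1}\circ Q(g)\circ Q(i_{X'})^{-1}$ in $\mc C/\mc B$, which in turn uses that $Q(i_{X'})$ and $Q(j_{Y'})$ become isomorphisms because $Q$ is exact and annihilates $\mc B$; you assert this but it is worth spelling out, since it is also exactly what makes the "moreover" clause of the lemma more than a tautology. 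Your exactness argument is the one place where the sketch is thin — the cleanest route is the one you also gesture at, namely to verify directly that $F'$ preserves kernels and cokernels using the same inverted-maps lemma, rather than to try to lift short exact sequences from $\mc C/\mc B$ back to $\mc C$. Modulo that, the proof is sound and matches the content of the cited corollaries.
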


\subsubsection{Serre quotients of $\mc O$}

Let $\mc I^\nu$  be the Serre subcategory of $\mc O$ generated by all simple modules $L(\la)$, where $\la \in \Lambda \backslash\Lnua$.  From now on, we denote
by $\ov{\mc O}:=\mc O/\mc I^\nu$ the corresponding  Serre quotient of $\mc O$ and
by $\pi\equiv\pi^{\nu}: \mc O\rightarrow \ov{\mc O}$ the corresponding  canonical
functor. Similarly, we can consider the Serre subcategory $\mc I^\nu_\oa$  of
$\mc O_\oa$, the corresponding Serre quotient
$\overline{\mc O}_\oa:= \mc O_\oa/\mc I^\nu_\oa$ and the canonical
functor $\pi_0: \mc O_\oa \rightarrow \ov{\mc O}_\oa$.  We remark that $\ov{\mc O}$ and $\ov{\mc O}_\oa$ depend on the choice of $\nu$.

The following lemma establishes that $\mc O^{\vpre}$ has the natural structure
of an abelian category induced by an equivalence with $\ov{\mc O}$.

\begin{lem} \label{lem01}
	We have an equivalence $$\pi: \mc O^{\vpre}\xrightarrow{\cong} \ov{\mc O}.$$ In particular, $\{\pi(L(\la))|~\la\in \Lnua\}$  is an exhaustive list of simple objects in $\ov{\mc O}$.  Furthermore, $\pi(P(\la))$ is the indecomposble projective cover of $\pi(L(\la))$ in $\ov{\mc O}$, for any $\la \in \Lnua$.
\end{lem}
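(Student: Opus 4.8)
The assertion is a super-analogue of a well-known fact about $\mathcal{S}$-subcategories of category $\mathcal O$ (the Lie algebra case being \cite[Section~2.3]{MaSt04}), so I would follow that blueprint. The key general principle is the following: if $\mathcal C$ is an abelian category with enough projectives, $\mathcal B\subseteq\mathcal C$ a Serre subcategory, and $\mathcal P$ denotes the full subcategory of objects admitting a two-step presentation $Q_2\to Q_1\to M\to 0$ by projectives $Q_i$ whose indecomposable summands $P$ satisfy $\pi(P)\neq 0$ and $\pi(P)$ projective in $\mathcal C/\mathcal B$, then the restriction of the quotient functor $\pi$ to $\mathcal P$ is an equivalence onto $\mathcal C/\mathcal B$. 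So the first step is to identify exactly which indecomposable projectives $P(\la)$ of $\mathcal O$ map to nonzero projectives in $\ov{\mathcal O}$, and to check that these are precisely the admissible ones, i.e.\ those with $\la\in\Lnua$.

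The starting input is the general theory of Serre quotients recalled just above the lemma. From \cite[Corollaire~III.3.1]{Gabriel} (or the standard statement that $\pi$ admits a right adjoint / section functor in the locally finite setting), one knows that the simple objects of $\ov{\mathcal O}$ are exactly the $\pi(L(\la))$ with $L(\la)\notin\mathcal I^\nu$, that is, $\la\in\Lnua$; and that for $X,Y\in\mc O$ with $X$ having no subquotients in $\mathcal I^\nu$, the map $\Hom_{\mc O}(X,Y)\to\Hom_{\ov{\mc O}}(\pi X,\pi Y)$ is an isomorphism. The second step is then to verify projectivity: for $\la\in\Lnua$ the functor $\Hom_{\ov{\mc O}}(\pi P(\la),-)$ is exact. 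Here one uses that $\pi$ is exact and essentially surjective, so any short exact sequence in $\ov{\mathcal O}$ lifts (after the standard manipulation with the quotient) to one in $\mathcal O$; then exactness of $\Hom_{\mc O}(P(\la),-)$ together with the Hom-comparison above gives what we want, provided we also know $\pi P(\la)\neq 0$, which holds because $P(\la)\tto L(\la)$ and $\pi L(\la)\neq 0$. Combined with the fact that $\pi P(\la)\tto\pi L(\la)$ and that $\pi$ kills no nonzero quotient of $P(\la)$ lying outside $\mathcal I^\nu$, one concludes $\pi P(\la)$ is the projective cover of $\pi L(\la)$. This already proves the last sentence of the lemma.

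For the equivalence $\pi\colon\mc O^{\vpre}\xrightarrow{\cong}\ov{\mc O}$ itself I would argue as follows. The functor is essentially surjective: given $M\in\mathcal O$, pick an admissible projective presentation of its ``$\Lnua$-part'' — concretely, take $Q_1\tto M'$ where $M'$ is a suitable quotient/submodule of $M$ whose top involves only $L(\la)$, $\la\in\Lnua$, then a presentation $Q_2\to Q_1$ of the kernel; applying $\pi$ and using right-exactness shows $\pi(\coker(Q_2\to Q_1))\cong\pi(M)$, and $\coker(Q_2\to Q_1)\in\mc O^{\vpre}$. Fullness and faithfulness: for $M,N\in\mc O^{\vpre}$, compute $\Hom_{\ov{\mc O}}(\pi M,\pi N)$ using the projective presentation of $M$ and the left-exactness of $\Hom$; the four-term comparison reduces it to $\Hom_{\ov{\mc O}}(\pi Q_i,\pi N)\cong\Hom_{\mc O}(Q_i,N)$ for $i=1,2$ (admissible projectives), and a diagram chase with the five lemma identifies $\Hom_{\ov{\mc O}}(\pi M,\pi N)$ with the image of $\Hom_{\mc O}(M,N)$ — giving both fullness and faithfulness at once once one checks the kernel and cokernel of $\Hom_{\mc O}(M,N)\to\Hom_{\ov{\mc O}}(\pi M,\pi N)$ vanish, the cokernel vanishing being where admissibility of $Q_1,Q_2$ is used.

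\textbf{Main obstacle.} The delicate point is identifying $\mathcal I^\nu$-behaviour of projectives precisely: one must show that $\pi P(\la)\neq0$ \emph{and} projective exactly when $\la\in\Lnua$, and, conversely, that for $\la\notin\Lnua$ the object $P(\la)$ need not vanish but its image fails to be a projective cover of anything relevant — so the subtlety is that $\mc O^{\vpre}$ is defined via \emph{admissible} projectives, and one has to be sure these are the right ones to reconstruct $\ov{\mathcal O}$. In the super setting the extra care is that $\Lnua$ is governed by $\al$-freeness (the remark at the end of Section~\ref{sect::24}), so I expect the proof to invoke that $\la\in\Lnua$ iff $L(\la)$ is $\al$-free for all $\al\in\Phi_\nu$, hence iff $L(\la)\notin\mathcal I^\nu$; once this translation is in place the argument is formally the same as in \cite{MaSt04}. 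I would also double-check that $\mc O$ here (the integral block) has enough projectives and that $\mathcal I^\nu$ is indeed a Serre subcategory closed under the relevant operations, which is immediate since it is generated by simples.
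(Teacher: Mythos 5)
Your argument is essentially the paper's: full-faithfulness via the two-step admissible-projective presentation and the Five Lemma (reducing to the Hom-comparison $\Hom_{\mc O}(Q,N)\cong\Hom_{\ov{\mc O}}(\pi Q,\pi N)$ for $Q$ an admissible projective, which holds because both $\Hom_{\mc O}(Q,-)$ and $\Ext^1_{\mc O}(Q,-)$ vanish on $\mc I^\nu$), essential surjectivity by replacing $M$ with a suitable $P/K\in\mc O^{\vpre}$ having the same image under $\pi$, and projectivity of $\pi(P(\la))$ by a lifting argument (the paper cites \cite[Lemma A.1.3]{CP19}). One small caution: your intermediate remark that $\Hom_{\mc O}(X,Y)\to\Hom_{\ov{\mc O}}(\pi X,\pi Y)$ is an isomorphism whenever $X$ has no subquotients in $\mc I^\nu$ is not quite right as stated — to get surjectivity in the colimit defining $\ov{\mc O}$-Hom one also needs $\Ext^1_{\mc O}(X,Z)=0$ for $Z\in\mc I^\nu$ — but since your actual Five-Lemma computation only invokes the comparison for admissible projectives (where both conditions hold), the proof goes through unchanged.
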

\begin{proof} We first show that $\pi$ is full and faithful.
	Take $X,Y\in \mc O^{\vpre}$ with a two step presentation
	\begin{align}
	&Q_1\rightarrow Q_2\rightarrow X\rightarrow 0, \label{eq::22}
	\end{align} where $Q_1,Q_2$ are {$\nu$-}admissible projective modules in $\mc O$.   Let $Y''$ be a submodule of $Y$ such that $Y''\in \mc  I^\nu$. Since $\Hom_{\mc O}(Q_1,Y'') = \Hom_{\mc O}(Q_2,Y'') =0$,  we have
\begin{align*}
	&\Hom_{\mc O}(Q_1,Y)= \Hom_{\mc O}(Q_1,Y/Y'')\quad\text{ and }\quad\Hom_{\mc O}(Q_2,Y)= \Hom_{\mc O}(Q_2,Y/Y'').
\end{align*} Applying the functors $\Hom_{\mc O}(-,Y)$ and $\Hom_{\mc O}(-,Y/Y'')$ to the sequence
\eqref{eq::22}, we obtain that $\Hom_{\mc O}(X,Y)= \Hom_{\mc O}(X,Y/Y'')$ by the Five lemma.  Consequently,   the map $\pi: \Hom_{\mc O}(X,Y) \rightarrow\Hom_{\ov{\mc O}}(\pi(X),\pi(Y))$ is an isomorphism.

Next, we show that $\pi$ is essentially surjective. For a given module $X\in \mc O$, let $X'$ be the minimal, with respect to inclusions, submodule of $X$ such that
$X/X'\in \mc I^\nu$. This implies that the top of $X'$ is a direct sum of simple modules whose  highest weights belong to $\Lnua$. Also, we have the following
short exact sequence  in $\ov{\mc O}$:
\begin{align*}
	&0\rightarrow \pi(X') \rightarrow \pi(X)\rightarrow \pi(X/X')\rightarrow 0.
\end{align*}
We have $\pi(X)\cong \pi(X')$ since $\pi$ is exact and $\pi(X/X')=0$.

Let $f: P\onto X'$ be an ($\nu$-admissible) projective cover of $X'$ in $\mc O$.
There is a submodule $K$ of the kernel $\ker(f)$ of $f$ such that $\pi(K)\cong \pi(\ker(f))$ and the top of $K$ is a direct sum of simple modules whose highest weights are in $\Lnua$. Let $g:Q\onto K$ be the projective cover of $K$ in $\mc O.$ This leads to the following  right exact sequence in $\ov{\mc O}$:
\begin{align*}
	&\pi(Q)\xrightarrow{\pi(g)}\pi(P)\rightarrow \pi(P/K)\rightarrow 0.
\end{align*} Since  $\pi(\ker(f))/\pi(K) \cong \pi(\ker(f)/K)=0$, it follows that we have the following right exact sequence $\ov{\mc O}$:
\begin{align*}
&\pi(Q)\xrightarrow{\pi(g)}\pi(P)\xrightarrow{\pi(f)} \pi(X')\rightarrow 0.
\end{align*} Consequently, we have $\pi(X) \cong \pi(P/K)$ and $P/K\in \mc O^{\vpre}$. This shows that $\pi$ is an equivalence.

 Since $\End_{\ov{\mc O}}(\pi(P(\la)))\cong \End_{\mc O^{\vpre}}(P(\la))$, for any $\la \in \Lnua$, is a local ring, it follows that $\pi(P(\la))$ is indecomposable in $\ov{\mc O}$ by \cite[Proposition 5.4]{Kr15}. Finally, it follows from \cite[Lemma A.1.3]{CP19} that $\pi(P(\la))$ is projective in $\ov{\mc O}$.  This completes the proof.
\end{proof}

For any $\la \in \Lnua$, we denote by $S(\la)\in \mc O$ the quotient of  $P(\la)$ by the sum of the images of all homomorphisms $P(\mu)\rightarrow \rad\, P(\la)$, for $\mu \in \Lnua$. Then the set $$\{\pi(S(\la))|~\la\in \Lnua\} = \{\pi(L(\la))|~\la\in \Lnua\}$$ is a complete and irredundant set of representatives of isomorphism classes of  simple objects in $\ov{\mc O}$; {see also Appendix \ref{sect::81}.}

\begin{prop} \label{prop::leftadj}
  The quotient functor $\pi$ has  both a left adjoint $\pi^L$ and a right adjoint $\pi^R$ such that  for every $V\in \ov{\mc O}$ the evaluation $V\rightarrow \pi \circ \pi^L(V)$ of the adjunction map  at $V$ is an isomorphism.
\end{prop}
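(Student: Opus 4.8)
The plan is to construct $\pi^L$ explicitly on objects using the equivalence $\mc O^{\vpre}\xrightarrow{\cong}\ov{\mc O}$ from Lemma~\ref{lem01}, and then to verify the adjunction and the isomorphism property of the unit. First I would recall that, by Lemma~\ref{lem01}, every object of $\ov{\mc O}$ is isomorphic to $\pi(N)$ for a uniquely determined (up to isomorphism) object $N\in\mc O^{\vpre}$, and that $\pi$ restricted to $\mc O^{\vpre}$ is fully faithful. So the natural candidate is to set $\pi^L(V):=N$, where $N\in\mc O^{\vpre}$ is the object with $\pi(N)\cong V$; functoriality of $\pi^L$ then follows because $\pi|_{\mc O^{\vpre}}$ is an equivalence onto $\ov{\mc O}$, so $\pi^L$ is simply a quasi-inverse of that equivalence, post-composed with the inclusion $\mc O^{\vpre}\hookrightarrow\mc O$.

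Next I would establish the adjunction $\Hom_{\mc O}(\pi^L V,M)\cong\Hom_{\ov{\mc O}}(V,\pi M)$ naturally in $V\in\ov{\mc O}$ and $M\in\mc O$. Writing $V=\pi(N)$ with $N\in\mc O^{\vpre}$, the left-hand side is $\Hom_{\mc O}(N,M)$. For the right-hand side, I would use the two-step admissible projective presentation $Q_2\to Q_1\to N\to 0$ of $N$ together with the colimit description of $\Hom_{\ov{\mc O}}$: since $\Hom_{\mc O}(Q_i, M'')=0$ for any submodule $M''\in\mc I^\nu$ of $M$ (the $Q_i$ being admissible, hence having top supported on $\Lnua$, while $M''$ has all composition factors outside $\Lnua$), the argument in the proof of Lemma~\ref{lem01} applies verbatim to show $\Hom_{\mc O}(N,M)\cong\Hom_{\mc O}(N,M/M'')$ for all such $M''$, and hence $\Hom_{\mc O}(N,M)\cong\varinjlim\Hom_{\mc O}(N,M/M'')=\Hom_{\ov{\mc O}}(\pi N,\pi M)=\Hom_{\ov{\mc O}}(V,\pi M)$. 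Naturality in both arguments is routine from the functoriality of these identifications.

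Finally, for the unit claim: the adjunction unit at $V=\pi(N)$ is the map $V\to\pi\pi^L(V)=\pi(N)$ corresponding under the above isomorphisms to $\id_N\in\Hom_{\mc O}(N,N)$; tracing through, this is precisely the identity morphism of $\pi(N)$ in $\ov{\mc O}$, hence an isomorphism. Equivalently, one observes $\pi\circ\pi^L\cong\id_{\ov{\mc O}}$ as functors because $\pi^L$ followed by $\pi$ is, by construction, a quasi-inverse of the equivalence $\pi|_{\mc O^{\vpre}}$ followed by $\pi|_{\mc O^{\vpre}}$, and one checks the coherence of this natural isomorphism with the adjunction unit.

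The main obstacle I anticipate is the bookkeeping in the adjunction isomorphism: one must be careful that the colimit defining $\Hom_{\ov{\mc O}}$ is taken over submodules $M''$ of $M$ with $M/M''$ —no, with $M''\in\mc I^\nu$— and simultaneously over quotients $N/N'$ with $N'\in\mc I^\nu$, and verify that the $N'$-side contributes nothing new because $N\in\mc O^{\vpre}$ has top supported on $\Lnua$ so its only subobject in $\mc I^\nu$ relevant here interacts trivially with maps out of $N$; this is exactly the ``full and faithful'' half of the proof of Lemma~\ref{lem01} and should transfer without difficulty, but it requires stating precisely. A secondary point is confirming that $\pi^L$ is additive and exact where needed (it need only be a left adjoint, hence automatically right exact), and that the construction is independent of the chosen presentation, which again follows from fully-faithfulness of $\pi|_{\mc O^{\vpre}}$.
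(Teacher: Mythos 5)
Your proof is correct, and it takes a genuinely different route from the paper's. You build the adjunction isomorphism directly from the colimit defining $\Hom_{\ov{\mc O}}$: you observe that (a) for $N\in\Opres$ the sub-object side of Gabriel's colimit is trivial, since $N/N'\in\mc I^\nu$ forces $N'=N$ (the top of $N$ is supported on $\Lnua$, whereas the top of a nonzero $N/N'\in\mc I^\nu$ is not), and (b) the five-lemma argument from the fully-faithful half of Lemma~\ref{lem01} actually works with \emph{arbitrary} $M\in\mc O$ in the second slot, not just $M\in\Opres$, because it only uses exactness of $\Hom(Q_i,-)$ and $\Hom(Q_i,M'')=0$. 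This identifies the adjunction isomorphism with the canonical map $\pi\colon\Hom_{\mc O}(N,M)\to\Hom_{\ov{\mc O}}(\pi N,\pi M)$, so naturality in both variables is just functoriality of $\pi$, and the unit is the identity essentially by inspection. The paper instead constructs, for each $M$, an explicit object $P^M/K^M\in\Opres$ with $\pi(P^M/K^M)\cong\pi(M)$, establishes the adjunction through the chain $\Hom(\pi^L X,M)\cong\Hom(\pi^L X,M')\cong\Hom(\pi^L X,P^M/K^M)\cong\Hom(X,\pi(P^M/K^M))\cong\Hom(X,\pi M)$, and then must verify naturality by chasing a five-row commutative diagram involving lifts of morphisms to the chosen projective covers. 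Your approach avoids all of that scaffolding; what the paper's route buys is an explicit $\Opres$-model for $\pi^L\pi(M)$, which is reused elsewhere (e.g.\ in Remark~\ref{re::37}). One small wording slip worth fixing: Gabriel's colimit is over sub-objects $N'\subseteq N$ with $N/N'\in\mc I^\nu$, not $N'\in\mc I^\nu$; your final paragraph briefly conflates these, though the argument you actually run (constraining quotients of $N$ via its top) is the correct one.
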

\begin{proof}
	 First, we prove the existence of the right adjoint $\pi^R$ of $\pi$. An injective module $Q\in \mc O$ is said to be {$\nu$-}admissible if it is a direct sum of injective hulls of $L(\la)$ with $\la \in \Lnua$. We put $\mc O^{\nu-copres}$ to be the category  of injectively copresentable modules, that is, $\mc O^{\nu-copres}$ is the full subcategory of $\mc O$ consisting of modules that have a two step copresentation by {$\nu$-}admissible injective modules. By \cite[Lemma 2.1]{GL}, the quotient functor $\pi$ restricts to a full embedding $\pi|_{\mc O^{\nu-copres}}$  from $\mc O^{\nu-copres}$ to $\OI$. We claim that this full  embedding is essentially surjective. To see this, for any $M\in \mc O$, we consider its minimal quotient $M/M'$  by a submodule $M'$ such that $M'\in \mc I_\nu$. This implies that $M_1:=M/M'$ can be embedded into an {$\nu$-}admissible injective module $I_1$ in $\mc O$. Next, we consider the submodule $M_2$ of $I_1$ which contains $M_1$ and is maximal subject to the condition that  $M_2/M_1\in \mc I_\nu$. Set $I_2$ to be the injective hull of $I_1/M_2$ in $\mc O$, then we obtain a short exact sequence $0\rightarrow M_2\rightarrow I_1\rightarrow I_2$ in $\mc O$ such that $\pi(M_2)=\pi(M)$ and both $I_1,I_2$ are {$\nu$-}admissible injective modules. This shows that $\pi|_{\mc O^{\nu-copres}}: \mc O^{\nu-copres}\rightarrow \OI$ is an equivalence. As a consequence, the quotient functor $\pi$ admits a right adjoint by \cite[Proposition 2.2]{GL}.
		
	 We provide a self-contained proof of the existence of the left adjoint $\pi^L$ of $\pi$ as follows. Let $\iota:\Opres \hookrightarrow \mc O$ be the inclusion functor. We define $$\pi^L:=  \ov{\mc O}\xrightarrow{\cong}\Opres \xrightarrow{\iota}\mc O,$$
	namely, $\pi^L$ is the composition of $\iota$ and the inverse of the equivalence given in Lemma~\ref{lem01}.
   For any $M\in \mc O$, we let $M'$ be the minimal submodule of $M$ such that $M/M'\in \mc I_\nu$. Set $P^M\xrightarrow{\tau^M} M'$ to be the projective cover of $M'$ and let $K^M$ to be the submodule of $\ker(\tau^M)$ such that $\pi(K^M)\cong \pi(\ker(\tau^M))$ and the top  of $K^M$ is a direct sum of simple modules with highest weights in $\Lnua$. Let $X\in \OI$. By the proof of Lemma \ref{lem01},
  we know that
  \begin{align}
  &\pi(P^M/K^M)\cong \pi(M), \label{eq::47}
  \end{align}
   and  the natural epimorphism $P^M/K^M\rightarrow M'$ induces an isomorphism \begin{align}
  & \Hom_{\mc O}(\pi^L(X), P^M/K^M)\cong  \Hom_{\mc O}(\pi^L(X), M'). \label{eq::48}
  \end{align}
  Therefore, we get isomorphisms
	\begin{align}
	&\Hom_{\mc O}(\pi^L(X), M) \label{eq::49}\\
	&\cong \Hom_{\mc O}(\pi^L(X), M')~\text{ since } \pi^L(X)\in \Opres \\
	&\cong \Hom_{\mc O}(\pi^L(X), P^M/K^M)~\text{ by }\eqref{eq::48} \\
	&\cong \Hom_{\OI}(X, \pi(P^M/K^M))~\text{ by Lemma }\ref{lem01}.\\
	&\cong \Hom_{\OI}(X, \pi(M))~\text{ by }\eqref{eq::47}.\label{eq::413}
	\end{align}
 Let us denote the composition of these isomorphisms by $\eta_{X,M}: \Hom_{\mc O}(\pi^L(X), M)\xrightarrow{\cong}\Hom_{\OI}(X, \pi(M)).$ We shall show that $\eta_{X,M}$ gives rise to an adjunction between $(\pi^L, \pi)$. Since $\eta_{X,M}$ is natural in the variable $X\in \ov{\mc O}$, it suffices to show that $\eta_{X,M}$ is natural in the variable $M$. To see this, we let $f:M\rightarrow N$ be a homomorphism in $\mc O$ and $N'$ be the analogue of $M'$, that is,  $N'$ is the minimal, with respect to inclusions, submodule of $N$ such that $N/N'\in \mc I_\nu$. Note that $f$ induces a homomorphism from $M'$ to $N'$, which we will denote by $f'$. We consider the following diagram
	\begin{align}
	&\xymatrixcolsep{2pc} \xymatrix{
	\Hom_{\mc O}(\pi^L(X), M) \ar[r]^-{(1)}  \ar@<-2pt>[d]_{(2)} &  \Hom_{\mc O}(\pi^L(X), N) \ar@<-2pt>[d]_{(3)} \\
	\Hom_{\mc O}(\pi^L(X), M') \ar[r]^{(4)}  \ar@<-2pt>[d]_{(5)} & \Hom_{\mc O}(\pi^L(X), N')  \ar@<-2pt>[d]_{(6)} \\
  \Hom_{\mc O}(\pi^L(X), P^M/K^M) \ar[r]^{(7)} \ar@<-2pt>[d]_{(8)} & \Hom_{\mc O}(\pi^L(X), P^N/K^N) \ar@<-2pt>[d]_{(9)}\\
\Hom_{\OI}(X, \pi(P^M/K^M)) \ar[r]^{(10)} \ar@<-2pt>[d]_{(11)} & \Hom_{\OI}(X, \pi(P^N/K^N)) \ar@<-2pt>[d]_{(12)}\\
 \Hom_{\OI}(X, \pi(M)) \ar[r]^{(13)} & \Hom_{\OI}(X, \pi(N))} \label{eq::dia414}
	\end{align}
	
We shall explain the homomorphisms $(1)$-$(13)$ in the diagram \eqref{eq::dia414} and prove that they make this diagram commutative. First, the vertical homomorphisms in this diagram are given in \eqref{eq::49}-\eqref{eq::413}. Next, the homomorphisms $(1)$ and $(4)$ are induced by $f$, while $(2)$ and $(3)$ are given by restricting homomorphisms to $M'$ and  $N'$, respectively. Therefore, the first square is commutative.

Let us explain homomorphisms $(4)$-$(7)$ in the second square. The $f': M'\rightarrow N'$ lifts to a homomorphism from $f'': P^M\rightarrow P^N$ make the following diagram commutes:
  	\begin{align*}
  &\xymatrixcolsep{2pc} \xymatrix{
  	 P^M   \ar@<-2pt>[d]  \ar[r]^{f''} &  P^N \ar@<-2pt>[d] \\
  	M' \ar[r]^{f'}    & N'  }
  \end{align*}
  We note that $f''(K^M)$ is contained in the kernel of the canonical map $P^N\rightarrow N'$. Since the top of $f''(K^M)$ is an epimorphic  image of an $\nu$-admissible projective module, it follows that $f''(K^M)\subseteq K^N$. Therefore, we get a commutative diagram:
  	\begin{align}
  &\xymatrixcolsep{2pc} \xymatrix{
  	P^M/K^M   \ar@<-2pt>[d]  \ar[r]  &  P^N/K^N \ar@<-2pt>[d] \\
  	M' \ar[r]^{f'}    & N'  } \label{eq::dia416}
  \end{align} This makes the second square in \eqref{eq::dia414} commutes.

Finally, the homomorphisms $(7)$-$(10)$ in the third square form a commutative diagram by Lemma \ref{lem01}. Also, the last square commutes by the commutativity of the square \eqref{eq::dia416}. This completes the proof.
\end{proof}

\begin{rem} \label{rem::14} The construction of $\pi^R$ and $\pi^L$ are dual to each other. In particular, if $\mc O$ admits a simple-preserving duality $D$ then we have  $\pi^R\circ\pi(M)\cong D\circ \pi^L\circ \pi \circ D(M)$, for $M\in \mc O$.
\end{rem}

\subsection{Harish-Chandra ($\g,\g_\oa$)-bimodules} \label{Sect::HCbimod}

For a given $(\g,\g_\oa)$-bimodule $M$, we let $M^\text{ad}$ denote the left $\g_\oa$-module under the adjoint action of $\g_\oa$ given by
\begin{align}
&x \cdot m = xm -mx, \label{eqadjcation}
\end{align} for any $x\in \g_\oa$ and $m\in M$.

Let $\mc B$ denote the category of finitely generated $(\g,\g_\oa)$-bimodules $M$ for which
$M^\text{ad}$ is a (possibly infinite) direct sum of modules in $\mc F_\oa$.  Recall that,
for $\la \in \h^\ast$, we denote by $\chi_\la$ (respectively $\chi_\la^{\g_\oa}$) the  central character of $\mf g$ (respectively of $\g_\oa$) associated with $\la$.  We let $\mc B_\nu$ denote the full subcategory of $\mc B$ consistsing of all $M$ such that $M\ker(\chi_\nu^{\g_\oa})=0$.\footnote{
We emphasize that the notation $\mc B_\nu$ used here has different meaning compared to the same notation used in \cite{Ch21}, but it agrees with the one defined in \cite{Ch212}. In \cite{Ch21}, $\mc B_\nu$ is used to denote the full subcategory of $\mc B$ consisting of modules $X$ satisfying $X\ker(\chi_\nu^{\g_\oa})^n=0$ for $n\gg 0$.}

For any $\g_\oa$-module $M$ and any $\g$-module $N$, we let $\mc L(M, N)$ be the maximal $(\g,\g_\oa)$-submodule of $\Hom_\C(M, N)$ that is a direct sum of modules in $\mc F_\oa$ with respect to the adjoint action in \eqref{eqadjcation}.

 If we denote by
$\g\text{-}\mathrm{Mod}\text{-}\g_\oa$ the category of all
$(\g,\g_\oa)$-bimodules, the previous paragraph gives a   left exact bifunctor
$$\mc L(-,-) :~ \mf g_\oa\text{-}\mathrm{Mod} \times \mf g\text{-}\mathrm{Mod}
\rightarrow \g\text{-}\mathrm{Mod}\text{-}\g_\oa.$$

	Let $M$ be  a $\fg_{\oa}$-module such that the annihilator ideal of $M$ in $U(\g_\oa)$ is generated by the kernel $\ker(\chi_\nu^{\g_\oa})$. Suppose that
	\begin{enumerate}[(A)]
		\item the canonical monomorphism $U(\g_\oa)/\Ann_{U(\g_\oa)}(M)\hookrightarrow \mc L(M,M)$ is surjective; \label{ThmMS1a}
		\item the module $M$ is projective in~$\langle \mc F_\oa\otimes M\rangle$. \label{ThmMS1b}
	\end{enumerate}
	Then, by \cite[Theorem 3.1]{CC}, we have an equivalence
 \begin{align*}
 &\mc L(M,-):~\text{Coker}(\mc F\otimes \Ind M)\xrightarrow{\cong} \mc B_\nu,
 \end{align*}
with inverse
\begin{align*}
 &-\otimes_{U(\g_\oa)}M:~\mc B_\nu\xrightarrow{\cong} \text{Coker}(\mc F\otimes \Ind M).
 \end{align*}

For example, the module $M_0(\nu)$ satisfies both conditions \eqref{ThmMS1a} and \eqref{ThmMS1b} leading to the following lemma, see \cite[Lemma 20]{Ch212}.

\begin{lem} \label{lem::03}
	We have the following mutually inverse equivalences
	\begin{align*}
	&\mc L(M_0(\nu),-):~\Opres\xrightarrow{\cong} \mc B_\nu,\\
	&-\otimes_{U(\g_\oa)}M_0(\nu):~ \mc B_\nu\xrightarrow{\cong} \Opres.
	\end{align*}
\end{lem}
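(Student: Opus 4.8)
The statement to prove, Lemma~\ref{lem::03}, asserts that the general equivalence supplied by \cite[Theorem 3.1]{CC} specializes, when $M = M_0(\nu)$, to mutually inverse equivalences between $\Opres$ and $\mc B_\nu$. Since \cite[Theorem 3.1]{CC} is stated here as a black box that applies to any $\g_\oa$-module $M$ whose $U(\g_\oa)$-annihilator is generated by $\ker(\chi_\nu^{\g_\oa})$ and which satisfies conditions \eqref{ThmMS1a} and \eqref{ThmMS1b}, the proof has exactly two jobs: first, verify that $M_0(\nu)$ meets all three hypotheses; and second, identify the category $\text{Coker}(\mc F \otimes \Ind M_0(\nu))$ with $\Opres$ so that the general equivalence reads as claimed. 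The plan is to do these in order.

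\textbf{Step 1: the hypotheses on $M_0(\nu)$.} Since $\nu$ is dominant and integral, $M_0(\nu)$ has central character $\chi_\nu^{\g_\oa}$, and because $\nu$ is (anti)dominant its annihilator in $U(\g_\oa)$ is exactly the ideal generated by $\ker(\chi_\nu^{\g_\oa})$; this is a standard fact about Verma modules of (anti)dominant highest weight, going back to Duflo, and I would cite it from \cite{Hu08}. For condition \eqref{ThmMS1a}, the surjectivity of $U(\g_\oa)/\Ann \hookrightarrow \mc L(M_0(\nu),M_0(\nu))$ is precisely the statement that every Harish-Chandra bimodule endomorphism of the (dominant) Verma module is realized by multiplication from $U(\g_\oa)$ modulo its annihilator; this is the content of the classical Bernstein-Gelfand analysis of Harish-Chandra bimodules attached to a dominant Verma, and I would invoke \cite{BG} (or the textbook account in \cite{Hu08}) directly. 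For condition \eqref{ThmMS1b}, projectivity of $M_0(\nu)$ in $\langle \mc F_\oa \otimes M_0(\nu)\rangle$ holds because $M_0(\nu) = P_0(\nu)$ is the big projective in its (typically singular) block when $\nu$ is dominant, hence is projective in the category generated by $\mc F_\oa \otimes M_0(\nu)$; again one points to \cite{BG} or \cite{Hu08}. All three points are exactly the verifications carried out (for the Lie algebra case) in \cite{MS}, so the cleanest route is simply to cite \cite[Lemma 20]{Ch212} as the excerpt already does, where this bookkeeping is assembled in the super setting.

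\textbf{Step 2: identifying the cokernel category with $\Opres$.} Here I must show $\text{Coker}(\mc F \otimes \Ind M_0(\nu)) = \Opres$ as full subcategories of $\g\text{-Mod}$. The key computation is that $\Ind_{\g_\oa}^{\g} M_0(\nu) = U(\g)\otimes_{U(\g_\oa)} M_0(\nu)$ is the parabolic Verma-type module which, for $\nu$ dominant, decomposes into a direct sum of modules of the form $P(\la)$ with $\la \in \Lnua$; more precisely, $\mc F \otimes \Ind M_0(\nu)$ and $\add(\{P(\la) : \la\in\Lnua\})$ generate the same additive category, because translating the big projective $M_0(\nu) = P_0(\nu)$ by finite-dimensional modules and inducing produces, up to summands, exactly the admissible projectives $P(\la)$, $\la\in\Lnua$ (this uses that induction from $\g_\oa$ sends projectives to projectives and is compatible with tensoring by $\mc F$ via $E \otimes \Ind(-) \cong \Ind(\Res E \otimes -)$, together with the characterization of admissible projectives). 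Once $\add(\mc F\otimes \Ind M_0(\nu)) = \add(\text{admissible projectives})$, the two-step-presentation definitions of $\text{Coker}(\mc F\otimes\Ind M_0(\nu))$ and of $\Opres$ coincide verbatim. I expect this identification — pinning down precisely which summands appear in $\mc F \otimes \Ind M_0(\nu)$ and matching them against the list $\{P(\la):\la\in\Lnua\}$ — to be the only real content of the proof; the hypothesis-checking in Step~1 is routine given the cited references.

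\textbf{Conclusion.} Combining Steps 1 and 2, \cite[Theorem 3.1]{CC} applied to $M = M_0(\nu)$ yields mutually inverse equivalences $\mc L(M_0(\nu),-) : \text{Coker}(\mc F\otimes\Ind M_0(\nu)) \xrightarrow{\cong} \mc B_\nu$ and $-\otimes_{U(\g_\oa)} M_0(\nu) : \mc B_\nu \xrightarrow{\cong} \text{Coker}(\mc F\otimes\Ind M_0(\nu))$, and rewriting the cokernel category as $\Opres$ via Step~2 gives exactly the two displayed equivalences of the lemma. The main obstacle, as noted, is Step~2; everything else is citation.
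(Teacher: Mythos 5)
Your proposal is correct and follows the same route the paper takes: verify that $M_0(\nu)$ meets the hypotheses of \cite[Theorem 3.1]{CC} and identify $\text{Coker}(\mc F\otimes \Ind M_0(\nu))$ with $\Opres$, both of which the paper delegates to the citation \cite[Lemma 20]{Ch212}. One minor terminological slip in Step~1: for dominant $\nu$, the module $M_0(\nu)=P_0(\nu)$ is the projective cover of the simple at the \emph{top} of the block, not the ``big'' projective (which is projective-injective and sits at the antidominant weight), but the fact you actually use --- that $M_0(\nu)$ is projective in $\mc O_\oa$, hence in $\langle\mc F_\oa\otimes M_0(\nu)\rangle$ --- is correct as stated.
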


\subsection{Whittaker modules}\label{sec:whitt}

We recall the category of Whittaker modules for Lie superalgebras studied
in \cite{Ch21}. Originally, Whittaker modules for Lie algebras were studied
by Kostant in \cite{Ko78} and further investigated in more detail in \cite{MS,Mc,Mc2, B, BM}.


\subsubsection{Whittaker category} \label{Sect::341} The category $\mc N$ in \cite{Ch21}, originally introduced in \cite{MS} in the case of Lie algebras, consists of finitely-generated $\g$-modules that are locally finite over $U(\mf n^+)$ and over the center $Z(\g_\oa)$ of $U(\g_\oa)$. We then have a decomposition
\begin{align*}
	\mc N=\bigoplus_{\zeta\in {\bf ch} \mf n^+_\oa} \mc N(\zeta),
	\end{align*} where the full subcategory $\mc N(\zeta)$ consists of modules $M \in \mc N$ such that $x-\zeta(x)$ acts locally nilpotently on $M$, for any $x\in \mf n_\oa^+$.
	In the case $\g=\g_\oa$, the corresponding category is denoted by $\mc N_\oa$. \footnote{The notations $\mc N$ and $\mc N_\oa$ used here are different from \cite{Ch21,Ch212},  where the corresponding categories are denoted by $\widetilde{\mc N}$ and $\mc N$.}

\subsubsection{Simple and standard Whittaker modules}
Classification of simple Whittaker modules over reductive Lie algebras was completed by Mili{\v{c}}i{\'c} and Soergel in \cite{MS}. Such modules are classified by means of the so-called {\em standard Whittaker modules}.  An analogue of these modules for Lie superalgebras is considered in \cite{Ch21}.

  In \cite{Ko78}, Kostant constructed the following Whittaker modules: 
\begin{align} \label{eq::31}
	&Y_\zeta(\la, \zeta):=U(\mf l_\zeta)/\text{Ker}(\chi^{\mf l_\zeta}_\la) U(\mf l_\zeta) \otimes_{U(\mf n_\zeta)}\mathbb C_\zeta,
\end{align} where  $\chi^{\mf l_\zeta}_\la$ is the central character of $\mf l_\zeta$ and  $\C_\zeta$ is the one-dimensional module over the nilradical $\mf n_\zeta\subset \mf l_\zeta$ associated with the characters $\la$ and $\zeta$, respectively.  Note that our notation
$Y_\zeta(\la,\zeta)$ corresponds to  $Y_{\chi^{\g_\oa}_\la, \zeta}$ in \cite[Section~3.6]{Ko78}. The standard Whittaker modules over $\g_\oa$ and $\g$ are defined as follows:
\begin{align*}
&M_0(\la, \zeta):= U(\g_\oa)\otimes_{\mf p_\oa} Y_\zeta(\la, \zeta)
\quad\text{ and }\quad M(\la, \zeta):= {U}(\g)\otimes_{\mf p} Y_\zeta(\la, \zeta).
\end{align*} These standard Whittaker modules are objects in the category $\mc N_\oa$ and $\mc N$ defined in Section \ref{Sect::341}; see also \cite{Mc,MS, B, Ch21}. \footnote{In the present paper, we use the $0$-subscript convention to denote the respective $\g_\oa$-modules. We emphasize that the notations $M(\la,\zeta), M_0(\la,\zeta)$ and $L(\la,\zeta)$ correspond to the notations $\widetilde{M}(\la,\zeta), M(\la,\zeta)$ and $\widetilde{L}(\la,\zeta)$ used in \cite{Ch21,Ch212}.}

The following result is taken from \cite[Proposition 2.1]{MS} and \cite[Theorem 6]{Ch21}.
\begin{lem}\label{lem::MS21}
	We have the following:
	\begin{itemize}
		\item[(1)]
		For any $\la \in \mf h^\ast$,  $M(\la, \zeta)$ has simple top. This simple top is denoted by $L(\la,\zeta)$.
		\item[(2)] The set $\{{L}(\la, \zeta)|~\la \in \h^\ast\}$ is a complete and irredundant set of representatives of the isomorphism classes of simple objects in ${\mc N}(\zeta)$.
		\item[(3)] 	Let $\mu \in \h^\ast$, then $${L}(\la, \zeta)\cong  {L}(\mu, \zeta)\Leftrightarrow {M}(\la, \zeta)\cong {M}(\mu, \zeta)\Leftrightarrow W_\zeta \cdot \la =W_\zeta \cdot \mu.$$
	\end{itemize}
	
\end{lem}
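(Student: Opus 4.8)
The plan is to deduce the three parts from Kostant's classification of simple Whittaker modules over the Levi $\mf l_\zeta=\mf l_\nu$, transported along the parabolic induction $M(\la,\zeta)=\Ind_{\mf p}^{\g}Y_\zeta(\la,\zeta)$, in parallel with the Lie-algebra prototype \cite[Proposition~2.1]{MS}. I would begin by recording two inputs. The first is \emph{Kostant's theorem} \cite{Ko78}: since $\mf l_\zeta$ is a reductive Lie algebra (its roots $\Phi_\nu\subseteq\Pi_0$ are even) on whose nilradical $\zeta$ is non-degenerate, the $Y_\zeta(\la,\zeta)$ are simple $\mf l_\zeta$-modules, they exhaust the simple objects of the corresponding Whittaker category for $\mf l_\zeta$, and $Y_\zeta(\la,\zeta)\cong Y_\zeta(\mu,\zeta)$ if and only if $\chi^{\mf l_\zeta}_\la=\chi^{\mf l_\zeta}_\mu$; since $\rho_\oa-\rho_{\mf l_\zeta}$ is $W_\zeta$-invariant, the dot-action of $W_\zeta$ used in the statement agrees with the one governing central characters of $\mf l_\zeta$, so this last condition reads $W_\zeta\cdot\la=W_\zeta\cdot\mu$. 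The second is the elementary remark that the hypothesis $\zeta(\g_\alpha)\neq0\Leftrightarrow\alpha\in\Phi_\nu$ for $\alpha\in\Pi_0$, together with $\zeta([\mf n^+_\oa,\mf n^+_\oa])=0$, forces $\zeta$ to vanish on $\mf u^+$; consequently the whole nilradical $\mf u^+$ --- including the finite-dimensional odd part $\g_1\subseteq\mf u^+$ --- acts locally nilpotently on every $N\in\mc N(\zeta)$, so $N^{\mf u^+}\neq0$ for $N\neq0$ and $N^{\mf u^+}$ is naturally an $\mf l_\zeta$-module. Finally I would note the induction-in-stages identity $M(\la,\zeta)\cong\Ind_{\g_{\geq 0}}^{\g}M_0(\la,\zeta)$, valid because $\mf p=\mf p_\oa\oplus\g_1$ with $\g_1$ an ideal of $\g_{\geq 0}$ acting trivially on $Y_\zeta(\la,\zeta)$.

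For part (1) --- that $M(\la,\zeta)$ has simple top --- I would argue via this reduction, and I expect this to be the main point. Exploiting the type-I grading, $U(\g_{-1})=\wedge(\g_{-1})$ is finite-dimensional, and one has a $\g_\oa$-linear epimorphism $\pi_0\colon M(\la,\zeta)\twoheadrightarrow M(\la,\zeta)/\g_{-1}M(\la,\zeta)\cong M_0(\la,\zeta)$ with kernel $\g_{-1}M(\la,\zeta)$. By \cite[Proposition~2.1]{MS} the module $M_0(\la,\zeta)$ has a simple top $L_0(\la,\zeta)$, so it suffices to show that every proper $\g$-submodule $N\subseteq M(\la,\zeta)$ has $\pi_0(N)\subseteq\rad M_0(\la,\zeta)$; then the sum of all proper submodules is sent by $\pi_0$ into $\rad M_0(\la,\zeta)\neq M_0(\la,\zeta)$, hence is itself proper, giving the unique maximal submodule. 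If instead $\pi_0(N)=M_0(\la,\zeta)$, i.e. $N+\g_{-1}M(\la,\zeta)=M(\la,\zeta)$, then, since $\g_{-1}N\subseteq N$, iterating yields $M(\la,\zeta)=N+\g_{-1}^{k}M(\la,\zeta)$ for all $k$, and $\g_{-1}^{k}=0$ in $U(\g)$ once $k>\dim\g_{-1}$ (exterior algebra), forcing $N=M(\la,\zeta)$ --- a contradiction. We then set $L(\la,\zeta):=\mathrm{top}\,M(\la,\zeta)$ as in the statement. The delicate point is the book-keeping: $\pi_0$ is only $\g_{\leq 0}$-linear, not $\g$-linear, so the Nakayama-type argument must be run on genuine $\g$-submodules while projecting through a map that is only partially equivariant.

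For part (2), I would combine Frobenius reciprocity for $M(\la,\zeta)=\Ind_\mf p^\g Y_\zeta(\la,\zeta)$ with the triviality of the $\mf u^+$-action on $Y_\zeta(\la,\zeta)$ to get, for every $N\in\mc N(\zeta)$,
\begin{align*}
\Hom_\g(M(\la,\zeta),N)\;\cong\;\Hom_\mf p(Y_\zeta(\la,\zeta),N)\;\cong\;\Hom_{\mf l_\zeta}(Y_\zeta(\la,\zeta),N^{\mf u^+}).
\end{align*}
Given a simple $N\in\mc N(\zeta)$, I would choose a simple $\mf l_\zeta$-submodule $S\subseteq N^{\mf u^+}$; by Kostant, $S\cong Y_\zeta(\la,\zeta)$ for the unique $W_\zeta$-antidominant $\la\in\Lnua$ whose $\mf l_\zeta$-central character is that of $S$, and the displayed isomorphism produces a nonzero, hence surjective, map $M(\la,\zeta)\twoheadrightarrow N$, so $N\cong L(\la,\zeta)$ by part~(1). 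For part~(3), the implication $W_\zeta\cdot\la=W_\zeta\cdot\mu\Rightarrow M(\la,\zeta)\cong M(\mu,\zeta)\Rightarrow L(\la,\zeta)\cong L(\mu,\zeta)$ is immediate from Kostant ($Y_\zeta(\la,\zeta)\cong Y_\zeta(\mu,\zeta)$) and functoriality of $\Ind_\mf p^\g$. For the converse, $L(\la,\zeta)\cong L(\mu,\zeta)=:N$ makes both $\Hom_{\mf l_\zeta}(Y_\zeta(\la,\zeta),N^{\mf u^+})$ and $\Hom_{\mf l_\zeta}(Y_\zeta(\mu,\zeta),N^{\mf u^+})$ nonzero, so $Y_\zeta(\la,\zeta)$ and $Y_\zeta(\mu,\zeta)$ both embed into $N^{\mf u^+}$, and one concludes $\chi^{\mf l_\zeta}_\la=\chi^{\mf l_\zeta}_\mu$ once all simple $\mf l_\zeta$-subquotients of $N^{\mf u^+}$ are known to carry a common central character.

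The genuine obstacle in the super setting --- and what \cite{Ch21} supplies beyond the Lie-algebra case --- is twofold: first, the finite-length property of $\mc N(\zeta)$ (the analogue of \cite[Theorem~2.6]{MS}), needed already in order to speak of maximal submodules and of simple $\mf l_\zeta$-submodules of $N^{\mf u^+}$ --- here the fact that $\g_1$ is finite-dimensional and $M(\la,\zeta)$ is a finite exterior ``thickening'' of $M_0(\la,\zeta)$ is what makes the reduction to \cite{MS} feasible; and second, the rigidity statement that $N^{\mf u^+}$ is in fact a \emph{simple} $\mf l_\zeta$-module for every simple $N\in\mc N(\zeta)$, which both closes the converse in part~(3) and sharpens part~(2) to $\dim\Hom_\g(M(\la,\zeta),N)\leq1$. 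Once these are in place the remainder is the routine induction-in-stages bookkeeping together with Kostant's theorem, yielding precisely \cite[Proposition~2.1]{MS} and \cite[Theorem~6]{Ch21}.
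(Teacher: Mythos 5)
The paper offers no in-text proof of this lemma; it is stated as ``taken from [Proposition~2.1]{MS} and [Theorem~6]{Ch21},'' so there is nothing in the paper itself to compare against. Your sketch correctly reconstructs the proof strategy of those sources: reduce to $\mf l_\zeta$ via parabolic induction, invoke Kostant's classification over the Levi, and use the type-I finite-dimensionality of $\g_{\pm 1}$ to pass between $M_0(\la,\zeta)$ and $M(\la,\zeta)$. In particular, your Nakayama-type argument for part~(1) is sound: if $N$ is a proper $\g$-submodule with $N+\g_{-1}M(\la,\zeta)=M(\la,\zeta)$, then iterating and using $\g_{-1}N\subseteq N$ together with $\g_{-1}^{k}=0$ in $U(\g)$ for $k>\dim\g_{-1}$ forces $N=M(\la,\zeta)$; the unique maximal submodule is then the sum of all proper ones, and only this weaker statement (not $\g$-stability of $\pi_0^{-1}(\rad M_0(\la,\zeta))$) is needed. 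Your identification $\rho_\oa-\rho_{\mf l_\zeta}$ being $W_\zeta$-invariant, so that the $\g$-dot-action and the $\mf l_\zeta$-dot-action of $W_\zeta$ coincide, is also correct and is the standard reason the statement of part~(3) can be phrased uniformly.

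You are also right that the two places where genuine work beyond McDowell--Mili{\v c}i{\'c}--Soergel is required in the super setting are (a) the existence of a simple $\mf l_\zeta$-submodule of $N^{\mf u^+}$ for $N\in\mc N(\zeta)$ simple (hinging on local finiteness over $U(\mf n^+)$ and an Engel argument for the nilpotent Lie superalgebra $\mf u^+=\mf u^+_\oa\oplus\g_1$, which acts locally nilpotently since $\zeta|_{\mf u^+_\oa}=0$ and every odd $x\in\g_1$ squares to zero in $U(\g)$), and (b) the rigidity that $N^{\mf u^+}$ carries a single $\mf l_\zeta$-central character, which closes the forward implication of part~(3). Those are precisely what \cite{Ch21} supplies. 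Two small points of hygiene: the antidominant representative $\la$ in part~(2) should be taken in the $W_\zeta$-antidominant chamber of $\h^*$, not in the integral set $\Lnua$, since the lemma is stated for arbitrary $\la\in\h^*$; and you should note explicitly that $M(\la,\zeta)$, being cyclic over the Noetherian ring $U(\g)$, does possess maximal submodules, so the ``sum of all proper submodules'' is proper.
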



\subsubsection{A Mili{\v{c}}i{\'c}-Soergel type equivalence}\label{sec:Wzeta} Set $$\mc W(\zeta):= \mathrm{Coker}(\cF\otimes \mathrm{Ind}(M_0(\nu,\zeta))).$$ 
In the case that $\zeta=0$, we have $\mc W(0)=\mc O$ by \cite[Corollary 3.2]{CC}. By \cite[Theorem 5.3, Lemma 5.11 and Proposition 5.5]{MS}, the standard Whittaker module $M_0(\nu, \zeta)$ satisfies both conditions \eqref{ThmMS1a} and \eqref{ThmMS1b} in Section \ref{Sect::HCbimod}.  Therefore, we have the following result, which  is taken from \cite[Theorem 16, Proposition 33]{Ch21}:
\begin{lem} \label{lem::5}
	There is an equivalence
	\begin{align}
		& -\otimes_{U(\g_\oa)} M_0(\nu,\zeta):~\mc B_\nu\xrightarrow{\cong} \mc W(\zeta), \label{eq::415}
	\end{align}
sending $\mc L(M_0(\nu), M(\la))$ to $M(\la, \zeta)$ and the top of $\mc L(M_0(\nu), M(\la))$ to $L(\la,\zeta)$, for any $\la \in \Lambda$.
\end{lem}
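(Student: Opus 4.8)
The plan is to realize the equivalence as the special case $M = M_0(\nu,\zeta)$ of the general statement recalled in Section~\ref{Sect::HCbimod} (i.e.\ \cite[Theorem~3.1]{CC}). Since $\mc W(\zeta)$ is by definition $\mathrm{Coker}(\mc F\otimes \Ind M_0(\nu,\zeta))$, it suffices to check that the $\g_\oa$-module $M_0(\nu,\zeta)$ satisfies the three inputs of that theorem: that $\Ann_{U(\g_\oa)}(M_0(\nu,\zeta)) = \ker(\chi_\nu^{\g_\oa})U(\g_\oa)$, that condition \eqref{ThmMS1a} holds, and that condition \eqref{ThmMS1b} holds. The key observation is that all three are assertions about $M_0(\nu,\zeta)$ purely as a module over the reductive Lie algebra $\g_\oa$ --- the type-I grading \eqref{typeI} and the odd part of $\g$ play no role here --- so they are exactly the facts about standard Whittaker modules for reductive Lie algebras established by Mili{\v{c}}i{\'c} and Soergel in \cite[Theorem~5.3, Lemma~5.11, Proposition~5.5]{MS}, valid because $\nu$ is dominant and integral and $\zeta$ is non-singular on the Levi $\mf l_\nu$. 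Feeding these into \cite[Theorem~3.1]{CC} produces the pair of mutually inverse equivalences, in particular $-\otimes_{U(\g_\oa)}M_0(\nu,\zeta)\colon \mc B_\nu\xrightarrow{\cong}\mc W(\zeta)$.

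The remaining, and more substantial, point is to trace the standard objects, i.e.\ to prove $\mc L(M_0(\nu),M(\la))\otimes_{U(\g_\oa)}M_0(\nu,\zeta)\cong M(\la,\zeta)$ for all $\la\in\Lambda$. I would reduce this to the even case by restriction to $\g_\oa$. Using PBW for the decomposition $\g=\mf u^-\oplus\mf p$ one gets $\Res^\g_{\g_\oa}M(\la)\cong U(\g_{-1})\otimes M_0(\la)$ and, likewise, $\Res^\g_{\g_\oa}M(\la,\zeta)\cong U(\g_{-1})\otimes M_0(\la,\zeta)$ as $\g_\oa$-modules; more generally, the restriction of any module parabolically induced from $\mf p$ is of the form $E\otimes(\text{the corresponding }\g_\oa\text{-module})$ with $E\in\mc F_\oa$. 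Together with the standard identity $\mc L(N,E\otimes Y)\cong E\otimes\mc L(N,Y)$ for finite-dimensional $E$, this reduces the claim to the even identity $\mc L(M_0(\nu),M_0(\la))\otimes_{U(\g_\oa)}M_0(\nu,\zeta)\cong M_0(\la,\zeta)$ proved in \cite{MS}; one then has to reassemble the full $\g$-action, checking that the natural map $U(\g)\otimes_{U(\mf p)}Y_\zeta(\la,\zeta)\to\mc L(M_0(\nu),M(\la))\otimes_{U(\g_\oa)}M_0(\nu,\zeta)$ is bijective via the character comparison just obtained. Alternatively, one may identify the functor $\mc L(M_0(\nu),-)\otimes_{U(\g_\oa)}M_0(\nu,\zeta)$ on $\mc O$ with Backelin's functor $\Gamma_\zeta$ and use $\Gamma_\zeta(M(\la))\cong M(\la,\zeta)$; in either form this is \cite[Theorem~16, Proposition~33]{Ch21}. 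I expect the bookkeeping in this step --- making the ``$E\otimes(\cdots)$'' restriction formulas and the base change along $U(\g_\oa)\hookrightarrow U(\g)$ precise enough to yield an isomorphism of $\g$-modules rather than merely of $\g_\oa$-modules --- to be the main obstacle.

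The statement about the top is then immediate: an equivalence of abelian categories preserves radicals, hence tops, so $-\otimes_{U(\g_\oa)}M_0(\nu,\zeta)$ carries the top of $\mc L(M_0(\nu),M(\la))$ onto the top of $M(\la,\zeta)$, which is the simple module $L(\la,\zeta)$ by Lemma~\ref{lem::MS21}(1). In particular this also shows, a posteriori, that $\mc L(M_0(\nu),M(\la))$ has simple top.
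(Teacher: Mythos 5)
Your proposal is correct and follows essentially the same route as the paper: the paper's entire ``proof'' of Lemma~\ref{lem::5} consists of noting (as you do) that $M_0(\nu,\zeta)$ satisfies conditions~\eqref{ThmMS1a} and~\eqref{ThmMS1b} by Mili{\v{c}}i{\'c}--Soergel, invoking \cite[Theorem~3.1]{CC} with $M=M_0(\nu,\zeta)$, and then citing \cite[Theorem~16, Proposition~33]{Ch21} for the identification of images of standard and simple objects. Your sketch of the restriction-to-$\g_\oa$ bookkeeping is a reasonable reconstruction of what those cited results do, and your observation that the equivalence sends tops to tops correctly recovers the last assertion via Lemma~\ref{lem::MS21}(1).
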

In the case that $\g=\g_\oa$, the equivalence \eqref{eq::415} was proved in \cite[Section 5]{MS}.  If $\g=\g_\oa$ and $\zeta=0$,  the equivalence  \eqref{eq::415}  goes back to \cite{BG}.
Lemma \ref{lem::5}  provides a connection between Harish-Chandra ($\g,\g_\oa$)-bimodules and Whittaker modules. We will show that the category $\ov{\mc O}$ admits a {\em properly stratified structure} in the sense of \cite[Section 2.6]{MaSt04}. Using Lemmas \ref{lem01}, \ref{lem::03} and \ref{lem::5}, we conclude that the category  $\mc W(\zeta)$ admits a properly stratified structure as well.

\subsection{Frobenius extensions} \label{Subsect::Fro}

In this subsection, we let $\g_\oa\subseteq \mf a\subseteq \g$ be a subalgebra such that it inherits a type-I grading in \eqref{typeI} with a Borel subalgebra $\mf b^{\mf a}$ such that $\mf b^{\mf a}_\oa =\mf b_\oa$. Let $\mc O(\mf a)\equiv \mc O(\mf a, \mf b^{\mf a})$.
We define the full subcategories $\mc O^{\vpre}(\mf a)$ and $\mc I^\nu(\mf a)$ of $\mc O(\mf a)$ in similar fashion. By the same argument as in Lemma \ref{lem01}, one can show that $\mc O^{\vpre}(\mf a)\cong \ov{\mc O}(\mf a):=\mc O(\mf a)/\mc I^\nu(\mf a)$, and so $\mc O(\mf a)^{\vpre}$ admits the structure of an abelian category.

\begin{lem}  \label{lem::1}
	 The induction functor $\Ind_{\mf a}^{\g}$, the co-induction functor $\Coind_{\mf a}^{\g}$ and the restriction functor $\Res_{\mf a}^{\g}$ give rise to two adjoint pairs of exact functors,
	\begin{align}
		&(\Ind_{\mf a}^{\g},~\Res_{\mf a}^{\g})\quad\text{ and }\quad(\Res_{\mf a}^{\g},~\Coind_{\mf a}^{\g}), \label{eq::338}
	\end{align} between $\mc O^{\nu\text{-pres}}(\mf a)$ and $\Opres$. Moreover,
\begin{align}
	&\Ind_{\mf a}^{\mf g}(-)\;\cong\; \Coind_{\mf a}^{\g}(-)\circ (\wedge^{\dim(\mf g/\mf a)}(\mf g/\mf a)\otimes-). \label{eq::339}
\end{align} 
\end{lem}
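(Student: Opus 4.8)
The plan is to reduce everything to statements that hold at the level of the ambient categories $\mc O(\mf a)$ and $\mc O$, and then check that the three functors restrict to the projectively presentable subcategories. First I would recall that by \eqref{eq::adj} (the Bernstein--Gelfand/Gorelik formula, applied here to $\mf a\subseteq\g$, both carrying a type-I grading) one has $\Ind_{\mf a}^{\g}(-)\cong\Coind_{\mf a}^{\g}(\wedge^{\dim(\g/\mf a)}(\g/\mf a)\otimes-)$ as functors on all of $\mf a$-Mod; since $\g/\mf a$ is purely odd of dimension equal to $\dim\g_{-1}-\dim\mf a_{-1}$ and $\mf a$ is reductive in the even part, $\wedge^{\dim(\g/\mf a)}(\g/\mf a)$ is a one-dimensional $\mf a$-module and in particular lies in $\mc F_\oa$-twisted finite-dimensional modules, so tensoring with it is an exact auto-equivalence of $\mc O(\mf a)$ preserving $\mc I^\nu(\mf a)$ and hence $\mc O^{\vpre}(\mf a)$. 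This immediately gives \eqref{eq::339} once we know both sides land in $\Opres$. The adjunctions $(\Ind_{\mf a}^{\g},\Res_{\mf a}^{\g})$ and $(\Res_{\mf a}^{\g},\Coind_{\mf a}^{\g})$ are the standard tensor-hom adjunctions for the ring extension $U(\mf a)\subseteq U(\g)$, and exactness of all three functors on category $\mc O$ was already noted in Section~\ref{sect::24} (the point being that $U(\g)$ is free of finite rank as a left and as a right $U(\mf a)$-module, because $\g/\mf a$ is finite-dimensional and purely odd, so $U(\g)$ has a PBW basis finite over $U(\mf a)$).

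The substantive step is to show that each of $\Ind_{\mf a}^{\g}$, $\Coind_{\mf a}^{\g}$, $\Res_{\mf a}^{\g}$ sends $\mc O^{\vpre}(\mf a)$ into $\Opres$ and, conversely, restricts correctly. Using the Serre-quotient description $\mc O^{\vpre}(\mf a)\cong\ov{\mc O}(\mf a)$ and $\Opres\cong\ov{\mc O}$ from Lemma \ref{lem01} (and its stated analogue for $\mf a$), it suffices to check that these functors descend to the Serre quotients, i.e.\ that they send $\mc I^\nu(\mf a)$ into $\mc I^\nu$ and vice versa. For $\Res_{\mf a}^{\g}$ this is clear on the level of weights: a simple $\g$-module $L(\la)$ with $\la\notin\Lnua$, i.e.\ $L(\la)$ not $\alpha$-free for some $\alpha\in\Phi_\nu$, restricts to an $\mf a$-module on which that same $f_\alpha\in\g_{-\alpha}\subseteq\mf a$ does not act freely (recall $\Phi_\nu\subseteq\Phi(\mf l_\nu)\subseteq\Phi(\mf a)$), so every composition factor of $\Res_{\mf a}^{\g}L(\la)$ lies in $\mc I^\nu(\mf a)$; conversely, by Frobenius reciprocity $\Ind$ and $\Coind$ of a module in $\mc I^\nu(\mf a)$ have all composition factors killed by the relevant central character / non-$\alpha$-free condition, hence lie in $\mc I^\nu$. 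An alternative, cleaner route is to characterize $\mc O^{\vpre}$ as the category of modules presented by admissible projectives and use that $\Ind_{\mf a}^{\g}P^{\mf a}(\la)$ is projective in $\mc O$ (left adjoint to the exact $\Res$), with $\Ind_{\mf a}^{\g}P^{\mf a}(\la)$ being a direct sum of $P(\mu)$'s with $\mu\in\Lnua$ by reciprocity; applying $\Ind$ to a two-step admissible-projective presentation and using right-exactness of $\Ind$ yields a two-step admissible-projective presentation in $\mc O$, so $\Ind_{\mf a}^{\g}$ preserves $\Opres$. For $\Coind_{\mf a}^{\g}$ one either argues dually (via the duality $(\cdot)^\vee$ and the fact that $\Coind$ of an injective is injective, together with the projective–injective identifications available here) or simply invokes \eqref{eq::339}, which expresses $\Ind$ and $\Coind$ in terms of one another up to the exact auto-equivalence given by tensoring with the one-dimensional module $\wedge^{\dim(\g/\mf a)}(\g/\mf a)$.

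Once all three functors are known to preserve the pair of subcategories, the adjunctions in \eqref{eq::338} are inherited from the ambient adjunctions $(\Ind_{\mf a}^{\g},\Res_{\mf a}^{\g})$ and $(\Res_{\mf a}^{\g},\Coind_{\mf a}^{\g})$ on $\mf a$-Mod and $\g$-Mod by restriction to full subcategories, and exactness on $\mc O^{\vpre}(\mf a)$, $\Opres$ follows since these subcategories inherit their abelian structure (via the Serre-quotient equivalences) in a way compatible with the ambient structure up to the computation of kernels, which is precisely what the Serre-quotient realization supplies. Finally \eqref{eq::339} is just \eqref{eq::adj} read inside these subcategories, using that tensoring with the one-dimensional $\mf a$-module $\wedge^{\dim(\g/\mf a)}(\g/\mf a)$ preserves $\mc O^{\vpre}(\mf a)$. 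I expect the main obstacle to be the verification that $\Ind_{\mf a}^{\g}$ and $\Coind_{\mf a}^{\g}$ really do land in $\Opres$ rather than merely in $\mc O$: this needs the reciprocity computation identifying $\Ind_{\mf a}^{\g}P^{\mf a}(\la)$ as an admissible projective (equivalently, controlling which $P(\mu)$ occur as summands, via $\Hom_{\mc O}(\Ind_{\mf a}^{\g}P^{\mf a}(\la),L(\mu))\cong\Hom_{\mc O(\mf a)}(P^{\mf a}(\la),\Res_{\mf a}^{\g}L(\mu))$ and the $\alpha$-freeness criterion for $\mu\in\Lnua$), and the dual statement for $\Coind$; everything else is formal adjunction bookkeeping.
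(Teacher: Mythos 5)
Your ``alternative, cleaner route'' --- characterizing $\mc O^{\vpre}$ via admissible-projective presentations and using the reciprocity computation
\[
\Hom_\g(\Ind_{\mf a}^\g P^{\mf a}(\la), L(\gamma)) \cong \Hom_{\mf a}(P^{\mf a}(\la), \Res_{\mf a}^\g L(\gamma)) = [\Res_{\mf a}^\g L(\gamma): L^{\mf a}(\la)]
\]
together with the $\alpha$-freeness criterion --- is precisely the paper's proof for $\Ind$, and hence for $\Coind$ via \eqref{eq::adj}. The paper handles $\Res$ by the dual computation with $\Coind$ as the right adjoint, $\Hom_{\mf a}(\Res P(\la), L^{\mf a}(\gamma)) \cong \Hom_\g(P(\la), \Coind L^{\mf a}(\gamma))$, which you gesture at (``the dual statement'') but do not write down. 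Having established that the three functors send admissible projectives to admissible projectives, and hence (by right-exactness, resp.\ \eqref{eq::adj}) preserve the projectively-presentable subcategories, the paper then deduces exactness on $\mc O^{\vpre}(\mf a)$ and $\Opres$ from \eqref{eq::339} and the Frobenius-extension fact in [Co, Cor.~2.1.2], rather than from your more direct argument.

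Your \emph{primary} route --- showing that $\Ind$, $\Coind$, $\Res$ descend to the Serre quotients by preserving $\mc I^\nu$ --- does not work as stated, and it is not what the paper does. There are two problems. First, from ``$\Res L(\la)$ is not $\alpha$-free for $\la\notin\Lnua$'' you cannot conclude that ``every composition factor of $\Res_{\mf a}^{\g}L(\la)$ lies in $\mc I^\nu(\mf a)$''; non-injectivity of $f_\alpha$ on the restriction only forces \emph{some} composition factor to be non-$\alpha$-free, not all of them. The companion claim that $\Ind$ or $\Coind$ of a module in $\mc I^\nu(\mf a)$ has all composition factors in $\mc I^\nu$ is likewise unjustified: Frobenius reciprocity governs $\Hom$ spaces into and out of simples (equivalently, tops and socles), not full composition series, and there is no reason to expect these descent statements to be true in general (e.g.\ $\Ind_{\g_\oa}^\g L_0(\la)\cong\wedge(\g_1\oplus\g_{-1})\otimes L_0(\la)$ as $\g_\oa$-modules can easily acquire composition factors with highest weights back in $\Lnua$). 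Second, and more structurally: even if descent held, that would only produce functors between $\ov{\mc O}(\mf a)$ and $\ov{\mc O}$; it would not by itself show that the restrictions of $\Ind$, $\Coind$, $\Res$ to $\mc O^{\vpre}(\mf a)$ actually take values in $\Opres$, which is the literal content of the lemma and is what is used later (e.g.\ in Lemma~\ref{lem3}). This is precisely why the paper works with admissible projectives rather than with the Serre subcategories $\mc I^\nu$: reciprocity completely controls the direct summands of $\Ind P^{\mf a}(\la)$ or $\Res P(\la)$, whereas it controls only the top or socle of $\Ind L^{\mf a}(\la)$ or $\Res L(\la)$.
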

\begin{proof} 	First, let us argue that both, $\Ind_{\mf a}^{\g}(-), \Coind_{\mf a}^{\g}(-): \mc O^{\vpre}(\mf a)\rightarrow  \mc O^{\vpre}$  and $\Res_{\mf a}^{\g}(-):\mc O^{\vpre}\rightarrow  \mc O^{\vpre}(\mf a)$, are well-defined functors. To see this, recall that $L^{\mf a}(\la)$ and $P^{\mf a}(\la)$ denote the irreducible module of highest weight $\la$ and its projective cover in the category $\mc O(\mf a)$, respectively. Suppose that $\la \in \Lnua$. Then $L^{\mf a}(\la)$ is $\alpha$-free, for $\alpha \in {\Pi_\nu}$. Suppose that $P(\gamma)$ is a direct summand of $\Ind_{\mf a}^\g P^{\mf a}(\la)$. Observe that
	\begin{align}
		&{\dim}\Hom_\g(\Ind_{\mf a}^\g P^{\mf a}(\la), L(\gamma)) = {\dim}\Hom_{\mf a}(P^{\mf a}(\la), \Res^\g_{\mf a} L(\gamma)) = [\Res_{\mf a}^\g L(\gamma): L^{\mf a}(\la)], \label{eq::26}
	\end{align} which implies that $L(\gamma)$ is $\alpha$-free, for all $\alpha\in \Pi_\nu$. Hence $\gamma\in\Lambda(\nu)$. This shows that both functors  $\Ind_{\mf a}^{\g}(-), \Coind_{\mf a}^{\g}(-): \mc O^{\vpre}(\mf a)\rightarrow  \mc O^{\vpre}$  are well-defined. Also,  if $P^{\mf a}(\gamma)$ is a direct summand of $\Res^\g_{\mf a} P(\la)$ then
	\begin{align}
	&{\dim}\Hom_{\mf a}(\Res_{\mf a}^\g P(\la), L^{\mf a}(\gamma)) = {\dim}\Hom_{\mf g}(P(\la), \Coind_{\mf a}^{\mf g} L^{\mf a}(\gamma)) = [\Coind_{\mf a}^\g L^{\mf a}(\gamma): L(\la)],
\end{align}which implies that the module $L^{\mf a}(\gamma)$ is $\alpha$-free, for all $\alpha\in {\Pi_\nu}$. This proves that the functor $\Res_{\mf a}^{\g}(-):\mc O^{\vpre}\rightarrow  \mc O^{\vpre}(\mf a)$ is well-defined as well.  Therefore, we obtain two adjoint pairs of functors  $(\Ind_{\mf a}^{\g}, \Res_{\mf a}^{\g})$ and $(\Res_{\mf a}^{\g}, \Coind_{\mf a}^{\g})$ between the abelian categories $\mc O^{\vpre}(\mf a)$ and $\mc O^{\vpre}$, as claimed. Equation \eqref{eq::339} follows from \eqref{eq::adj}.

Finally, the assertion about exactness of these functors follows from the adjunction \eqref{eq::339} and  \cite[Corollary 2.1.2]{Co}.  \end{proof}

By Lemma \ref{lem01}, the induction, the co-induction and the restriction functors induce two adjoint pairs of exact functors between $\ov{\mc O}(\mf a)$ and $\ov{\mc O}$. By slight abuse of notation, we still denote these functors by $\Ind_{\mf a}^{\g}(-), \Coind_{\mf a}^{\g}(-)$ and $\Res_{\mf a}^{\g}(-)$.
It follows from \eqref{eq::338} and \eqref{eq::339} that these functors give a structure of {\em Frobenius extension} between $\mc O^{\vpre}(\mf a)\cong \ov{\mc O}(\mf a)$ and  $\mc O^{\vpre}\cong \ov{\mc O}$ in the sense of \cite[Section 2]{Co}, see also \cite{BF}.

 \begin{cor}\label{cor::5} Let $M\in \ov{\mc O}(\mf a)$ and $N\in \ov{\mc O}$. Then,  for any $d\geq 0$, we have
 	\begin{align*}
 	&\Ext^d_{\ov{\mc O}}(\Ind_{\mf a}^{\g} M, N)\cong \Ext^d_{\ov{\mc O}(\mf a)}(M, \Res_{\mf a}^{\g} N),\\
 	&\Ext^d_{\ov{\mc O}(\mf a)}(\Res_{\mf a}^{\g} M, N)\cong \Ext^d_{\ov{\mc O}}(M, \Coind_{\mf a}^{\g} N).
 	\end{align*}
 \end{cor}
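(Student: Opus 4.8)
The plan is to deduce Corollary~\ref{cor::5} directly from Lemma~\ref{lem::1} by the standard derived-functor argument for adjoint pairs of exact functors. First I would observe that, by Lemma~\ref{lem01} and the discussion following Lemma~\ref{lem::1}, the categories $\ov{\mc O}(\mf a)$ and $\ov{\mc O}$ are abelian categories with enough projectives (their projective generators being the images of admissible projectives in $\mc O(\mf a)$ and $\mc O$), and the functors $\Ind_{\mf a}^{\g}$, $\Coind_{\mf a}^{\g}$, $\Res_{\mf a}^{\g}$ are exact functors forming the two adjoint pairs $(\Ind_{\mf a}^{\g},\Res_{\mf a}^{\g})$ and $(\Res_{\mf a}^{\g},\Coind_{\mf a}^{\g})$ by \eqref{eq::338}.

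Next I would invoke the general homological principle: if $(F,G)$ is an adjoint pair of \emph{exact} functors between abelian categories with enough projectives (equivalently, for the second pair, enough injectives), then $F$ sends projectives to projectives (being left adjoint to an exact functor), and hence one obtains natural isomorphisms $\Ext^d(FM,N)\cong \Ext^d(M,GN)$ for all $d\geq 0$. For the first isomorphism: $\Res_{\mf a}^{\g}$ is exact and is right adjoint to $\Ind_{\mf a}^{\g}$, so $\Ind_{\mf a}^{\g}$ preserves projectives; taking a projective resolution $P_\bullet\to M$ in $\ov{\mc O}(\mf a)$, the complex $\Ind_{\mf a}^{\g}P_\bullet$ is a projective resolution of $\Ind_{\mf a}^{\g}M$ in $\ov{\mc O}$ (exactness of $\Ind_{\mf a}^{\g}$ is used here), and then
\begin{align*}
\Ext^d_{\ov{\mc O}}(\Ind_{\mf a}^{\g}M,N)&=H^d\big(\Hom_{\ov{\mc O}}(\Ind_{\mf a}^{\g}P_\bullet,N)\big)\\
&\cong H^d\big(\Hom_{\ov{\mc O}(\mf a)}(P_\bullet,\Res_{\mf a}^{\g}N)\big)=\Ext^d_{\ov{\mc O}(\mf a)}(M,\Res_{\mf a}^{\g}N),
\end{align*}
where the middle isomorphism is the adjunction \eqref{eq::338} applied termwise. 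For the second isomorphism one argues dually, using that $\ov{\mc O}$ and $\ov{\mc O}(\mf a)$ have enough injectives — which follows because $(\cdot)^\vee$-type dualities, or simply finite length together with enough projectives in these finite-dimensional-algebra module categories, guarantee injectives — and that $\Coind_{\mf a}^{\g}$, being right adjoint to the exact functor $\Res_{\mf a}^{\g}$, preserves injectives; one then resolves $N$ injectively and applies $\Hom(M,-)$. Alternatively, one can use \eqref{eq::339} to transfer the first isomorphism into the second.

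I do not anticipate a serious obstacle here; the only point requiring a little care is verifying that $\ov{\mc O}$ and $\ov{\mc O}(\mf a)$ genuinely have enough projectives and enough injectives, so that the derived-functor computation of $\Ext$ is legitimate and the adjunction can be applied resolution-termwise. Enough projectives is immediate from Lemma~\ref{lem01} (the $\pi(P(\la))$, $\la\in\Lnua$, form a projective generator), and since these categories are module categories over finite-dimensional algebras (each block of $\Opres$ is the module category of a properly stratified algebra, as recalled around Lemma~\ref{lem::5}), they automatically have enough injectives as well. With this in place the argument is the routine ``adjoint exact functors induce adjunctions on $\Ext$'' lemma, and the proof is complete.
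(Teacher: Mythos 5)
Your proof is correct and follows the same route as the paper's: the adjunctions and exactness from Lemma~\ref{lem::1} feed into a general homological fact about adjoint pairs of exact functors. The paper simply outsources that general fact to a citation ([CoM, Proposition~4]), whereas you spell out the derived-functor argument explicitly.

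One remark about your treatment of the second isomorphism: you resolve $N$ injectively and appeal to the target categories being module categories over finite-dimensional, properly stratified algebras, but that stratified structure is only established in Section~\ref{sect::5}, and in the paper's own logic the ``enough injectives'' corollary is deduced \emph{after} Corollary~\ref{cor::5} (from the Frobenius-extension structure) rather than used to prove it. This detour can be avoided: since $\Coind_{\mf a}^{\g}$ is exact by Lemma~\ref{lem::1}, its left adjoint $\Res_{\mf a}^{\g}$ preserves projectives, so one can take a projective resolution $Q_\bullet\to M$ in $\ov{\mc O}$ (Lemma~\ref{lem01}), observe that $\Res_{\mf a}^{\g}Q_\bullet\to \Res_{\mf a}^{\g}M$ is a projective resolution in $\ov{\mc O}(\mf a)$, and apply the adjunction $(\Res_{\mf a}^{\g},\Coind_{\mf a}^{\g})$ termwise. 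In this way both isomorphisms rest only on Lemma~\ref{lem::1} and the existence of enough projectives (Lemma~\ref{lem01}), with no need for injectives or forward references.
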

\begin{proof}
	The conclusion follows from Lemma \ref{lem::1} and \cite[Proposition 4]{CoM}.
\end{proof}

\begin{cor}
 \emph{$\ov{\mc O}$} has enough injective objects.
\end{cor}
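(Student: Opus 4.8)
The goal is to show that $\ov{\mc O}$ has enough injective objects. The natural strategy is to transport injectivity along the Frobenius extension just established. Since $\mc O$ is a highest weight category, it has enough injectives: each simple $L(\la)$ embeds into an injective hull, and the dual Verma modules $M(\la)^\vee$ (or rather the duality $(\cdot)^\vee$ on $\mc O$) produce these injectives. So the first step is to recall that $\ov{\mc O}$ has enough projectives, which is already contained in Lemma~\ref{lem01}: the objects $\pi(P(\la))$, $\la\in\Lnua$, are projective covers of the simples $\pi(L(\la))$, hence every object of $\ov{\mc O}$ is a quotient of a projective. Now I would invoke the Frobenius extension structure between $\ov{\mc O}(\mf a)$ and $\ov{\mc O}$ from Subsection~\ref{Subsect::Fro}, applied to the special case $\mf a=\g_\oa$.

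Concretely, take $\mf a=\g_\oa$. Then $\ov{\mc O}(\g_\oa)=\ov{\mc O}_\oa$ is the Serre quotient of $\mc O_\oa$; being a quotient of the highest weight category $\mc O_\oa$, and by the equivalence $\ov{\mc O}_\oa\cong\mc O_\oa^{\vpre}$ together with the results of \cite{MaSt04} that this category is properly stratified, $\ov{\mc O}_\oa$ has enough injective objects (a properly stratified category has finite global dimension on the relevant side, or, more elementarily, $\mc O_\oa$ has a simple-preserving duality $(\cdot)^\vee$ that descends to $\ov{\mc O}_\oa$ since the Serre subcategory $\mc I^\nu_\oa$ is stable under $(\cdot)^\vee$; this duality sends projectives to injectives). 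The second step: the functor $\Coind^\g_{\g_\oa}$ is exact (Lemma~\ref{lem::1}) and is right adjoint to the exact functor $\Res^\g_{\g_\oa}$ (see the adjoint pair in \eqref{eq::338}), so $\Coind^\g_{\g_\oa}$ sends injective objects of $\ov{\mc O}_\oa$ to injective objects of $\ov{\mc O}$. The third step is to check that there are enough of them: given $N\in\ov{\mc O}$, pick a monomorphism $\Res^\g_{\g_\oa}N\hookrightarrow I$ with $I$ injective in $\ov{\mc O}_\oa$; by the adjunction, the unit $N\to\Coind^\g_{\g_\oa}\Res^\g_{\g_\oa}N$ composed with $\Coind^\g_{\g_\oa}$ of this monomorphism gives a map $N\to\Coind^\g_{\g_\oa}I$, and one verifies this map is a monomorphism because the unit $N\to\Coind^\g_{\g_\oa}\Res^\g_{\g_\oa}N$ is a monomorphism (the composite $\Res^\g_{\g_\oa}N\to\Res^\g_{\g_\oa}\Coind^\g_{\g_\oa}\Res^\g_{\g_\oa}N\to\Res^\g_{\g_\oa}N$ is the identity by the triangle identities, forcing injectivity of the unit). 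Thus $N$ embeds into an injective object of $\ov{\mc O}$.

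An alternative, perhaps cleaner route avoiding the Frobenius machinery: observe that the duality $(\cdot)^\vee$ on $\mc O$ — which exists when $\g=\g_\oa$; for the super case one uses instead the equivalence of $\ov{\mc O}$ with the Harish-Chandra bimodule category $\mc B_\nu$ from Lemma~\ref{lem::03} and dualizes there — interchanges projectives and injectives and preserves the relevant Serre subcategory, so $\pi((P(\la))^\vee)$ furnishes an injective hull of $\pi(L(\la))$. Whichever route is taken, the key points are: (i) $\ov{\mc O}$ has enough projectives (Lemma~\ref{lem01}); (ii) either an exact right adjoint from a category with enough injectives with monomorphic unit, or a simple-preserving contravariant duality, transports this to enough injectives.

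**Main obstacle.** The delicate point is not the categorical mechanics but ensuring the chosen tool actually applies in the super setting: for $\g\ne\g_\oa$ there is in general no simple-preserving duality on all of $\mc O$, so one genuinely needs the Frobenius-extension argument (or the bimodule-category detour). Within that argument, the step requiring care is verifying that the unit of the $(\Res^\g_{\g_\oa},\Coind^\g_{\g_\oa})$ adjunction is a monomorphism on $\ov{\mc O}$ — equivalently that $\Res^\g_{\g_\oa}$ is faithful on $\ov{\mc O}$ — which follows from the triangle identities since $\Coind^\g_{\g_\oa}\circ\Res^\g_{\g_\oa}$ contains the identity functor as a direct summand (this is the content of \eqref{eq::339} together with the fact that $\g/\g_\oa=\g_{-1}\oplus\g_1$ is finite-dimensional, making $\wedge^{\mathrm{top}}(\g/\g_\oa)$ a one-dimensional, hence invertible, twist). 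I expect this to be the only genuinely content-bearing verification; everything else is formal.
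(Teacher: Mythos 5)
Your main route is exactly the paper's: the paper also reduces to the Frobenius extension structure of Lemma~\ref{lem::1} between $\ov{\mc O}_\oa$ and $\ov{\mc O}$, noting that $\ov{\mc O}_\oa$ has enough injectives, and then simply cites \cite[Proposition~2.2.1]{Co} for the transport; your argument unpacks what that proposition would say (right adjoint $\Coind^\g_{\g_\oa}$ to the exact $\Res^\g_{\g_\oa}$ preserves injectives, unit is a mono). One detail in your justification is off, though inessential: $\Coind^\g_{\g_\oa}\circ\Res^\g_{\g_\oa}$ need not contain the identity as a direct summand (by the tensor identity it is a twist of $M\mapsto M\otimes\wedge\g_\ob$, which does not split off the trivial summand in general); the mono-ness of the unit follows more simply from the fact that $\Res^\g_{\g_\oa}$ on $\ov{\mc O}$ is exact and reflects zero objects (every nonzero $\pi(M)$ has a composition factor $\pi(L(\la))$ with $\la\in\Lnua$, and $\Res L(\la)$ has $L_0(\la)\notin\mc I^\nu_\oa$ as a subquotient), hence is faithful.
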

\begin{proof} Since $\ov{\mc O}$ is a Frobenius extension of $\ov{\mc O}_\oa$ and the latter category has enough injective objects, the conclusion follows from \cite[Proposition 2.2.1]{Co}.
	\end{proof}

\section{Stratified structure of $\ov{\mc O}$} \label{sect::5}
From now on, we assume that $\mf l_\nu$ is the Levi subalgebra in a parabolic decomposition of $\g$; see Section \ref{sec:lambdanu}. We remark that this condition is always satisfied in the case when $\g=\gl(m|n), \mf{osp}(2|2n),\pn$; see, e.g.,  \cite[Section 3.3.1]{Ch21}.   By letting $\g_1$ and $\g_{-1}$ act trivially on $\g_\oa$-modules,
we define the {\em Kac functor} and the {\em dual Kac functor}, respectively, as follows:
\begin{align*}
&K(-):=\Ind_{\g_{\geq 0}}^{\g}(-), ~\widehat K(-):=\Coind_{\g_{\leq 0}}^{\g}(-): \mc O_\oa \rightarrow \mc O.
\end{align*} By the universal property in Lemma \ref{lem::unipro}, these two functors induce functors from $\ov{\mc O}_\oa$ to $\ov{\mc O}$. By a slight abuse of notations, we still denote these induced functors by $K(-)$ and $\widehat{K}(-)$. In this section, we study the stratified structure of $\ov{\mc O}$.

\subsection{Standard and costandard objects} \label{sect::51}


Consider the BGG category $\mc O(\mf l_\nu, \mf l_\nu\cap \mf b_\oa)$ of $\mf l_\nu$-modules with respect to the Borel subalgebra $\mf l_\nu\cap \mf b_\oa$.
For a given $\la \in \Lnua$, we set $$\overline\Delta_0'(\la):= \Ind_{\fp_\oa}^{\g_\oa}L(\mf l_\nu,\la)\quad\text{ and }\quad\Delta_0'(\la):= \Ind_{\fp_\oa}^{\g_\oa}P(\mf l_\nu,\la).$$
Here $L(\mf l_\nu,\la)$ denotes the simple $\mf l_\nu$-module of
highest weight $\la$  and  $P(\mf l_\nu,\la)$
denotes the indecomposable projective cover of $L(\mf l_\nu,\la)$
in $\mc O(\mf l_\nu, \mf l_\nu\cap \mf b_\oa)$. We note that $\ov{\Delta}'_0(\la)$ is the Verma module $M_0(\la)$.

Recall that we have the simple-preserving duality $(\cdot)^\vee$ on $\mc O_\oa$, mentioned in Section~\ref{sect::24}. Set $\nb_0'(\la):= (\Delta_0'(\la))^\vee$ and $\ov{\nb}_0'(\la):= (\ov{\Delta}_0'(\la))^\vee.$ We define  in $\ov{\mc O}$ the following
objects:
\begin{itemize}
\item the {\em standard} object $\Delta(\la):= \pi(\Ind_{\g_{\geq 0}}^\g \Delta_0'(\la))$,
\item the {\em proper standard} object
$\overline \Delta(\la):= \pi(\Ind_{\g_{\geq 0}}^\g \overline\Delta_0'(\la))$,
\item the {\em costandard} object
$\nabla(\la):= \pi(\Coind_{\g_{\leq 0}}^\g \nabla_0'(\la))$, and
\item the  {\em proper costandard} object
$\overline \nabla(\la):= \pi(\Coind_{\g_{\leq 0}}^\g \overline\nabla_0'(\la))$.
\end{itemize}
Set $$\Delta_0(\la):= \pi(\Delta'_0(\la)),~\ov{\Delta}_0(\la):= \pi(\ov{\Delta}'_0(\la)),~\ov{\nb}_0(\la):= \pi(\ov{\nb}'_0(\la))\,\text{ and }\,\Delta_0(\la):= \pi(\Delta'_0(\la)).$$ These objects are structural objects for the properly stratified structures described in \cite{FKM00, MaSt04}.  We have $\Delta(\la) \cong K(\Delta_0(\la)),~\nb(\la) \cong \widehat{K}(\nb_0(\la))$, etc.

We have the following natural inclusions and projections:
\begin{align*}
&\pi(L(\la)) \hookrightarrow \nb(\la), \ov{\nb}(\la),\\
&\Delta(\la), \ov{\Delta}(\la) \onto \pi(L(\la)).
\end{align*}

\begin{lem} \label{lem3}
 For any $\la,\mu \in \Lnua$ and a non-negative integer $d$, we have
 \begin{align}
 	&\Ext_{\ov{\mc O}}^d(\Delta(\la), \ov \nb(\mu)) =\begin{cases} \C, &\mbox{ if $\mu =\la$ and $d=0$;}\\
 	0,&\mbox{ otherwise.}
 	\end{cases} \label{lem3eq::1} \\
 	&\Ext_{\ov{\mc O}}^d(\ov \Delta(\la), \nb(\mu)) =\begin{cases} \C, &\mbox{ if $\mu =\la$ and $d=0$;}\\
 	0,&\mbox{ otherwise.}
 	\end{cases} \label{lem3eq::2}
 \end{align}
\end{lem}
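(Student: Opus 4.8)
The plan is to reduce both \eqref{lem3eq::1} and \eqref{lem3eq::2} to the corresponding $\Ext$-orthogonality statements in the purely even Serre quotient $\ov{\mc O}_\oa$, which are already available from the theory of properly stratified categories. The heart of the argument will be the natural isomorphism
\[
\Ext^d_{\ov{\mc O}}\bigl(K(X),\,\widehat K(Y)\bigr)\;\cong\;\Ext^d_{\ov{\mc O}_\oa}(X,Y),\qquad X,Y\in\ov{\mc O}_\oa,\ d\geq 0,
\]
identifying the pairing between objects in the image of the Kac functor and objects in the image of the dual Kac functor with the ordinary $\Ext$-pairing in $\ov{\mc O}_\oa$. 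Granting this and the identifications $\Delta(\la)\cong K(\Delta_0(\la))$, $\ov{\nb}(\mu)\cong\widehat K(\ov{\nb}_0(\mu))$, $\ov{\Delta}(\la)\cong K(\ov{\Delta}_0(\la))$ and $\nb(\mu)\cong\widehat K(\nb_0(\mu))$, the identity \eqref{lem3eq::1} becomes the assertion that $\Ext^d_{\ov{\mc O}_\oa}(\Delta_0(\la),\ov{\nb}_0(\mu))$ equals $\C$ when $\la=\mu$ and $d=0$ and vanishes otherwise, and \eqref{lem3eq::2} becomes the same assertion for $\Ext^d_{\ov{\mc O}_\oa}(\ov{\Delta}_0(\la),\nb_0(\mu))$. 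Under the equivalence $\pi_0\colon\Opres_\oa\xrightarrow{\cong}\ov{\mc O}_\oa$ of Lemma~\ref{lem01}, the objects $\Delta_0(\la)$, $\ov{\Delta}_0(\la)$, $\nb_0(\mu)$, $\ov{\nb}_0(\mu)$ are precisely the standard, proper standard, costandard and proper costandard objects of the properly stratified structure on $\Opres_\oa$ described in \cite{FKM00,MaSt04}, and the two displayed statements are exactly the $\Ext$-orthogonality between standard and proper costandard objects (respectively, between proper standard and costandard objects) which is built into any properly stratified category.

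To prove the key isomorphism, I will use the Frobenius-extension machinery of Section~\ref{Subsect::Fro} for the two extensions $\g_{\geq 0}\subseteq\g$ and $\g_0=(\g_{\geq 0})_\oa\subseteq\g_{\geq 0}$, both of which satisfy the hypotheses there, and in particular Corollary~\ref{cor::5}. Writing $K=\Ind_{\g_{\geq 0}}^{\g}$ and $\widehat K=\Coind_{\g_{\leq 0}}^{\g}$ (the relevant inflations carry $\mc I^\nu_\oa$ into the corresponding Serre subcategories, since inflation preserves simplicity and $\alpha$-freeness, hence descend to the quotients by Lemma~\ref{lem::unipro}), Corollary~\ref{cor::5} applied to $\g_{\geq 0}\subseteq\g$ gives
\[
\Ext^d_{\ov{\mc O}}\bigl(K(X),\widehat K(Y)\bigr)\;\cong\;\Ext^d_{\ov{\mc O}(\g_{\geq 0})}\bigl(X,\ \Res^{\g}_{\g_{\geq 0}}\widehat K(Y)\bigr).
\]
Next I will establish a Mackey-type isomorphism $\Res^{\g}_{\g_{\geq 0}}\circ\widehat K\cong\Coind_{\g_0}^{\g_{\geq 0}}$ of functors $\ov{\mc O}_\oa\to\ov{\mc O}(\g_{\geq 0})$: using \eqref{eq::adj} to rewrite $\widehat K(Y)\cong\Ind_{\g_{\leq 0}}^{\g}\bigl(D^{-1}\otimes Y\bigr)$ with $D:=\wedge^{\dim(\g/\g_{\leq 0})}(\g/\g_{\leq 0})$, a routine PBW computation of $\Res^{\g}_{\g_{\geq 0}}\circ\Ind^{\g}_{\g_{\leq 0}}$ (based on $\g=\g_{\geq 0}\oplus\g_{-1}=\g_{\leq 0}\oplus\g_1$ and $\g_{\geq 0}\cap\g_{\leq 0}=\g_0$, cf.\ \cite{Co}) turns the right-hand side into $\Ind_{\g_0}^{\g_{\geq 0}}\bigl(D_0^{-1}\otimes Y\bigr)$, where $D_0$ is the restriction of $D$ to $\g_0$; finally \eqref{eq::adj} applied to $\g_0\subseteq\g_{\geq 0}$ identifies this with $\Coind_{\g_0}^{\g_{\geq 0}}Y$, because $\g/\g_{\leq 0}\cong\g_1\cong\g_{\geq 0}/\g_0$ as $\g_0$-modules makes the one-dimensional twisting module the same on both sides. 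All of these isomorphisms are functorial and descend through the quotient functors $\pi$, $\pi_0$ and the canonical quotient functor for $\g_{\geq 0}$ by Lemma~\ref{lem::unipro}. Plugging in and applying the second isomorphism of Corollary~\ref{cor::5} to $\g_0\subseteq\g_{\geq 0}$ (together with $\Res^{\g_{\geq 0}}_{\g_0}X=X$) yields
\[
\Ext^d_{\ov{\mc O}(\g_{\geq 0})}\bigl(X,\Coind_{\g_0}^{\g_{\geq 0}}Y\bigr)\;\cong\;\Ext^d_{\ov{\mc O}_\oa}(X,Y),
\]
and composing the three isomorphisms gives the key isomorphism, completing the proof.

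I expect the main obstacle to be the Mackey-type isomorphism $\Res^{\g}_{\g_{\geq 0}}\circ\widehat K\cong\Coind_{\g_0}^{\g_{\geq 0}}$: one must carry out the PBW/Mackey bookkeeping for $\Res^{\g}_{\g_{\geq 0}}\circ\Ind^{\g}_{\g_{\leq 0}}$ with the correct parities and, crucially, check that it, together with the matching of the two twisting modules $D^{-1}$ and $D_0^{-1}$, is natural in $Y$ and compatible with the three Serre quotients involved. Everything else is a purely formal two-step application of Corollary~\ref{cor::5}. A minor additional point is to make sure that the input we borrow on the even side --- the vanishing $\Ext^{>0}_{\ov{\mc O}_\oa}(\Delta_0(\la),\ov{\nb}_0(\mu))=\Ext^{>0}_{\ov{\mc O}_\oa}(\ov{\Delta}_0(\la),\nb_0(\mu))=0$ --- is stated in the required generality in \cite{FKM00,MaSt04}; it is, being a defining homological property of properly stratified categories.
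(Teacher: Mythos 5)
Your proof is correct, and the overall strategy is the same as the paper's: two applications of the Frobenius adjunctions of Corollary~\ref{cor::5} sandwiching a Mackey-type identity, reducing everything to the $\Ext$-orthogonality between (proper) standard and (proper) costandard objects in the even properly stratified category $\ov{\mc O}_\oa$, which is then borrowed from \cite{FKM00,MaSt04}. The genuine difference is the choice of intermediate subalgebra, and the two routes are strict mirror images of one another. The paper first applies the $(\Res,\Coind)$ adjunction to $\g_{\leq 0}\subseteq\g$, using $\ov\nb(\mu)\cong\Coind_{\g_{\leq 0}}^{\g}(\cdot)$, so the Mackey step it needs is $\Res^{\g}_{\g_{\leq 0}}\circ\Ind^{\g}_{\g_{\geq 0}}\cong\Ind^{\g_{\leq 0}}_{\g_\oa}$ on modules with trivial $\g_1$-action, which is a direct PBW decomposition with no twists; it then finishes with the $(\Ind,\Res)$ adjunction for $\g_\oa\subseteq\g_{\leq 0}$. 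You instead begin with the $(\Ind,\Res)$ adjunction for $\g_{\geq 0}\subseteq\g$, using $\Delta(\la)\cong\Ind_{\g_{\geq 0}}^{\g}(\cdot)$, which forces you to compute $\Res^{\g}_{\g_{\geq 0}}\circ\Coind^{\g}_{\g_{\leq 0}}$. That is heavier: you have to pass through \eqref{eq::adj} twice (once to convert $\Coind$ to $\Ind$, once to convert back) and observe that the one-dimensional twists $\wedge^{\mathrm{top}}(\g/\g_{\leq 0})$ and $\wedge^{\mathrm{top}}(\g_{\geq 0}/\g_\oa)$ agree (both are $\wedge^{\mathrm{top}}\g_1$ as a $\g_\oa$-module) so that they cancel. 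You do state and use these points correctly, so the argument goes through. The one thing you gain is an explicit functor-level isomorphism $\Ext^d_{\ov{\mc O}}(K(X),\widehat K(Y))\cong\Ext^d_{\ov{\mc O}_\oa}(X,Y)$, from which both \eqref{lem3eq::1} and \eqref{lem3eq::2} drop out at once; the paper instead computes the chain for \eqref{lem3eq::1} and says \eqref{lem3eq::2} is similar, which is functionally the same but avoids the extra bookkeeping your Mackey step requires.
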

\begin{proof}
	Let $\pi_{\leq 0}: \mc O(\g_{\leq 0})\rightarrow\ov{\mc O}(\g_{\leq 0})$ denote the quotient functor.  
 For any $d\geq 0$, using Corollary \ref{cor::5},	we calculate
\begin{align*}
	&\Ext_{\ov{\mc O}}^d(\Delta(\la), \ov \nb(\mu))\cong \Ext_{\ov{\mc O}(\g_{\leq 0})}^d(\Res_{\g_{\leq 0}}^\g(\Delta(\la)), \pi_{\leq 0}(\ov \nb_0'(\mu)))\\
    &\cong \Ext_{\ov{\mc O}(\g_{\leq 0})}^d(\pi_{\leq 0}(\Res_{\g_{\leq 0}}^\g \Ind_{\g_{\geq 0}}^\g\Delta_0'(\la)), \pi_{\leq 0}(\ov \nb_0'(\mu)))\\
	&\cong\Ext_{\ov{\mc O}(\g_{\leq 0})}^d(\Ind_{\g_\oa}^{\g_{\leq 0}}(\Delta_0(\la)), \pi_{\leq 0}(\ov \nb_0'(\mu)))\\
	&\cong\Ext_{\ov{\mc O}_\oa}^d(\Delta_0(\la),  \ov \nb_0(\mu)).
\end{align*} Now Equation~\eqref{lem3eq::1} follows from classical results; see \cite{MaSt04} and \cite{FKM00}. Equation \eqref{lem3eq::2} can be proved in similar fashion.
\end{proof}
\begin{prop} \label{prop4} For $\la,\mu\in \Lnua$, we have the following:
	\begin{itemize}
		\item[(1)] The standard object $\Delta(\la)$ has a filtration, all subquotients of which are isomorphic to $\ov \Delta(\la)$.
		\item[(2)]  $[\ov \Delta(\la): \pi(L(\la))]=1$.
		\item[(3)] $[\ov \Delta(\la): \pi(L(\mu))]>0$ implies that $\mu<\la$.
	\end{itemize}
\end{prop}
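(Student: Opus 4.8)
I would prove (2) and (3) by first identifying the proper standard object $\ov{\Delta}(\la)$ explicitly, and reduce (1) to the corresponding, classically known, statement in $\ov{\mc O}_\oa$. For (2) and (3): since $\la\in\Lnua$, the module $\ov{\Delta}'_0(\la)=\Ind_{\fp_\oa}^{\g_\oa}L(\mf l_\nu,\la)$ is the $\g_\oa$-Verma module $M_0(\la)$, as recorded in Section~\ref{sect::51} (the simple $\mf l_\nu$-module $L(\mf l_\nu,\la)$ coincides with its Verma module since $\la$ is $W_\nu$-anti-dominant). By transitivity of induction and the PBW theorem, $\Ind_{\g_{\geq 0}}^{\g}M_0(\la)\cong\Ind_{\mf b}^{\g}\C_\la=M(\la)$, the $\g$-Verma module (the inflation of $M_0(\la)$ to $\g_{\geq 0}$ is $\Ind_{\mf b}^{\g_{\geq 0}}\C_\la$); hence $\ov{\Delta}(\la)=\pi(M(\la))$. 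Since $\pi$ is exact and, by Lemma~\ref{lem01}, sends $L(\mu)$ to a simple object for $\mu\in\Lnua$ and to $0$ for $\mu\in\Lambda\setminus\Lnua$, applying $\pi$ to a composition series of $M(\la)$ and deleting the vanishing subquotients gives $[\ov{\Delta}(\la):\pi(L(\mu))]=[M(\la):L(\mu)]$ for every $\mu\in\Lnua$. Statement (2) is then the familiar equality $[M(\la):L(\la)]=1$ (immediate from $\dim M(\la)_\la=1$, since no composition factor $L(\mu)$ with $\mu<\la$ has $\la$ as a weight), and (3) follows because every composition factor $L(\mu)$ of the highest weight module $M(\la)$ has $\mu\le\la$; combined with (2), any $\mu\ne\la$ with $[\ov{\Delta}(\la):\pi(L(\mu))]>0$ satisfies $\mu<\la$.

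For (1), recall from Section~\ref{sect::51} that $\Delta(\la)\cong K(\Delta_0(\la))$ and $\ov{\Delta}(\la)\cong K(\ov{\Delta}_0(\la))$, where $K=\Ind_{\g_{\geq 0}}^{\g}$ is the exact Kac functor $\ov{\mc O}_\oa\to\ov{\mc O}$. It therefore suffices to produce, in $\ov{\mc O}_\oa$, a filtration of $\Delta_0(\la)$ whose subquotients are all isomorphic to $\ov{\Delta}_0(\la)$: applying the exact functor $K$ to such a filtration term by term then yields a filtration of $\Delta(\la)$ with every subquotient isomorphic to $K(\ov{\Delta}_0(\la))=\ov{\Delta}(\la)$. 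The existence of a $\ov{\Delta}_0(\la)$-filtration of $\Delta_0(\la)$ in $\ov{\mc O}_\oa\cong\Opres_\oa$ is part of the properly stratified structure of that category and is classical, see \cite{FKM00,MaSt04}; concretely, $\Delta_0'(\la)=\Ind_{\fp_\oa}^{\g_\oa}P(\mf l_\nu,\la)$ inherits a Verma flag from one of $P(\mf l_\nu,\la)$ (induction from the parabolic $\fp_\oa$ is exact), the Verma subquotients $M_0(\mu)$ that occur all have $\mu\in W_\nu\cdot\la$, and $\pi_0(M_0(\mu))\cong\pi_0(M_0(\la))=\ov{\Delta}_0(\la)$ for each such $\mu$.

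I would invoke \cite{FKM00,MaSt04} for this last point rather than reprove it; that cited assertion is the only non-elementary ingredient in the argument. The remaining steps — exactness of the Kac functor $K$ on the Serre quotients, the transitivity-of-induction computation identifying $\Ind_{\g_{\geq 0}}^{\g}M_0(\la)$ with $M(\la)$, and the bookkeeping of composition multiplicities through the exact quotient functor $\pi$ — are all routine.
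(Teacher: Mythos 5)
Your proposal is correct and follows essentially the same route as the paper's proof: you prove (2)–(3) by identifying $\ov{\Delta}(\la)=\pi(M(\la))$ and tracking composition factors through the exact functor $\pi$, and you prove (1) by applying the exact Kac functor $K$ to the known $\ov{\Delta}_0$-filtration of $\Delta_0(\la)$ in $\ov{\mc O}_\oa$, citing \cite{FKM00,MaSt04}. The only difference is that you spell out the (correct but routine) identification $\Ind_{\g_{\geq 0}}^{\g}M_0(\la)\cong M(\la)$ via transitivity of induction, which the paper leaves implicit.
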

\begin{proof} For a reductive Lie algebra $\g=\g_\oa$,  Claims (1)-(3) are established in \cite[Section 2.6]{MaSt04}.
	
	Suppose that $\g$ is an  arbitrary quasireductive Lie superalgebra of type I. Claim (1) follows by applying the Kac functor $K(-)$. 
	Since $\ov \Delta(\la)= \pi(M(\la))$ in $\ov{\mc O}$,
	applying the quotient functor $\pi$ we have
\begin{align*}
&[\ov \Delta(\la): \pi(L(\mu))] = [M(\la): L(\mu)].
\end{align*}  Claims (2) and (3)  follow.
\end{proof}

We let $[\ov{\mc O}]$ denote the Grothedieck group of $\ov{\mc O}$. Then, by definition, the set of simple objects $\{[\pi(L(\la))]|~\la\in \Lnua\}$ forms a $\Z$-basis in $[\ov{\mc O}]$.  From Proposition \ref{prop4}, it follows that elements in the following two sets
\begin{align}
	&\{[\ov{\Delta}(\la)]|~\la\in \Lnua\}\quad\text{ and }\quad\{[\ov{\nb}(\la)]|~\la\in \Lnua\}, \label{eq::61}
\end{align} are $\Z$-independent in $[\ov{\mc O}]$, respectively.

We now describe the endomorphism algebra $\End_\OI(\Delta(\la))$ of the standard object $\Delta(\la)$. In the case of the reductive Lie algebra $\g=\g_\oa$ with a regular weight $\la\in \Lnua$,  this endomorphism algebra is isomorphic to the coinvariant algebra for $\mf l_\nu$ by  \cite[Theorem 6.1 and Remark 6.2]{MaSt08}:
	 \begin{cor}For any $\la\in \Lnua$, we have
		\begin{align*}
		&\End_{\OI}(\Delta(\la))\cong\End_{\mf l_\nu}(P(\mf l_\nu, \la)).
		\end{align*}
\end{cor}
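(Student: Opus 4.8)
The plan is to reduce the statement to the reductive case $\g=\g_\oa$ already recorded via \cite{MaSt08}, using the fact that the standard object $\Delta(\la)$ in $\OI$ arises by applying the Kac functor $K(-)=\Ind_{\g_{\geq 0}}^\g(-)$ to the corresponding standard object $\Delta_0(\la)\in\ov{\mc O}_\oa$. More precisely, we have $\Delta(\la)\cong K(\Delta_0(\la))$ and $\Delta_0(\la)=\pi_0(\Delta_0'(\la))$ with $\Delta_0'(\la)=\Ind_{\fp_\oa}^{\g_\oa}P(\mf l_\nu,\la)$. The first step is therefore to express $\End_{\OI}(\Delta(\la))$ in terms of data over $\g_\oa$.

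First I would use the adjunction $(\Ind_{\g_{\geq 0}}^\g, \Res_{\g_{\geq 0}}^\g)$ on the level of the quotient categories, which holds by Lemma~\ref{lem::1} and Lemma~\ref{lem01} (the induction/restriction/coinduction functors descend to adjoint pairs between $\ov{\mc O}(\g_{\geq 0})$ and $\OI$). This gives
\begin{align*}
\End_{\OI}(\Delta(\la))=\Hom_{\OI}(K(\Delta_0(\la)),K(\Delta_0(\la)))\cong\Hom_{\ov{\mc O}(\g_{\geq 0})}(\Delta_0(\la),\Res_{\g_{\geq 0}}^\g K(\Delta_0(\la))).
\end{align*}
Now $\Res_{\g_{\geq 0}}^\g\Ind_{\g_{\geq 0}}^\g(-)\cong \Lambda^\bullet(\g_{-1})\otimes(-)$ as $\g_{\geq 0}$-modules (PBW), and since $\g_{-1}$ is annihilated in the standard object while $\g_1$ acts trivially on the image of $\mc O_\oa$, the relevant $\Hom$-space over $\g_{\geq 0}$ collapses: the $\g_1$-invariants force everything to the bottom graded piece $\Lambda^0(\g_{-1})=\C$, so that $\Hom_{\ov{\mc O}(\g_{\geq 0})}(\Delta_0(\la),\Res_{\g_{\geq 0}}^\g K(\Delta_0(\la)))\cong\Hom_{\ov{\mc O}_\oa}(\Delta_0(\la),\Delta_0(\la))$. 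Here I would be careful about the grading/weight bookkeeping: the point is that any $\g_{\geq 0}$-homomorphism out of $\Delta_0(\la)$ into $\Lambda^\bullet(\g_{-1})\otimes\Delta_0(\la)$ lands in degree $0$ because $\g_1$ kills the generating subspace of $\Delta_0(\la)$ and acts compatibly on the tensor factors, while the higher exterior powers $\Lambda^{>0}(\g_{-1})$ contribute only weights strictly below. This is the step I expect to be the main obstacle — making the ``bottom-piece collapse'' argument rigorous in the Serre quotient $\OI$ (as opposed to $\mc O$ itself), where one cannot reason naively with weight spaces; one may instead transport to $\Opres$ via Lemma~\ref{lem01} and argue there, or invoke exactness of $K(-)$ together with Proposition~\ref{prop4}(1) to control composition factors.

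Finally, once $\End_{\OI}(\Delta(\la))\cong\End_{\ov{\mc O}_\oa}(\Delta_0(\la))$ is established, I would identify the right-hand side. By Lemma~\ref{lem01} applied to $\g_\oa$, $\ov{\mc O}_\oa\cong\mc O_\oa^{\vpre}$ and $\pi_0$ is full and faithful on the projectively presentable subcategory, so $\End_{\ov{\mc O}_\oa}(\Delta_0(\la))\cong\End_{\mc O_\oa}(\Delta_0'(\la))=\End_{\g_\oa}(\Ind_{\fp_\oa}^{\g_\oa}P(\mf l_\nu,\la))$; by adjunction $(\Ind_{\fp_\oa}^{\g_\oa},\Res)$ and the fact that restricting $\Ind_{\fp_\oa}^{\g_\oa}P(\mf l_\nu,\la)$ back to $\mf l_\nu$ recovers $P(\mf l_\nu,\la)$ up to lower terms that contribute no endomorphisms (the parabolic induction of a projective object in $\mc O(\mf l_\nu)$ is its own ``Verma-type'' thickening), this is $\End_{\mf l_\nu}(P(\mf l_\nu,\la))$. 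Alternatively, and perhaps more cleanly, one cites \cite[Theorem~6.1 and Remark~6.2]{MaSt08} directly for $\g_\oa$ with $\la$ regular, noting that the argument there extends verbatim to singular $\la$ since $\Delta_0'(\la)$ and $P(\mf l_\nu,\la)$ are defined for all $\la\in\Lnua$, yielding $\End_{\ov{\mc O}_\oa}(\Delta_0(\la))\cong\End_{\mf l_\nu}(P(\mf l_\nu,\la))$. Stitching the two isomorphisms together gives the claim.
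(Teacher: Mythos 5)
Your strategy is genuinely different from the paper's. The paper's argument is short and self-contained: it uses the Appendix identification of $\Delta'(\la):=K(\Delta_0'(\la))=\Ind_{\mf p}^\g P(\mf l_\nu,\la)$ with the explicit quotient $Q(\la)$ of $P(\la)$ by traces of $P(\mu)$, $\mu\in\Lnua_{>\la}$, so that $\Delta'(\la)\in\Opres$ and $\End_\OI(\Delta(\la))\cong\End_{\mc O}(\Delta'(\la))$ by Lemma~\ref{lem01}; then it sandwiches this endomorphism algebra between the injection $\End_{\mf l_\nu}(P(\mf l_\nu,\la))\hookrightarrow\End_{\mc O}(\Delta'(\la))$ coming from $\Ind_{\mf p}^\g$ and the upper bound $\dim\End_{\mc O}(\Delta'(\la))\leq\dim\Hom_\g(P(\la),\Delta'(\la))=[\Delta'(\la):L(\la)]=[P(\mf l_\nu,\la):L(\mf l_\nu,\la)]$, the last equality by Proposition~\ref{prop4}. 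This dimension count handles regular and singular $\la\in\Lnua$ uniformly and never requires computing $\Hom$ spaces inside a $\g_{\geq 0}$-category. Your route instead runs the $(K,\Res)$ adjunction to reduce to $\End_{\ov{\mc O}_\oa}(\Delta_0(\la))$ and then to the reductive case, which would give a more conceptual proof if completed.

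However, two points need repair. First, the ``bottom-piece collapse'' as you state it is imprecise: $\Res_{\g_{\geq 0}}^\g K(\Delta_0'(\la))$ is not isomorphic to $\Lambda^\bullet(\g_{-1})\otimes\Delta_0'(\la)$ as a $\g_{\geq 0}$-module ($\g_1$ lowers the $\Lambda(\g_{-1})$-degree nontrivially, so one only gets a $\g_{\geq 0}$-filtration with that associated graded); the correct argument is that $\Delta_0'(\la)$ is cyclically generated in weight $\la$, and the weight-$\la$ space of each filtration subquotient $\Lambda^k(\g_{-1})\otimes\Delta_0'(\la)$, $k\geq 1$, is zero because it lives in strictly negative $\Z$-degree while $\Delta_0'(\la)-\la$ is concentrated in degree zero. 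Note that ``weights strictly below'' in the partial order alone would not suffice, since $\Delta_0'(\la)$ contains weights strictly above $\la$, so one must use the type-I $\Z$-grading. You do correctly flag the Serre-quotient transport as the main obstacle, but the suggestions you give are not carried out; the cleanest fix — transporting to $\Opres$ via Lemma~\ref{lem01} after observing $\Delta'(\la)\in\Opres$ — is precisely the paper's first step, so your argument would converge with the paper's at that point. Second, your final reduction to $\End_{\mf l_\nu}(P(\mf l_\nu,\la))$ rests either on a heuristic (``lower terms contribute no endomorphisms'') or on \cite[Theorem~6.1, Remark~6.2]{MaSt08}, which — as the paper explicitly notes — covers only regular $\la$. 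The assertion that ``the argument there extends verbatim to singular $\la$'' is exactly what one needs to prove, and you would have to reproduce the dimension-count argument (or an equivalent) over $\g_\oa$ to fill this in, which is again what the paper already does uniformly for $\g$.
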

\begin{proof} Set $\Delta'(\la):=K(\Delta_0'(\la)) = \Ind_{\mf p}^{\g} P(\mf l_\nu,\la)$. By the proof of Proposition \ref{prop::45} in Appendix \ref{sect::81},  $\Delta'(\la)$  is the quotient of $P(\la)$ modulo the sum of images of  homomorphisms $P(\mu)\rightarrow P(\la)$, for $\mu\in \Lnua$ with $\mu >\la$.  Therefore, we have  $\End_{\OI}(\Delta(\la)) = \End_{\mc O}(\Delta'(\la))$ by Lemma \ref{lem01}. Applying the parabolic induction, we get an inclusion
	\begin{align}
	&\End_{\mf l_\nu}(P(\mf l_\nu, \la)) \hookrightarrow \End_\OI(\Delta(\la)). \label{eq::59}
	\end{align}

On the other hand, applying the functor $\Hom_{\g}(-, \Delta'(\la))$ to the canonical quotient $P(\la)\rightarrow \Delta'(\la)$, we obtain
\begin{align*}
&\dim \End_\OI(\Delta(\la)) \leq \dim \Hom_{\g}(P(\la), \Delta'(\la)) =[\Delta'(\la): L(\la)],
\end{align*}
which coincides with the multiplicity  $[P(\mf l_\nu, \la):L(\mf l_\nu, \la)] =\dim\End_{\mf l_\nu}(P(\mf l_\nu, \la))$ by Proposition \ref{prop4}.
Consequently, the inclusion in \eqref{eq::59} is an isomorphism.
\end{proof}

\subsection{BGG reciprocity}

 An object $M$ in $\mc \OI$ is said to have a $\Delta$-flag  if $M$ has a filtration subquotients of which are isomorphic to objects in $\{\Delta(\la)|~\la\in \Lnua\}$.    Similarly, we define objects that admit $\ov \Delta$-flag, $\nb$-flag and $\ov \nb$-flag, respectively. We denote by $\mc F(  \Delta)$, $\mc F(\ov \Delta)$,  $\mc F(\nb)$ and $\mc F(\ov \nb)$ the full subcategories of $\OI$ of objects which admit a $\Delta$-flag,
 or a $\ov \Delta$-flag, or a $\nb$-flag or a  $\ov \nb$-flag, respectively. Similarly we define $\mc F(\Delta_0)$, $\mc F(\ov{\Delta}_0)$, and so on.  The following statement is
 the key homological property of stratified categories.

\begin{thm} \label{thm5}
	We have \begin{itemize}
		\item[(1)] $\mc F(\ov \Delta) = \{M\in \mc \OI|~\Ext^d_\OI(M, \mc F(\nb))=0, \,\text{for any }d\geq 1\}.$
		\item[(2)] $\mc F(\Delta) = \{M\in \mc \OI|~\Ext^d_\OI(M, \mc F(\ov \nb))=0, \,\text{for any }d\geq 1\}.$
	\end{itemize}
\end{thm}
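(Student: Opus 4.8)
The plan is to reduce everything to the corresponding statement for the properly stratified category $\ov{\mc O}_\oa$ (equivalently, the category $\mc O^{\vpre}_\oa$ of $\cite{MaSt04,FKM00}$) via the Frobenius extension structure established in Section~\ref{Subsect::Fro}, together with the orthogonality relations in Lemma~\ref{lem3}. I will prove (1) in detail; (2) is entirely parallel, swapping the roles of standard/proper standard and costandard/proper costandard and using the second adjoint pair in \eqref{eq::338}.

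First I would establish the inclusion ``$\subseteq$''. If $M\in \mc F(\ov\Delta)$, then $M$ has a finite filtration with subquotients among the $\ov\Delta(\la)$, $\la\in\Lnua$; so it suffices to check $\Ext^d_\OI(\ov\Delta(\la),\nb(\mu))=0$ for $d\geq 1$ and all $\la,\mu$, and this is exactly \eqref{lem3eq::2} of Lemma~\ref{lem3}. Since $\Ext$ is additive in each variable and a long exact sequence argument propagates vanishing along the filtration of $M$ (and, dually, along any $\nb$-flag of the target), the inclusion follows. For the reverse inclusion ``$\supseteq$'', I would argue by downward induction on the poset $(\Lnua,\leq)$ restricted to the (finite) set of weights occurring in the support of a given $M$ satisfying $\Ext^{\geq 1}_\OI(M,\mc F(\nb))=0$. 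Let $\la$ be maximal among weights $\mu$ with $[\ov\Delta(\mu)':\cdots]$—more precisely, maximal such that $\Hom_\OI(M,\nb(\la))\neq 0$ or such that $\pi(L(\la))$ appears in the top of $M$; using $\pi(L(\la))\hookrightarrow\ov\nb(\la)$ and the projection $\ov\Delta(\la)\onto\pi(L(\la))$, together with Lemma~\ref{lem3}, one shows that the canonical evaluation map gives a short exact sequence $0\to K\to M\to \ov\Delta(\la)^{\oplus k}\to 0$ (or $0\to \ov\Delta(\la)^{\oplus k}\to M\to K\to 0$, depending on the chosen direction), where the multiplicity $k$ is computed by a Hom-space against $\nb(\la)$ and vanishing of $\Ext^1$ against $\mc F(\nb)$ guarantees the sequence splits off correctly and that $K$ again satisfies the $\Ext^{\geq1}$-vanishing hypothesis with strictly smaller support. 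Induction then finishes the argument.

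The technical engine at each inductive step is the pair of adjunctions and the Frobenius reciprocity isomorphisms of Corollary~\ref{cor::5}: applying $\Res^\g_{\g_{\leq 0}}$ (resp. $\Res^\g_{\g_{\geq 0}}$) converts questions about $\Ext_\OI(\Delta$-type objects$,\nb$-type objects$)$ into the analogous questions in $\ov{\mc O}(\g_{\leq 0})$ and then, via a second application of Corollary~\ref{cor::5}, in $\ov{\mc O}_\oa$, exactly as in the computation in the proof of Lemma~\ref{lem3}. Because $\Delta(\la)\cong K(\Delta_0(\la))$ and $\nb(\la)\cong \widehat K(\nb_0(\la))$, the Kac and dual Kac functors are (by construction and by \eqref{eq::339}) compatible with these adjunctions, so a $\ov\Delta$-flag in $\ov{\mc O}_\oa$ is carried to a $\ov\Delta$-flag in $\OI$ and conversely the detection of flags descends. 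Thus the reverse inclusion for $\OI$ is deduced from the already-known reverse inclusion for $\ov{\mc O}_\oa$, which is part of the properly stratified formalism of $\cite[Section~2.6]{MaSt04}$.

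The main obstacle I anticipate is the bookkeeping in the inductive step: one must be careful that ``maximal weight in the support'' behaves well, that the multiplicity space is correctly identified (here the orthogonality $\Ext^\bullet_\OI(\Delta(\la),\ov\nb(\mu))=\delta_{\la\mu}\delta_{d0}\C$ of \eqref{lem3eq::1} is what pins down Hom and kills higher obstructions), and that the kernel/cokernel $K$ genuinely inherits the homological vanishing — this last point needs the long exact sequence in $\Ext_\OI(-,\nb(\mu))$ applied to $0\to K\to M\to\ov\Delta(\la)^{\oplus k}\to 0$ together with \eqref{lem3eq::2}. A secondary subtlety is finiteness: objects of $\OI$ need not a priori have finite length, but since every object of $\mc O$ does and $\pi$ is exact, every object of $\OI$ has finite length, so the induction on support is legitimate; I would note this explicitly. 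Once these points are in place the proof of (2) is obtained by the evident dualization, invoking the second adjunction in \eqref{eq::338} and $\Ext^\bullet_\OI(\ov\Delta(\la),\nb(\mu))=\delta_{\la\mu}\delta_{d0}\C$.
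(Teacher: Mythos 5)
Your outline for the inclusion $\mc F(\ov\Delta)\subseteq\{M:\Ext^{\geq 1}_\OI(M,\mc F(\nb))=0\}$ is fine: it is exactly Lemma~\ref{lem3} together with d\'evissage, as in the paper.

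For the reverse inclusion, however, the argument has a genuine gap: you propose to pick $\la$ \emph{maximal} among weights with $\Hom_\OI(M,\pi(L(\la)))\neq 0$ (or with $\Hom_\OI(M,\nb(\la))\neq 0$), whereas the paper's proof picks $\la$ \emph{minimal}. The minimal choice is what makes the peeling step work. Indeed, setting $N(\la)=\rad\,\ov\Delta(\la)$, Proposition~\ref{prop4} says that $[N(\la):\pi(L(\gamma))]>0$ forces $\gamma<\la$; with $\la$ minimal this yields $\Hom_\OI(M,N(\la))=0$, and then Lemma~\ref{lem::8}(1) -- which you do not invoke at all -- upgrades the hypothesis $\Ext^1_\OI(M,\nb(\mu))=0$ to $\Ext^1_\OI(M,\pi(L(\mu)))=0$ for $\mu<\la$, hence to $\Ext^1_\OI(M,N(\la))=0$. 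Together these give $\Hom_\OI(M,\ov\Delta(\la))\cong\Hom_\OI(M,\pi(L(\la)))$, so the canonical surjection $M\onto\pi(L(\la))$ lifts to a \emph{surjection} $M\onto\ov\Delta(\la)$ (its image is a submodule of $\ov\Delta(\la)$ not contained in the radical, hence all of $\ov\Delta(\la)$), and the kernel inherits the $\Ext^{\geq1}$-vanishing by the long exact sequence. With the maximal choice none of this works: the composition factors of $N(\la)$ are indexed by weights strictly below $\la$, which are precisely the ones you have \emph{not} excluded from the top of $M$, so $\Hom_\OI(M,N(\la))$ need not vanish and there is no reason for a lift $M\to\ov\Delta(\la)$ of $M\onto\pi(L(\la))$ to be surjective, nor for a clean sequence $0\to K\to M\to\ov\Delta(\la)^{\oplus k}\to 0$ to exist.

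A secondary issue is your proposed reduction to $\ov{\mc O}_\oa$ via the Frobenius structure: Corollary~\ref{cor::5} does give the Ext-orthogonality of Lemma~\ref{lem3} by pushing computations down to $\ov{\mc O}_\oa$, and this is how the paper uses it. But the further claim that ``the detection of flags descends'' -- i.e.\ that $M$ has a $\ov\Delta$-flag because $\Res\,M$ has a $\ov\Delta_0$-flag -- is not a formal consequence of Frobenius reciprocity and is not established. The Kac functor $K$ does send $\Delta_0(\la)$ to $\Delta(\la)$, but $\Res\,\Delta(\la)\cong U(\g_{-1})\otimes\Delta_0(\la)$ has a long $\Delta_0$-flag, and knowing that $\Res\,M$ has such a flag does not by itself produce one on $M$. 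The paper avoids this entirely: once Lemma~\ref{lem3} is in hand, the proof of Theorem~\ref{thm5} is a purely internal induction in $\OI$ using Lemma~\ref{lem::8}, with no further recourse to $\ov{\mc O}_\oa$. You should replace ``maximal'' by ``minimal,'' supply the analogue of Lemma~\ref{lem::8} (or cite it), and drop the unproved flag-descent claim.
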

To prove Theorem \ref{thm5}, we need the following lemma.
\begin{lem}\label{lem::8} Let $\la, \mu \in \Lnua$.  \begin{itemize}
		\item[(1)] Suppose that $$\Ext^1_\OI(M,\nb(\mu))=\Hom_{\OI}(M, \pi(L(\gamma))) =0,$$ for any $\gamma \leq \mu$. Then $\Ext^1_{\OI}(M, \pi(L(\mu))=0$.
		\item[(2)] Suppose that $$\Ext^1_\OI(M,\ov \nb(\mu))=\Hom_{\OI}(M, \pi(L(\gamma))) =0,$$ for any $\la \geq \mu$ and $\la>\gamma$. Then $\Ext^1_{\OI}(M, \pi(L(\mu)))=0$, for any $\la \geq \mu$.
	\end{itemize}
\end{lem}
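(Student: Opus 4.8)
The plan is to deduce both parts from the left exactness of $\Hom_\OI(M,-)$, applied to the short exact sequences arising from the embeddings $\pi(L(\la))\hookrightarrow\nb(\la)$ and $\pi(L(\la))\hookrightarrow\ov\nb(\la)$ recorded just before Lemma~\ref{lem3}, combined with the control on composition factors of (proper) costandard objects. Concretely, I will use the costandard analogue of Proposition~\ref{prop4}, obtained from the $\g=\g_\oa$ case in \cite{MaSt04,FKM00} by applying the dual Kac functor $\widehat K$, exactly as Proposition~\ref{prop4} is obtained via $K$: every composition factor of $\nb(\mu)$ is of the form $\pi(L(\gamma))$ with $\gamma\le\mu$, every composition factor of $\ov\nb(\mu)$ is of the form $\pi(L(\gamma))$ with $\gamma\le\mu$, and $[\ov\nb(\mu):\pi(L(\mu))]=1$.

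For part~(1), I would consider the short exact sequence $0\to\pi(L(\mu))\to\nb(\mu)\to C\to 0$ with $C:=\nb(\mu)/\pi(L(\mu))$. By the above, every composition factor of $C$ is of the form $\pi(L(\gamma))$ with $\gamma\le\mu$, and by hypothesis $\Hom_\OI(M,\pi(L(\gamma)))=0$ for every such $\gamma$ (the case $\gamma=\mu$ included). An easy induction on the composition length of $C$, using left exactness of $\Hom_\OI(M,-)$, then gives $\Hom_\OI(M,C)=0$. Applying $\Hom_\OI(M,-)$ to the sequence produces the exact fragment $\Hom_\OI(M,C)\to\Ext^1_\OI(M,\pi(L(\mu)))\to\Ext^1_\OI(M,\nb(\mu))$, whose outer terms vanish — the first by the previous sentence, the second by hypothesis — whence $\Ext^1_\OI(M,\pi(L(\mu)))=0$.

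For part~(2), I would fix $\mu\le\la$ and run the same argument with the proper costandard object: from $0\to\pi(L(\mu))\to\ov\nb(\mu)\to C'\to 0$, where $C':=\ov\nb(\mu)/\pi(L(\mu))$, the multiplicity-one statement $[\ov\nb(\mu):\pi(L(\mu))]=1$ forces every composition factor of $C'$ to be $\pi(L(\gamma))$ with $\gamma<\mu$, hence $\gamma<\la$. The hypothesis gives $\Hom_\OI(M,\pi(L(\gamma)))=0$ for all of these, so $\Hom_\OI(M,C')=0$ as before, and since $\Ext^1_\OI(M,\ov\nb(\mu))=0$ by hypothesis, the long exact sequence again yields $\Ext^1_\OI(M,\pi(L(\mu)))=0$. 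As $\mu\le\la$ was arbitrary, this is the asserted conclusion.

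The substance of the argument lies entirely in the bookkeeping of weights, and the point worth flagging is precisely why part~(1) succeeds with the full costandard $\nb(\mu)$ whereas part~(2) requires the proper costandard $\ov\nb(\mu)$: in~(1) the hypothesis also covers $\gamma=\mu$, so it is harmless should $\pi(L(\mu))$ reoccur in $C$, while in~(2) the $\Hom$-vanishing is only available for $\gamma<\la$, which is why one must use that $\pi(L(\mu))$ appears in $\ov\nb(\mu)$ with multiplicity one to ensure it does not reoccur in $C'$. I do not anticipate a genuine obstacle; the only thing to verify carefully is the costandard analogue of Proposition~\ref{prop4}, which is routine given the $\ov{\mc O}_\oa$-statements already cited.
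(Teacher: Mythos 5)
Your proof is correct and is essentially the paper's own argument: both take the short exact sequences $0\to\pi(L(\mu))\to\nb(\mu)\to E(\mu)\to 0$ and $0\to\pi(L(\mu))\to\ov\nb(\mu)\to F(\mu)\to 0$, apply $\Hom_\OI(M,-)$, and kill the outer terms of the resulting exact fragment $\Hom_\OI(M,-)\to\Ext^1_\OI(M,\pi(L(\mu)))\to\Ext^1_\OI(M,-)$ using the hypothesis together with the fact that composition factors of $E(\mu)$ have highest weight $\gamma\le\mu$ and those of $F(\mu)$ have $\gamma<\mu$. The only stylistic difference is that you spell out the induction on composition length and the dualization of Proposition~\ref{prop4} to (proper) costandards, which the paper invokes tacitly; your observation about why $\nb$ suffices in part (1) while multiplicity-one for $\ov\nb$ is genuinely needed in part (2) is exactly the point the paper's two cases encode.
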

\begin{proof} First, we prove Claim~(1).
Consider a short exact sequence
\begin{align}
	&0\rightarrow \pi(L(\mu)) \rightarrow \nb(\mu) \rightarrow E(\mu)\rightarrow 0 \label{eq::388}
\end{align} in $\OI$. Applying the functor $\Hom_{\OI}(M,-)$ to \eqref{eq::388}, we obtain the exact sequence
\begin{align*}
&\Hom_{\OI}(M, E(\mu)) \rightarrow \Ext^1_{\OI}(M, \pi(L(\mu)))\rightarrow \Ext^1_{\OI}(M, \nb(\mu)).
\end{align*} By assumptions, we have $\Ext^1_{\OI}(M, \nb(\mu))=0$. Also, if $[E(\mu):\pi(L(\gamma))]>0$, then $\mu \geq \gamma$, and so $\Hom_\OI(M, E(\mu)) =0.$ This implies that $\Ext^1_\OI(M,\pi(L(\mu))) =0$.

Claim~(2) can be proved similarly.
\end{proof}

\begin{proof}[Proof of Theorem \ref{thm5}] We shall only prove Claim~(1). The proof of Claim~(2) is similar and will be omitted.
	
	   By Lemma \ref{lem3}, we only need to show that $M \in \mc F(\ov \Delta)$ provided that $\Ext^d_{\ov{\mc O}}(M, \mc F(\nb))=0$, for all $d\geq 1$. To see this, we pick    $\la\in \Lnua$ such that $\la$ is minimal with the property $\Hom_{\OI}(M, \pi(L(\la)))\neq 0.$
    We let $N(\la)$ be the radical of $\ov \Delta(\la)$, namely,  we have the following short exact sequence in $\OI$: \begin{align}
   	&0\rightarrow N(\la) \rightarrow \ov \Delta(\la) \rightarrow \pi(L(\la)) \rightarrow 0. \label{eq::38}
   \end{align}
  We are going to show that \begin{align}
  	&\Hom_{\OI}(M, \ov \Delta(\la)) \cong \Hom_{\OI}(M, \pi(L(\la))). \label{eq::39}
  \end{align} To see this, we apply the functor $\Hom_{\OI}(M,-)$ to  \eqref{eq::38} and obtain the following exact sequence:
  \begin{align*}
  	&0\rightarrow \Hom_{\OI}(M, N(\la))\rightarrow \Hom_{\OI}(M, \ov \Delta(\la))\rightarrow \Hom_{\OI}(M, \pi(L(\la)))\rightarrow \Ext^1_{\OI}(M, N(\la)).
  \end{align*} It follows from  Proposition \ref{prop4} that $[N(\la): \pi(L(\mu))] =0$ unless $\mu <\la$.  By the minimality of $\la$  and Lemma \ref{lem::8}, we have $\Hom_{\OI}(M, N(\la))=0$ and $\Ext^1_{\OI}(M, N(\la))=0$. Consequently, \eqref{eq::39} holds and the quotient map $M\rightarrow \pi(L(\la))$ factors through the canonical quotient $\ov \Delta(\la)\rightarrow \pi(L(\la))$. 
  Then, we obtain a short exact sequence
 \begin{align*}
 	&0\rightarrow X \rightarrow M \rightarrow \ov \Delta(\la) \rightarrow 0
 \end{align*}in  $\OI$. For any $d\geq 1$ and $Y\in \mc F(\nb)$, applying the functor $\Hom_\OI(-, Y)$, we obtain an exact sequence
\begin{align*}
	&\Ext^d_\OI(M , Y)\rightarrow \Ext^d_\OI(X, Y) \rightarrow \Ext^{d+1}_\OI(\ov \Delta(\la), Y).
\end{align*} It follows that $\Ext^d_\OI(X, Y)=0$ and, by induction, we conclude that $X\in \mc F(\ov \Delta)$. This completes the proof of Claim~(1). 
 \end{proof}

\begin{cor}
	$\mc F(\ov \Delta)$ and $\mc F(\Delta)$ are closed under taking direct summands.
\end{cor}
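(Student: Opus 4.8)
The plan is to deduce the statement from Theorem~\ref{thm5} together with a standard homological-algebra argument about resolving classes defined by $\Ext$-vanishing. Concretely, by Theorem~\ref{thm5}(1) the class $\mc F(\ov\Delta)$ equals the left orthogonal $^\perp\mc F(\nb):=\{M\in\OI\mid \Ext^d_\OI(M,N)=0\text{ for all }N\in\mc F(\nb),\,d\geq 1\}$, and by Theorem~\ref{thm5}(2) the class $\mc F(\Delta)$ equals $^\perp\mc F(\ov\nb)$. Thus it suffices to observe that any class of the form $^\perp\mc X$, for $\mc X$ a collection of objects, is automatically closed under direct summands. This is immediate: if $M=M_1\oplus M_2$ and $\Ext^d_\OI(M,N)=\Ext^d_\OI(M_1,N)\oplus\Ext^d_\OI(M_2,N)$ vanishes for all $N\in\mc X$ and all $d\geq 1$, then each summand $\Ext^d_\OI(M_i,N)$ vanishes as well, so $M_i\in{}^\perp\mc X$.

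First I would record that $\Ext$ turns direct sums in the first variable into direct products (here finite, since all modules in $\OI$ have finite length and the categories involved have finite homological dimension in the relevant range, so this is just the usual additivity of $\Ext^d_\OI(-,N)$ on the subcategory of objects admitting such resolutions). Then I would invoke Theorem~\ref{thm5}(1): a direct summand $M_1$ of an object $M\in\mc F(\ov\Delta)$ satisfies $\Ext^d_\OI(M_1,Y)=0$ for every $Y\in\mc F(\nb)$ and every $d\geq 1$, because this $\Ext$-group is a direct summand of $\Ext^d_\OI(M,Y)=0$; hence $M_1\in\mc F(\ov\Delta)$ again by Theorem~\ref{thm5}(1). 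The argument for $\mc F(\Delta)$ is verbatim the same, using Theorem~\ref{thm5}(2) and the class $\mc F(\ov\nb)$ in place of $\mc F(\nb)$.

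I do not anticipate a genuine obstacle here; the only point requiring a word of care is the additivity of $\Ext$, i.e. that for $M=M_1\oplus M_2$ in $\OI$ one has a natural decomposition $\Ext^d_\OI(M,Y)\cong\Ext^d_\OI(M_1,Y)\oplus\Ext^d_\OI(M_2,Y)$ compatibly in $d$; this follows from the biadditivity of $\Hom_\OI$ and the fact that $\OI$ is an abelian category with enough projectives (or, dually, enough injectives, which was established in the corollary preceding Section~\ref{sect::5}), so that derived functors are defined and additive. Once this is in hand, the closure under summands is the formal statement that an intersection of kernels of additive functors (here $\Ext^d_\OI(-,Y)$ for varying $Y$ and $d$) is closed under summands. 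This completes the proof.
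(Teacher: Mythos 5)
Your proof is correct and follows exactly the paper's own one-line argument: Theorem~\ref{thm5} gives an $\Ext$-vanishing characterization of $\mc F(\ov\Delta)$ and $\mc F(\Delta)$, and closure under direct summands then follows from additivity of $\Ext$. Your extra commentary on why $\Ext$ is additive is a harmless elaboration of the same point.
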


\begin{proof}
The conclusion follows from the additive characterization of
$\mc F(\ov \Delta)$ and $\mc F(\Delta)$ given by  Theorem \ref{thm5}.
\end{proof}

\begin{prop}[BGG reciprocity] \label{prop::BGG}
	Let $\la \in \Lnua$. Then $\pi(P(\la))$ has a $\Delta$-flag in $\ov{\mc O}$. In particular, we have the following multiplicity formula:
	\begin{align}
	&(\pi(P(\la)): \Delta(\mu)) = [\ov \nb(\mu): \pi(L(\la))]. \label{eq::BGGre}
	\end{align}
Furthermore, if $\Phi(\g_1) = -\Phi(\g_{-1})$, then we have $$(\pi(P(\la)): \Delta(\mu)) = [\ov{\Delta}(\mu): \pi(L(\la))]. $$
	\end{prop}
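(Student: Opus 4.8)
The plan is to prove the two statements in turn, using the Frobenius extension structure of $\ov{\mc O}$ over $\ov{\mc O}_\oa$ to reduce everything to the known classical facts about properly stratified categories in \cite{MaSt04, FKM00}. First I would establish that $\pi(P(\la))$ admits a $\Delta$-flag. Since $P(\la)$ is an admissible projective, by Lemma \ref{lem01} the object $\pi(P(\la))$ is the projective cover of $\pi(L(\la))$ in $\ov{\mc O}$. To show it has a $\Delta$-flag I would invoke Theorem \ref{thm5}(2): it suffices to check that $\Ext^d_{\OI}(\pi(P(\la)), \ov\nb(\mu)) = 0$ for all $d \ge 1$ and all $\mu \in \Lnua$. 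Projectivity of $\pi(P(\la))$ in $\ov{\mc O}$ kills all higher $\Ext$'s, so this is immediate. Hence $\pi(P(\la)) \in \mc F(\Delta)$.

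Next I would compute the multiplicities $(\pi(P(\la)) : \Delta(\mu))$. Because $\{[\ov\nb(\la)]\}_{\la \in \Lnua}$ is $\Z$-independent in $[\OI]$ (the second set in \eqref{eq::61}) and, by Lemma \ref{lem3}(1), $\dim \Hom_{\OI}(\Delta(\mu), \ov\nb(\gamma)) = \delta_{\mu\gamma}$ with all higher $\Ext$'s vanishing, applying $\Hom_{\OI}(-, \ov\nb(\mu))$ to a $\Delta$-flag of $\pi(P(\la))$ gives
\begin{align*}
(\pi(P(\la)) : \Delta(\mu)) = \dim \Hom_{\OI}(\pi(P(\la)), \ov\nb(\mu)) = [\ov\nb(\mu) : \pi(L(\la))],
\end{align*}
where the last equality is the usual reciprocity for a projective cover: $\dim \Hom_{\OI}(\pi(P(\la)), N) = [N : \pi(L(\la))]$ for any $N$ of finite length, applied to $N = \ov\nb(\mu)$. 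This yields \eqref{eq::BGGre}.

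For the final assertion, under the hypothesis $\Phi(\g_1) = -\Phi(\g_{-1})$, I would show $[\ov\nb(\mu) : \pi(L(\la))] = [\ov\Delta(\mu) : \pi(L(\la))]$. The point is that this symmetry of the odd roots makes the Kac and dual Kac functors interchangeable up to the simple-preserving duality: concretely, $\wedge^{\dim \g_1}\g_1$ is then (up to a shift that can be absorbed) compatible with the duality, so that $\widehat K(\nb_0(\mu))$ and $K(\Delta_0(\mu))$ are related by an exact duality on $\OI$ extending $(\cdot)^\vee$ on $\OI_\oa$ — more precisely one checks $\ov\nb(\mu) \cong \ov\Delta(\mu)^\star$ for the appropriate contravariant duality $\star$ on $\OI$, using \eqref{eq::339} and the fact that $\ov\nb_0'(\mu) = (\ov\Delta_0'(\mu))^\vee$ by definition. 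Since a simple-preserving duality preserves composition multiplicities, $[\ov\nb(\mu):\pi(L(\la))] = [\ov\Delta(\mu):\pi(L(\la))]$, and combining with \eqref{eq::BGGre} gives the claim.

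The main obstacle I anticipate is the last step: one must verify that $\OI$ genuinely carries a simple-preserving contravariant duality interchanging $\ov\nb$ and $\ov\Delta$, which is exactly where the hypothesis $\Phi(\g_1) = -\Phi(\g_{-1})$ is needed — without it the one-dimensional twist $\wedge^{\dim\g_1}\g_1$ appearing in \eqref{eq::339} obstructs self-duality. I would handle this by working first at the level of $\mc O$ (where such a duality on type-I superalgebras with $\Phi(\g_1) = -\Phi(\g_{-1})$ is standard, cf. the discussion around Kac modules) and then checking it descends to the Serre quotient $\OI$, i.e. that it preserves the Serre subcategory $\mc I^\nu$ (which holds because the duality fixes highest weights, hence preserves $\alpha$-freeness for $\alpha \in \Phi_\nu$), invoking the universal property of Lemma \ref{lem::unipro}.
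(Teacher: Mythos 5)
Your proposal is correct and arrives at all three assertions, but it departs from the paper's argument in two places. For the existence of a $\Delta$-flag, you invoke the cohomological characterization of Theorem \ref{thm5}(2) together with projectivity of $\pi(P(\la))$ in $\ov{\mc O}$ (from Lemma \ref{lem01}); the paper instead gives a constructive argument, writing $\Ind P_0(\la)\cong\Ind_{\g_{\geq 0}}^\g\Ind_{\g_\oa}^{\g_{\geq 0}}P_0(\la)$, filtering by modules of the form $K(P_0(\la+\gamma))$, and pushing the $\Delta_0$-flags of $\pi_0(P_0(\la+\gamma))$ through the Kac functor. Your route is shorter, but it leans on Theorem \ref{thm5} (whose proof in the paper uses induction on length and hence implicitly needs $\pi(P(\la))$ of finite length, which does hold); the paper's route is more self-contained and also shows what the $\Delta$-subquotients actually are. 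The middle step, applying $\Hom_{\OI}(-,\ov\nb(\mu))$ to a $\Delta$-flag and using Lemma \ref{lem3} together with the projective-cover identity, is the same as the paper's. For the final claim under $\Phi(\g_1)=-\Phi(\g_{-1})$ you propose constructing a simple-preserving contravariant duality on $\OI$ exchanging $\ov\Delta(\mu)$ and $\ov\nb(\mu)$ (descending the standard duality on $\mc O$ to the quotient via Lemma \ref{lem::unipro}, after observing that $\mc I^\nu$ is preserved because highest weights are fixed). This is a legitimate and more structural route, but it requires you to actually verify that the duality swaps the Kac and dual Kac inductions on $\ov\Delta_0'$ and $\ov\nb_0'$, i.e.\ that $(\Ind_{\g_{\ge 0}}^\g \ov\Delta_0'(\mu))^\vee\cong\Coind_{\g_{\le 0}}^\g\ov\nb_0'(\mu)$ — you only gesture at this and flag it as the main obstacle. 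The paper sidesteps the whole issue with a one-line character computation: $\ch\big(\Coind_{\g_{\le 0}}^\g\ov\nb_0'(\eta)\big)=\ch(U(\g_1^\ast))\,\ch(M_0(\eta))=\ch(U(\g_{-1}))\,\ch(M_0(\eta))=\ch\big(\Ind_{\g_{\ge 0}}^\g\ov\Delta_0'(\eta)\big)$ when $\Phi(\g_1)=-\Phi(\g_{-1})$, giving equality of composition factors directly, with no duality needed. The character argument is both shorter and weaker in its hypotheses (it does not require the duality to exist on the quotient, only the identification of $\ch(U(\g_1^\ast))$ and $\ch(U(\g_{-1}))$), so you may want to adopt it even if you can make the duality approach precise.
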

\begin{proof}
	To show that $\pi(P(\la))\in \mc F(\Delta)$, it suffices to show that $\pi(\Ind P_0(\la))\in \mc F(\Delta)$. We note that $\Ind P_0(\la)\cong \Ind_{\g_{\geq 0}}^\g \Ind_{\g_\oa}^{\g_{\geq 0}} P_0(\la)$ has a filtration subquotients of which are of the form $K(P_0(\la+\gamma))$, for some $\gamma \in \Phi(\g_1)$, satisfying  $\la +\gamma \in \Lnua.$ Since each $\pi_0(P_0(\la+\gamma))$ has a $\Delta_0$-filtration, we know that $\pi(K(P_0(\la+\gamma)))$ has a $\Delta$-flag. This implies the first claim.
	
 By Lemma \ref{lem3}, for any exact sequence $0\rightarrow M' \rightarrow M\rightarrow M/M'\rightarrow 0$ in $\mc F(\Delta)$, we have a short exact sequence \begin{align*}
 	&0\rightarrow \Hom_{\ov{\mc O}}(M/M',\ov\nb(\mu)) \rightarrow \Hom_{\ov{\mc O}}(M,\ov\nb(\mu)) \rightarrow \Hom_{\ov{\mc O}}(M',\ov\nb(\mu))\rightarrow 0.
 	\end{align*}   In particular,	Equation \eqref{eq::BGGre} follows from Lemma \ref{lem3} as
	\begin{align*}
	&[\ov \nb(\mu): \pi(L(\la))] = \dim \Hom_{\ov{\mc O}}(\pi(P(\la)), \ov \nb(\mu)) = (\pi(P(\la)): \Delta({\mu})).
	\end{align*}
	
	Finally, suppose that $\Phi(\g_1) = -\Phi(\g_{-1})$.  We calculate the following characters:
	\begin{align*}
	&\ch (\Coind_{\g_{\leq 0}}^{\g} \ov{\nb}_0'(\eta)) = \ch(U(\g_1^\ast))\ch(M_0(\eta)) = \ch(U(\g_{-1}))\ch(M_0(\eta)) =\ch(\Ind_{\g_{\geq 0}}^{\g} \ov{\Delta}_0'(\eta)),
	\end{align*} for any $\eta \in \h^\ast$.
	 It follows that $\ov{\nb}(\mu)$ and $\ov{\Delta}(\mu)$ have the same composition factors in $\ov{\mc O}$, for all $\mu \in \Lnua$.   The conclusion follows.
\end{proof}

\begin{rem}
Since $(\Delta(\mu): \ov{\Delta}(\mu)) = [P(\mu, \mf l_\nu): L(\mu, \mf l_\nu)] = |W_\nu\cdot \mu|,$ for any $\mu \in   \Lnua$,  it follows that
\begin{align*}
	&(\pi(P(\la)): \ov{\Delta}(\mu)) = |W_\nu\cdot \mu|\cdot  [\ov \nb(\mu): \pi(L(\la))].
\end{align*}
\end{rem}

\begin{rem}
Descriptions of simple, standard and proper standard objects in $\Opres$
can be found in Appendix~\ref{sect::81}.
\end{rem}

\section{Tilting modules and Ringel duality} \label{sect::6}

In this section, we study tilting modules in $\ov{\mc O}$. We continue to assume that $\nu\in \h^\ast$ is dominant and integral. We set  $w_0^\nu:=w_0^\zeta$ to be the longest element in $W_\nu$ as defined  in  Section  \ref{sec:qsymm}. { Let $\Lnub$ be the set of all  $W_\nu$-dominant and integral weights.}

\subsection{Tilting modules} \label{sect::tiltings} As in the the general setup of properly stratified categories (see, e.g., \cite{FM06}),
we  consider the category  $\mc T:= \mc F(\Delta)\cap \mc F(\ov \nb)$ of {\em tilting modules}, respectively the category $\mc T':= \mc F(\ov \Delta)\cap \mc F(\nb)$ of {\em co-tilting modules}  in $\ov{\mc O}$.  Similarly, we  consider $\mc T_0 : = \mc F(\Delta_0)\cap \mc F(\ov \nb_0)$ and $\mc T'_0 : = \mc F(\ov{\Delta}_0)\cap \mc F(\nb_0)$. In this section, we show that $\mc T =\mc T'$ and classify all indecomposable tilting objects in this category.

\begin{lem}   \label{lem::11}
	The functors $\Res(-): \mc T\rightarrow \mc T_0$ and $\Ind(-):\mc T_0\rightarrow \mc T$ are well-defined.
\end{lem}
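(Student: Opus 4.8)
The plan is to reduce both assertions to the structural objects of the properly stratified structure and then to propagate them through $\Res$ and $\Ind$. Both functors are exact between $\ov{\mc O}$ and $\ov{\mc O}_\oa$ by Lemma~\ref{lem::1} applied with $\mf a=\g_\oa$; the subcategories $\mc F(\Delta_0),\mc F(\ov\nb_0)$ are closed under extensions by the classical theory of \cite{MaSt04,FKM00}, and $\mc F(\Delta),\mc F(\ov\nb)$ are closed under extensions by Theorem~\ref{thm5} together with the general properly stratified formalism; moreover $\mc T_0\subseteq\mc F(\Delta_0)\cap\mc F(\ov\nb_0)$ and $\mc T\subseteq\mc F(\Delta)\cap\mc F(\ov\nb)$. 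Hence it suffices to prove $\Res\Delta(\la)\in\mc F(\Delta_0)$, $\Res\ov\nb(\la)\in\mc F(\ov\nb_0)$, $\Ind\Delta_0(\la)\in\mc F(\Delta)$ and $\Ind\ov\nb_0(\la)\in\mc F(\ov\nb)$ for every $\la\in\Lnua$.

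For the restriction functor I would first record the PBW isomorphisms of $\g_\oa$-modules $\Res_{\g_\oa}^\g K(N)\cong\wedge(\g_{-1})\otimes N$ and $\Res_{\g_\oa}^\g\widehat K(N)\cong\wedge(\g_1)^\ast\otimes N$, which are natural in $N\in\mc O_\oa$ and compatible with $\pi_0$. Since $\Delta(\la)\cong K(\Delta_0(\la))$ and $\ov\nb(\la)\cong\widehat K(\ov\nb_0(\la))$, this identifies $\Res\Delta(\la)$ and $\Res\ov\nb(\la)$ with the results of tensoring $\Delta_0(\la)$, respectively $\ov\nb_0(\la)$, with a fixed finite-dimensional $\g_\oa$-module. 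The remaining point is that $\mc F(\Delta_0)$ and $\mc F(\ov\nb_0)$ are stable under tensoring with finite-dimensional $\g_\oa$-modules: for $\mc F(\ov\nb_0)$ this follows from $\ov\nb_0(\la)=\pi_0(M_0(\la)^\vee)$ and the Verma flag of $E^\vee\otimes M_0(\la)$, and for $\mc F(\Delta_0)$ from the description $\Delta_0(\la)=\pi_0(\Ind_{\fp_\oa}^{\g_\oa}P(\fl_\nu,\la))$, the tensor identity for $\Ind_{\fp_\oa}^{\g_\oa}$, and the fact that tensoring a projective object of $\mc O(\fl_\nu)$ with a finite-dimensional module stays projective. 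Throughout, one uses the criterion $\la\in\Lnua\Leftrightarrow L(\la)$ is $\alpha$-free for all $\alpha\in\Phi_\nu$ to control which weights can appear in the resulting flags.

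For the induction functor I would use transitivity of induction and PBW to write $\Ind_{\g_\oa}^\g\cong K\circ\Ind_{\g_\oa}^{\g_{\ge 0}}$, where $\Ind_{\g_\oa}^{\g_{\ge 0}}(N)$ carries a $\g_{\ge 0}$-module filtration with subquotients $\wedge^k(\g_1)\otimes N$ ($\g_1$ acting by zero); applying the exact Kac functor $K$ to a lift of $\Delta_0(\la)$ then yields a filtration with subquotients $K(\wedge^k(\g_1)\otimes\Delta_0'(\la))$, each of which has a $\Delta_0'$-flag by the preceding paragraph and hence a $\Delta'$-flag since $K(\Delta_0'(\eta))=\Delta'(\eta)$, so $\Ind\Delta_0(\la)\in\mc F(\Delta)$ after applying $\pi$. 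For the proper costandard object I would instead invoke \eqref{eq::339} to write $\Ind_{\g_\oa}^\g\cong\Coind_{\g_\oa}^\g(\mathcal E\otimes-)$ with $\mathcal E=\wedge^{\dim(\g/\g_\oa)}(\g/\g_\oa)$ a one-dimensional $\g_\oa$-module whose weight is $W_\nu$-invariant (because $\g_{\pm1}$ are $\fl_\nu$-modules), so that $\mathcal E\otimes\ov\nb_0(\la)\cong\ov\nb_0(\la')$ for some $\la'\in\Lnua$; then, by the argument dual to the one just used — $\Coind_{\g_\oa}^\g\cong\widehat K\circ\Coind_{\g_\oa}^{\g_{\le 0}}$ and the exact dual Kac functor $\widehat K$ sending proper-costandard lifts to proper-costandard lifts — one gets $\Ind\ov\nb_0(\la)\in\mc F(\ov\nb)$. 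An alternative, once the restriction statement is in hand, is to argue homologically: by Theorem~\ref{thm5}(2) and its left-hand analogue, together with Corollary~\ref{cor::5} and \eqref{eq::339}, one has $\Ext^{\ge1}_{\ov{\mc O}}(\Ind M',\ov\nb(\mu))\cong\Ext^{\ge1}_{\ov{\mc O}_\oa}(M',\Res\ov\nb(\mu))$ and $\Ext^{\ge1}_{\ov{\mc O}}(\Delta(\la),\Ind M')\cong\Ext^{\ge1}_{\ov{\mc O}_\oa}(\Res\Delta(\la),\mathcal E\otimes M')$, both of which vanish for $M'\in\mc T_0$ by the $\Delta_0$--$\ov\nb_0$ orthogonality in $\ov{\mc O}_\oa$.

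I expect the main work to lie in the restriction step, and more precisely in checking that the finite-dimensional twists $\wedge(\g_{-1})\otimes\Delta_0'(\la)$, $\wedge(\g_1)^\ast\otimes\ov\nb_0'(\la)$ and $\wedge^k(\g_1)\otimes\Delta_0'(\la)$ genuinely carry $\Delta_0'$- respectively $\ov\nb_0'$-flags all of whose weights lie in $\Lnua$, and that these flags survive application of the (dual) Kac functor and of $\pi$ and $\pi_0$; the parabolic-decomposition hypothesis on $\fl_\nu$ and the $\alpha$-freeness criterion are exactly what is needed here. No step presents a genuine obstacle, only careful bookkeeping, and the homological variant trades the flag bookkeeping for the mild extra input of the left-hand analogue of Theorem~\ref{thm5}.
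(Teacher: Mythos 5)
Your proposal is correct and follows essentially the same route as the paper: reduce to the structural objects, use $\Res K(N)\cong\wedge(\g_{-1})\otimes N$ (and its coinduced analogue) for restriction, and use $\Ind_{\g_\oa}^{\g}\cong K\circ\Ind_{\g_\oa}^{\g_{\geq 0}}$ together with the PBW filtration of $\Ind_{\g_\oa}^{\g_{\geq 0}}$ for induction, invoking closure of $\mc F(\Delta_0)$ and $\mc F(\ov\nb_0)$ under tensoring with finite-dimensional modules. One genuinely useful addition in your write-up is that you unpack the paper's terse ``Similarly, $\Ind(\mc F(\ov\nb_0))\subseteq\mc F(\ov\nb)$'': this case is \emph{not} literally symmetric to the $\Delta$-case (applying $K$ to a lift of $\ov\nb_0(\la)$ does not produce $\ov\nb(\la)$), and your passage through \eqref{eq::339} to rewrite $\Ind\cong\Coind(\mathcal E\otimes-)$, using that $\mathcal E$ has $W_\nu$-invariant weight because $\g_{\pm1}$ are $\fl_\nu$-modules, is exactly the missing step. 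Your alternative homological argument via Corollary~\ref{cor::5} and the Ext-characterizations of Theorem~\ref{thm5} is also sound and arguably cleaner, as it sidesteps the flag bookkeeping once the restriction statement is known.
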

\begin{proof}
 We note that $\Res\Delta(\la) \cong  \pi_0(\Res K(\Delta_0'(\la)))\cong \pi_0(U(\g_{-1})\otimes \Delta_0'(\la))\in \mc F(\Delta_0)$, since $\mc  F(\Delta_0)$ is closed under tensoring with finite-dimensional modules. Similarly, we have $\Res (\ov \nb(\la)) \in \mc F(\ov \nb_0)$. Consequently, $\Res(\mc T)\subseteq  \mc T_0$.

 Let $\la \in \Lnua$. Then \begin{align} \label{eq::1}
 	&\pi(\Ind \Delta_0'(\la))\cong \Ind_{\g_{\geq 0}}^\g\pi_{\geq 0}(U(\g_1)\otimes\Delta_0'(\la)),
 \end{align} where $\pi_{\geq 0}: \mc O(\g_{\geq 0})\rightarrow \ov{\mc O}(\g_{\geq 0})$ denotes the quotient functor.
 Since $\mc F(\Delta_0)$ is closed under tensoring with finite-dimensional modules,
 the module  $\pi_{\geq 0}(U(\g_1)\otimes \Delta_0'(\la))$ has a $\Delta_0$-flag whose subquotients are of the form $\Delta_0(\mu)$ with $\g_1$ acting trivially. Here $\mu\in\Lambda(\nu)$ is the $W_\nu$-anti-dominant conjugate of $\la+\gamma$, where $\gamma$ is a weight in $\wedge(\g_1)$. This means that $\pi(\Ind \Delta_0'(\la))$ has a $\Delta$-flag. Then $\Ind(\mc F(  \Delta_0))\subseteq \mc F(  \Delta)$. Similarly, $\Ind(\mc F(\ov \nb_0))\subseteq \mc F(\ov \nb)$.
\end{proof}

\begin{lem} \label{lem::9}
For any $\la \in \Lnua$, there exists a unique (up to isomorphism) indecomposable tilting module $T^\OI(\la)\in \ov{\mc O}$ with a short exact sequence
\begin{align}
&0\rightarrow \Delta(\la) \rightarrow T^\OI(\la) \rightarrow C(\la) \rightarrow 0, \label{eq::2}
\end{align}  in $\ov{\mc O}$, where $C(\la) \in \mc F(\Delta).$

Furthermore, $T^\OI(\la)$ is the unique indecomposable  co-tilting module  such that there is a short exact sequence \begin{align}
	&0\rightarrow C'(\la) \rightarrow T^\OI(\la) \rightarrow \nabla(\la) \rightarrow 0, \label{eq::3}
\end{align}  in $\ov{\mc O}$, where $C'(\la) \in \mc F(\nabla).$
\end{lem}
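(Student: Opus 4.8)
The plan is to construct $T^{\OI}(\la)$ by the standard "linkage exhaustion" argument used in the theory of (properly) stratified categories, see \cite{FM06, MaSt04}. The key homological input is already available: Lemma~\ref{lem3} gives orthogonality of $\Delta$'s against proper costandard objects, and Theorem~\ref{thm5} gives the Ext-characterizations $\mc F(\Delta) = \{M : \Ext^{\geq 1}_\OI(M, \mc F(\ov\nb))=0\}$ together with the dual statement; the previous corollary gives closure of $\mc F(\Delta)$ and $\mc F(\ov\Delta)$ under direct summands. We should also observe, via Lemma~\ref{lem::11} and the fact that $\mc F(\ov\nb)$ pulls back along $\Res$ (i.e.\ $\Res(\mc F(\ov\nb))\subseteq \mc F(\ov\nb_0)$, shown in the proof of that lemma) together with Frobenius adjunction (Corollary~\ref{cor::5}), that $\Ind$ sends $\mc F(\ov\nb_0)$ into $\mc F(\ov\nb)$ and hence an object of $\mc T_0$ to an object of $\mc T$; this supplies the existence of enough "honest" tilting objects to bootstrap from.

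**First I would** establish existence. Fix $\la\in\Lnua$. Start with $T_0:=\Delta(\la)$, which lies in $\mc F(\Delta)$ but typically not in $\mc F(\ov\nb)$. Enumerate the finitely many $\mu$ with $\mu > \la$ that can occur (by Proposition~\ref{prop4}(3) and finiteness of the relevant poset intervals, the construction terminates) and inductively build an increasing chain $\Delta(\la)=T_0 \subset T_1 \subset \cdots \subset T_N =: T^\OI(\la)$ of objects in $\mc F(\Delta)$ such that the cokernel $T_i/\Delta(\la)$ always lies in $\mc F(\Delta)$ and such that at the end $\Ext^1_\OI(\ov\nabla(\mu), T^\OI(\la))=0$ for all $\mu$. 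Concretely, at each stage, if $\Ext^1_\OI(\ov\nabla(\mu), T_i)\neq 0$ for some maximal such $\mu$, I form a non-split extension $0\to T_i \to T_{i+1}\to \ov\nabla(\mu)\to 0$ realizing a non-zero class; one checks $T_{i+1}\in \mc F(\Delta)$ because $\ov\nabla(\mu)$ has a finite $\Delta$-flag? — no: here the point is rather that we add $\Delta(\mu)$'s, so more carefully, following the classical argument, we take $T_{i+1}$ to be the universal extension of $T_i$ by copies of $\Delta(\mu)$ killing $\Ext^1_\OI(\ov\nabla(\mu), T_i)$, using $\Ext^1_\OI(\ov\nabla(\mu),\Delta(\mu'))$ control from Lemma~\ref{lem3} and \eqref{lem3eq::2}. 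Since the only new standard subquotients introduced have strictly larger highest weight and the poset above $\la$ (intersected with $\Lnua$) is finite, this terminates with $T^\OI(\la)\in \mc F(\Delta)\cap\mc F(\ov\nabla)=\mc T$, fitting into \eqref{eq::2} with $C(\la):=T^\OI(\la)/\Delta(\la)\in\mc F(\Delta)$.

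**Next I would** prove indecomposability and uniqueness. For indecomposability, note $\Hom_\OI(\Delta(\la), T^\OI(\la))$ contains the structural inclusion; using Lemma~\ref{lem3}, $\dim\Hom_\OI(\Delta(\la), \ov\nabla(\la))=1$, and since $T^\OI(\la)\in\mc F(\ov\nabla)$ with $\ov\nabla(\la)$ occurring exactly once in its $\ov\nabla$-flag (by the character/filtration bookkeeping — all other $\ov\nabla(\mu)$ have $\mu>\la$, which do not receive maps from $\Delta(\la)$), the idempotents in $\End_\OI(T^\OI(\la))$ that fix the $\Delta(\la)$-bottom must be trivial; a short argument with the two filtrations shows $\End_\OI(T^\OI(\la))$ is local, so $T^\OI(\la)$ is indecomposable. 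For uniqueness, if $T'$ is another indecomposable tilting object with $0\to\Delta(\la)\to T'\to C'\to 0$, $C'\in\mc F(\Delta)$, then $\Ext^1_\OI(C', \ov\nabla(\la))=0$ gives a map $T'\to T^\OI(\la)$ extending the identity on $\Delta(\la)$ (using that $\Delta(\la)$ embeds as the "$\ov\nabla(\la)$-socle layer") and symmetrically a map back; the composites are identities on $\Delta(\la)$ hence, by indecomposability and Fitting, isomorphisms. Finally, for the co-tilting description \eqref{eq::3}: apply the Ringel-type self-duality of $\OI$ (which the paper develops in this very section) or argue directly that $T^\OI(\la)\in\mc F(\nabla)$ — this needs $\mc F(\ov\nabla)\cap\mc F(\Delta)=\mc F(\nabla)\cap\mc F(\Delta)$? which is false in general, so instead: since $T^\OI(\la)\in\mc T=\mc F(\Delta)\cap\mc F(\ov\nabla)$ and one shows $\mc T=\mc T'$ (the asserted equality of tilting and co-tilting categories, itself part of this lemma's context), we get $T^\OI(\la)\in\mc F(\ov\Delta)\cap\mc F(\nabla)$, and then $\nabla(\la)$ sits on top with kernel $C'(\la)\in\mc F(\nabla)$ by the dual filtration argument.

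**The hard part will be** the verification that the inductive construction genuinely terminates and never leaves $\mc F(\Delta)$ — i.e.\ that the universal extensions only introduce $\Delta(\mu)$ with $\mu>\la$, $\mu\in\Lnua$, and that $\Ext^1_\OI(\ov\nabla(\mu),-)$ is computable/finite-dimensional on $\mc F(\Delta)$ — which rests on carefully combining Theorem~\ref{thm5}, Lemma~\ref{lem3}, and the finiteness of the relevant weight posets; and, secondarily, pinning down the precise relationship between $\mc F(\ov\nabla)$ and $\mc F(\nabla)$ needed to pass between \eqref{eq::2} and \eqref{eq::3}, for which I expect to invoke $\mc T=\mc T'$ established just above in the text (or, if that is logically later, to prove the two statements of this lemma together with the $\mc T=\mc T'$ claim in a single induction).
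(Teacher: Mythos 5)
Your uniqueness argument (apply $\Hom_\OI(-,T^\OI(\la))$ to the $\Delta$-filtration, use Lemma~\ref{lem3} to kill $\Ext^1(C,T^\OI(\la))$, lift the identity on $\Delta(\la)$ both ways, and invoke Fitting) is essentially identical to the paper's, which follows Soergel's argument in \cite[Proposition~5.6]{So98} and uses the abstract Fitting lemma of \cite[Lemma~5.3]{Kr15}.

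Your existence argument, however, takes a genuinely different route. You propose the intrinsic universal-extension (Ringel) construction: iteratively extend $\Delta(\la)$ by standard objects of strictly higher weight until $\Ext^1_\OI(\ov\nabla(\mu),-)$ vanishes. The paper instead imports existence from the reductive case: by \cite{FKM00} the tilting objects in $\ov{\mc O}_\oa$ are known, Lemma~\ref{lem::11} shows $\Ind$ carries $\mc T_0$ into $\mc T$, and one then picks off the indecomposable summand of $\Ind T_0^{\ov{\mc O}_\oa}(\la)$ sitting above $\Delta(\la)$. The paper's route has two concrete advantages that are worth noting. First, it sidesteps the termination problem you correctly flag as ``the hard part'': since $\ov{\mc O}$ has infinitely many simples and the weight poset has no a priori upper bound, you would need a separate finiteness argument (e.g.\ that every object of $\OI$ has finite length, so $\Delta(\la)$ has only finitely many composition factors and hence only finitely many $\mu$ with $\Ext^1_\OI(\ov\nabla(\mu),\Delta(\la))\neq 0$, together with an inductive bound on the new $\Delta$-subquotients introduced) -- this can be made to work but needs spelling out. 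Second, and more importantly, the paper's route immediately yields the cotilting statement \eqref{eq::3}: since $\mc T_0=\mc T_0'$ holds for $\ov{\mc O}_\oa$ by \cite{FKM00}, and the same $\Ind$ argument carries $\mc T_0'$ into $\mc T'$, the module $T^\OI(\la)$ lands in $\mc T\cap\mc T'$ with no extra work, and the second uniqueness claim is then the mirror image of the first. Your treatment of this last point is where the proposal actually has a gap: you notice yourself that you need $\mc T=\mc T'$ and you leave it unresolved, floating between ``invoke Ringel self-duality'' (which in this paper is established only \emph{after} this lemma, in Section~\ref{sect::62}, and logically depends on it) and ``prove it in a joint induction.'' As written, the proposal therefore does not furnish a complete proof of \eqref{eq::3}; the clean fix is exactly the paper's: derive $T^\OI(\la)\in\mc F(\ov\Delta)\cap\mc F(\nabla)$ from $\mc T_0=\mc T_0'$ via $\Ind$ and closure of $\mc F(\ov\Delta)$, $\mc F(\nabla)$ under direct summands.
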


\begin{proof}

	First, we prove uniqueness following  the argument in \cite[Proposition 5.6]{So98}.  Suppose that $T^\OI(\la)$ and $T$ are two indecomposable tilting modules in $\OI$ such that \eqref{eq::2} holds and there is a short exact sequence \begin{align*}
		&0\rightarrow \Delta(\la) \xrightarrow{i} T \rightarrow C \rightarrow 0,
	\end{align*} where $C \in \mc F(\Delta).$ Applying the functor $\Hom_\OI(-,T^\OI(\la))$, we get an exact sequence
\begin{align*}
	&0\rightarrow \Hom_\OI(C, T^\OI(\la)) \rightarrow   \Hom_\OI(T, T^\OI(\la)) \rightarrow  \Hom_\OI(\Delta(\la), T^\OI(\la))\rightarrow 0,
\end{align*} since $\Ext^1_{\ov{\mc O}}(C, T^\OI(\la))=0.$ Therefore, if we denote the inclusion $\Delta(\la)\hookrightarrow T^\OI(\la)$ by $\iota$, then there exists $\alpha\in \Hom_\OI(T,T^\OI(\la))$ such that $\alpha\circ i = \iota$. Similarly, there also exists  $\beta\in \Hom(T^\OI(\la),T)$ such that $\beta \circ \iota = i$. By the Fitting lemma for arbitrary abelian categories given in \cite[Lemma 5.3]{Kr15}, the homomorphisms $\alpha\circ \beta$ and $\beta\circ \alpha$ are either invertible or nilpotent. Since $\alpha\circ\beta(\Delta(\la)) =\Delta(\la) = \beta \circ\alpha(\Delta(\la))$, we may conclude that both $\alpha\circ \beta$ and $\alpha\circ \beta$ are isomorphisms. This proves that $T\cong T^\OI(\la)$.

For any $\la\in \h^\ast$, recall that $T_0(\la)$ denotes the indecomposable tilting module in $\mc O_\oa$ of highest weight $\la$. By \cite{FKM00}, the indecomposable tilting objects in $\ov{\mc O}_\oa$ are $\{\pi(T_0(\la))|~\la\in \Lnub\}$, up to isomorphism. For a given $\la\in \Lnua$, we let $T_0^{\ov {\mc O}_\oa}(\la) =\pi_0(T_0(w_0^\nu\cdot \la))$ denote the indecomposable tilting object in $\ov{\mc O}_\oa$. There is an embedding from $\Ind \Delta_0(\la)$ to $\Ind T_0^{\OI_\oa}(\la)$  such that the cokernel of this embedding has a $\Delta$-flag. It follows from \eqref{eq::1} that $\Ind\Delta_0(\la)$ has a $\Delta$-flag starting at $K(\Delta_0(\la+2\rho_1))=\Delta(\la+2\rho_1)$.

Finally, we note that $T^\OI(\la)$ is also a co-tilting module since $\mc T_0=\mc T_0'$ by \cite{FKM00}. Therefore, the uniqueness in the second claim can be proved using an identical argument. This completes the proof.
 \end{proof}

 We give a description of  indecomposable tilting modules of $\OI$ in the following proposition.
\begin{prop} \label{prop::15}
For any $\la \in \Lnua$, we have $T^\OI(\la) = \pi(T(w_0^\nu\cdot \la))$.
\end{prop}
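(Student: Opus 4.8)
The strategy is to check that $\pi(T(w_0^\nu\cdot\la))$ satisfies the properties that characterize $T^\OI(\la)$ in Lemma~\ref{lem::9}. Write $\mu_0:=w_0^\nu\cdot\la$ (the $W_\nu$-dominant weight in $W_\nu\cdot\la$), and for $\eta\in\h^\ast$ let $\eta^-$ denote the unique $W_\nu$-antidominant weight in $W_\nu\cdot\eta$, so $\eta^-\in\Lnua$ and $\mu_0^-=\la$. By Lemma~\ref{lem::9} it suffices to prove: (a) $\pi(T(\mu_0))$ is a tilting object of $\ov{\mc O}$; (b) it is indecomposable; and (c) there is a short exact sequence $0\to\Delta(\la)\to\pi(T(\mu_0))\to C\to 0$ with $C\in\mc F(\Delta)$. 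The first technical input I would record is the behaviour of $\pi$ on structural modules of $\mc O$: using the compatibilities $\pi\circ K\cong K\circ\pi_0$, $\pi\circ\widehat K\cong\widehat K\circ\pi_0$ from Section~\ref{sect::5} together with the reductive-case results of \cite{FKM00,MaSt04}, one gets $\pi(M(\eta))\cong\ov\Delta(\eta^-)$, $\pi$ of a costandard module of $\mc O$ is $\ov\nb(\eta^-)$, and $\pi\bigl(\Ind_{\mf p}^{\g}P(\mf l_\nu,\gamma)\bigr)\cong\Delta(\gamma)$ for $\gamma\in\Lnua$. For the Verma statement this reduces, via $M(\eta)\cong K(M_0(\eta))$, to $\pi_0(M_0(\eta))\cong\ov\Delta_0(\eta^-)$, which follows from $M_0(\eta)\cong\Ind_{\fp_\oa}^{\g_\oa}M(\mf l_\nu,\eta)$ and the fact that parabolic induction from $\mf l_\nu$ annihilates the simple $\mf l_\nu$-modules that are not $W_\nu$-antidominant while carrying the $W_\nu$-antidominant one (which occurs in $M(\mf l_\nu,\eta)$ with multiplicity one) to $M_0(\eta^-)$.

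For (a): because $\mu_0=w_0^\nu\cdot\la$ is $W_\nu$-dominant, the tilting module $T(\mu_0)\in\mc O$ admits a filtration whose subquotients are among the modules $\Ind_{\mf p}^{\g}P(\mf l_\nu,\gamma)$ and a filtration whose subquotients are costandard modules of $\mc O$; this is the feature that distinguishes tiltings with $W_\nu$-dominant highest weight, known over $\g_\oa$ by \cite{FKM00} and transported to $\g$ through the exact Kac functor $K$. Applying $\pi$ and the previous paragraph, $\pi(T(\mu_0))$ acquires a $\Delta$-filtration and a $\ov\nb$-filtration, hence $\pi(T(\mu_0))\in\mc F(\Delta)\cap\mc F(\ov\nb)=\mc T$. (Merely pushing the Verma filtration through $\pi$ only yields a $\ov\Delta$-filtration, which is not enough — so the $W_\nu$-dominance of the highest weight is genuinely used here; e.g.\ $\pi_0$ of an $\mc O_\oa$-tilting whose highest weight is not $W_\nu$-dominant need not be tilting in $\ov{\mc O}_\oa$.)

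For (b) and (c): being tilting, $\pi(T(\mu_0))=\bigoplus_i T^\OI(\mu_i)$ for some $\mu_i\in\Lnua$. Since the bottom step of the Verma filtration of $T(\mu_0)$ is $M(\mu_0)$ and $\mu_0^-=\la$, the proper standard $\ov\Delta(\la)$ sits at the bottom of the induced $\ov\Delta$-filtration of $\pi(T(\mu_0))$, and $\la$ is $\leq$-minimal among the $W_\nu$-antidominant weights occurring there; a standard argument in properly stratified categories then produces a submodule $\Delta(\la)\hookrightarrow\pi(T(\mu_0))$ with cokernel in $\mc F(\Delta)$, giving (c) and showing $T^\OI(\la)$ is one summand. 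To see it is the only one I would pass to $\g_\oa$: one has $\Res^{\g}_{\g_\oa}\pi(T(\mu_0))\cong\pi_0(\Res^{\g}_{\g_\oa}T(\mu_0))$, and since restriction along $\g_\oa\subseteq\g$ sends Verma (resp.\ costandard) modules to modules with Verma (resp.\ costandard) filtrations, $\Res^{\g}_{\g_\oa}T(\mu_0)$ is a direct sum of indecomposable $\mc O_\oa$-tiltings in which $T_0(\mu_0)$ occurs exactly once and every other constituent has strictly smaller highest weight. By Lemma~\ref{lem::11} each $\Res^{\g}_{\g_\oa}T^\OI(\mu_i)$ is a sum of indecomposable $\ov{\mc O}_\oa$-tiltings, while $\pi_0(T_0(\mu_0))=T_0^{\ov{\mc O}_\oa}(\la)$ by \cite{FKM00}; comparing the two expressions for $\Res^{\g}_{\g_\oa}\pi(T(\mu_0))$ in the Grothendieck group together with the minimality of $\la$ forces a single summand, namely $T^\OI(\la)$, and in particular $\pi(T(\mu_0))$ is indecomposable. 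The main obstacle is exactly this last comparison: because $\pi_0$ does not preserve indecomposability, one must exploit the multiplicity-one occurrence of $T_0(\mu_0)$ and the minimality of $\la$ to exclude spurious summands; a convenient way to organize it is to induct on the highest weight, peeling off the tilting summands with strictly smaller highest weight first.
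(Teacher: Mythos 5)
Your overall strategy (verify the characterizing properties of $T^{\ov{\mc O}}(\la)$ from Lemma~\ref{lem::9}) matches the paper's, but two of your three steps have genuine gaps, and in both places the paper uses a tool you don't invoke.

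The first gap is in (a). You assert that $T(w_0^\nu\cdot\la)$ has a filtration by the modules $\Ind_{\mf p}^{\g}P(\mf l_\nu,\gamma)$ because this ``is known over $\g_\oa$ and transported to $\g$ through the exact Kac functor $K$.'' But $T(w_0^\nu\cdot\la)$ is not $K$ of a $\g_\oa$-tilting, and $K(T_0(w_0^\nu\cdot\la))$ is not a tilting module in $\mc O$, so there is nothing to transport. The paper instead proves the identity $\add\{\Ind T_0(\mu)\,|\,\mu\in\Lnub\}=\add\{T(\mu)\,|\,\mu\in\Lnub\}$ via the twisting functor ${\bf T}_{w_0^W}$ and the Ringel-duality Lemma~\ref{lem::16} (comparing projectives and tiltings for the two Borels $\mf b$ and $\mf b^r$), and only then deduces, using Lemma~\ref{lem::11} and closure of $\mc F(\Delta)$ under direct summands, that $\pi(T(\mu))$ is tilting for $\mu\in\Lnub$. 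The twisting-functor ingredient is essential here and is entirely absent from your proposal; without it, the claim that $\pi(T(w_0^\nu\cdot\la))$ lies in $\mc F(\Delta)$ is unsupported.

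The second gap is in (b). Your indecomposability argument restricts to $\g_\oa$ and tries to compare tilting decompositions in the Grothendieck group of $\ov{\mc O}_\oa$, invoking ``multiplicity one of $T_0(\mu_0)$'' and ``minimality of $\la$.'' This is both unconvincing and unnecessarily heavy: the Grothendieck group does not see direct sum decompositions, the decomposition of $\Res^{\g}_{\g_\oa}T^{\ov{\mc O}}(\mu_i)$ into $\ov{\mc O}_\oa$-tiltings is not controlled, and $\pi_0$ may split other restriction summands in ways your bookkeeping does not rule out. The paper's argument is direct: $T_0(w_0^\nu\cdot\la)\in\mc O^{\vpre}_\oa$ by \cite[Lemma~18]{FKM00}, hence $\Ind T_0(w_0^\nu\cdot\la)\in\mc O^{\vpre}$ by Lemma~\ref{lem::1}, hence $T(w_0^\nu\cdot\la)\in\mc O^{\vpre}$, and then Lemma~\ref{lem01} gives $\End_{\ov{\mc O}}(\pi(T(w_0^\nu\cdot\la)))\cong\End_{\mc O}(T(w_0^\nu\cdot\la))$, a local ring; indecomposability follows from \cite[Proposition~5.4]{Kr15}. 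Your step (c), identifying $\ov\Delta(\la)$ at the bottom from the Verma flag of $T(w_0^\nu\cdot\la)$, is fine and agrees with the paper's final paragraph, but by itself it cannot carry the proposition without (a) and (b) being established.
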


Before we prove Proposition \ref{prop::15}, we need to do some preparation in the
next subsection.

\subsection{Ringel duality} \label{sect::62}

We denote by $w_0^W$ the  longest element in $W$.
  \subsubsection{} \label{sect::621}
For any simple reflection $s\in W$, we let ${\bf T}_s: \mc O\rightarrow \mc O$ denote Arkhipov's
{\em twisting functor} as introduced in \cite[Section 3.6]{CMW}; see also \cite{Ar, AS, CoM1}. As in the classical case \cite{KM2}, twisting functors satisfy braid relations; see \cite{CoM1}. This allows us to define ${\bf T}_w$, for any $w\in W$. We use ${\bf T}^0_w$ to denote the corresponding twisting functor on $\mc O_\oa$.

In \cite{Ri}, Ringel developed a duality theory for quasi-hereditary algebras which is now known as Ringel duality. In \cite{So98}, Soergel proved Ringel self-duality of category
$\mc O$ by establishing an  equivalence between the full subcategories of projective modules and tilting modules. This is done by applying the functor ${\bf T}_{w_0^W}$.
In fact, for a given $\mu\in \Lambda$, we have
$${\bf T}_{w_0^W}^0P_0(\mu) = T_0(w_0^W \cdot \mu).$$

Building on \cite{So98}, Brundan  established
Ringel self-duality for category $\mathcal{O}$ for   Lie superalgebras
in \cite{Br04};  see also \cite{Ma,CCC} for further results.
In the formulation of \cite[Theorem~3.7]{CCC},
for any $\mu \in \h^\ast$, we have
${\bf T}_{w_0^W}P(\mu) = T^r(\mu'),$
 where  $T^r(\mu')$  denotes the indecomposable tilting module of highest weight $$\mu':=w_0^W\mu -2\rho_\oa-2w_0^W\rho_{-1},$$ with respect to the  {\em reverse Borel subalgebra} $\mf b^r:= \mf b_\oa+\g_{-1}$.

  \subsubsection{}
  We define a map $\hat{\cdot}: \mf h^\ast\rightarrow \mf h^\ast$ by letting $\hat \mu:=-w_0^W\mu$, for any $\mu \in \h^\ast$. {Recall that we denote  the set of all integral and $W_\nu$-dominant weights by  $\Lnub$.}  We note that the map $\mu \mapsto w_0^W\cdot \mu$ gives a bijection between $\Lnub$ and $\Lambda(\widehat{\nu})$.

\begin{lem} \label{lem::16} We have
  $$ \{T^r(\la)|~\la \in \Lnub\} = \{T(\la)|~\la \in \Lnub\} .$$
  In particular, the twisting functor ${\bf T}_{w_0^W}$ gives rise to an equivalence between the additive closures $\add(\{P( \mu)|~\mu \in \Lnua\})$ and $\add(\{T( \mu)|~\mu \in \Lnub\})$.
\end{lem}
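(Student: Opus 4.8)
The plan is to transport the known Ringel self\-duality of $\mc O$ along the twisting functor ${\bf T}_{w_0^W}$, keeping careful track of the passage from $\mf b$ to the reverse Borel $\mf b^r$. By \cite[Theorem~3.7]{CCC} we have ${\bf T}_{w_0^W}P(\mu)\cong T^r(\mu')$ with $\mu'=w_0^W\mu-2\rho_\oa-2w_0^W\rho_{-1}$, and ${\bf T}_{w_0^W}$ is an equivalence between the corresponding additive categories. The first step I would carry out is the combinatorics: compute the image of $\Lnua$ under $\mu\mapsto\mu'$. Using that $\rho_{\pm 1}$ are $W_\nu$\-invariant — indeed $\mf g_{\pm 1}$ are $\mf l_\nu$\-modules, so their weight multiplicities are $W_\nu$\-stable, whence $\rho_{\pm 1}\perp\Phi_\nu$ — together with the standard behaviour of $w_0^W$ under the dot\-action, this image turns out to be the set of integral $W_{\widehat\nu}$\-dominant weights, and the bijection $\Lnub\leftrightarrow\Lambda(\widehat\nu)$ recorded before the lemma lets one pass between the $\Lnub$\-indexing of the $\mf b^r$\-tiltings and the one arising here.

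It then remains to compare $\mf b^r$\-tilting modules with $\mf b$\-tilting modules. For this I would use that $\mc O(\g,\mf b)$ and $\mc O(\g,\mf b^r)$ are equivalent highest\-weight categories via an automorphism $\phi$ of $\g$ that fixes $\mf b_\oa$ and interchanges $\mf g_1$ and $\mf g_{-1}$ (for $\gl(m|n)$ one may take minus the supertranspose composed with conjugation by a representative of $w_0$, with analogous choices in the remaining type\-I cases), so that $\phi(\mf b)=\mf b^r$ and $\phi$ induces $\mu\mapsto\widehat\mu$ on $\h^\ast$. Pullback of the action along $\phi$ is an exact equivalence $\mc O(\g,\mf b^r)\xrightarrow{\cong}\mc O(\g,\mf b)$ carrying standard objects to standard objects and costandard objects to costandard objects, hence $\mf b^r$\-tilting modules to $\mf b$\-tilting modules, and it leaves $\mf l_\nu$ and $W_\nu$ unchanged since these depend only on $\mf b_\oa$. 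Combining $\phi$ with ${\bf T}_{w_0^W}$ and tracking the highest weights then yields the equality of the two families of tilting modules and, with it, the claimed equivalence of additive closures. Any intervening statement about $\Delta$\- and $\nb$\-flags I would reduce to the even case through the Kac functor $K$ and its dual, exactly as in the proof of Proposition~\ref{prop::BGG}.

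The step I expect to be the main obstacle is precisely the reconciliation of the indexing sets: since $\Phi_\nu$ need not be stable under the diagram automorphism $-w_0^W$, one cannot simply assert $T^r(\la)\cong T(\la)$, and the real work lies in checking that the three weight relabellings involved — the one produced by $\phi$, the one produced by ${\bf T}_{w_0^W}$, and the one converting $W_\nu$\-anti\-dominance into $W_\nu$\-dominance via $w_0^\nu$ and the $\widehat{\phantom{\nu}}$\-map — compose to a bijection of $\Lnub$ with itself. A secondary, more technical, point is verifying that twisting by $\phi$ is genuinely an equivalence of highest\-weight (not merely abelian) categories compatible with all parametrizations — concretely, that $\phi$ carries the parabolic $\mf p$ to its reverse and is compatible with parabolic induction — which is routine but parity\-sensitive.
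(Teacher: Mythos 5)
Your approach diverges from the paper's, and as written it has a gap. The paper's proof uses the observation that an indecomposable projective in $\mc O$ is simultaneously $P(\la)$ (with respect to $\mf b$) and $P^r(\mu)$ (with respect to $\mf b^r$) for some $\la,\mu$, and that the associated simple modules are the \emph{same module} $L(\la)=L^r(\mu)$ (simples in $\mc O$ are Borel-independent up to relabelling), so the condition of $\alpha$-freeness for $\alpha\in\Phi_\nu$ is intrinsic: $\la\in\Lnua\iff\mu\in\Lnua$. Hence $\{P(\la)\mid\la\in\Lnua\}=\{P^r(\mu)\mid\mu\in\Lnua\}$ as sets. The paper then applies the \emph{dualized} twisting formulas from \cite[Theorem~3.7]{CCC}, ${\bf D}{\bf T}_{w_0^W}P(\kappa)=T(-\kappa-2\rho_\oa+2\rho_{-1})$ and ${\bf D}{\bf T}_{w_0^W}P^r(\kappa)=T^r(-\kappa-2\rho_\oa+2\rho_{1})$, to a single set of modules described in two ways, which immediately yields the equality of the two families of tiltings. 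No automorphism of $\g$ is invoked.

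Your route via an automorphism $\phi$ of $\g$ with $\phi(\mf b)=\mf b^r$ does not deliver the claim. First, a factual error: $\mf l_\nu$ and $W_\nu$ do \emph{not} depend only on $\mf b_\oa$ — they depend on $\Phi_\nu$, and since $\phi$ acts on weights by $\widehat{\cdot}=-w_0^W$ and $\Phi_\nu$ need not be $-w_0^W$-stable, one has $\phi(\mf l_\nu)=\mf l_{\widehat\nu}\neq\mf l_\nu$ in general; this directly contradicts the sentence asserting $\phi$ fixes $\mf l_\nu$ and $W_\nu$, and also contradicts your own closing paragraph where you correctly note $\Phi_\nu$ is not stable under the diagram flip. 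Second, and more fundamentally, pullback $\phi^\ast$ is a nontrivial auto-equivalence of $\mc O$ that sends $T^r(\la)$ to some $T(\mu)$, but $\phi^\ast T^r(\la)\not\cong T^r(\la)$ as $\g$-modules; Lemma~\ref{lem::16} asserts that $T^r(\la)$ \emph{itself} equals some $T(\mu)$, not that it becomes one after twisting by an outer automorphism. So even if the three weight relabellings were verified to compose to a bijection of $\Lnub$ with itself, the functorial identification would still not yield the required set equality. The reconciliation step you flag as the likely obstacle is indeed not carried out, but the obstruction is conceptual, not merely combinatorial.
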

\begin{proof}
Suppose that $\la,\mu \in \Lambda$ are such that
$P(\la) = P^r(\mu),$
where  $P^r(\mu)$ denotes the projective cover of the irreducible module $L^r(\mu)$ of highest weight $\mu$ with respect to $\mf b^r$. For a given   root $\alpha\in \Pi_\nu$, we  calculate
\begin{align*}
	&\langle \mu +\rho_\oa, \alpha^\vee \rangle \leq 0 \Leftrightarrow L^r(\mu) \text{ is }\alpha\text{-free} \Leftrightarrow  L(\la) \text{ is }\alpha\text{-free}\Leftrightarrow \langle \la+\rho_\oa,\alpha^\vee \rangle \leq 0.
\end{align*}

By \cite[Theorem 3.7]{CCC}, there is a duality ${\bf D}: \mc O\rightarrow \mc O$ such that
\begin{align*}
&{\bf D T}_{w_0^W} P(\kappa) = T(-\kappa -2\rho_\oa+2\rho_{-1}),~
{\bf D T}_{w_0^W} P^r(\kappa) = T^r(-\kappa -2\rho_\oa+2\rho_{1}),
\end{align*}
for any $\kappa\in \Lambda$. We calculate that
\begin{align*}
	&\{T(\la)|~\la\in \Lambda \text{ with } \langle\la+\rho_\oa,\alpha^\vee \rangle\geq 0 \}\\
	&=\{{\bf DT}_{w_0^W}P(\la)|~\la\in \Lambda \text{ with }\langle\la+\rho_\oa,\alpha^\vee \rangle\leq 0\}\\
	&=\{{\bf DT}_{w_0^W}P^r(\la)|~\la\in \Lambda \text{ with }\langle\la+\rho_\oa,\alpha^\vee \rangle\leq 0\}\\
	&=\{T^r(\la)|~\la\in \Lambda \text{ with } \langle\la+\rho_\oa,\alpha^\vee \rangle\geq 0 \}.
\end{align*}
This completes the proof.
\end{proof}

\begin{proof}[Proof of Proposition \ref{prop::15}]
	We first show that  the following set \begin{align*}
		&\{\pi(T(\la))|~\la \in \Lnub\}
	\end{align*} is an exhaustive list of pairwise non-isomorphic indecomposable tilting modules in $\mc \OI.$ To see this,  we note that $\mc T = \add(\{\Ind(\mc F(\Delta_0)\cap \mc F(\ov \nb_0))\})$ by Lemmas \ref{lem::11} and \ref{lem::9}. By \cite{FKM00}, the set  $$\{\pi_0(T_0(\la))|~\la \in \Lnub\},$$ is a complete list of indecomposable tilting objects in $\OI_\oa$. It follows that $$\mc T = \add(\{\pi(\Ind(T_0(\mu)))|~\mu \in \Lnub\}).$$  By \cite[Theorem 3.5-(i)]{CCC}, we know that $\mc T$ is a full subcategory of the category $\add(\{\pi(T(\mu))|~\mu\in \Lambda\})$.



	
	On the other hand, we have $\add(\{\Ind P_0(\mu)|~\mu \in \Lnua\}) = \add(\{P(\mu)|~\mu\in \Lnua\})$ by \eqref{eq::26}.
	We calculate
	\begin{align*}
	&\add(\{\Ind T_0(\mu)|~\mu\in \Lnub\})\\
	&=\add(\{\Ind {\bf T}^0_{w_0^W}P_0(\mu)|~\mu\in  {\Lambda}(\widehat \nu)\})\\
	&=\add(\{{\bf T}_{w_0^W}\Ind P_0(\mu)|~\mu\in  {\Lambda}(\widehat \nu)\})\\
	&=\add(\{{\bf T}_{w_0^W} P(\mu)|~\mu\in  {\Lambda}(\widehat\nu)\})\\
	&=\add(\{T^r( \mu)|~\mu \in \Lnub\})\\
	&=\add(\{T( \mu)|~\mu \in \Lnub\}). 
	\end{align*}
	The last equality follows from Lemma \ref{lem::16}.
	
 Next,  we shall show that $\pi(T(\la))$ is indecomposable, for any $\la \in \Lnub$.
 By \cite[Lemma 18]{FKM00}, we have $T_0(\la)\in \mc O^{\vpre}_\oa$. By Lemma \ref{lem::1}, it follows that $\Ind T_0(\la)\in \mc O^{\vpre}$.
 By Lemma \ref{lem01}, we obtain an isomorphism  $$\pi:\Hom_{\mc O}(\Ind T_0(\la), \Ind T_0(\mu))\xrightarrow{\cong}\Hom_{\OI}(\pi(\Ind T_0(\la)), \pi(\Ind T_0(\mu))),$$ for $\la, \mu \in \Lnub$. In particular, $$\End_{\OI}(\pi(T(\la)))\cong\End_{\mc O}(T(\la)).$$ Since the latter is a local ring, it follows  from \cite[Proposition 5.4]{Kr15} that $\pi(T(\la))$ is indecomposable.

 Finally, for any $\la\in \Lnub$,  there is an inclusion $M(\la)\hookrightarrow T(\la)$ with co-kernel having a Verma flag. Since $\pi(M(\la)) = \ov\Delta(w_0^\nu\cdot \la)$, we know that $\pi(T(\la))$ admits a $\ov \Delta$-flag starting at $\ov \Delta(w_0^\nu\cdot \la)$. The conclusion  follows from Lemma \ref{lem::9}.
\end{proof}

\begin{cor} The category $\mc T$ is equivalent to the full subcategory
$\mc P$ in $\ov{\mc O}$ consisting of all projective objects.
\end{cor}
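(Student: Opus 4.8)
The plan is to transport the Ringel-type equivalence of Lemma~\ref{lem::16} through the quotient functor~$\pi$, exploiting that $\pi$ is fully faithful on $\mc O^{\vpre}$ by Lemma~\ref{lem01}. The first step I would carry out is to record that $\mc O^{\vpre}$ is closed under direct summands inside $\mc O$; this is immediate from Lemma~\ref{lem01}, since there $\mc O^{\vpre}$ is identified, via $\pi$, with the abelian (hence idempotent-complete) category $\ov{\mc O}$. Combined with the fact that the indecomposable projective objects of $\ov{\mc O}$ are exactly the $\pi(P(\la))$, $\la\in\Lnua$ (again Lemma~\ref{lem01}), this yields $\mc P=\add(\{\pi(P(\la))\mid\la\in\Lnua\})$, and the restriction of $\pi$ becomes an equivalence $\add(\{P(\la)\mid\la\in\Lnua\})\xrightarrow{\cong}\mc P$; a quasi-inverse is the restriction of the left adjoint $\pi^{L}$ from Proposition~\ref{prop::leftadj}.

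Next I would establish the analogous statement on the tilting side, namely that $\pi$ restricts to an equivalence $\add(\{T(\mu)\mid\mu\in\Lnub\})\xrightarrow{\cong}\mc T$. The point to verify is that $\add(\{T(\mu)\mid\mu\in\Lnub\})$ lies in $\mc O^{\vpre}$: in the proof of Proposition~\ref{prop::15} it is shown that $\add(\{T(\mu)\mid\mu\in\Lnub\})=\add(\{\Ind_{\g_\oa}^{\g}T_0(\mu)\mid\mu\in\Lnub\})$, while $T_0(\mu)\in\mc O^{\vpre}_\oa$ by \cite[Lemma~18]{FKM00} and $\Ind_{\g_\oa}^{\g}$ maps $\mc O^{\vpre}_\oa$ into $\mc O^{\vpre}$ by Lemma~\ref{lem::1}; closedness of $\mc O^{\vpre}$ under summands then finishes this. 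Consequently $\pi$ is fully faithful on $\add(\{T(\mu)\mid\mu\in\Lnub\})$, and since the proof of Proposition~\ref{prop::15} exhibits $\{\pi(T(\mu))\mid\mu\in\Lnub\}$ as the complete list of pairwise non-isomorphic indecomposable tilting objects of $\ov{\mc O}$ (so that $\mc T=\add(\{\pi(T(\mu))\mid\mu\in\Lnub\})$), the essential image of this restriction is precisely $\mc T$.

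It then remains to glue the pieces together. Lemma~\ref{lem::16} supplies an equivalence ${\bf T}_{w_0^{W}}\colon\add(\{P(\la)\mid\la\in\Lnua\})\xrightarrow{\cong}\add(\{T(\mu)\mid\mu\in\Lnub\})$, and composing the three equivalences yields
\[
\mc P\ \xrightarrow{\ \pi^{L}\ }\ \add\bigl(\{P(\la)\mid\la\in\Lnua\}\bigr)\ \xrightarrow{\ {\bf T}_{w_0^{W}}\ }\ \add\bigl(\{T(\mu)\mid\mu\in\Lnub\}\bigr)\ \xrightarrow{\ \pi\ }\ \mc T ,
\]
which is the asserted equivalence; by construction it sends the indecomposable projective $\pi(P(\la))$ to an indecomposable tilting object, exactly as in classical Ringel duality. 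I do not anticipate a genuine obstacle: the substance is already contained in Lemma~\ref{lem::16} (Ringel self-duality of $\mc O$, itself lifted from \cite{So98,Br04,CCC}) and in Lemma~\ref{lem01} (the abelian structure on $\mc O^{\vpre}\cong\ov{\mc O}$). The one thing requiring care is that all the modules in sight must first be realised inside $\mc O^{\vpre}$, where $\pi$ is fully faithful, before one may pass to $\ov{\mc O}$ (which is exactly what closedness of $\mc O^{\vpre}$ under direct summands secures), and that one keep straight which additive closures are formed in $\mc O$ and which in $\ov{\mc O}$.
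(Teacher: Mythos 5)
Your proof is correct and follows essentially the same route as the paper: the paper's own proof simply records the two equivalences $\pi\colon\add(\{P(\mu)\mid\mu\in\Lnua\})\xrightarrow{\cong}\mc P$ and $\pi\colon\add(\{T(\mu)\mid\mu\in\Lnub\})\xrightarrow{\cong}\mc T$ and then invokes Lemma~\ref{lem::16}, which is exactly the composition you build. The only difference is that you spell out why $\pi$ restricts to these equivalences (closedness of $\mc O^{\vpre}$ under summands, full faithfulness from Lemma~\ref{lem01}, and the identification of $\add(\{T(\mu)\})$ with $\add(\{\Ind T_0(\mu)\})$ inside $\mc O^{\vpre}$ drawn from the proof of Proposition~\ref{prop::15} and Lemma~\ref{lem::1}), details the paper leaves implicit.
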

\begin{proof}
	 Since we have  $$\pi: \add(\{P( \mu)|~\mu \in \Lnua\})\xrightarrow{\cong}\mc P\quad\text{ and }\quad\pi: \add(\{T( \mu)|~\mu \in \Lnub\})\xrightarrow{\cong}\mc T,$$ the conclusion follows from Lemma \ref{lem::16}.
\end{proof}

\section{Categorification of the $q$-symmetrized Fock space} \label{sect::7}

 In  this section, unless stated otherwise, we assume that $\g$ is one of
the Lie superalgebras $\gl(m|n), \mf{osp}(2|2n)$ or $ \pn$. We recall that $\nu\in \h^\ast$ is a (possibly singular) dominant integral weight and $\zeta\in {\bf ch}\mf n_\oa^+$ is such that $W_\nu = W_\zeta$.

\subsection{Multiplicity problem for standard Whittaker modules} The problem of computing the composition factors of standard Whittaker modules has been solved by Backelin \cite{B} and Mili{\v{c}}i{\'c} and Soergel \cite{MS}  in the case of semisimple Lie algebras. The answer is formulated in terms of the Kazhdan-Lusztig combinatorics of category $\mc O$.
The main tool used by Mili{\v{c}}i{\'c} and Soergel  \cite{MS} was an equivalence from a certain subcategory of Harish-Chandra bimodules to a certain category of Whittaker modules, which restricts to the equivalence \eqref{eq::415} in Lemma~\ref{lem::5}. Backelin, in turn, used a certain functor $\Gamma_\zeta$ from $\mc O$ to $\mc N(\zeta)$. For quasireductive Lie superalgebras, the analogues of Backelin's functor $\Gamma_\zeta$ and Mili{\v{c}}i{\'c}-Soergel's equivalence were constructed in \cite{Ch21}.

\subsection{The Backelin functor $\Gamma_\zeta$}
For  given $M\in \mc O$ and $\la \in \h^\ast$, we recall from \eqref{eq::wtspace} that $M_\la$ denotes the weight subspace of $M$ corresponding to $\la$. Then the completion $\ov M:= \prod_{\la \in \mf h^\ast} M_\la$ admits the  structure of a $\g$-module in a natural way.
 We recall the super version of Backelin's functor from \cite[Section 5.2]{Ch21}, which was initially studied by Backelin \cite{B} in the setting of the category $\mc O$ for a reductive Lie algebra:
\begin{align*}
&  \Gamma_\zeta:\mc O\rightarrow \mc N(\zeta),~
M\mapsto  \{m\in \ov M|~x-\zeta(x)\text{ acts nilpotently on $m$ for any }x\in \mf n_\oa^+ \}.
\end{align*}  For semisimple Lie algebras, our notation  $\Gamma_\zeta$
corresponds to $\ov{\Gamma}_f$ in \cite{B, AB}.

By \cite[Theorem 20]{Ch21}, the functor $ {\Gamma}_\zeta$ is exact and, for any $\la \in \Lambda$, we have
	\begin{align}
		&\Gamma_\zeta(M(\la))\cong M(\la,\zeta); \label{qe::1}\\
		&\Gamma_\zeta({L}(\la)) \cong \left\{ \begin{array}{ll} {L}(\la, \zeta),\quad \text{if $\la\in \Lnua$;} \\
			0, \quad\text{otherwise}.\end{array} \right.   \label{qe::3}
	\end{align}
  Furthermore, $\Gamma_\zeta$ functorially commutes with projective functors; see also \cite[Proposition 3]{AS}. Denote by $\Gamma_\zeta^0$ the original Backelin's functor studied in \cite{B}. By the proofs of \cite[Proposition 3]{AB} and \cite[Theorem 20]{Ch21},  we have
  \begin{align}
  &\Ind ( \Gamma_\zeta^0(M)) \cong  \Gamma_\zeta(\Ind(M)), \label{eq::73} \\
   &K(\Gamma_\zeta^0(M)) \cong  \Gamma_\zeta(K(M)), \label{eq::74}
  \end{align} for any  $M\in \mc O_\oa$.
We will show that the target category $\Gamma_\zeta(\mc O)$ of $\Gamma_\zeta$ is equivalent to $\OI$, see Corollary \ref{coro::univprop}.

\subsection{A Mili{\v{c}}i{\'c}-Soergel type equivalence}
\subsubsection{The functor $F_\zeta$} We define a functor $F_\zeta(-):\mc O\rightarrow \mc W(\zeta)$ by letting
$$F_\zeta(-):=\mc L(M_0(\nu), -) \otimes_{U(\g_\oa)} M_0(\nu,\zeta): \mc O\rightarrow \mc B_\nu \xrightarrow{\cong} \mc W(\zeta).$$
By \cite[Theorem 26]{Ch212}, this functor has the following properties:
\begin{thm} \label{prop18}
The following holds:
\begin{itemize}
	\item[(1)] $F_\zeta(-)$ is exact.
	\item[(2)] $F_\zeta(-)$ functorially commutes   with projective functors.
	\item[(3)] Let $\la\in \Lnua$ and $\mu\in \Lambda$. Suppose that $\la\in W_\nu\cdot \mu$. Then
	\begin{align*}
	&F_\zeta(M(\mu))=F_\zeta(M(\la))=M(\la,\zeta).
	\end{align*}
	\item[(4)]	We have
	\begin{align}
		&F_\zeta(L(\mu))\cong\begin{cases} L(\mu,\zeta), &\mbox{ for $\mu \in \Lnua$,}\\
			0,&\mbox{ otherwise.}
		\end{cases} \label{eq::twoeq}
	\end{align}
\end{itemize}
\end{thm}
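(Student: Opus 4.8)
The statement is a package of four properties of the functor
$$F_\zeta(-)=\mc L(M_0(\nu),-)\otimes_{U(\g_\oa)}M_0(\nu,\zeta):\mc O\to\mc B_\nu\xrightarrow{\cong}\mc W(\zeta),$$
and since the excerpt explicitly says it is taken from \cite[Theorem~26]{Ch212}, the honest proof is to cite that reference; but I would also want to reconstruct the argument so that the paper is self-contained. The idea is to factor $F_\zeta$ through the two equivalences already available. By Lemma~\ref{lem::03} the first factor $\mc L(M_0(\nu),-)$ restricts to a functor $\mc O\to\Opres\xrightarrow{\cong}\mc B_\nu$ (more precisely, it is the composition of the Serre quotient $\pi:\mc O\to\ov{\mc O}\cong\Opres$ with the equivalence of Lemma~\ref{lem::03}), and by Lemma~\ref{lem::5} the second factor $-\otimes_{U(\g_\oa)}M_0(\nu,\zeta):\mc B_\nu\xrightarrow{\cong}\mc W(\zeta)$ is an equivalence. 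So $F_\zeta$ is the composition of the exact functor $\pi$ with two equivalences, which immediately gives (1). For (2), $\pi$ functorially commutes with projective functors (projective functors are direct summands of $V\otimes-$, and tensoring with a finite-dimensional module commutes with Serre localization because $\mc I^\nu$ is stable under $V\otimes-$), and the two equivalences intertwine the $\g$-action with the corresponding actions on bimodules and on Whittaker modules in a way compatible with $V\otimes-$; so the composite inherits functorial commutation with $\mc Proj$. One must be a little careful to check the coherence (naturality in $\alpha:T\to T'$) survives the two equivalences, but this is formal once one records that both equivalences are given by $\g$-linear functors.

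For (3) and (4) I would compute on objects. First, $\mc L(M_0(\nu),M(\mu))$: using the description in Section~\ref{Sect::HCbimod} together with Lemma~\ref{lem::03}, for $\mu\in\Lambda$ one has $\pi(M(\mu))=\ov\Delta(\mu)$ in $\ov{\mc O}$, and by Lemma~\ref{lem::MS21}(3) together with Lemma~\ref{lem::5} this maps under the equivalence to $M(\la,\zeta)$ whenever $\la\in W_\nu\cdot\mu$; note also that $\ov\Delta(\mu)$ depends only on the $W_\nu$-dot-orbit of $\mu$ since $M(\mu)$ and $M(\la)$ have the same image in $\ov{\mc O}$ (one can see this from $M(\mu)=\ov\Delta'_0$-induction and the fact that $L(\eta)$ for $\eta\notin\Lnua$ are killed by $\pi$). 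Chasing through the two equivalences of Lemmas~\ref{lem::03} and~\ref{lem::5} then gives $F_\zeta(M(\mu))=F_\zeta(M(\la))=M(\la,\zeta)$, which is (3). For (4), Lemma~\ref{lem::5} records that the equivalence sends the top of $\mc L(M_0(\nu),M(\la))$ to $L(\la,\zeta)$; combined with the fact (Lemma~\ref{lem01}) that $\pi(L(\la))=0$ for $\la\notin\Lnua$ and $\pi(L(\la))$ is simple nonzero for $\la\in\Lnua$, and that $\mc L(M_0(\nu),-)$ sends $\pi(L(\la))$ (i.e.\ the simple in $\Opres$ that $L(\la)$ represents, cf.\ the module $S(\la)$) to the simple object of $\mc B_\nu$ whose image is $L(\la,\zeta)$, one gets $F_\zeta(L(\mu))\cong L(\mu,\zeta)$ for $\mu\in\Lnua$ and $0$ otherwise. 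This is exactly \eqref{eq::twoeq}.

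\textbf{Main obstacle.} The genuinely delicate point is matching up the simple objects under the composite of the two equivalences, i.e.\ proving $F_\zeta(L(\mu))\cong L(\mu,\zeta)$ rather than some other simple Whittaker module: one must verify that the equivalence $\mc L(M_0(\nu),-):\Opres\to\mc B_\nu$ of Lemma~\ref{lem::03} carries the simple object of $\Opres$ determined by $L(\mu)$ to the simple bimodule that Lemma~\ref{lem::5} then carries to $L(\mu,\zeta)$. Tracking this requires identifying the simple objects of $\Opres$ (via the modules $S(\la)$ and the appendix) with the corresponding simple Harish-Chandra bimodules, and then using that the bimodule equivalence sends the top of $\mc L(M_0(\nu),M(\la))$ to $L(\la,\zeta)$; the compatibility of ``top'' with the equivalence is what needs care, because the equivalence $-\otimes_{U(\g_\oa)}M_0(\nu,\zeta)$ need not be exact a priori on all of $\mc O$, only on $\mc B_\nu$ where it is an equivalence. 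Everything else—exactness and functorial commutation with projective functors—is formal bookkeeping once $F_\zeta$ is presented as $\pi$ followed by two equivalences.
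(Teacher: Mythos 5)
Your plan—to present $F_\zeta$ as $\pi$ followed by the two equivalences of Lemmas~\ref{lem::03} and~\ref{lem::5}, and then read off (1)--(4)—is logically inverted relative to the paper. The factorization $F_\zeta\cong(\text{equiv})\circ\pi$ is precisely the content of Theorem~\ref{prop::univprop} and Corollary~\ref{coro::univprop}, which in the paper are \emph{deduced from} Theorem~\ref{prop18} via Gabriel's universal property (Lemma~\ref{lem::unipro}). To invoke that universal property you need both that $F_\zeta$ is exact (part (1)) and that $F_\zeta$ annihilates $\mc I^\nu$ (the vanishing half of part (4)), so assuming the factorization to prove (1) and (4) begs the question. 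Lemma~\ref{lem::03} alone gives you that the \emph{restriction} of $\mc L(M_0(\nu),-)$ to $\Opres$ is an equivalence onto $\mc B_\nu$; it does not tell you that the functor on all of $\mc O$ agrees with the composite of $\pi$ and that equivalence. That identity, for an arbitrary $M\in\mc O$, is a genuine claim: $\mc L(M_0(\nu),-)$ is a priori only left exact, and $\pi^L\pi(M)$ (the projectively presentable replacement of $M$) is not a submodule or quotient of $M$ in any obvious sense, so $\mc L(M_0(\nu),M)\cong\mc L(M_0(\nu),\pi^L\pi(M))$ needs an argument.

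You locate the delicate point in matching simple objects across the equivalences; that is real, but it is not the main obstacle. The main obstacle is establishing exactness of $\mc L(M_0(\nu),-)$ on $\mc O$ and the vanishing $\mc L(M_0(\nu),L(\la))=0$ for $\la\notin\Lnua$ independently of the factorization. Those are the two facts that must be proven directly (this is what the cited reference \cite[Theorem~26]{Ch212} supplies), after which everything else you wrote—commutation with projective functors, the computation on Verma modules via Lemmas~\ref{lem::5} and~\ref{lem::MS21}, and the identification of simple objects—does go through. As it stands, the proposal assumes its most substantive conclusions. In the paper itself there is no argument to compare against, since Theorem~\ref{prop18} is taken verbatim from \cite{Ch212}; a self-contained reconstruction would have to supply proofs of exactness and of the vanishing on $\mc I^\nu$ before invoking the Serre-quotient picture.
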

The following lemma provides an alternative proof of \cite[Theorem 6.2]{B} and \cite[Theorem 20]{Ch21}, where the cases of arbitrary weights are considered.
\begin{lem}
	Let $\la,\mu\in \Lambda$ and $\la_0\in W_\nu\cdot \la\cap \Lnua,~\mu_0\in W_\nu\cdot \mu\cap \Lnua$. Then we have
	\begin{align*}
	&[M(\la,\zeta): L(\mu, \zeta)] = [M(\la_0):L(\mu_0)].
	\end{align*}
\end{lem}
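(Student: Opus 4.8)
The plan is to use the exactness of the functor $F_\zeta$ together with its action on simple modules, reducing the multiplicity computation to the category $\OI$ (equivalently, to classical category $\mc O_\oa$).

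First I would observe that since $\la_0 \in W_\nu \cdot \la \cap \Lnua$, Theorem \ref{prop18}(3) gives $F_\zeta(M(\la)) = F_\zeta(M(\la_0)) = M(\la_0,\zeta)$, and by Lemma \ref{lem::5} (or Lemma \ref{lem::MS21}(3)) the module $M(\la_0,\zeta)$ is isomorphic to $M(\la,\zeta)$ up to the identification $W_\nu\cdot\la = W_\nu\cdot\la_0$. Since $F_\zeta$ is exact by Theorem \ref{prop18}(1), it induces a homomorphism on Grothendieck groups. Apply $F_\zeta$ to a composition series of $M(\la_0) \in \mc O$: each composition factor $L(\gamma)$ with $\gamma \in \Lambda$ is sent, by \eqref{eq::twoeq}, to $L(\gamma,\zeta)$ if $\gamma \in \Lnua$ and to $0$ otherwise. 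Hence in $[\mc W(\zeta)]$ we get
\begin{align*}
[M(\la,\zeta)] = [F_\zeta(M(\la_0))] = \sum_{\gamma \in \Lnua} [M(\la_0):L(\gamma)]\,[L(\gamma,\zeta)].
\end{align*}
By Lemma \ref{lem::MS21}(2) the classes $\{[L(\gamma,\zeta)] \mid \gamma \in \Lnua\}$ are linearly independent in $[\mc W(\zeta)]$, so reading off the coefficient of $[L(\mu_0,\zeta)] = [L(\mu,\zeta)]$ yields $[M(\la,\zeta):L(\mu,\zeta)] = [M(\la_0):L(\mu_0)]$.

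The only subtlety to check carefully is the bookkeeping of central characters and $W_\nu$-orbits: one must confirm that $L(\mu,\zeta) \cong L(\mu_0,\zeta)$ precisely because $W_\nu\cdot\mu = W_\nu\cdot\mu_0$ (Lemma \ref{lem::MS21}(3)), and that no other $\gamma \in \Lnua$ appearing in the sum satisfies $L(\gamma,\zeta) \cong L(\mu_0,\zeta)$ — this again follows from Lemma \ref{lem::MS21}(3), since distinct elements of $\Lnua$ lie in distinct $W_\nu$-orbits. I expect the main (modest) obstacle is simply making sure the identification of $M(\la,\zeta)$ with $F_\zeta(M(\la))$ via Theorem \ref{prop18}(3) is invoked correctly for $\la$ not necessarily in $\Lnua$; once that is in place the argument is a direct Grothendieck-group computation using exactness and \eqref{eq::twoeq}.
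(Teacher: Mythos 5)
Your argument is correct, and it is exactly the argument the paper intends: the paper's own proof is simply the one-line remark that the lemma ``is a direct consequence of Theorem \ref{prop18},'' and you have supplied precisely the details that make that deduction explicit (pass to the Grothendieck group using exactness, apply the formulas for $F_\zeta$ on Verma and simple modules, and invoke the classification of simple Whittaker modules to identify coefficients). The bookkeeping points you flag — that $L(\mu,\zeta)\cong L(\mu_0,\zeta)$ since $W_\nu\cdot\mu=W_\nu\cdot\mu_0$, and that distinct $\gamma\in\Lnua$ index non-isomorphic simples — are exactly the right checks and are covered by Lemma \ref{lem::MS21}.
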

\begin{proof}
 This is a direct consequence of Theorem \ref{prop18}.
\end{proof}

	In the following theorem, we show that the functor $F_\zeta(-)$ is a realization of the quotient functor $\pi$.
	\begin{thm} \label{prop::univprop}
	The functor $F_\zeta(-): \mc O\rightarrow \mc W(\zeta)$ satisfies the universal property of the Serre quotient category $\OI$.
	\end{thm}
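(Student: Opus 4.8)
The plan is to check that $F_\zeta$ satisfies the hypotheses of Gabriel's universal property (Lemma~\ref{lem::unipro}) with respect to the Serre subcategory $\mc I^\nu$, to extract from it the canonically induced functor $\bar F_\zeta\colon\OI\to\mc W(\zeta)$, and then to prove that $\bar F_\zeta$ is an equivalence by restricting the identity $F_\zeta=\bar F_\zeta\circ\pi$ to $\Opres$, where $F_\zeta$ becomes a composition of the equivalences supplied by Lemmas~\ref{lem::03} and~\ref{lem::5}.

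First I would record that $F_\zeta$ is exact, which is Theorem~\ref{prop18}(1), and that $F_\zeta$ annihilates $\mc I^\nu$. For the latter, note that $\mc I^\nu$ is by definition the Serre subcategory of $\mc O$ generated by the simples $L(\mu)$ with $\mu\in\Lambda\setminus\Lnua$; since every object of $\mc O$ has finite length, every object of $\mc I^\nu$ has a finite composition series with all subquotients of this form, and by Theorem~\ref{prop18}(4) together with exactness $F_\zeta$ kills each of them, hence all of $\mc I^\nu$. By Lemma~\ref{lem::unipro} there is then a unique exact functor $\bar F_\zeta\colon\OI\to\mc W(\zeta)$ with $F_\zeta=\bar F_\zeta\circ\pi$, and $\bar F_\zeta$ is computed from $F_\zeta$ on representatives of morphisms in $\OI$.

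It remains to see that $\bar F_\zeta$ is an equivalence; this then transports the universal property of $(\OI,\pi)$ to $(\mc W(\zeta),F_\zeta)$, since for any exact $G\colon\mc O\to\mc D$ vanishing on $\mc I^\nu$ one factors $G=G'\circ\pi$ through $\OI$ by Lemma~\ref{lem::unipro} and sets $\tilde G:=G'\circ\bar F_\zeta^{-1}$, which is the unique exact functor with $G\cong\tilde G\circ F_\zeta$. Let $\iota\colon\Opres\hookrightarrow\mc O$ denote the inclusion. By the very definition $F_\zeta=(-\otimes_{U(\g_\oa)}M_0(\nu,\zeta))\circ\mc L(M_0(\nu),-)$, so $F_\zeta\circ\iota$ is the composition of $\mc L(M_0(\nu),-)\colon\Opres\xrightarrow{\cong}\mc B_\nu$ (Lemma~\ref{lem::03}) with $-\otimes_{U(\g_\oa)}M_0(\nu,\zeta)\colon\mc B_\nu\xrightarrow{\cong}\mc W(\zeta)$ (Lemma~\ref{lem::5}), hence is an equivalence; and $\pi\circ\iota\colon\Opres\xrightarrow{\cong}\OI$ is an equivalence by Lemma~\ref{lem01}. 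Restricting $F_\zeta=\bar F_\zeta\circ\pi$ along $\iota$ yields $F_\zeta\circ\iota\cong\bar F_\zeta\circ(\pi\circ\iota)$, so $\bar F_\zeta\cong(F_\zeta\circ\iota)\circ(\pi\circ\iota)^{-1}$ is an equivalence.

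The argument is essentially a bookkeeping assembly of results already established, so there is no deep obstacle; the points demanding attention are the reduction of the vanishing $F_\zeta(\mc I^\nu)=0$ to the vanishing on simple modules (which uses exactness and the finite-length property in $\mc O$), and the verification that the restriction of $F_\zeta$ to $\Opres$ is literally the composite of the two equivalences above and not a twisted version of it --- after which the fact that $\pi$ restricts to an equivalence on $\Opres$ (Lemma~\ref{lem01}) does the rest.
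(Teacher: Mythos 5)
Your proof is correct and takes essentially the same route as the paper: invoke Lemma~\ref{lem::unipro} to obtain the induced functor $\bar F_\zeta$ on $\OI$, then conclude it is an equivalence by comparing with the restriction to $\Opres$ via Lemmas~\ref{lem01}, \ref{lem::03}, and~\ref{lem::5}. You spell out a couple of points the paper leaves implicit (the vanishing $F_\zeta(\mc I^\nu)=0$ via exactness and Theorem~\ref{prop18}(4), and the direct composition of the two equivalences on $\Opres$ rather than a separate full-and-faithful check), but the underlying argument is identical.
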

	\begin{proof}
		By the universal property in Lemma \ref{lem::unipro}, we get an induced functor $F'_\zeta: \OI \rightarrow \mc W(\zeta)$ such that $F_\zeta = F'_\zeta\circ \pi$. We shall show that $F'_\zeta$ is an equivalence. By Lemma \ref{lem::03}, $\mc L(M_0(\nu),-)$ restricts to an equivalence from $\Opres$ to $\mc B_\nu$. It follows that  $F_\zeta$ restricts to an equivalence from $\Opres$ to $\mc W(\zeta)$. It remains to show that $F'_\zeta$ is full and faithful. By Lemma \ref{lem01}, the homomorphism space between two objects in $\ov{\mc O}$ is of the form $\Hom_{\OI}(\pi(M), \pi(N))$, for some $M,N\in \Opres$. Then, we get two isomorphisms
		\begin{align*}
			&\pi: \Hom_{\mc O}(M,N) \xrightarrow{\cong}  \Hom_{\OI}(\pi(M),\pi(N)),\\
			&F_\zeta: \Hom_{\mc O}(M,N) \xrightarrow{\cong}  \Hom_{\mc N}(F_\zeta(M),F_\zeta(N)).
		\end{align*}  This implies that $F'_\zeta: \Hom_{\OI}(\pi(M),\pi(N)) \rightarrow \Hom_{\mc N}(F_\zeta(M),F_\zeta(N))$ is an isomorphism. The theorem follows.
	\end{proof}

\begin{cor} \label{coro::univprop}
	The functors $\Gamma_\zeta$ and $F_\zeta$ are isomorphic as functors from $\mc O$ to $\mc N(\zeta)$.
		As a consequence, we have $\Gamma_\zeta(\mc O) = \mc W(\zeta)\cong \ov{\mc O}$. 	
	 In particular, the Backelin functor $\Gamma_\zeta: \mc O\rightarrow \mc W(\zeta)$ satisfies the universal property of the Serre quotient category $\OI$.
\end{cor}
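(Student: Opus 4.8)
The plan is to deduce this corollary by comparing $\Gamma_\zeta$ and $F_\zeta$ through Lemma~\ref{lem::fpfO}, and then transporting the universal property already established for $F_\zeta$ in Theorem~\ref{prop::univprop}. First I would observe that both $\Gamma_\zeta$ and $F_\zeta$ are exact functors from $\mc O$ to the category $\mc N(\zeta)$ (or rather to the full abelian subcategory $\mc W(\zeta)$, which is invariant under projective functors), and that both functorially commute with projective functors: for $\Gamma_\zeta$ this is recalled just after \eqref{qe::3}, and for $F_\zeta$ it is part~(2) of Theorem~\ref{prop18}. Next I would pick $\la$ as in Lemma~\ref{lem::fpfO}, i.e.\ a generic, typical, dominant, integral and regular weight, and check that $\Gamma_\zeta(M(\la))\cong F_\zeta(M(\la))$: by \eqref{qe::1} we have $\Gamma_\zeta(M(\la))\cong M(\la,\zeta)$, and since such a $\la$ lies in $\Lnua$, part~(3) of Theorem~\ref{prop18} gives $F_\zeta(M(\la))\cong M(\la,\zeta)$ as well. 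Applying Lemma~\ref{lem::fpfO} then yields an isomorphism of functors $\Gamma_\zeta\cong F_\zeta$.

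Once the isomorphism $\Gamma_\zeta\cong F_\zeta$ is in hand, the remaining assertions follow formally. The identity of codomains $\Gamma_\zeta(\mc O)=F_\zeta(\mc O)$ is immediate, and $F_\zeta(\mc O)=\mc W(\zeta)$ because $F_\zeta$ restricts to the equivalence $\Opres\xrightarrow{\cong}\mc W(\zeta)$ of Lemmas~\ref{lem::03} and~\ref{lem::5} while every object of $\mc O$ has the same image under $\pi$ (hence under $F'_\zeta\circ\pi=F_\zeta$) as some object of $\Opres$, by Lemma~\ref{lem01}. Combining with Theorem~\ref{prop::univprop}, which identifies $\mc W(\zeta)$ with $\OI$ via $F'_\zeta$, gives $\Gamma_\zeta(\mc O)=\mc W(\zeta)\cong\ov{\mc O}$. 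Finally, since $\Gamma_\zeta\cong F_\zeta$ and $F_\zeta$ satisfies the universal property of the Serre quotient $\OI$ by Theorem~\ref{prop::univprop}, so does $\Gamma_\zeta$; in particular, by the uniqueness clause of Lemma~\ref{lem::unipro}, the induced functor $\Gamma'_\zeta\colon\OI\to\mc W(\zeta)$ is an equivalence and agrees with $F'_\zeta$ up to natural isomorphism.

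The only genuine point requiring care is the hypothesis of Lemma~\ref{lem::fpfO} that both functors land in a fixed full abelian subcategory $\mc C$ of $\g\mod_Z$ that is invariant under projective functors. I would take $\mc C=\mc W(\zeta)$; that $\mc W(\zeta)$ is a full abelian subcategory stable under projective functors follows from the Mili\v{c}i\'c--Soergel type description (it is a cokernel category of the form $\mathrm{Coker}(\mc F\otimes\Ind M_0(\nu,\zeta))$, hence closed under the action of $V\otimes-$ and under the relevant cokernels), together with the equivalences of Lemmas~\ref{lem::03} and~\ref{lem::5}. One must also confirm that the codomain of $\Gamma_\zeta$, a priori the larger category $\mc N(\zeta)$, can be corestricted to $\mc W(\zeta)$; this is exactly the statement $\Gamma_\zeta(\mc O)=\mc W(\zeta)$, which on the level of objects can be seen directly since projective objects of $\mc O$ are summands of modules $\mc F\otimes M(\la)$ for $\la$ as above, so their images are summands of $\mc F\otimes M(\la,\zeta)\in\mc W(\zeta)$, and $\Gamma_\zeta$ is right exact. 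I expect this bookkeeping — verifying that $\mc W(\zeta)$ is the right ambient abelian category and is projective-functor-stable — to be the main (though not deep) obstacle; everything else is a direct citation of Lemma~\ref{lem::fpfO}, Lemma~\ref{lem::unipro} and Theorem~\ref{prop::univprop}.
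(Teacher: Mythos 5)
Your overall strategy coincides with the paper's: apply Lemma~\ref{lem::fpfO} to the pair $(\Gamma_\zeta, F_\zeta)$, compare values on the distinguished Verma module $M(\la)$, and then transport the universal property from $F_\zeta$ (Theorem~\ref{prop::univprop}) to $\Gamma_\zeta$. Your attention to the hypotheses of Lemma~\ref{lem::fpfO} --- that $\mc W(\zeta)$ is a full abelian subcategory of $\g\mod_Z$ stable under projective functors and that $\Gamma_\zeta$ corestricts to it --- is the right bookkeeping to flag.

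However, one step in your comparison of $\Gamma_\zeta(M(\la))$ and $F_\zeta(M(\la))$ is mistaken: you assert that the weight $\la$ from Lemma~\ref{lem::fpfO} "lies in $\Lnua$." By construction this $\la$ is generic, typical, \emph{dominant}, integral and regular, so $\langle\la+\rho_\oa,\alpha^\vee\rangle>0$ for all $\alpha\in\Pi_0$, in particular for all $\alpha\in\Phi_\nu$. But $\Lnua$ is by definition the set of integral weights that are $W_\nu$-\emph{anti}-dominant, i.e.\ those with $\langle\la+\rho_\oa,\alpha^\vee\rangle\in\Z_{\leq 0}$ for all $\alpha\in\Phi_\nu$. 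Hence, unless $\Phi_\nu=\emptyset$ (the trivial case $\zeta=0$), this $\la$ does \emph{not} belong to $\Lnua$, and Theorem~\ref{prop18}(3) cannot be applied with $\la$ itself as the $\Lnua$-parameter. The repair is short and is exactly what the paper implicitly carries out when it writes $\Gamma_\zeta(M(\la))\cong F_\zeta(M(\la))$ for all $\la\in\h^\ast$: let $\la'$ be the unique element of $W_\nu\cdot\la\cap\Lnua$. Applying Theorem~\ref{prop18}(3) with the dominant $\la$ in the role of $\mu$ and $\la'$ in the role of the $\Lnua$-weight gives $F_\zeta(M(\la))\cong M(\la',\zeta)$. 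Then Lemma~\ref{lem::MS21}(3) gives $M(\la',\zeta)\cong M(\la,\zeta)$ because $W_\zeta\cdot\la'=W_\zeta\cdot\la$. Combined with \eqref{qe::1}, $\Gamma_\zeta(M(\la))\cong M(\la,\zeta)$, this yields the needed isomorphism $\Gamma_\zeta(M(\la))\cong F_\zeta(M(\la))$, after which the rest of your argument goes through unchanged.
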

\begin{proof}
	Since both exact functors $\Gamma_\zeta$ and $F_\zeta$  functorially commute with projective functors and $\Gamma_\zeta(M(\la))\cong F_\zeta(M(\la))$, for any $\la\in \h^\ast$, we have $\Gamma_\zeta \cong F_\zeta$ by Lemma \ref{lem::fpfO}.
		Consequently, we  have  $\Gamma_\zeta(\mc O) = F_\zeta(\mc O) =\mc W(\zeta)$. Finally, it follows from Lemmas 	 \ref{lem01}, \ref{lem::03} and \ref{lem::5} that $\mc W(\zeta)\cong \OI$. This completes the proof.
\end{proof}

 \subsection{Applications of the quotient functor realization of $\Gamma_\zeta$}
 In this subsection, we give some applications of the realization of $\Gamma_\zeta$ in terms of the Serre quotient functor.
\subsubsection{Left and right adjoints of $\Gamma_\zeta$} \label{re::37}	
	 In the case when $\g=\g_\oa$ is a semisimple Lie algebra, Arias and Backelin \cite{AS} studied the restriction of $\Gamma_\zeta$ to a  block $\mc O_\la$ containing $L(\la)$, for a dominant, integral and regular weight $\la \in \h^\ast$. It is proved in \cite[Corollary 2]{AB} that $\Gamma_\zeta: \mc O_\la \rightarrow \Gamma_\zeta(\mc O_\la)$ has both a left adjoint $\Gamma_\zeta^L: \Gamma_\zeta(\mc O_\la)\rightarrow \mc O_\la$ and a right adjoint $\Gamma_\zeta^R: \Gamma_\zeta(\mc O_\la)\rightarrow \mc O_\la$ such that the evaluation $V\rightarrow \Gamma_\zeta\Gamma_\zeta^L(V)$ of the adjunction map at each $V\in \Gamma_\zeta^L(\mc O_\la)$ is an isomorphism. In the present paper this is generalized to the setup of Lie superalgebras.

	In \cite[Section 3.2]{AB},  Arias and Backelin calculated  the left adjoint $\Gamma_\zeta^L$ and the right adjoint $\Gamma_\zeta^R$ on two standard Whittaker modules $M(\la,\zeta)$  in the case when $\g$ is a semisimple Lie algebra and $\la$ is regular. We now generalize this calculation by computing the images of $\Gamma^L_\zeta$ and $\Gamma_\zeta^R$ on all simple and standard Whittaker modules in $\mc W(\zeta)$ for Lie superalgebras of type I as follows.
	
	 Let $w_0^\nu$ denote the longest element in $W_\nu$.  Recall the twisting functor $\mathbf T_{w_0^\nu}:\mc O\rightarrow \mc O$  associated to $w^\nu_0$ from Section \ref{sect::621}. If $\g =\g_\oa$ and $M\in \mc O$ such that the projective cover of $M$ is $\nu$-admissible, then $\mathbf T_{w_0^\nu}(M)$ is isomorphic to  the partial coapproximation of $M$ in the sense of \cite[Section 2.5]{KM2}, namely, we have  $\mathbf T_{w_0^\nu}(M)\cong \pi^L\pi(M)$; see \cite{KM2, MaSt07}.
	Such a realization of twisting functor has been generalized to Lie superalgebras in \cite[Proposition 5.9]{Co16} and \cite[Theorems 9, 12]{Ch212}, including the series in  \eqref{eq::claA}-\eqref{eq::claP}.  To summarize, we have the following consequence. 	
	\begin{cor}
	Let $\g$ be a quasireductive Lie superalgebra of type I. Then the Backelin functor $\Gamma_\zeta: \mc O\rightarrow \mc W(\zeta)$ admits both a left adjoint $\Gamma_\zeta^L$ and a right adjoint $\Gamma_\zeta^R$. If $\mc O$ admits a simple preserving duality $D$, then we have \begin{align}
		&\Gamma_\zeta^R\circ \Gamma_\zeta(-) \cong D\circ \Gamma_\zeta^L\circ \Gamma_\zeta\circ D(-). \label{eq::isolr}
	\end{align}	Furthermore, suppose that $\g$ is one of the Lie superalgebras from the series   in \eqref{eq::claA},   \eqref{eq::claC} or \eqref{eq::claP}. Then, for any $\la\in \Lnua$ we have
	\begin{align}
	&\Gamma_\zeta^L(L(\la,\zeta))\cong {\bf T}_{w_0^\nu}L(\la), \label{eq::Cor39eq1}\\
	&\Gamma_\zeta^L(M(\la,\zeta))\cong {\bf T}_{w_0^\nu}M(\la) \label{eq::Cor39eq2}.
	\end{align} 

	\end{cor}
\begin{proof}
 The existence of left and right adjoints of $\Gamma_\zeta$ is proved in  Proposition \ref{prop::leftadj} and Corollary \ref{coro::univprop}. The isomorphism  in \eqref{eq::isolr} follows from the construction of $\pi^L, \pi^R$ in Proposition \ref{prop::leftadj}; see   Remark \ref{rem::14}.

 Next, we shall establish the isomorphisms in  \eqref{eq::Cor39eq1} and \eqref{eq::Cor39eq2}. To see this, recall that $\Gamma_\zeta(M(\la))\cong M(\la,\zeta)$ and $\Gamma_\zeta(L(\la))\cong L(\la,\zeta)$. By  Proposition~\ref{prop::leftadj}, Corollary \ref{coro::univprop} and  Proposition~\ref{prop::45}, we have
  \begin{align*}
 	&\Gamma_\zeta^L(M(\la,\zeta))\cong \Gamma_\zeta^L\Gamma_\zeta(M(\la))\cong \pi^L\pi(M(\la)),\\
 		&\Gamma_\zeta^L(L(\la,\zeta))\cong \Gamma_\zeta^L\Gamma_\zeta(L(\la))\cong \pi^L\pi(L(\la)).
 \end{align*}
Since the projective cover of $M(\la)$ is {$\nu$-}admissible, we have $\pi^L\pi(M(\la))\cong {\bf T}_{w_0^\nu}M(\la)$ by \cite[Theorems 9,12]{Ch212}. Similarly, we have $ \pi^L\pi(L(\la))\cong {\bf T}_{w_0^\nu}L(\la)$.
This completes the proof.
\end{proof}

\begin{ex}[Arias-Backelin] Consider the case that $\g$ is a semisimple Lie algebra. In this example, we recover   the calculations of  $\Gamma_\zeta^L(M(\la,\zeta))$ and $\Gamma_\zeta^R(M(\la,\zeta))$ given in \cite[Section 3.2]{AB}, where the cases of dominant and anti-dominant integral and regular weights $\la$ were considered. Recall that $w_0^W$ denotes the longest element in $W$ and we have a simple-preserving duality $(\cdot)^\vee$ on $\mc O$.
	
	Let $\la$ be a dominant, integral and regular  weight. Then we have a short exact sequence $0\rightarrow M(\la) \rightarrow Q_1\rightarrow Q_2$ for some projective-injective modules $Q_1,Q_2$ in $\mc O$; see, e.g., \cite[Section 3.1]{KSX}. This shows that $\Gamma_\zeta^R(M(\la,\zeta))\cong \pi^R(\pi(M(\la)))\cong M(\la)$.   Next, since $w_0^\nu\cdot \la\in \Lnua$, it follows that  $\Gamma_\zeta^L(M(\la,\zeta))\cong \Gamma_\zeta^L(M(w_0^\nu\cdot \la,\zeta))\cong {\bf T}_{w_0^\nu}M(w_0^\nu\cdot\la)$. By \cite[Section 5]{KM2}, ${\bf T}_{w_0^\nu}M(w_0^\nu\cdot\la)$ is isomorphic to the quotient of $P(w_0^\nu\cdot\la)$ by the sum of all homomorphic images from $P(w_0^\nu\cdot\la)$ to the kernel of the canonical quotient $P(w_0^\nu\cdot\la)\rightarrow M(w_0^\nu\cdot\la).$

 Next, we consider $\Gamma_\zeta^L(M(w_0^W\cdot \la,\zeta))$ and $\Gamma_\zeta^R(M(w_0^W\cdot \la,\zeta))$ as follows. Since $w_0^W\cdot \la\in \Lnua$, we have $\Gamma_\zeta^L(M(w_0^W\cdot \la,\zeta))\cong {\bf T}_{w_0^\nu}(M(w_0^W\cdot \la))\cong M(w_0^\nu w_0^W\cdot \la)^\vee$ by \cite[Theorem 2.3]{AS}. Finally, by Remark \ref{rem::14} we have  \begin{align*}
 	&\Gamma_\zeta^R(M(w_0^W\cdot \la,\zeta))\cong \Gamma_\zeta^R(\Gamma_\zeta(M(w_0^W\cdot \la)))\cong (\Gamma_\zeta^L\Gamma_\zeta(M(w_0^W\cdot \la)^\vee))^\vee \cong  M(w_0^\nu w_0^W\cdot \la).
 \end{align*}
\end{ex}


    	\subsubsection{A McDowell type category and representations of finite $W$-algebras}\label{rem::40} In this section, we continue to assume  that $\g$ is a semisimple Lie algebra.  For a given dominant and integral weight $\nu$ and $\eta\in \bf \ch \mf n^+$, we denote by $\mc N(\ker(\chi_\nu), \eta)$ the full subcategory of $\mc N(\eta)$  consisting of all modules annihilated by the kernel of the central character $\chi_\nu$ associated to  $\nu$. The study of this type of category goes back at least to McDowell \cite{Mc}. The case of the principal block $\mc N(\ker(\chi_0), 0)$ was also considered in the work of Soergel \cite{So86}.

    	An interesting connection with the representation theory of finite $W$-algebras is the following equivalence constructed  by Webster  \cite[Proposition 7]{We11}, building on the earlier work of Losev \cite[Theorem 4.1, Proposition 4.2]{Lo09} and \cite[Theorem 1.2.2(iii)]{Lo10jams}:
    		\begin{align}
    		&\mc O'(\chi_\nu,\mc W_e, \mf p_\eta)\cong \mc N(\ker(\chi_\nu), \eta). \label{eq::WeLo}
    	\end{align} Here $\mc W_e$ denotes the finite $W$-algebra associated to $\g$ and a given  nilpotent element $e\in \g$ which is regular in the Levi $\mf l_\eta$ of the parabolic subalgebra $\mf p_\eta$ determined by $\eta$. Then   $\mc O'(\chi_\nu,\mc W_e, \mf p_\eta)$ denotes the  category of modules in the category $\mc O$ for $\mc W_e$ from \cite[Section 4]{Lo09} on which the center $Z(\mc W_e)\cong Z(\g)$ acts semisimply via the central character $\chi_\nu$. We refer to \cite[Section 2]{We11} and \cite[Section 4]{Lo09} for more details. We remark that the equivalence  \eqref{eq::WeLo} is stated   for regular weights $\nu$ in \cite{We11}, however, by the same arguments as in the proof of \cite[Propoisiton 7]{We11} the equivalence \eqref{eq::WeLo} holds for the case of singular weights as well.  

    	Let $\nu$ be regular.
    		For $\mu\in W\cdot \nu$, the structure of Whittaker modules $\Gamma_\zeta(M_0( \mu)^\vee)$ has been studied by  Brown and Romanov in \cite[Sections 6,7]{BR}. It is clear that $\Gamma_\zeta(M_0(\mu)^\vee)$ and $\Gamma_\zeta(M_0(\mu))$ have the same composition factors (see also \cite[Theorem 6.4]{BR}). From Corollary \ref{coro::univprop} and results in Section \ref{sect::5}, we know that $\Gamma_\zeta(M_0(\mu)^\vee)$ plays the role of a proper costandard object in the stratified structure of $\mc W(\zeta)$. In particular,
    	$\Gamma_\zeta(M_0(\mu)^\vee)$ has simple socle, as shown in \cite[Theorems 6.4, 6.8]{BR}. In \cite[Theorem 8.2]{BR} it is proved that $\Gamma_\zeta(M_0(\mu)^\vee)$
    	appears in the BGG reciprocity for the block $\mc N(\ker(\chi_\nu),\eta)$. 

    	It is natural to investigate the structure of $\mc N(\ker(\chi_\nu),\eta)$ for  singular  $\nu$. Note that $\mc N(\ker(\chi_\nu),\eta)$ is equivalent to the full subcategory of $\mc B_\nu$ from Section \ref{Sect::HCbimod} consisting of bimodules $X$ such that  $\ker(\chi_\la)X=0$ for some dominant and integral weight $\la\in \h^\ast$ with $W_\la=W_\eta$.  Therefore, it is equivalent to the   block $\mc O^{\vpre}_\la$ of $\mc O$ associated to $\la$, namely, it consists of modules in $\mc O^{\vpre}$ on which   $\ker({\chi_\la})$ acts nilpotently.  We summarize our discussion  in the following corollary:
    	\begin{cor} \label{cor::41}
    	Let $\g$ be a semisimple Lie algebra with $\eta \in {\bf ch}\mf n^+$ and  $\la,\nu\in \h^\ast$ be dominant and integral weights such that $W_\la=W_\eta$. Then $\mc N(\ker(\chi_\nu), \eta)$ (and hence also the category $\mc O'(\chi_\nu,\mc W_e, \mf p_\eta)$ above) is equivalent to  the (possibly singular) block $\mc O^{\vpre}_\la$.
    	
    	In particular, $\mc N(\ker(\chi_\nu), \eta)$ admits a stratified structure with proper standard objects  $\Gamma_\zeta(M(\mu))$,  proper costandard objects  $\Gamma_\zeta(M(\mu)^\vee)$, standard objects  $\Gamma_\zeta(\Ind_{\mf p}^{\mf g}P(\mf l_\nu,\mu))$, and costandard objects  $\Gamma_\zeta(\Ind_{\mf p}^{\mf g}P(\mf l_\nu,\mu)^\vee)$ for $\mu\in \Lnua\cap W\cdot \nu.$
    	Furthermore, it is a highest weight category if $\nu$ is regular.
    	\end{cor}
    	
    	In the case when $\nu$ is regular,  Corollary \ref{cor::41} has been obtained in \cite{BR}; see also \cite[Remark 1.1, Corollary 7.4, Theorem 8.2]{BR}. 
    		In general, the full regular block of $\mc N(\eta)$ is equivalent to a (possibly singular) block of the {\em thick} category $\mc O$, namely, the category of finitely generated $\g$-modules which are locally finite over both $\mf b$ and $Z(\g)$; see \cite[Sections 2.6.2, 2.6.3]{AB}. These categories do not have enough projectives.

		\subsubsection{Realization of Soergel's  functor $\mathbb V$ in terms of  $\Gamma_\zeta$} \label{ref::41} Let $\mc O_\la$ be the  block of $\mc O$ over a semisimple Lie algebra $\g$  associated to a regular,  antidominant and integral weight $\la$. As is well-known,  Soergel's
	combinatorial functor $\mathbb V$ introduced in \cite{So90}  plays a significant role in the representation theory of {a} semisimple Lie algebra. It is an exact functor sending $M\in \mc O_\la$ to $\Hom_\g(P(\la),M)$, which is  a right module over $\End_\g(P(\la))$. In the case when $W_\zeta =W$, Backelin proved in \cite[Corollary 5.4]{B} that  $\Gamma_\zeta(-)$ and $\mathbb V(-)$ are isomorphic, up to an equivalence from  \cite{Ko78} between their target categories; see also \cite[Proposition 2]{AB}.	
			
			It would be natural to establish an analog of such a realization in the super case. Indeed,  set $\nu:=-\rho_\oa$ and define the algebra $B:=\bigoplus_{\la, \mu\in \Lnua} \Hom_{\mf g}(P(\mu), P(\la))$ for a quasireductive Lie superalgebra of type I as in Section \ref{sect::92}.
		   Then we can formulate an analog of Soergel's functor $\mathbb V$ as the functor $\mathbb V^{sup}: M\mapsto \bigoplus_{\la\in \Lnua}\Hom_\g(P(\la),M)$ sending modules in $\mc O$ to (right) $B$-modules.
		    We have the following corollary:
		   \begin{cor} \label{cor::42}  Let $\g$ be a quasireductive Lie superalgebra of type I with non-singular $\zeta$ (i.e., $W_\zeta=W$). Retain the notations above.
		  Then there is a full embedding $\iota$ from $\mc W(\zeta)$ to the category of right $B$-modules such that $\iota\circ \Gamma_\zeta(-)\cong \mathbb V^{sup}$.
		   \end{cor}
	   \begin{proof}
	   	By Lemma \ref{lem::116} and Corollary \ref{lem::51}, the functor $\mathbb V^{sup}$  satisfies the universal property of Serre quotient functor. The conclusion follows from Corollary \ref{coro::univprop}.   \end{proof}

 \subsection{Categorification}

In  this section, we assume that $\g=\gl(m|n)$. Denote by $\ep_1,\cdots,\ep_{m+n}$ the basis in $\h^*$ dual to the standard basis of the Cartan subalgebra $\h$ of diagonal matrices.

Recall $\mathbb T^{m|n}$ and $\mathbb T^{m|n}_\zeta$ from \eqref{eq:Fock} and \eqref{eq:Fock:z}. Let $S_\zeta$ be the $q$-symmetrizer defined in \eqref{eq:q:symm}.

 Recall the Casimir element $\Omega$ for $\g$, defined as follows:
 \begin{align*}
 &\Omega = \sum_{i,j=1}^{m+n} (-1)^{|e_{ij}|}e_{i,j}\otimes e_{j,i}\in \g\otimes \g.
 \end{align*}
 Let ${\bf V}:= \C^{m|n}$ be the natural module over $\g$ with its dual $\bf V^\ast$. Then the action of $\Omega$ and the action of $\g$ commute on both ${\bf V}\otimes M$ and ${\bf V}^\ast\otimes M$, for any $M\in \mc O$. We define $\texttt{F}_i$ (respectively $\texttt{E}_i$) to be the endofunctor on $\mc O$ defined via taking the $i$-eigenspace (respectively $n-m-i$-eigenspace) of the action of $\Omega$ on ${\bf V}\otimes M$ (respectively ${\bf V}^\ast\otimes M$). These give exact {functors} and they induce an action of the Lie algebra $\mathfrak{sl}_\infty$ both on  $[\mc O]$ and on $[\ov{\mc O}]$.

We first recall the standard monomial basis and dual canonical basis of $\mathbb T^{m|n}$ from \eqref{eq:Mf} and {Proposition} \ref{prop:ex:can1}, respectively. Denote their $\Z[q,q^{-1}]$-span by $\mathbb T^{m|n}_{\Z[q,q^{-1}]}$ and denote its specification at $q=1$ by $\mathbb T^{m|n}_{\Z}$. Their topological completion are denoted by $\widehat{\mathbb T}^{m|n}_{\Z[q,q^{-1}]}$ and $\widehat{\mathbb T}^{m|n}_{\Z}$, respectively. Similar notations apply to $\mathbb T^{m|n}_\zeta$.

The relevance of the canonical and dual canonical basis $\{T_f|f\in\Z^{m|n}\}$ and $\{L_f|f\in\Z^{m|n}\}$ in the representation theory of $\gl(m|n)$ is given in the following Brundan-Kazhdan-Lusztig conjecture \cite{Br1} that was first established in \cite{CLW2}, and then later upgraded to a $\Z$-graded version in \cite{BLW}. Before stating it, let us make the following identification between the integral weight lattice $\sum_{i=1}^{m+n}\Z\ep_i$ of $\gl(m|n)$ and $\Z^{m|n}$ via the bijection: $\la\rightarrow f_\la\in\Z^{m|n}$, where
\begin{align}\label{eq:wt:bij}
f_\la(i):=(\la+\rho,\ep_i),
\end{align}
for $\la\in \sum_{i=1}^{m+n}\Z\ep_i$. Here $\rho=\sum_{i=1}^m(m-i+1)\ep_i+\sum_{j=m+1}^{m+n}(m-j)\ep_j$ is the super Weyl vector.

Define $\psi:\mathbb T^{m|n}_\Z \rightarrow [\mc O]$ to be the $\Z$-linear isomorphism uniquely determined by $\psi(M_{f_\la}):=[M(\la)]$. Then the topological completion $\widehat{\mathbb T}^{m|n}_\Z$ of $\mathbb T^{m|n}_\Z$ induces a topological completion $\widehat{[\mc O]}$ of $[\mc O]$ via $\psi$. By slight abuse of notation, we will still denote the induced map by
\begin{align}\label{map:psi}
\psi:\widehat{\mathbb T}^{m|n}_\Z \rightarrow \widehat{[\mc O]}.
\end{align}

\begin{thm}\label{thm:BKL:conj}\emph{(}{\cite[Conjecture 4.32]{Br1}, \cite[Section 8]{CLW2}}\emph{)} Let $\psi$ be as above. We have $\psi(L_{f_\la})=[L(\la)]$ and $\psi(T_{f_\la})=[T(\la)]$, for $\la\in\sum_{i=1}^{m+n}\Z\ep_i$. In particular, the character of the irreducible module $L(\la)$ and that of the tilting module $T(\la)$ are given respectively by
\begin{align*}
\ch L(\la) = \sum_{\mu}\ell_{f_\mu,f_\la}(1)\ch M(\mu),\\
\ch T(\la) = \sum_{\mu}t_{f_{\mu},f_\la}(1)\ch M(\mu).
\end{align*}
Furthermore, the $\mf{sl}_\infty$-actions on $\widehat{\mathbb T}^{m|n}_\Z$ (obtained by specializing the $U_q(\mf{sl}_\infty)$-action to $q=1$) and on $\widehat{[\mc O]}$ are compatible. That is, $\psi$ is an isomorphism of $\mf{sl}_\infty$-modules.
\end{thm}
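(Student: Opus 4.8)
The plan is to deduce this from the results of Brundan~\cite{Br1} and Cheng--Lam--Wang~\cite{CLW2}, together with the graded refinement of Brundan--Losev--Webster~\cite{BLW}; the substance of the proof is the translation of those results into the present notation. First I would establish the last assertion, that $\psi$ intertwines the $\fraksl_\infty$-actions. This is essentially~\cite{Br1}: one computes the eigenvalues of the Casimir element $\Omega$ on ${\bf V}\otimes M(\la)$ and on ${\bf V}^\ast\otimes M(\la)$ and checks that, under the identification \eqref{eq:wt:bij}, passing to the $i$-eigenspace (resp.\ the $(n-m-i)$-eigenspace) corresponds on the Fock-space side to acting by $F_i$ (resp.\ $E_i$) through the comultiplication $\Delta$ on $\mathbb V^{\otimes m}\otimes\mathbb W^{\otimes n}$ specialised at $q=1$, summed over tensor positions. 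Since $\{[M(\la)]\}$ and $\{M_{f_\la}\}$ are topological $\Z$-bases of $\widehat{[\mc O]}$ and $\widehat{\mathbb T}^{m|n}_\Z$, this makes $\psi$ an isomorphism of $\fraksl_\infty$-modules.

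For $\psi(L_{f_\la})=[L(\la)]$ I would invoke the uniqueness characterisation of the dual canonical basis in Proposition~\ref{prop:ex:can1}. Writing $[L(\la)]=\sum_{\mu}b_{\mu\la}[M(\mu)]$ in the Verma basis, the linkage principle in the integral super category $\mc O$ for $\gl(m|n)$ --- which under \eqref{eq:wt:bij} becomes the Bruhat order $\preceq$ of Section~\ref{super:Bruhat}, cf.~\cite[Lemma~2.4]{CLW2} --- gives $b_{\la\la}=1$ and $b_{\mu\la}=0$ unless $f_\mu\preceq f_\la$, so $\psi^{-1}([L(\la)])$ is unitriangular with respect to $\preceq$. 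The remaining, and deep, point is that $\psi^{-1}([L(\la)])$ is invariant under the bar involution of $\widehat{\mathbb T}^{m|n}$; this bar-invariance is precisely Brundan's conjecture and is what~\cite{CLW2} proves via super duality, while~\cite{BLW} upgrades it to a $q$-statement forcing $\ell_{f_\mu,f_\la}(q)\in q^{-1}\Z[q^{-1}]$. By uniqueness in Proposition~\ref{prop:ex:can1} we conclude $[L(\la)]=\psi(L_{f_\la})$, and comparing coefficients at $q=1$ yields the formula for $\ch L(\la)$.

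For $\psi(T_{f_\la})=[T(\la)]$ the cleanest route is to observe that this too is part of Brundan's conjecture~\cite{Br1} and is established in~\cite{CLW2}, with the refinement $t_{f_\mu,f_\la}(q)\in q\Z[q]$ supplied by~\cite{BLW}; alternatively it can be deduced from the irreducible case by Ringel self-duality of category $\mc O$ for $\gl(m|n)$ (\cite{Br04}, in the form of~\cite[Theorem~3.7]{CCC} recalled in Section~\ref{sect::621}), using that the twisting functor ${\bf T}_{w_0^W}$ carries indecomposable projectives to indecomposable tilting modules, that $\psi^{-1}([T(\la)])$ is unitriangular with respect to $\preceq$ (from the existence of a Verma flag on $T(\la)$ and the same linkage), and that on $\widehat{\mathbb T}^{m|n}$ the transported operation is bar-invariance-preserving; in either case Proposition~\ref{prop:ex:can1} identifies $[T(\la)]$ with $\psi(T_{f_\la})$ and gives the formula for $\ch T(\la)$.

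The main obstacle is bookkeeping rather than new mathematics: pinning down the Casimir-eigenvalue/$\fraksl_\infty$-weight dictionary for the functors $\texttt{E}_i,\texttt{F}_i$, verifying that the order of Section~\ref{super:Bruhat} is indeed the linkage order under \eqref{eq:wt:bij} (with the normalisations of the super Weyl vector $\rho$ and of the pairing $\langle w_a,v_b\rangle=(-q)^{-a}\delta_{a,b}$ matched against those of~\cite{Br1,CLW2}), and checking that the duality functor used in~\cite{CLW2,BLW} corresponds precisely to the bar involution of Proposition~\ref{prop:ex:can1}. The genuinely hard input --- bar-invariance of $[L(\la)]$ and $[T(\la)]$ --- is imported wholesale from~\cite{CLW2,BLW}.
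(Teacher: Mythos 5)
Your proposal is correct and is essentially the paper's own treatment: the paper does not prove Theorem~\ref{thm:BKL:conj} but states it as an established result, citing \cite{Br1} for the conjecture, \cite{CLW2} for its proof via super duality, and \cite{BLW} for the $\Z$-graded upgrade, exactly the three inputs you assemble. Your fleshing-out of the triangularity-plus-bar-invariance-plus-uniqueness scheme (and the $\Omega$-eigenvalue dictionary giving $\fraksl_\infty$-equivariance of $\psi$) is a faithful account of what those references supply, and your correct identification that the genuine content is imported wholesale from \cite{CLW2,BLW} matches the paper's use of the theorem as a black box.
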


Recall that we have the standard basis $\widetilde{N}_f$ and the dual canonical basis $\mc L_f$ on $\widehat{\mathbb T}^{m|n}_\zeta$ from \eqref{eq:Nf} and Section \ref{sec:qsymm}.
 The following theorem gives a categorification of $q$-symmetrized Fock space via  category $\ov{\mc O}$.

\begin{thm}\label{thm:T:to:Obar}
	\begin{itemize}
		\item[(1)] The $\Z$-linear isomorphism $$\psi_\zeta: \widehat{\mathbb T}^{m|n}_{\zeta,\Z} \xrightarrow{\cong} \widehat{[\ov{\mc O}]},$$ uniquely determined by $\psi_\zeta(\widetilde{N}_{f_\la})=[\ov \nb(\la)]$, satisfies
		\begin{align*}
		\psi_\zeta(N_{f_\la})=[\Delta(\la)],\quad\psi_\zeta(\mc T_{f_\la})=[T^{\OI}(\la)],	\quad\psi_\zeta( \mc L_{f_\la})=[\pi(L(\la))],
		\end{align*}
		for all $\la \in \Lnua$.
Furthermore, $\psi_\zeta$ is compatible with the action of $\mf{sl}_\infty$, i.e., $\psi$ is an isomorphism of $\mf{sl}_\infty$-modules.
{}				
\item[(2)] Both functors $  \Gamma_\zeta(-)$ and $F_\zeta(-)$ categorify the map of $\phi_\zeta:\widehat{\mathbb T}^{m|n}_\Z\rightarrow\widehat{\mathbb T}^{m|n}_{\zeta,\Z}$ in \eqref{def:phiz}.
	\end{itemize}
\end{thm}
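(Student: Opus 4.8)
The plan is to reduce the whole statement to the commutativity of one square built from the Backelin functor. By Corollary~\ref{coro::univprop}, $\Gamma_\zeta$ realizes the Serre quotient functor and $\mc W(\zeta)\cong\ov{\mc O}$, and $\Gamma_\zeta\cong F_\zeta$, so it suffices to work with $\Gamma_\zeta$. Let $\widehat{[\ov{\mc O}]}$ be the completion of $[\ov{\mc O}]$ allowing Bruhat-bounded infinite $\Z$-combinations of the proper costandard classes $[\ov{\nb}(\la)]$, $\la\in\Lnua$. The theorem will follow from commutativity of
$$\xymatrixcolsep{3pc}\xymatrix{
\widehat{\mathbb T}^{m|n}_\Z \ar[r]^-{\psi} \ar[d]_-{\phi_\zeta} & \widehat{[\mc O]} \ar[d]^-{[\Gamma_\zeta]} \\
\widehat{\mathbb T}^{m|n}_{\zeta,\Z} \ar[r]^-{\psi_\zeta} & \widehat{[\ov{\mc O}]}
}$$
which is exactly assertion~(2). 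For this I first need $\psi_\zeta$ to be a well-defined $\Z$-isomorphism: the classes $\{\widetilde N_{f_\la}\}_{\la\in\Lnua}$ (specialized at $q=1$) form a topological $\Z$-basis of $\widehat{\mathbb T}^{m|n}_{\zeta,\Z}$, while by Proposition~\ref{prop4} and the identity $[\ov{\nb}(\la)]=[\ov\Delta(\la)]$ (which holds for $\gl(m|n)$, where $\Phi(\g_1)=-\Phi(\g_{-1})$; cf.\ Proposition~\ref{prop::BGG}) the classes $\{[\ov{\nb}(\la)]\}_{\la\in\Lnua}$ are unitriangular over the simple classes $\{[\pi(L(\la))]\}$ for the (finite-interval) Bruhat order, hence a topological $\Z$-basis of $\widehat{[\ov{\mc O}]}$. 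This step also fixes the \emph{weight dictionary} via \eqref{eq:wt:bij}: $\zmnz\leftrightarrow\Lnua$, the $q=1$ Hecke action of $W_\zeta$ on $\Z^{m|n}\leftrightarrow$ the dot-action of $W_\nu$ (compatibly with shortest coset representatives), and the Bruhat order of Section~\ref{super:Bruhat}$\leftrightarrow$ the order on integral $\gl(m|n)$-weights.

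To check commutativity, evaluate both composites on the standard monomials $M_{f_\la}$, $\la\in\Lambda$, which topologically span the source. Along $\psi$ then $[\Gamma_\zeta]$: $\psi(M_{f_\la})=[M(\la)]$ and $[\Gamma_\zeta(M(\la))]=[M(\la,\zeta)]$ by \eqref{qe::1}; by Lemma~\ref{lem::MS21}(3) this equals $[M(\la_0,\zeta)]$ for the $W_\nu$-antidominant $\la_0\in W_\nu\cdot\la$, which under $\mc W(\zeta)\cong\ov{\mc O}$ is $[\pi(M(\la_0))]=[\ov\Delta(\la_0)]=[\ov{\nb}(\la_0)]$ (Proposition~\ref{prop4}). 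Along $\phi_\zeta$ then $\psi_\zeta$: $\phi_\zeta(M_{f_\la})=q^{-\ell(\tau)}\widetilde N_{f_\la\cdot\tau}$ by \eqref{eq:MfS}, equal at $q=1$ to $\widetilde N_{f_{\la_0}}$ by the dictionary, and $\psi_\zeta(\widetilde N_{f_{\la_0}})=[\ov{\nb}(\la_0)]$. The two coincide, so the square commutes; this is (2) for $\Gamma_\zeta$, hence for $F_\zeta\cong\Gamma_\zeta$.

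Part~(1) is then read off by feeding distinguished elements through the square. On $L_{f_\la}$ ($\la\in\Lnua$): $\psi(L_{f_\la})=[L(\la)]$ (Theorem~\ref{thm:BKL:conj}), $[\Gamma_\zeta(L(\la))]=[L(\la,\zeta)]$ by \eqref{qe::3} corresponding to $[\pi(L(\la))]$, and $\phi_\zeta(L_{f_\la})=\mc L_{f_\la}$ (Theorem~\ref{thm:szeta:fock}(3)), giving $\psi_\zeta(\mc L_{f_\la})=[\pi(L(\la))]$. On $T_{f_{w_0^\nu\cdot\la}}$: $\psi(T_{f_{w_0^\nu\cdot\la}})=[T(w_0^\nu\cdot\la)]$ (Theorem~\ref{thm:BKL:conj}) maps under $[\Gamma_\zeta]$ to $[\pi(T(w_0^\nu\cdot\la))]=[T^{\OI}(\la)]$ (Proposition~\ref{prop::15}), while $\phi_\zeta(T_{f_{w_0^\nu\cdot\la}})=\phi_\zeta(T_{f_\la\cdot w_0^\zeta})=\mc T_{f_\la}$ (Theorem~\ref{thm:szeta:fock}(2) and the dictionary), giving $\psi_\zeta(\mc T_{f_\la})=[T^{\OI}(\la)]$. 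The identity $\psi_\zeta(N_{f_\la})=[\Delta(\la)]$ follows since the scalar $\tfrac{[|W_\zeta|]}{[|W_{f_\la}|]}$ relating $N_{f_\la}$ and $\widetilde N_{f_\la}$ specializes at $q=1$ to $\tfrac{|W_\nu|}{|W_{f_\la}|}=|W_\nu\cdot\la|$, which matches $(\Delta(\la):\ov\Delta(\la))=|W_\nu\cdot\la|$ from the Remark after Proposition~\ref{prop::BGG} together with $[\ov\Delta(\la)]=[\ov{\nb}(\la)]$. Finally, for $\mf{sl}_\infty$-equivariance of $\psi_\zeta$: $\psi$ is equivariant (Theorem~\ref{thm:BKL:conj}), $\phi_\zeta$ is a surjective $U_q(\mf{sl}_\infty)$-module map, and $[\Gamma_\zeta]$ intertwines the $\mf{sl}_\infty$-actions because $\Gamma_\zeta$ functorially commutes with projective functors and $\texttt E_i,\texttt F_i$ are given by the same tensoring-and-eigenspace recipe on $\mc O$ and on $\ov{\mc O}\cong\mc W(\zeta)$; a diagram chase through the commutative square, using surjectivity of $\phi_\zeta$, yields equivariance of $\psi_\zeta$.

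The main obstacle is the weight dictionary of the first paragraph: carefully identifying $\zmnz$ with $\Lnua$, the $q=1$ Hecke action of $W_\zeta$ with the $W_\nu$ dot-action (including the shortest-element statements used above), and the two Bruhat orders, and then confirming that each decategorified identity invoked ($\Gamma_\zeta$ on Verma, simple and tilting modules, the equivalence $\mc W(\zeta)\cong\ov{\mc O}$, and the BGG-type multiplicities) is genuinely compatible with it. By contrast, the representation-theoretic inputs are all already available from Sections~\ref{sect::4}--\ref{sect::6} and Theorems~\ref{thm:szeta:fock} and~\ref{thm:BKL:conj}, so no new category-theoretic work is needed.
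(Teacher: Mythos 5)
Your argument is essentially the paper's own: both establish the square
$$\xymatrixcolsep{3pc}\xymatrix{\widehat{\mathbb T}^{m|n}_{\Z}\ar[r]^-{\psi}\ar[d]_{\phi_\zeta}&\widehat{[\mc O]}\ar[d]\\\widehat{\mathbb T}^{m|n}_{\zeta,\Z}\ar[r]^-{\psi_\zeta}&\widehat{[\ov{\mc O}]}}$$
by evaluating on standard monomials $M_{f_\la}$ and then push $T_{f_\la\cdot w_0^\nu}$ and $L_{f_\la}$ through it, with $\mf{sl}_\infty$-equivariance of $\psi_\zeta$ falling out of equivariance of $\psi$ and $\phi_\zeta$. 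The only substantive difference is the right vertical map: the paper uses $\pi$ directly and verifies commutativity via $\pi([\Coind_{\g_{\le 0}}^\g\ov\nabla'(\tau\cdot\la)])=\pi([\Coind_{\g_{\le 0}}^\g\ov\nabla'(\la)])$, whereas you use $[\Gamma_\zeta]$ and verify via $\Gamma_\zeta(M(\la))=M(\la,\zeta)=M(\la_0,\zeta)$ and the equivalence $\mc W(\zeta)\cong\ov{\mc O}$ — by Corollary~\ref{coro::univprop} these are the same map, so this is cosmetic. Two genuine pluses of your write-up: you explicitly derive $\psi_\zeta(N_{f_\la})=[\Delta(\la)]$ from the $q=1$ scalar $|W_\nu\cdot\la|$ and $(\Delta(\la):\ov\Delta(\la))=|W_\nu\cdot\la|$, a case the paper's proof does not spell out; and you flag the weight dictionary ($\zmnz\leftrightarrow\Lnua$, the $q=1$ Hecke action $\leftrightarrow$ $W_\nu$ dot-action, Bruhat order $\leftrightarrow$ Bruhat order) as the point actually requiring verification, which the paper passes over silently. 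Both approaches buy the same thing; yours is slightly more explicit at no extra cost.
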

\begin{proof}
First, we observe that, if $\mu\not\in\Lambda(\nu)$, then $\pi(L(\mu))=0$.

Let $\la\in\Lambda(\nu)$. Since, for any $\tau\in W_\nu$, the kernel of the surjection $\Coind_{\g_{\le 0}}^\g\ov{\nabla}'(\tau\cdot\la)\onto\Coind_{\g_{\le 0}}^\g\ov{\nabla}'(\la)$ cannot contain a composition factor with highest weight in $\Lambda(\nu)$ and $\pi$ is exact, we conclude that $\pi(\Coind_{\g_{\le 0}}^\g\ov{\nabla}'(\la))=\pi(\Coind_{\g_{\le 0}}^\g\ov{\nabla}'(\tau\cdot\la))$.

Now, consider the linear map $\psi:\widehat{\mathbb T}^{m|n}_\Z\rightarrow \widehat{[\mc O]}$ from \eqref{map:psi}. The quotient functor $\pi:\mc O\rightarrow \ov{\mc O}$ induces a linear map for the completed Grothendieck groups, which we shall also denote by $\pi:\widehat{[\mc O]}\rightarrow\widehat{[\ov{\mc O}]}$ by slight abuse of notation. Recall also the map $\phi_\zeta:\widehat{\mathbb T}^{m|n}\rightarrow\widehat{\mathbb T}^{m|n}_\zeta$ defined in \eqref{def:phiz}. We shall denote the specialization at $q=1$ also by $\phi_\zeta:\widehat{\mathbb T}^{m|n}_\Z\rightarrow\widehat{\mathbb T}^{m|n}_{\zeta,\Z}$. Finally, recall the linear isomorphism $\psi_\zeta:\widehat{\mathbb T}^{m|n}_{\zeta,\Z}\rightarrow\widehat{[\ov{\mc O}]}$ above. For $f_\la\in\Z^{m|n}$, we let $\tau\in W_\nu=W_\zeta$ such that $f_\la\cdot\tau\in\Lambda(\nu)$. We compute
\begin{align*}
\pi\circ\psi(M_{f_\la})=\pi([\Delta'(\la)])=\pi([\nabla'(\la)])=[\ov{\nabla}(\tau\cdot\la)].
\end{align*}
On the other hand,
\begin{align*}
\psi_\zeta\circ\phi_\zeta(M_{f_\la})=\psi_\zeta(\widetilde{N}_{f_\la\cdot\tau})=[\ov{\nabla}(\tau\cdot\la)].
\end{align*}
Thus, we have a commutative diagram:
$$\xymatrixcolsep{3pc} \xymatrix{
			\widehat{\mathbb T}^{m|n}_{\Z}  \ar[r]^-{\psi}  \ar@<-2pt>[d]_{\phi_\zeta} &   \widehat{[\mc O]}   \ar@<-2pt>[d]_{\pi} \\
			\widehat{\mathbb T}^{m|n}_{\zeta,\Z} \ar[r]^-{\psi_\zeta}  &  [\widehat{\ov{\mc O}}]}$$
Using Proposition \ref{prop::15}, Theorem \ref{thm:szeta:fock} and Theorem \ref{thm:BKL:conj} we compute for $\la\in\Lambda(\nu)$:
\begin{align*}
\psi_\zeta(\mc T_{f_\la})&=\psi_\zeta\circ\phi_\zeta(T_{f_\la\cdot w_0^\nu})=\pi\circ\psi(T_{f_\la\cdot w_0^\nu})=\pi([T(w_0^\nu\cdot\la)])=[T^{\ov{\mc O}}(\la)].\\
\psi_\zeta(\mc L_{f_\la})&=\psi_\zeta\circ\phi_\zeta(L_{f_\la})=\pi\circ\psi(L_{f_\la})=\pi([L(\la)]).
\end{align*}
Finally, both maps $\phi_\zeta$ and $\pi$ are $\mf{sl}_\infty$-module homomorphisms, and thus so is $\psi_\zeta$. This completes the proof of Part (1).

Part (2) now follows from Part (1), Theorem \ref{prop::univprop} and Corollary~\ref{coro::univprop}.
\end{proof}

\begin{cor}\label{cor:duality1}
	For any $\la,\mu \in \Lnua$, we have
	\begin{align*}
	&(T^{\OI}(\la): \Delta(\mu)) = \left(P(-(w_0^\nu(\la+\rho)-\rho): M(-w_0^\nu(\mu+\rho)-\rho)\right).
	\end{align*}
	\end{cor}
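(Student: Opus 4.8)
The plan is to deduce the identity by combining the categorification of $\widehat{\mathbb T}^{m|n}_\zeta$ by $\OI$ (Theorem~\ref{thm:T:to:Obar}) with the Brundan--Kazhdan--Lusztig combinatorics of $\mc O$ for $\gl(m|n)$ (Theorem~\ref{thm:BKL:conj}) and the Ringel self-duality of $\mc O$ recalled in Section~\ref{sect::62}.

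First I would read off the multiplicity from Theorem~\ref{thm:T:to:Obar}(1). Expanding the canonical basis element $\mc T_{f_\la} = N_{f_\la} + \sum_{g\prec f_\la}\texttt{t}_{g,f_\la}(q)\,N_g$, where $g$ runs over $\zmnz$ (which corresponds to $\Lnua$ under \eqref{eq:wt:bij}), then applying $\psi_\zeta$ and specializing at $q=1$ gives $[T^{\OI}(\la)] = \sum_{\mu\in\Lnua}\texttt{t}_{f_\mu,f_\la}(1)\,[\Delta(\mu)]$. Since $T^{\OI}(\la)$ has a $\Delta$-flag and the classes $[\Delta(\mu)]$ are $\Z$-independent in $[\OI]$ (they are positive multiples of the independent classes $[\ov\Delta(\mu)]$ of \eqref{eq::61}, by Proposition~\ref{prop4}(1)), we conclude $(T^{\OI}(\la):\Delta(\mu)) = \texttt{t}_{f_\mu,f_\la}(1)$. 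Next, by Proposition~\ref{pi:can:basis}, $\texttt{t}_{f_\mu,f_\la} = t_{f_\mu\cdot w_0^\zeta,\, f_\la\cdot w_0^\zeta}$, and since the bijection \eqref{eq:wt:bij} intertwines the dot-action of $W$ on integral weights with the right $\mf{S}_{m+n}$-action on $\Z^{m|n}$ (Section~\ref{super:Bruhat}), we have $f_\mu\cdot w_0^\zeta = f_{w_0^\nu\cdot\mu}$ and $f_\la\cdot w_0^\zeta = f_{w_0^\nu\cdot\la}$, so $(T^{\OI}(\la):\Delta(\mu)) = t_{f_{w_0^\nu\cdot\mu},\, f_{w_0^\nu\cdot\la}}(1)$. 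Finally, Theorem~\ref{thm:BKL:conj}, which gives $\psi(T_{f_\kappa}) = [T(\kappa)]$ and $\psi(M_{f_\kappa}) = [M(\kappa)]$, identifies this number with the multiplicity $(T(w_0^\nu\cdot\la):M(w_0^\nu\cdot\mu))$ of $M(w_0^\nu\cdot\mu)$ in a Verma flag of the tilting module $T(w_0^\nu\cdot\la)$ in $\mc O$.

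It then remains to convert this Verma-flag multiplicity into the projective--Verma-flag multiplicity appearing in the statement. For this I would use the Ringel self-duality of super category $\mc O$ from Section~\ref{sect::62}: by \cite[Theorem~3.7]{CCC} the duality ${\bf D}$ and Arkhipov's twisting functor satisfy ${\bf D}{\bf T}_{w_0^W}P(\kappa) = T(-\kappa - 2\rho_\oa + 2\rho_{-1})$. Combined with BGG reciprocity in $\mc O$ (and the compatibility of twisting functors with Verma flags), this rewrites $(T(w_0^\nu\cdot\la):M(w_0^\nu\cdot\mu))$ as $(P(\eta^{\dagger}):M(\xi^{\dagger}))$ for suitable weights $\eta^\dagger,\xi^\dagger$, and the closing step is to compute these using $\rho = \rho_\oa + \rho_{-1}$, the fact that $W$ fixes $\rho_\ob$ (hence $w_0^\nu\rho_{-1} = \rho_{-1}$) and $(w_0^\nu)^2 = e$, and to verify that $\eta^\dagger = -(w_0^\nu(\la+\rho)-\rho)$ and $\xi^\dagger = -w_0^\nu(\mu+\rho)-\rho$. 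This last bookkeeping — tracking the $\rho$- and $\rho_{-1}$-shifts through ${\bf D}$, ${\bf T}_{w_0^W}$ and BGG reciprocity so as to land exactly on the two (slightly asymmetric) weights of the statement — is the only real obstacle; everything preceding it is a routine chaining of Theorem~\ref{thm:T:to:Obar}, Proposition~\ref{pi:can:basis}, Theorem~\ref{thm:BKL:conj} and standard properties of $\Delta$-flags.
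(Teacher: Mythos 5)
Your argument follows the paper's route almost step for step through the crucial reduction: extract $(T^{\OI}(\la):\Delta(\mu)) = \texttt{t}_{f_\mu,f_\la}(1)$ from Theorem~\ref{thm:T:to:Obar} and the $\Z$-linear independence of the $[\Delta(\mu)]$'s, rewrite $\texttt{t}_{f_\mu,f_\la} = t_{f_\mu\cdot w_0^\zeta, f_\la\cdot w_0^\zeta}$ via Proposition~\ref{pi:can:basis}, translate $f_\mu\cdot w_0^\zeta = f_{w_0^\nu\cdot\mu}$ under the bijection \eqref{eq:wt:bij}, and identify the result with $(T(w_0^\nu\cdot\la):M(w_0^\nu\cdot\mu))$ via Theorem~\ref{thm:BKL:conj}. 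All of this is correct and is exactly what the paper does.

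Where you differ is the final conversion from the tilting--Verma-flag multiplicity to the projective--Verma-flag multiplicity, and here there is a genuine gap. You propose to assemble this from \cite[Theorem~3.7]{CCC} together with ``BGG reciprocity and the compatibility of twisting functors with Verma flags,'' and you leave the resulting bookkeeping as ``the only real obstacle.'' Two problems: first, BGG reciprocity relates projective--Verma multiplicities to composition multiplicities of (dual) Verma modules, not to tilting--Verma multiplicities, so it is not the tool that closes this step --- what you actually need is the \emph{Ringel duality} formula, not BGG reciprocity; second, you explicitly stop short of verifying the $\rho$-shift bookkeeping, which is precisely the content of the step. The paper avoids all of this by quoting the explicit Ringel self-duality multiplicity formula from \cite[(7.4)]{Br04}, namely $(T(\kappa):M(\gamma)) = (P(-\kappa-2\rho):M(-\gamma-2\rho))$ for all $\kappa,\gamma\in\h^*$; substituting $\kappa = w_0^\nu(\la+\rho)-\rho = w_0^\nu\cdot\la$ and $\gamma = w_0^\nu(\mu+\rho)-\rho = w_0^\nu\cdot\mu$ then immediately yields $(P(-w_0^\nu(\la+\rho)-\rho):M(-w_0^\nu(\mu+\rho)-\rho))$. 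If you replace your appeal to BGG reciprocity with this citation and carry out that one substitution, the proof is complete. (As a side remark: the displayed form of the corollary has an unbalanced parenthesis, and the uniform shift $-w_0^\nu(\cdot+\rho)-\rho$ produced by the Ringel formula is what the paper's proof in fact establishes for both the $P$- and $M$-arguments.)
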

	
\begin{proof}
By Theorems~\ref{thm:T:to:Obar} and \ref{pi:can:basis}, we deduce that
\begin{align*}
\left(T^{\OI}(\la):\Delta(\mu)\right)=&t_{f_\mu\cdot w_0^\nu,f_\la\cdot w_0^\nu}(1)\\
=&\left(T(w_0^\nu(\la+\rho)-\rho):M(w_0^\nu(\la+\rho)-\rho)\right)\\
=&\left(P(-w_0^\nu(\la+\rho)-\rho):M(-w_0^\nu(\la+\rho)-\rho)\right).
\end{align*}
The second equality is due to Theorem \ref{thm:BKL:conj}, while the last equality follows from Ringel duality for $\mc O$ \cite[(7.4)]{Br04}, which reads $\left(T(\kappa):M(\gamma)\right)=\left(P(-\kappa-2\rho):M(-\gamma-2\rho)\right)$, for $\kappa,\gamma\in\h^*$.
\end{proof}

\begin{rem} \label{rem::44}
In the special case of $W_\zeta=\mf{S}_m\times\mf{S}_n$, by the super version of Skryabin's theorem \cite[Remarks 3.9--3.10]{Zh}, the category of Whittaker modules is equivalent to the finite-dimensional module category over the finite $W$-superalgebra corresponding to the Lie superalgebra $\gl(m|n)$ associated to the (even) principal nilpotent element.
	More precisely, in this case, there is a {\em Whittaker co-invariant functor} $H_0$ from $\mc O$ to a certain category of  modules over above $W$-algebra which is reminiscent of Backelin's functor see \cite[Section 3]{BrG}.  One of the most significant property of $H_0$ is that it is the Serre quotient functor, namely, $H_0$ satisfies the universal property of the Serre quotient (see \cite[Theorem 4.8]{BrG}  and \cite[Theorem 4.10]{BLW}).

If we denote by $\mc W_\zeta$ the target category consisting of all $W$-algebra modules of the form $H_0(M)$ for $M\in \mc O$, then by Proposition \ref{prop::univprop} there is a unique equivalence $E_\zeta(-): \mc W(\zeta)\xrightarrow{\cong} \mc W_\zeta$ making the following diagram commute:
	$$\xymatrixcolsep{3pc} \xymatrix{
  &	\mc O  \ar[ld]_-{F_\zeta(-)}  \ar@<-2pt>[rd]^{H_0(-)} &       \\
	\mc W(\zeta) \ar[rr]_-{E_\zeta(-)}^{\cong} & &  \mc W_\zeta}$$

By the super Skryabin equivalence, $E_\zeta(-)$ is the {\em Whittaker functor} $Wh(-)$ which gives {\em Whittaker vectors} of modules as in \cite[Section 3.7]{Zh}, and which was first introduced by Kostant \cite{Ko78} in the setting of semisimple Lie algebras.  Therefore, our functor $F_\zeta(-)$, which is isomorphic to $\Gamma_\zeta(-)$, can be considered as a generalization of the Whittaker co-invariants functor in the singular setting $\zeta$, i.e.,  $W_\zeta\neq W$.

It is proved in \cite{BrG} that $H_0(-)$ has properties similar to Soergel's functor $\mathbb V$ for semisimple Lie algebras.  In particular,  $H_0(-)$ is fully faithful on projective modules by \cite[Corollary 4.9]{BrG};  see also \cite[Theorem~4.10]{BLW}. This allows one to show that these functors satisfy an analogue of the Soergel's {\em Struktursatz} in the setting of the category $\mc O$ for Lie algebras. Namely, $\Gamma_\zeta(-)\cong  F_\zeta(-)$ are fully faithful on projective modules. It perhaps worth pointing out that this holds for any type I Lie superalgebras.

\end{rem}

\begin{cor} \label{cor::47} Let $\g$ be a quasireductive Lie superalgebra of type I. Suppose that $\zeta$ is non-singular, namely, $W_\zeta=W$. Then the functors
	\begin{align*}
	 {\Gamma_\zeta(-)\cong~}F_\zeta(-):\mc O\rightarrow W(\zeta),\qquad
	\mc L(M(\nu),-):\mc O\rightarrow \mc B_\nu,
	\end{align*}  are fully faithful on projectives.
\end{cor}
\begin{proof} We recall the functor $\mathbb V^{sup}$  from Section \ref{ref::41}.
By \cite[Theorem 7.2]{AM}, $\mathbb V^{sup}$ is fully  faithful on projective modules. The conclusion   follows by Corollary \ref{coro::univprop} or Corollary \ref{cor::42}.
\end{proof}

	\subsection{Examples} \label{sect::gl11ex} In this section, we consider the Whittaker category  $\mc W(\zeta)$ and its structural modules in the case of the general linear Lie superalgebra $\gl(m|n)$.
	
	Following \cite{Ko78, MS}, a character $\zeta$ is said to be {\em non-singular} (or {\em regular}) if $W_\zeta =W$. Throughout this subsection, we assume that $\zeta$ is non-singular with  $W_\nu=W_\zeta (=W)$.  In this case, we give a explicit combinatorial description of the algebras that are Morita equivalent to $\mc W(\zeta)$, for $\g=\gl(1|n)$.

\subsubsection{Structural modules in $\mc W(\zeta)$ for $\g=\gl(1|2)$.}	In this section, we will give a detailed description in the case of the general linear Lie superalgebra  $\gl(1|2)$.

	For any $\la\in \Lnua$, we define the following Whittaker modules
	\begin{align*}
	&P(\la,\zeta):= \Gamma_\zeta(P(\la)), \\
	&\Delta(\la,\zeta):= \Gamma_\zeta(K(\Ind_{\mf p_\oa}^{\g_\oa} P(\mf l_\nu, \la))).
	\end{align*}  By Proposition \ref{coro::univprop}, $P(\la,\zeta)$, $\Delta(\la,\zeta)$ and $M(\la,\zeta)$ are the projective  cover of $L(\la,\zeta)$, the corresponding standard object and the corresponding proper standard object in the category $\mc W(\zeta)$, respectively.  In addition, $P(\la,\zeta)$ is an indecomposable tilting module in $\mc W(\zeta)$ by Proposition \ref{prop::15}.   We have the following description of $P(\la,\zeta)$:

\begin{prop} Let $\la \in \Lnua$.
		\begin{itemize}
			\item[(1)]	Suppose that $\la$ is typical. Then we have
			\begin{align}
				&M(\la,\zeta) =L(\la,\zeta),~P(\la,\zeta) = \Delta(\la,\zeta), \label{eq::717}
			\end{align}  moreover, $\Delta(\la,\zeta)$ has a proper standard filtration of length $|W\cdot \la|.$
		\item[(2)] Suppose that $\la$ is atypical with  $\langle \la+\rho, \alpha^\vee\rangle =0$, for some odd root $\alpha$.  Then we have the following non-split short exact sequences:
		\begin{align}
			&0\rightarrow L(\la-\alpha,\zeta)\rightarrow M(\la,\zeta) \rightarrow L(\la,\zeta)\rightarrow 0, \label{eq::718}\\
			&0\rightarrow \Delta(\underline{\la+\alpha})\rightarrow P(\la,\zeta) \rightarrow \Delta(\la,\zeta) \rightarrow 0, \label{eq::719}
		\end{align} where  $ \underline{\la+\alpha}$ denotes the unique anti-dominant weight in $W\cdot (\la+\alpha)$.
		\end{itemize}
\end{prop}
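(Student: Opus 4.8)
The plan is to reduce everything to category $\mc O$ for $\gl(1|2)$ via the exact functor $\Gamma_\zeta$ (equivalently $F_\zeta$), which by Corollary~\ref{coro::univprop} realizes the Serre quotient functor $\pi: \mc O \to \OI \cong \mc W(\zeta)$. Since $\Gamma_\zeta$ is exact and kills exactly the simple modules $L(\mu)$ with $\mu \notin \Lnua$ (see \eqref{qe::3}), and since $W_\nu = W_\zeta = W$ forces $\Lnua$ to be the set of integral antidominant weights, the structure of $M(\la,\zeta)$, $L(\la,\zeta)$ and $P(\la,\zeta)$ will be extracted by applying $\Gamma_\zeta$ to the known structure of Verma modules, simple modules and projectives in $\mc O(\gl(1|2))$. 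First I would recall the relevant facts about $\mc O$ for $\gl(1|2)$: the blocks are either semisimple (typical case) or equivalent to a small explicitly-known quiver algebra (atypical case), and the structure of $M(\la)$, $L(\la)$, $P(\la)$ in the atypical block is classical (e.g.\ from Germoni or from the $\gl(1|2)$ literature). In particular, for atypical antidominant $\la$ with $\langle\la+\rho,\alpha^\vee\rangle = 0$ for an odd root $\alpha$, the Verma $M(\la)$ has composition series with top $L(\la)$ and socle $L(\la-\alpha)$, so the exactness of $\Gamma_\zeta$ together with $\Gamma_\zeta(L(\mu)) = L(\mu,\zeta)$ for $\mu \in \Lnua$ immediately gives \eqref{eq::718}; non-splitness follows because $\Gamma_\zeta$ is faithful on the relevant $\Ext$-groups, or more simply because $M(\la,\zeta)$ is a quotient of the standard Whittaker module and hence indecomposable by Lemma~\ref{lem::MS21}(1).

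For the typical case (part (1)), the weight $\la$ lies in a semisimple block of $\mc O$, so $M(\la) = L(\la) = P(\la)$; applying $\Gamma_\zeta$ gives $M(\la,\zeta) = L(\la,\zeta) = P(\la,\zeta)$. The identification $P(\la,\zeta) = \Delta(\la,\zeta)$ then follows from the general stratified picture: by Corollary~\ref{coro::univprop} and Section~\ref{sect::5}, $\Delta(\la,\zeta) = K(\Delta_0(\la,\zeta))$ corresponds to the standard object, and BGG reciprocity (Proposition~\ref{prop::BGG}) together with the fact that in a typical/semisimple-type situation there are no higher proper costandard multiplicities forces $P(\la,\zeta)$ to equal $\Delta(\la,\zeta)$. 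The statement that $\Delta(\la,\zeta)$ has a proper standard filtration of length $|W\cdot\la|$ is exactly Proposition~\ref{prop4}(1) combined with the remark after Proposition~\ref{prop::BGG} that $(\Delta(\mu):\ov\Delta(\mu)) = |W_\nu\cdot\mu|$, specialized to $W_\nu = W$; I would just quote this.

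For the atypical case (part (2)), the description of $P(\la,\zeta)$ in \eqref{eq::719} requires knowing the projective cover $P(\la)$ in $\mc O(\gl(1|2))$ and then applying $\Gamma_\zeta$. I would use the classical fact that for the atypical antidominant $\la$ the projective $P(\la)$ has a Verma flag $0 \to M(\la') \to P(\la) \to M(\la) \to 0$ for an appropriate $\la'$ (in fact $\la'$ is the antidominant representative of $W\cdot(\la+\alpha)$, which matches the BGG reciprocity $(P(\la):M(\mu)) = [M(\mu):L(\la)]$ — here $L(\la)$ appears in $M(\la)$ with multiplicity $1$ and in $M(\la')$ with multiplicity $1$). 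Applying the exact functor $\Gamma_\zeta$ to this short exact sequence, using $\Gamma_\zeta(M(\mu)) = M(\mu,\zeta) = \Delta(\mu,\zeta)$ — note that here $\Delta(\la,\zeta) = M(\la,\zeta)$ and $\Delta(\underline{\la+\alpha}) = \Delta(\underline{\la+\alpha},\zeta) = M(\underline{\la+\alpha},\zeta)$ since all the relevant weights are genuine antidominant weights in $\Lnua$ when $W_\zeta = W$ — yields \eqref{eq::719}. Non-splitness of \eqref{eq::719} follows because $P(\la,\zeta)$ is indecomposable (it is the projective cover of the simple $L(\la,\zeta)$, by Corollary~\ref{coro::univprop}). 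The main obstacle I anticipate is pinning down the precise structure of $P(\la)$ and the Verma $M(\la)$ in the atypical block of $\mc O(\gl(1|2))$ — i.e., correctly identifying which weight $\la'$ (or $\la-\alpha$, $\underline{\la+\alpha}$) appears and verifying non-splitness of the $\mc O$-level extensions — but this is standard and well-documented for $\gl(1|2)$, so I would cite the relevant source and not reprove it; the passage from $\mc O$ to $\mc W(\zeta)$ via exactness of $\Gamma_\zeta$ is then routine bookkeeping.
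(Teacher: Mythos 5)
Your overall plan — push everything through the exact functor $\Gamma_\zeta$ and read off the answer from the structure of $\mc O(\gl(1|2))$ — is essentially the same as the paper's, and your treatment of \eqref{eq::718} matches the paper's alternative argument (exactness of $\Gamma_\zeta$ plus the known composition series of atypical Vermas). However, two of the steps as written are wrong.

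In part (1) you assert that a typical anti-dominant weight $\la$ lies in a \emph{semisimple} block of $\mc O(\gl(1|2))$ and conclude $M(\la)=L(\la)=P(\la)$, hence $M(\la,\zeta)=L(\la,\zeta)=P(\la,\zeta)$. This is false whenever $\la$ is regular: a typical block is equivalent to a block of $\mc O_\oa$, and for $\la$ regular anti-dominant $M_0(\la)=L_0(\la)$ but $P_0(\la)\neq L_0(\la)$. In fact the proposition itself tells you that $P(\la,\zeta)=\Delta(\la,\zeta)$ has a proper standard filtration of length $|W\cdot\la|>1$, so $P(\la,\zeta)\neq L(\la,\zeta)$; your chain of equalities contradicts the very statement you are proving. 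The correct route, and the reason the paper cites Gorelik, is that for typical $\la$ the equivalence of the typical block with the corresponding $\mc O_\oa$-block gives $P(\la)\cong K(P_0(\la))$. Since $W_\nu=W$ forces $\mf l_\nu=\g_\oa$, we have $\Delta_0'(\la)=P_0(\la)$ and hence $\Delta(\la,\zeta)=\Gamma_\zeta(K(P_0(\la)))=\Gamma_\zeta(P(\la))=P(\la,\zeta)$; together with $M(\la)=L(\la)$ this gives \eqref{eq::717}.

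In part (2), for \eqref{eq::719} you claim $\Delta(\la,\zeta)=M(\la,\zeta)$ and $\Delta(\underline{\la+\alpha},\zeta)=M(\underline{\la+\alpha},\zeta)$ ``since the weights are anti-dominant.'' This conflates the standard object $\Delta(\mu)=\pi(K(\Delta_0'(\mu)))$ with the proper standard $\ov\Delta(\mu)=\pi(M(\mu))=M(\mu,\zeta)$; they differ whenever $|W\cdot\mu|>1$, since $(\Delta(\mu):\ov\Delta(\mu))=|W_\nu\cdot\mu|$. Consequently, applying $\Gamma_\zeta$ to a Verma flag of $P(\la)$ in $\mc O(\g)$ produces a filtration of $P(\la,\zeta)$ by the $\ov\Delta$'s (i.e.\ by copies of $M(\cdot,\zeta)$), not the two-step $\Delta$-flag asserted in \eqref{eq::719}, and the lengths do not even match. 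The paper instead obtains \eqref{eq::719} directly from the BGG reciprocity of Proposition~\ref{prop::BGG}: $(\pi(P(\la)):\Delta(\mu))=[\ov\Delta(\mu):\pi(L(\la))]=[M(\mu):L(\la)]$ for anti-dominant $\mu$, which for $\gl(1|2)$ and atypical anti-dominant $\la$ is $1$ for $\mu\in\{\la,\underline{\la+\alpha}\}$ and $0$ otherwise. Your non-splitness arguments via indecomposability of $M(\la,\zeta)$ and $P(\la,\zeta)$ are valid; the paper proves the stronger fact that both have simple socle (using injectivity of $P(\la,\zeta)$ and, for $M(\la,\zeta)\cong K(Y_\zeta(\la,\zeta))$, a result of \cite{CM}), but indecomposability already suffices for non-splitness.
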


\begin{proof}
	We first show that both $M(\la,\zeta)$ and $P(\la,\zeta)$ have simple socles. To see this, observe that $P(\la,\zeta)$ is an indecomposable injective module, and so it has simple socle. Also, we note that $M(\la,\zeta)\cong K(Y_\zeta(\la,\zeta))$, where $Y_{\zeta}(\la,\zeta)$ is the simple Whittaker $\g_\oa$-module introduced by Kostant in \cite{Ko78}; see \eqref{eq::31} for the definition. Consequently, $M(\la,\zeta)$ has simple socle by \cite[Lemma~3.2]{CM}.
		
	The identities in \eqref{eq::717} follow from \cite{Gor2}.  The short exact sequence in  \eqref{eq::718} is  taken from \cite[Proposition 24]{Ch21}, or, alternatively, it follows from the exactness of $\Gamma_\zeta$ and the character formulas in \cite[Section 9.4]{ChWa08}. The exact sequence in  \eqref{eq::719} is a direct consequence of Proposition \ref{prop::BGG}. The exact sequences in \eqref{eq::718} and \eqref{eq::719} are non-split since $M(\la,\zeta)$ and $P(\la,\zeta)$ have simple socles.
\end{proof}

	\subsubsection{The category $\mc W(\zeta)$ for  $\g=\gl(1|n)$.}
		In the general case $\g=\gl(m|n)$ with arbitrary $\zeta$, the category $\mc W(\zeta)$ is equivalent to the   category of finite-dimensional locally unital right modules over a locally unital algebra  by Lemma \ref{lem01} and Lemma \ref{lem::116}. In the case when $\zeta$ is non-singular, $\mc W(\zeta)$ is equivalent to a certain infinite {\em tower of cyclotomic quiver Hecke algebras}; see \cite[Section 4, Lemma 4.6]{Br}.
	
Consider the special cases $\g= \gl(1|n)$ with non-singular $\zeta$. By the analysis in  \cite[Examples 4.7, 4.8]{Br}, the category $\mc W(\zeta)$ is equivalent to the category of finite dimensional locally unital right modules over the path algebra of the following infinite quiver
	\begin{center}
		\hskip -3cm \setlength{\unitlength}{0.16in}
		\begin{picture}(24,3)
			\put(7.8,1){\makebox(0,0)[c]{$\bullet$}}
			\put(10.2,1){\makebox(0,0)[c]{$\bullet$}}
			\put(15.2,1){\makebox(0,0)[c]{$\bullet$}}
			\put(12.8,1){\makebox(0,0)[c]{$\bullet$}}
			\put(9,1.1){\makebox(0,0)[c]{$\longrightarrow$}}
			\put(9,0.7){\makebox(0,0)[c]{$\longleftarrow$}}
			\put(11.5,1.1){\makebox(0,0)[c]{$\longrightarrow$}}
			\put(11.5,0.7){\makebox(0,0)[c]{$\longleftarrow$}}
			\put(14,1.1){\makebox(0,0)[c]{$\longrightarrow$}}
			\put(14,0.7){\makebox(0,0)[c]{$\longleftarrow$}}

			
			\put(6.5,0.95){\makebox(0,0)[c]{$\cdots$}}
			\put(16.5,0.95){\makebox(0,0)[c]{$\cdots$}}
			
			\put(9,1.8){\makebox(0,0)[c]{\tiny$x_{i-1} $}}
			\put(9,0){\makebox(0,0)[c]{\tiny$y_{i-1}$}}
			\put(11.5,1.8){\makebox(0,0)[c]{\tiny$x_{i} $}}
			\put(11.5,0){\makebox(0,0)[c]{\tiny$y_{i}$}}
			\put(14,1.8){\makebox(0,0)[c]{\tiny$x_{i+1} $}}
			\put(14,0){\makebox(0,0)[c]{\tiny$y_{i+1}$}}
		\end{picture}
	\end{center}
	modulo the relations
	\begin{align*}
		&x_{i+1}\cdot x_i =y_{i}\cdot y_{i+1} =0, \\ &(y_{i+1}x_{i+1})^{n-(n-1)\delta_{i,-1}} =-(x_iy_i)^{n-(n-1)\delta_{i,0}},
	\end{align*} for all $i\in \mathbb{Z}$. In particular, in the case of $\gl(1|2)$, we get the defining relations
	\begin{align*}
		&x_{i+1}\cdot x_i =y_{i}\cdot y_{i+1} =0, \\ &y_{i+1}x_{i+1} =-x_iy_i, \text{for $i\neq 0,1$,} \\
		&y_0x_0=-(x_{-1}y_{-1})^2,\\
		&(y_1x_1)^2 = -x_0y_0,
	\end{align*} for all $i\in \mathbb{Z}$.

	From Section~\ref{App::9}, the category $\mc W(\zeta)$, for $\g=\gl(m|n)$,  has a graded lift. In particular, in the case when $\g=\gl(1|n)$ and $\zeta$ is non-singular, by \cite[Examples 4.7, 4.8]{Br} the algebra described above is positively graded  with the grading
	\begin{align}
	&\deg(x_i)=\deg(y_i) = 1+(n-1)\delta_{i,0}, \label{eq::grad}
	\end{align} for any $i\in \Z$. By Lemma \ref{lem::18} it follows  that all simple, standard and
	proper standard modules in $\mc W(\zeta)$ are gradable with respect to the grading given by \eqref{eq::grad}.

\section{Appendix A. Structural modules in $\Opres$} \label{sect::81}

The goal of this section is to describe the simple, standard and proper standard objects in $\Opres.$

For a given module $M\in \mc O$, let $\text{Tr}_{\nu}(M)$ denote the  sum of the  images of all homomorphisms
from the {$\nu$-}admissible projective modules to $M$.  For a fixed $\la \in \Lnua$,  denote by $A(\la)$ the kernel of the
canonical epimorphism $P(\la) \onto M(\la)$. Consider the following modules: \begin{align}
	&S(\la):=   P(\la)/\text{Tr}_{\nu}(\rad   P(\la)),\label{eq::81}\\
	&D(\la):=   P(\la)/\text{Tr}_{\nu}(A(\la)), \label{eq::82}
\end{align}
Define $Q(\la)$ to be the quotient of $P(\la)$ modulo the sum of the images of all
homomorphisms $P(\mu)\rightarrow P(\la)$, where $\mu\in\Lnua$ and $\mu>\la$.
Denote the natural projection by
\begin{align}\label{varpi:map}
\varpi: P(\la)\rightarrow Q(\la).
\end{align}

By construction, the modules $S(\la), D(\la)$ and $Q(\la)$ lie in $\Opres$. We recall that $\mathbf T_{w_0^\nu}$ denotes the twisting functor on $\mc O$ associated to $w^\nu_0$; see Section \ref{sect::621}.
The following proposition describe these modules as structural objects with respect to
the stratified structure on $\Opres$:

\begin{prop} \label{prop::45} Suppose that $\g$ is one of the Lie superalgebras from the series   in \eqref{eq::claA}-\eqref{eq::claP}.
  For any $\la \in \Lnua$, we have
  \begin{align}
  	&S(\la)\cong {\bf T}_{w_0^\nu}L(\la)\text{ and } \pi(S(\la))\cong \pi(L(\la)), \label{eq::84} \\
  	&D(\la)\cong {\bf T}_{w_0^\nu}M(\la)\text{ and } \pi(D(\la))\cong \ov \Delta(\la), \label{eq::85} \\
  	&\pi(Q(\la))\cong \Delta(\la). \label{eq::86}
  \end{align}
\end{prop}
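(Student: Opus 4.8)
The plan is to prove the three isomorphisms \eqref{eq::84}--\eqref{eq::86} essentially by combining the twisting-functor computations recalled in Section~\ref{sect::621} with the Serre-quotient description of $\Opres$ from Lemma~\ref{lem01} and the explicit identifications of $\Delta(\la)$, $\ov\Delta(\la)$ and of the simple objects already established in Section~\ref{sect::51}. First I would recall that $w_0^\nu$ is the longest element of $W_\nu$ and that the twisting functor $\mathbf T_{w_0^\nu}$ is right exact; I would also use the classical fact (e.g.\ Andersen--Stroppel, as cited via \cite{KM2,CoM1}) that $\mathbf T_w$ sends $P(w\cdot\mu)$ to $P(\mu)$ only after the appropriate shift, but more relevantly that $\mathbf T_{w_0^\nu}$ applied to objects supported on $\Lnua$ lands in $\Opres$. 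Concretely, on the $\g_\oa$-level it is known from \cite{MaSt04} (see also \cite{MaSt08}) that $\mathbf T^0_{w_0^\nu}L_0(\la)\cong S_0(\la)$, $\mathbf T^0_{w_0^\nu}M_0(\la)\cong D_0(\la)$ and that the appropriate twist of a projective in $\mc O_\oa$ is the $Q_0(\la)$ appearing in the properly stratified structure of $\Opres_\oa$; the super statements then follow by applying the exact functor $\Ind_{\g_{\geq 0}}^\g$ (equivalently $K(-)$) and using \eqref{eq::73}--\eqref{eq::74} together with the fact that $\mathbf T_{w_0^\nu}$ commutes with $\Ind$ and with $K$ up to isomorphism (this is the super analogue of the compatibility used in the proof of Proposition~\ref{prop::15}).

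For the second halves of \eqref{eq::84} and \eqref{eq::85}, I would argue that applying $\pi$ kills the relevant ``extra'' composition factors. For $S(\la)$: by definition $S(\la)=P(\la)/\mathrm{Tr}_\nu(\rad P(\la))$, so $\pi(S(\la))$ is the quotient of $\pi(P(\la))$ by $\pi$ of the trace of admissible projectives inside $\rad P(\la)$; since $\pi(P(\la))$ is the projective cover of $\pi(L(\la))$ in $\ov{\mc O}$ (Lemma~\ref{lem01}) and the trace of admissible projectives in the radical accounts precisely for all of $\rad\pi(P(\la))$, one gets $\pi(S(\la))\cong\pi(L(\la))$; this matches the definition of $S(\la)$ already given after Lemma~\ref{lem01}. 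For $D(\la)$: here $D(\la)=P(\la)/\mathrm{Tr}_\nu(A(\la))$ with $A(\la)=\ker(P(\la)\onto M(\la))$, and one checks that $\pi$ sends $\mathrm{Tr}_\nu(A(\la))$ onto $\ker(\pi(P(\la))\onto\pi(M(\la)))$, hence $\pi(D(\la))\cong\pi(M(\la))$; but $\pi(M(\la))=\ov\Delta(\la)$ was observed in the proof of Proposition~\ref{prop4}. The isomorphism $\pi(Q(\la))\cong\Delta(\la)$ in \eqref{eq::86} is already essentially recorded in the proof of the corollary computing $\End_\OI(\Delta(\la))$: there one notes that $\Delta'(\la)=K(\Delta_0'(\la))$ is the quotient of $P(\la)$ modulo the sum of images of $P(\mu)\to P(\la)$ with $\mu>\la$ in $\Lnua$, which is exactly $Q(\la)$; hence $\pi(Q(\la))=\pi(\Delta'(\la))=\Delta(\la)$. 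I would spell this identification out carefully, since it is the one place where the ordering $\mu>\la$ and the behaviour of parabolic induction interact.

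The remaining isomorphisms $S(\la)\cong\mathbf T_{w_0^\nu}L(\la)$ and $D(\la)\cong\mathbf T_{w_0^\nu}M(\la)$ require knowing that $\mathbf T_{w_0^\nu}$ already produces the right module ``on the nose'' in $\mc O$, not just after applying $\pi$. For this I would invoke the $\g_\oa$-results of \cite{MaSt04} in the form $\mathbf T^0_{w_0^\nu}L_0(\la)\cong S_0(\la)$ and $\mathbf T^0_{w_0^\nu}M_0(\la)\cong D_0(\la)$, then lift to $\g$ via $K(-)$ using \eqref{eq::74}: $K(\mathbf T^0_{w_0^\nu}L_0(\la))\cong\mathbf T_{w_0^\nu}K(L_0(\la))$ and $K(L_0(\la))$ surjects onto $L(\la)$; a short comparison of tops, radicals and the admissible-projective trace (right-exactness of $\mathbf T_{w_0^\nu}$ is crucial here) identifies $\mathbf T_{w_0^\nu}L(\la)$ with $S(\la)$, and similarly for $M(\la)$ versus $D(\la)$.

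The main obstacle I anticipate is the last paragraph: transporting the \emph{un-quotiented} $\g_\oa$-statements of \cite{MaSt04} to $\g$ through the Kac functor and the twisting functor, and in particular verifying that no extra submodules or quotients appear when one passes from $K(L_0(\la))$ to $L(\la)$ (respectively $K(M_0(\la))$ to $M(\la)$). One must check that $\mathbf T_{w_0^\nu}$ applied to the kernel of $K(L_0(\la))\onto L(\la)$ lands inside $\mathrm{Tr}_\nu(\rad P(\la))$, which uses both the right-exactness of twisting and the fact that this kernel has top consisting of simples $L(\mu)$ with $\mu\notin\Lnua$ (equivalently, not $\alpha$-free for some $\alpha\in\Phi_\nu$), so that $\mathbf T_{w_0^\nu}$ either annihilates them or moves them inside the admissible-projective trace. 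Once this bookkeeping is carried out, the proposition follows.
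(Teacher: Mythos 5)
Your argument has a fundamental circularity in the treatment of \eqref{eq::86}, which is actually the part of the proposition the paper has to prove from scratch.  You write that $\pi(Q(\la))\cong\Delta(\la)$ ``is already essentially recorded in the proof of the corollary computing $\End_\OI(\Delta(\la))$.''  But that corollary's proof begins with the sentence ``By the proof of Proposition~\ref{prop::45} in Appendix~\ref{sect::81}, $\Delta'(\la)$ is the quotient of $P(\la)$ modulo the sum of images of homomorphisms $P(\mu)\rightarrow P(\la)$, for $\mu\in\Lnua$ with $\mu>\la$.''  In other words, the corollary cites the proposition you are trying to prove; you cannot cite it back.  The real content of \eqref{eq::86} is the statement $K(\Delta_0'(\la))\cong Q(\la)$, i.e.\ that the parabolically induced module $\Ind_{\fp}^\g P(\mf l_\nu,\la)$ coincides with the trace-quotient $Q(\la)$ of $P(\la)$.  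The paper proves this via two preparatory lemmas (one producing a summand $P\cong P_0(\la)$ of $\Res P(\la)$ with $U(\g)P=P(\la)$, the other exhibiting a Kac flag on $\Ind P_0(\la)$ with higher subquotients $K(P_0(\gamma))$, $\gamma>\la$), and then a careful two-sided argument constructing epimorphisms $K(\Delta_0'(\la))\onto Q(\la)$ and $Q(\la)\onto K(\Delta_0'(\la))$.  None of this appears in your proposal.

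For \eqref{eq::84} and \eqref{eq::85}, the paper simply cites \cite[Theorem~12]{Ch21}.  Your alternative route — proving $\mathbf T^0_{w_0^\nu}L_0(\la)\cong S_0(\la)$ and $\mathbf T^0_{w_0^\nu}M_0(\la)\cong D_0(\la)$ at the $\g_\oa$-level and lifting through $K(-)$ — has the difficulty you yourself flag: $L(\la)$ is a proper quotient of $K(L_0(\la))$, and the compatibility you invoke ($K$ commuting with twisting, which you misattribute to \eqref{eq::74}, an identity about $\Gamma_\zeta$) does not immediately control $\mathbf T_{w_0^\nu}$ applied to that quotient.  Also note that the claim $\pi(\operatorname{Tr}_\nu(A(\la)))=\ker(\pi(P(\la))\onto\pi(M(\la)))$, which you assert when arguing $\pi(D(\la))\cong\ov\Delta(\la)$, is equivalent to $A(\la)/\operatorname{Tr}_\nu(A(\la))\in\mc I^\nu$; this needs justification beyond the observation that the top of $A(\la)/\operatorname{Tr}_\nu(A(\la))$ lies outside $\Lnua$, since the Serre subcategory condition concerns all composition factors, not just the top.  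As written, your proposal does not establish the proposition.
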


	We will prove  Proposition \ref{prop::45} using the following lemmas.
	\begin{lem} \label{lem:::46}
		For any weight $\la\in \h^\ast$, there is a direct  summand $P$ of $\Res P(\la)$ such that  $P\cong P_0(\la)$ and $U(\g)P =P(\la)$.
	\end{lem}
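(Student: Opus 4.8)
The plan is to obtain the statement from two ingredients: the fact that the restriction functor $\Res^{\g}_{\g_\oa}$ sends projective objects of $\mc O$ to projective objects of $\mc O_\oa$, and a direct analysis of the Kac module $N:=\Ind^{\g}_{\g_{\geq 0}}L_0(\la)$, where $\g_1$ acts as zero on $L_0(\la)$. For the first ingredient I would argue that $\Res^{\g}_{\g_\oa}$ is left adjoint to $\Coind^{\g}_{\g_\oa}$, and that $\Coind^{\g}_{\g_\oa}$ is an exact functor from $\mc O_\oa$ to $\mc O$: this is recorded in Section~\ref{sect::24}, and it can also be seen from \eqref{eq::adj}, since $\wedge^{\dim(\g/\g_\oa)}(\g/\g_\oa)$ is one-dimensional, whence $\Coind^{\g}_{\g_\oa}(-)\cong\Ind^{\g}_{\g_\oa}\big((\wedge^{\dim(\g/\g_\oa)}(\g/\g_\oa))^{*}\otimes-\big)$ is a composite of exact functors. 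Since a functor possessing an exact right adjoint preserves projectivity, $\Res^{\g}_{\g_\oa}P(\la)$ is projective in $\mc O_\oa$.

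For the second ingredient, fix a highest weight vector $v_\la\in L_0(\la)$. Then $1\otimes v_\la\in N$ has weight $\la$, is annihilated by $\mf n^+=\mf n^+_\oa\oplus\g_1$, and generates $N$ over $U(\g)$; hence $N$ is a highest weight module, it has simple top $L(\la)$, and I write $\pi_L\colon N\twoheadrightarrow L(\la)$ for the canonical epimorphism. By the PBW theorem (using the grading \eqref{typeI}), one has $\Res^{\g}_{\g_\oa}N\cong\wedge(\g_{-1})\otimes L_0(\la)$ as $\g_\oa$-modules, with $\g_\oa$ acting on $\wedge(\g_{-1})$ by the adjoint action; the exterior-degree grading then gives a direct sum decomposition $\Res^{\g}_{\g_\oa}N=\bigoplus_{k\ge0}\big(\wedge^{k}(\g_{-1})\otimes L_0(\la)\big)$ of $\g_\oa$-modules whose degree-zero summand is $1\otimes L_0(\la)\cong L_0(\la)$. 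Moreover $\pi_L$ restricted to $1\otimes L_0(\la)$ is a nonzero homomorphism out of the simple $\g_\oa$-module $L_0(\la)$, hence injective. Finally, since $P(\la)$ is the projective cover of $L(\la)$ and $L(\la)$ is the top of $N$, there is an epimorphism $q\colon P(\la)\twoheadrightarrow N$ with $\pi_L\circ q$ the canonical epimorphism $P(\la)\twoheadrightarrow L(\la)$; in particular $\ker(\pi_L\circ q)=\rad P(\la)$.

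Now I would extract the desired summand. The morphism $\Res^{\g}_{\g_\oa}q$ is an epimorphism of $\g_\oa$-modules from the projective module $\Res^{\g}_{\g_\oa}P(\la)$ onto $L_0(\la)\oplus\big(\bigoplus_{k\ge1}\wedge^{k}(\g_{-1})\otimes L_0(\la)\big)$. By the standard theory of projective covers (see, e.g., \cite{Kr15}), I may choose a decomposition $\Res^{\g}_{\g_\oa}P(\la)=P_0(\la)\oplus Q'\oplus C$ of $\g_\oa$-modules for which $\Res^{\g}_{\g_\oa}q$ restricts to a projective cover $P_0(\la)\twoheadrightarrow 1\otimes L_0(\la)$ on the first summand, to a projective cover $Q'\twoheadrightarrow\bigoplus_{k\ge1}\wedge^{k}(\g_{-1})\otimes L_0(\la)$ on the second, and to $0$ on $C$. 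Set $P$ to be this first summand; it is a direct summand of $\Res^{\g}_{\g_\oa}P(\la)$ with $P\cong P_0(\la)$. Then $q(P)=1\otimes L_0(\la)$ as a subspace of $N$, so $\pi_L(q(P))=\pi_L(1\otimes L_0(\la))\ne0$ by the injectivity noted above. Hence $P\not\subseteq\ker(\pi_L\circ q)=\rad P(\la)$, and since $\rad P(\la)$ is the unique maximal submodule of $P(\la)$, the $\g$-submodule of $P(\la)$ generated by $P$ equals $P(\la)$, that is, $U(\g)P=P(\la)$.

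The only slightly delicate point is the bookkeeping in the last step: arranging the Krull--Schmidt decomposition of $\Res^{\g}_{\g_\oa}P(\la)$ so that it is compatible with the decomposition $\Res^{\g}_{\g_\oa}N=L_0(\la)\oplus(\cdots)$; concretely, so that exactly one summand isomorphic to $P_0(\la)$ is carried by $q$ onto the subobject $1\otimes L_0(\la)$ on which $\pi_L$ is injective. Everything else — the PBW identification of $\Res^{\g}_{\g_\oa}N$, the highest weight property of $N$, and the adjunction showing that $\Res^{\g}_{\g_\oa}$ preserves projectivity — is routine.
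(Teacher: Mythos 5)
Your proof is correct and follows essentially the same strategy as the paper's: both exploit the Kac module $K(L_0(\la))$ (your $N$), the projectivity of $\Res^{\g}_{\g_\oa}P(\la)$, and the fact that the degree-zero piece $1\otimes L_0(\la)\subseteq\Res^{\g}_{\g_\oa}K(L_0(\la))$ maps injectively to $L(\la)$, in order to isolate a summand $P\cong P_0(\la)$ of $\Res^{\g}_{\g_\oa}P(\la)$ that is not killed by the canonical projection $P(\la)\onto L(\la)$.

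The difference is in the bookkeeping. The paper observes that $p\colon P(\la)\onto L(\la)$ factors through $q\colon K(L_0(\la))\onto L(\la)$ and asserts that $\Res^{\g}_{\g_\oa}L(\la)$ admits $L_0(\la)$ as a quotient, then composes with $\Res p$ to produce a $\g_\oa$-surjection $\Res^{\g}_{\g_\oa}P(\la)\onto L_0(\la)$; from this one reads off both that some summand is $P_0(\la)$ and that it is not contained in $\ker p$. You instead lift $p$ to an epimorphism $P(\la)\onto N$ and choose the Krull--Schmidt decomposition of $\Res^{\g}_{\g_\oa}P(\la)$ compatibly with the exterior-degree direct sum decomposition of $\Res^{\g}_{\g_\oa}N$, isolating the summand covering $1\otimes L_0(\la)$. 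This has the small advantage of not relying on the claim that $L_0(\la)$ is a \emph{quotient} (and not merely a submodule) of $\Res^{\g}_{\g_\oa}L(\la)$, which is true but requires an extra word (for instance self-duality of $\Res^{\g}_{\g_\oa}L(\la)$ under $(\cdot)^\vee$). The step you flag as delicate is in fact fine: a surjection from a projective object onto a finite-length module $M_1\oplus M_2$ in $\mc O_\oa$ lifts to a surjection onto the projective cover $P_{M_1}\oplus P_{M_2}$, which splits and gives exactly the decomposition you need.
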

	\begin{proof}
		We first decompose $\Res P(\la) =\bigoplus_{i=1}^{\ell}P_0(\mu_i)$. Consider the canonical quotients
		\begin{align}
			&p: P(\la) \onto   L(\la),\label{eq::04} \\
			&q:K(L_0(\la))\onto  L(\la), \label{eq::05}
		\end{align} where $p$ factors through $q.$
		By \eqref{eq::05}, the $\g_\oa$-module $\Res L(\la)$ has a quotient isomorphic to $L_0(\la)$. Therefore, $\Res P(\la)$ has
		$L_0(\la)$ as a quotient as well. This implies that there is $1\leq i\leq \ell$ such that $\mu_i=\la$, and, if we let $P:=P_0(\mu_i)$, then $P\not \subseteq \ker(p)$. The fact that $U(\g)P =P(\la)$ then follows by combining  the uniqueness of the maximal submodule of $P(\la)$ with $p(P)\neq 0$.
	\end{proof}

For any $\la \in \Lnua$, define
\[\Lnua_{>\la}:=\{\mu \in \Lnua \text{ such that } \mu>\la\}.\]

\begin{lem} \label{lem:::47}
	For each $\la \in \Lambda(\nu)$, there is a short exact sequence of $\g$-modules as follows:
	\begin{align*}
		&0\rightarrow X \rightarrow \Ind P_0(\la) \rightarrow K(P_0(\la)) \rightarrow 0,
	\end{align*} Here $X = U(\g_{-1})\cdot (\g_1U(\g_1))\otimes P_0(\la)\subset \Ind P_0(\la)$ has a Kac flag, subquotients of which are isomorphic to direct sums of the  modules of the form $K(P_0(\gamma))$, where $\gamma \in \xnula$.
	
	Furthermore, $X$ is an epimorphic image of a direct sum of modules of the form $P(\gamma)$, where $\gamma \in \xnula.$
\end{lem}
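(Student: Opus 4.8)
The plan is to analyze the induced module $\Ind P_0(\la) = U(\g)\otimes_{U(\g_{\geq 0})}P_0(\la)$ using the PBW-type decomposition $U(\g) \cong U(\g_{-1})\otimes U(\g_{\geq 0})$ as a right $U(\g_{\geq 0})$-module, so that as a $\g_\oa$-module (indeed as a vector space) $\Ind P_0(\la) \cong \wedge(\g_{-1})\otimes \wedge(\g_1) \otimes P_0(\la)$, where the $\g_1$-factor records how $\g_1$ acts nontrivially on $P_0(\la)$ inside $U(\g_{\geq 0})$. The Kac module $K(P_0(\la)) = \Ind_{\g_{\geq 0}}^\g P_0(\la)$ with $\g_1$ acting trivially is the quotient of $\Ind P_0(\la)$ by the submodule generated by $(\g_1 U(\g_1))\otimes P_0(\la)$; concretely, set $X := U(\g_{-1})\cdot(\g_1 U(\g_1))\otimes P_0(\la)$, which is visibly a $\g$-submodule, and the short exact sequence $0\to X\to \Ind P_0(\la)\to K(P_0(\la))\to 0$ is immediate from this description.

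Next I would establish the Kac flag on $X$. Filter $\g_1 U(\g_1)$ by the standard increasing filtration on $U(\g_1)\cong \wedge(\g_1)$ by degree; the associated graded pieces are $\wedge^k(\g_1)\otimes P_0(\la)$ for $k\geq 1$, on which $\g_1$ acts trivially and $\h$ acts by weight $\la+\gamma$ with $\gamma$ a weight of $\wedge^k(\g_1)$. Pushing this filtration through $U(\g_{-1})\otimes -$ yields a filtration of $X$ whose subquotients are isomorphic to direct sums of $\Ind_{\g_{\geq 0}}^\g$ of $\g_\oa$-modules of the form $U(\g_1$-trivial)$\otimes P_0(\la)$; since $P_0(\la)$ is projective in $\mc O_\oa$, each such $\g_\oa$-module $\wedge^k(\g_1)\otimes P_0(\la)$ is projective, hence a direct sum of indecomposable projectives $P_0(\gamma)$ with $\gamma$ in the linkage class, and $\gamma = \la + (\text{weight of }\wedge^k(\g_1))$ with $k\geq 1$ forces $\gamma > \la$; moreover the $\Lnua$-condition is inherited because $X$ is a submodule of $\Ind P_0(\la)\in\Opres$ (using the $\alpha$-freeness argument as in Lemma~\ref{lem::1}, equation~\eqref{eq::26}), so in fact $\gamma\in\xnula$. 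This gives the claimed Kac flag with subquotients $K(P_0(\gamma))$, $\gamma\in\xnula$.

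Finally, for the last assertion, I would note that each $K(P_0(\gamma))$ is a quotient of $\Ind_{\g_{\geq 0}}^\g P_0(\gamma) = \Ind_{\g_{\geq 0}}^\g \Ind_{\g_\oa}^{\g_{\geq 0}} P_0(\gamma) = \Ind P_0(\gamma)$, and by Lemma~\ref{lem:::46} (applied with the roles reversed, or directly: $\Ind_{\g_\oa}^\g P_0(\gamma)$ surjects onto $P(\gamma)$ via a projectivity argument, and $\Ind_{\g_\oa}^\g P_0(\gamma)$ is itself a direct sum of $\Ind P_0(\gamma')$ with $\gamma'\geq\gamma$ by the same Kac-flag reasoning)—more cleanly, $\Ind P_0(\gamma)$ is projective in $\Opres$ by Lemma~\ref{lem::1} since $P_0(\gamma)$ is projective in $\Opres_\oa$ (for $\gamma\in\xnula$), and it has $P(\gamma)$ as a summand, so $K(P_0(\gamma))$, being a quotient of a projective whose indecomposable summands are $P(\gamma')$ with $\gamma'\geq\gamma$, is an epimorphic image of such a projective. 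Assembling these surjections over a filtration of $X$ and lifting (using projectivity in $\Opres$, so no $\Ext^1$-obstruction), I obtain the desired surjection onto $X$ from a direct sum of $P(\gamma)$'s with $\gamma\in\xnula$. The main obstacle I anticipate is bookkeeping the weight inequalities—verifying that every $\gamma$ appearing genuinely lies in $\xnula$ rather than merely in $\{\mu:\mu>\la\}$—which requires combining the weight shift by $\wedge^{\geq 1}(\g_1)$ with the $W_\nu$-antidominance preserved under passing to submodules of objects in $\Opres$.
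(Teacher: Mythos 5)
Your proof follows the paper's main strategy for the Kac-flag part (PBW decomposition, degree filtration of $U(\g_1)\cong\wedge(\g_1)$, projectivity of $\wedge^k(\g_1)\otimes P_0(\la)$, and the $\alpha$-freeness/Hom-formula argument to place the summand highest weights in $\Lambda(\nu)$), so the core of the argument is the same. Where you diverge is the final assertion: the paper simply invokes \cite[Theorem 51]{CCM} to see that $K(P_0(\gamma))$ has simple top $L(\gamma)$, hence is a quotient of $P(\gamma)$; you instead realize $K(P_0(\gamma))$ as a quotient of $\Ind P_0(\gamma)$, argue that the latter is projective in $\Opres$ with indecomposable summands $P(\gamma')$ satisfying $\gamma'\in\Lambda(\nu)$ and $\gamma'\geq\gamma>\la$, and then assemble a surjection over the Kac flag using projectivity. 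Your route is more self-contained (it avoids the external theorem on simple tops of Kac modules) but is longer, and it delivers a surjection from $P(\gamma')$ with $\gamma'\geq\gamma$ rather than the more precise quotient of $P(\gamma)$ itself. A few slips worth fixing: (i) $\Ind P_0(\la)$ means $U(\g)\otimes_{U(\g_\oa)}P_0(\la)$, not $U(\g)\otimes_{U(\g_{\geq 0})}P_0(\la)$ (your later use of the $\wedge(\g_1)$ factor is consistent with the former); (ii) the filtration of $\wedge(\g_1)$ that is stable under the left $\g_1$-action is the \emph{decreasing} one by $\wedge^{\geq k}$, not the increasing one; (iii) the assertion ``$\gamma=\la+(\text{weight of }\wedge^k\g_1)$'' is too strong in general — what the Hom formula $\Hom_{\g_\oa}(\wedge^k\g_1\otimes P_0(\la),L_0(\gamma))=[\wedge^k\g_1^*\otimes L_0(\gamma):L_0(\la)]$ gives is precisely $\gamma>\la$, via the weight estimate on $\wedge^k\g_1^*\otimes L_0(\gamma)$; (iv) the phrase ``the $\Lnua$-condition is inherited because $X$ is a submodule of $\Ind P_0(\la)\in\Opres$'' is not a valid step, since $\Opres$ is not closed under submodules — the $\Lambda(\nu)$-condition on $\gamma$ must be derived at the $\g_\oa$-level from the $\alpha$-freeness of $L_0(\la)$ passing through the above Hom formula, which is what your parenthetical reference to \eqref{eq::26} correctly indicates; the surrounding phrasing should be replaced by that argument.
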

\begin{proof} For any $k>1$,
	we may observe that
	\begin{align*}
		&\Hom_{\g_0}(\Lambda^k \mf g_1\otimes P_0(\la), L_0(\gamma)) = [ \Lambda^k \mf g_1^\ast\otimes L_0(\gamma): L_0(\la)],
	\end{align*} which implies that $\Lambda^k(\g_1)\otimes P_0(\la)$ decomposes into a  direct sum of modules $P_0(\gamma)$, with $\gamma\in \xnula$. In particular, the $\g_{\geq 0}$-module $\Ind_{\g_0}^{\g_{\geq 0}}P_0(\la)$ has a filtration, subquotients of which are direct sums of  $P_0(\gamma)$  such that $\g_1P_0(\gamma)=0$ and $\gamma\in \xnula\cup \{\la\}$. It follows that the module
	\begin{align*}
		&\Ind P_0(\la) \cong \Ind_{\g_{\geq 0}}^\g \Ind_{\g_0}^{\g_{\geq 0}}  P_0(\la),
	\end{align*} admits a Kac flag, subquotients of which are of the form $K(P_0(\gamma))$, where we have $\gamma \in \xnula\cup \{\la\}$. By \cite[Theorem 51]{CCM}, each $K(P_0(\gamma))$ has simple top, which is, automatically, isomorphic to $L(\gamma)$. Consequently,  $K(P_0(\gamma))$ is a quotient of $P(\gamma)$. This completes the proof.
\end{proof}

\begin{proof}[Proof of Proposition \ref{prop::45}]	The isomorphisms in \eqref{eq::84}, \eqref{eq::85} are proved in \cite[Theorem~12]{Ch21}. It remains to prove the isomorphism in \eqref{eq::86}. To see this, we first recall the definitions of the Kac functor $K(-)$ and the module $\Delta_0(\la)$ from Sections \ref{sect::5}, \ref{sect::51}. We shall show that $K(\Delta_0'(\la))\cong Q(\la)$.
	
	Let $Q_0(\la)$ denote the quotient of $P_0(\la)$ by the sum of the traces of all $P_0(\mu)$, where $\mu \in \xnula$, in  $P_0(\la)$. 

Let $P$ be the $\g_0$-submodule of $\Res P(\la)$ described in Lemma \ref{lem:::46}. Recalling the map $\varpi$ from \eqref{varpi:map}, we see that $\varpi(P)\neq 0$. In particular, the $\g_0$-submodule $\varpi(P)$ is a quotient of $P\cong P_0(\la)$. We are going to show that there is an  $\g_\oa$-epimorphism
	\begin{align}
		&Q_0(\la) \onto \varpi(P), \label{eq:::06}
	\end{align} equivalently, we shall show that $[\varpi(P):L_0(\gamma)]=0$, for any $\gamma \in \xnula$.
	
	We first claim that $\g_1 \varpi(P)=0,$ namely, that $\g_1 P\in \ker(\varpi)$. To see this, we consider the epimorphism  $\Ind(P) \onto U(\g)P$ sending $x\otimes v$ to $xv$, for any $x\in U(\g)$ and $v\in P$.
	By Lemma \ref{lem:::47}, the $\g$-module $U(\g_{-1})\g_1U(\g_1)P$ is an epimorphic image of a direct sum of $P(\gamma)$'s, where $\gamma \in \xnula$. Consequently, we have $\g_1P\subseteq \ker(\varpi)$, as desired.
	
	Next,  we suppose that $[\varpi(P):L_0(\gamma)]>0$, for some weight $\gamma$.  Then there is a $\g_0$-submodule $Y$ of $\varpi(P)$ having a quotient $L_0(\gamma)$. Consider
	\begin{align*}
		&\Hom_\g(K(Y), Q(\la)) =\Hom_{\g_0}(Y, Q(\la)^{\g_1}) \supseteq \Hom_{\g_0}(Y, \varpi(P)) \neq 0,
	\end{align*} where $Q(\la)^{\g_1}$ denotes the $\g_1$-invariants of $Q(\la)$. This leads to a non-zero homomorphism from $P(\gamma)$ to $Q(\la)$. Consequently, $\gamma$ cannot be a weight in $\xnula$. This proves \eqref{eq:::06}.

	Finally, we note that $U(\g)\varpi(P)=Q(\la)$ by Lemma \ref{lem:::46}. Then we consider
	\begin{align*}
		&\Hom_\g(K(\Delta'_0(\la)), Q(\la)) =\Hom_{\g_0}(\Delta'_0(\la), Q(\la)^{\g_1}).
	\end{align*}
	Since $Q_0(\la)$ is isomorphic to the parabolically induced $\g_\oa$-module $\Delta'_0(\la)$ from the projective-injective $\mf l_\nu$-module $P(\mf l_\nu, \la)$, we obtain an epimorphism from $K(\Delta'_0(\la))$ to $Q(\la)$.
	
	Finally, it remains to show  that $K(\Delta_0'(\la))$ is an epimorphic image of  $Q(\la)$. By \cite[Theorem 51]{CCM}, the top of $K(\Delta_0'(\la))$ is simple, and so it is isomorphic to  $L(\la)$. Then we get an epimorphism $f: P(\la)\onto K(\Delta_0'(\la))$. It remains to show that, if $\eta\in \Lnua$ is such that $[K(\Delta_0'(\la)): L(\eta)]\neq 0$, then $\eta \not > \la$. We will show the stronger statement that $\eta \leq \la$. To see this, we observe that
	\begin{align*}
	&\ch K(\Delta_0'(\la)) = \sum_{\mu\in W_\nu\cdot \la} \ch M(\mu),
	\end{align*}  which implies that there is $\mu \in W_\nu\cdot \la$ such that  $[M(\mu):L(\eta)]\neq 0$. Since $\mu \in W_\nu\cdot \la$ and $\la, \eta\in \Lnua$, we  have $[M(\mu)/M(\la):L(\eta)]=0$ and thus $[M(\mu):L(\eta)] = [M(\la):L(\eta)]$. 	The conclusion follows.
\end{proof}

\begin{ex}
	Consider $\g=\mf{sl}(2)$ with $W_\nu =W$. Let $\la \in \Lnua$. If $\la$ is regular, then $S(\la)=D(\la)$ is isomorphic to the  the dual Verma module of highest weight $w_0^\nu\cdot\la$ and $Q(\la)= P(\la)$. If $\la$ is singular, then $S(\la) =D(\la) =Q(\la)$.
\end{ex}

\section{Appendix B. A realization of $\OI$ and its graded version.} \label{App::9}

In this appendix, we realize $\OI$ as the category of  finite-dimensional (locally unital) modules over a locally unital algebra.
In the case $\g=\gl(m|n)$, we study the graded version of $\OI$ and show that all structural modules in $\OI$ have graded lifts.

\subsection{Morita equivalence for $\mc O$}
We define $A$ to be the locally unital algebra of $\mc O$, that is
\begin{align*}
	&A:=\bigoplus_{\la, \mu\in \Lambda} \Hom_{\mf g}(P(\la), P(\mu)).
\end{align*}
For $\la \in \Lnua$, denote by $1_\la$ the identity endomorphism on $P(\la)$. Then $\{1_\la|~\la\in \Lambda\}$ forms a complete
set of mutually orthogonal idempotents and $A = \bigoplus_{\la,\mu\in\Lambda}1_\mu A 1_\la$.  Define $\text{mof}$-$A$ to
be the category of finite-dimensional locally unital right $A$-modules.  This means that the objects of $\text{mof}$-$A$ are
finite dimensional  right $A$-modules $M$ such that
$M= \bigoplus_{\la\in \Lambda} M1_\la$ and morphisms in $\text{mof}$-$A$
are homomorphism of $A$-modules. Then we have a Morita type equivalence  (c.f.{~\cite[Section 2.1]{BLW}})
\begin{align*}
	&T(-):\mc O\xrightarrow{\cong }\text{mof-}A,
\end{align*} via the functor $T(M):=\bigoplus_{\la \in \Lambda} \Hom_\g(P(\la), M)$, where $M\in \mc O$.  We may note that $T(P(\la)) = 1_\la A$ and $\dim 1_\la A, \dim A1_\la<\infty$.

In particular,  for a given $\la\in \Lambda$ (respectively $\mu\in \Lambda$), there are only finitely many $\mu\in \Lambda$ (respectively $\la\in \Lambda$) such that $\Hom_\g(P(\la), P(\mu))\neq 0$.  Hence $A= \bigoplus_{\la,\mu\in \Lambda}\Hom_A(1_\la A,1_\mu A)$.

\subsection{Morita equivalence for $\OI$}\label{sect::92} We define a subalgebra $B$ of $A$ as follows
\begin{align}
	&B:=\bigoplus_{\la, \mu\in \Lnua} \Hom_{\mf g}(P(\mu), P(\la))=\bigoplus_{\la,\mu\in \Lnua} 1_\la A 1_\mu.
\end{align}
Similarly, we define $\mofB$ to be the category of all finite dimensional locally unital right $B$-modules.

\begin{lem}\label{lem::116} There is an equivalence
	$\ov{\mc O}\cong \emph{\mofB}$.
\end{lem}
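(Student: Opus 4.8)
The plan is to transfer the Morita equivalence $T(-)\colon\mc O\xrightarrow{\cong}\mofA$ established in the previous subsection through the Serre quotient construction, matching up the idempotent truncation $B = \bigoplus_{\la,\mu\in\Lnua}1_\la A 1_\mu$ with the subcategory $\Opres\cong\OI$. First I would recall from Lemma~\ref{lem01} that $\pi\colon\Opres\xrightarrow{\cong}\OI$ is an equivalence and that $\{\pi(P(\la))\mid\la\in\Lnua\}$ is (up to isomorphism) a complete set of indecomposable projective objects in $\OI$, with $\End_{\OI}(\pi(P(\la)))\cong\End_{\mc O}(P(\la))$ and, more generally, $\Hom_{\OI}(\pi(P(\mu)),\pi(P(\la)))\cong\Hom_{\mc O}(P(\mu),P(\la))$ by fullness and faithfulness of $\pi$ on $\Opres$. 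Therefore $B$ is precisely the locally unital endomorphism algebra $\bigoplus_{\la,\mu\in\Lnua}\Hom_{\OI}(\pi(P(\mu)),\pi(P(\la)))$ of a projective generator of $\OI$.

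Concretely, I would define the functor $\OI\to\mofB$ by $N\mapsto\bigoplus_{\la\in\Lnua}\Hom_{\OI}(\pi(P(\la)),N)$, with the right $B$-action given by precomposition, exactly as in the definition of $T(-)$ but using only the idempotents indexed by $\Lnua$. Equivalently, this functor is $N\mapsto T'(N)e$, where $e=\sum_{\la\in\Lnua}1_\la$ is the (locally unital) idempotent and $T'$ is the image of $T(-)$ under the equivalence $\pi^{-1}$; the key point is that $e A e = B$. The two facts needed to conclude are: (i) $\bigoplus_{\la\in\Lnua}\pi(P(\la))$ is a projective generator of $\OI$, which follows from Lemma~\ref{lem01} together with the standard fact that every object of $\OI$, being in the image of $\pi$ applied to $\Opres$, is a quotient of an admissible projective and hence of a finite direct sum of the $\pi(P(\la))$; and (ii) each $\Hom_{\OI}(\pi(P(\la)),\pi(P(\mu)))$ is finite-dimensional, which is immediate from $\Hom_{\mc O}(P(\la),P(\mu))$ being finite-dimensional, and that for fixed $\la$ only finitely many $\mu\in\Lnua$ give a nonzero Hom-space (inherited from the same property of $A$). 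The Morita theory for projective generators in locally finite abelian categories — the same argument used for $T(-)\colon\mc O\xrightarrow{\cong}\mofA$ and referenced to \cite[Section~2]{Br} — then yields the asserted equivalence $\OI\cong\mofB$.

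The routine verifications are: checking that the functor lands in \emph{finite-dimensional} locally unital $B$-modules (this uses the local finiteness noted in (ii)), checking it is exact (it is a Hom-functor out of a projective, hence exact on $\OI$), and checking it is fully faithful and essentially surjective (standard Morita argument, using that $\bigoplus_{\la\in\Lnua}\pi(P(\la))$ is a projective generator with endomorphism algebra $B$). I do not expect any genuine obstacle here; the only mild subtlety is bookkeeping with the locally unital structure, i.e.\ ensuring that the decomposition $B=\bigoplus_{\la,\mu\in\Lnua}1_\la A 1_\mu$ is compatible with the idempotent decomposition of objects, but this is exactly parallel to the treatment of $A$ and $\mofA$ and requires no new ideas. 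The cleanest write-up simply invokes Lemma~\ref{lem01} to reduce to the statement that for the projective generator $\bigoplus_{\la\in\Lnua}P(\la)$ of $\Opres$ (with the abelian structure transported from $\OI$) the endomorphism algebra is $B$, and then cites the same Morita-equivalence principle used for $\mc O$.
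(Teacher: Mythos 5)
Your argument is correct, and it leads to the same functor as the paper's, but it is packaged differently. The paper transports everything through the Morita equivalence $T\colon\mc O\xrightarrow{\cong}\mofA$: it identifies $T(\mc I^\nu)$ with the Serre subcategory $I_\nu=\{M\in\mofA\mid M1_\la=0\ \text{for all}\ \la\in\Lnua\}$, defines the idempotent-truncation functor $\pi'\colon M\mapsto\bigoplus_{\la\in\Lnua}M1_\la$ from $\mofA$ to $\mofB$, and then invokes \cite[Lemma A.2.1]{CP19} as a black box to conclude that $\pi'$ descends to an equivalence $\ov{\mofA}\cong\mofB$. You instead stay inside $\ov{\mc O}$ and run a self-contained Morita argument: using Lemma~\ref{lem01} you identify $\bigoplus_{\la\in\Lnua}\pi(P(\la))$ as a projective generator of $\ov{\mc O}$ with locally unital endomorphism algebra $B=eAe$ (via fullness and faithfulness of $\pi$ on $\Opres$), and then apply $\bigoplus_{\la\in\Lnua}\Hom_{\ov{\mc O}}(\pi(P(\la)),-)$. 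The two constructions agree: for $N=\pi(M)$ with $M\in\Opres$ one has $\Hom_{\ov{\mc O}}(\pi(P(\la)),N)\cong\Hom_{\mc O}(P(\la),M)=T(M)1_\la$, so your functor is exactly $\pi'\circ T$ on $\Opres$. What your route buys is independence from the external reference (and from the Serre-quotient formalism on the algebra side); the price is that you must verify the projective-generator and local-finiteness hypotheses of the Morita principle yourself, which you do correctly. What the paper's route buys is brevity once the CP19 lemma is available, and it makes the compatibility with the quotient functor $\pi^A$ explicit, which the authors use right afterwards (Corollary~\ref{lem::51}). Either write-up is acceptable; if you go with yours, it would be worth stating explicitly that the two functors agree, so that the subsequent Corollary~\ref{lem::51} about the universal property still follows without change.
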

\begin{proof} Let $\ov{\mofA}$ denote the Serre quotient of $\mofA$ by the Serre subcategory $I_\nu$ of all modules $M\in \mofA$ which satisfy $M1_\la =0$, for all $\la \in \Lnua$. We observe that $I_\nu=T(\mc I^\nu)$. Therefore, $\ov{\mofA}$ is equivalent to $\ov{\mc O}$ through the Morita equivalence functor $T$. Therefore we have a commutative diagram
	$$\xymatrixcolsep{3pc} \xymatrix{
		\mc O \ar[r]^-{ T}  \ar@<-2pt>[d]_{\pi} &   \mofA  \ar@<-2pt>[d]_{\pi^A} \\
		\ov{\mc O} \ar[r]^-{\cong}  &  \ov{\mofA},}$$
	where $\pi^A$ denotes the quotient functor from $\mofA$ to $\overline{\mofA}$.
	
 In order to complete the proof we define a functor $\pi': \mofA\rightarrow \mofB$ by letting
	\begin{align*}
		&\pi': M\mapsto \bigoplus_{\la\in \Lnua} M1_\la.
	\end{align*} We are now in the position to invoke \cite[Lemma A.2.1]{CP19} which says that
 $\pi'$ induces an equivalence $\widetilde{\pi'}: \ov{\mofA}\rightarrow\mofB$ such that $\pi'=  \widetilde{\pi'} \circ \pi^A$.
\end{proof}

By the proof of Lemma \ref{lem::116}, there are  functors $T^B: \ov{\mc O} \xrightarrow{\cong} \mofB$ and $\pi': \mofA \rightarrow \mofB$ that make the following  diagram  commutative:
$$\xymatrixcolsep{3pc} \xymatrix{
	\mc O \ar[r]^-{T}  \ar@<-2pt>[d]_{\pi} &   \mofA  \ar@<-2pt>[d]_{\pi'} \\
	\ov{\mc O} \ar[r]^-{T^B}  &  \mofB.}$$

\begin{cor} \label{lem::51}
The exact functor
\begin{align}
&\pi'\circ T: \mc O\rightarrow \mofB, ~M\mapsto \bigoplus_{\la\in \Lnua}\Hom_\g(P(\la),M) \label{eq::96}
\end{align} 	
satisfies the universal property for the Serre quotient $\OI$.
\end{cor}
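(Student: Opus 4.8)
The plan is to deduce the corollary directly from the commutative square $\pi' \circ T = T^B \circ \pi$ displayed immediately before the statement, together with the universal property of Serre quotients recorded in Lemma~\ref{lem::unipro}. The idea is that this identity exhibits $\pi' \circ T$ as the composition of the canonical quotient functor $\pi\colon \mc O \to \OI$ with the equivalence $T^B\colon \OI \xrightarrow{\cong} \mofB$ supplied by Lemma~\ref{lem::116}, and any such composition of $\pi$ with an equivalence automatically satisfies the universal property of the Serre quotient.

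First I would record that $\pi' \circ T$ really is the functor in \eqref{eq::96}: since $T(M) = \bigoplus_{\la\in\Lambda}\Hom_\g(P(\la),M)$ and $\pi'$ sends a module $N$ to $\bigoplus_{\la\in\Lnua}N1_\la$, one gets $\pi'(T(M)) = \bigoplus_{\la\in\Lnua}\Hom_\g(P(\la),M)$. Next I would check the two hypotheses of Lemma~\ref{lem::unipro}. For exactness: $T$ is an equivalence, and in the proof of Lemma~\ref{lem::116} the functor $\pi'$ factors as $\widetilde{\pi'}\circ\pi^A$ with $\pi^A$ exact and $\widetilde{\pi'}$ an equivalence, so $\pi'\circ T$ is exact. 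For vanishing on $\mc I^\nu$: the proof of Lemma~\ref{lem::116} identifies $T(\mc I^\nu)$ with the Serre subcategory $I_\nu$ of all $N\in\mofA$ with $N1_\la = 0$ for every $\la\in\Lnua$, and such $N$ clearly satisfy $\pi'(N)=0$.

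Then I would apply Lemma~\ref{lem::unipro} to obtain a unique exact functor $G\colon \OI\to\mofB$ with $\pi'\circ T = G\circ\pi$. Comparing with the commutative square $\pi'\circ T = T^B\circ\pi$ and invoking the uniqueness clause of Lemma~\ref{lem::unipro}, I conclude $G = T^B$, which is an equivalence by Lemma~\ref{lem::116}. Hence $\pi'\circ T$ is, up to the equivalence $T^B$, the canonical Serre quotient functor, that is, it satisfies the universal property of $\OI$.

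I do not expect a serious obstacle here: the content is essentially bookkeeping, and everything substantive (the commuting square and the exactness of $\pi'$) has already been established in the proof of Lemma~\ref{lem::116}. If one preferred to argue without re-using that square, the alternative would be to produce $G$ from Lemma~\ref{lem::unipro} and verify directly that it is full, faithful, and essentially surjective, using the Morita equivalence $T$ together with the description of morphism spaces in $\OI$ from Lemma~\ref{lem01}; this would run parallel to the proof of Theorem~\ref{prop::univprop}, with the only mildly delicate point being the matching of the $\Hom$-space identifications on the two sides.
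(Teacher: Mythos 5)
Your proof is correct and takes essentially the same route as the paper, which presents the corollary with no separate proof as an immediate consequence of the displayed commutative square $\pi'\circ T = T^B\circ\pi$ together with the fact from Lemma~\ref{lem::116} that $T^B$ is an equivalence. You simply spell out the bookkeeping (exactness of $\pi'\circ T$, vanishing on $\mc I^\nu$, and the identification of the induced functor with $T^B$ via the uniqueness clause of Lemma~\ref{lem::unipro}) that the paper leaves implicit.
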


\subsection{Graded category $\mc O$ for $\gl(m|n)$} We now consider the case that $\g=\gl(m|n)$. Recall the simple-preserving contragredient duality $(-)^\vee:\mc O\rightarrow \mc O$ from Section \ref{sect::24}. In this case, the algebra $A$ admits a positive grading such that $A$ is a standard Koszul algebra. We refer to \cite[Sections 3,4]{Br} and \cite{BLW} for the details. With respect to this grading, the idempotents $1_\la$, for $\la\in \Lambda$, have degree zero.

 \begin{ex} For $\g=\gl(1|1)$, the corresponding category $\mc O$ is equivalent to the category of finite-dimensional locally unital right modules over the path algebra of the following infinite quiver
 \begin{center}
 	\hskip -3cm \setlength{\unitlength}{0.16in}
 	\begin{picture}(24,3)
 		\put(7.8,1){\makebox(0,0)[c]{$\bullet$}}
 		\put(10.2,1){\makebox(0,0)[c]{$\bullet$}}
 		\put(15.2,1){\makebox(0,0)[c]{$\bullet$}}
 		\put(12.8,1){\makebox(0,0)[c]{$\bullet$}}
 		\put(9,1.1){\makebox(0,0)[c]{$\longrightarrow$}}
 		\put(9,0.7){\makebox(0,0)[c]{$\longleftarrow$}}
 		\put(11.5,1.1){\makebox(0,0)[c]{$\longrightarrow$}}
 		\put(11.5,0.7){\makebox(0,0)[c]{$\longleftarrow$}}
 		\put(14,1.1){\makebox(0,0)[c]{$\longrightarrow$}}
 		\put(14,0.7){\makebox(0,0)[c]{$\longleftarrow$}}

 		
 		\put(6.5,0.95){\makebox(0,0)[c]{$\cdots$}}
 		\put(16.5,0.95){\makebox(0,0)[c]{$\cdots$}}
 		
 		\put(9,1.8){\makebox(0,0)[c]{\tiny$x_{i-1} $}}
 		\put(9,0){\makebox(0,0)[c]{\tiny$y_{i-1}$}}
 		\put(11.5,1.8){\makebox(0,0)[c]{\tiny$x_{i} $}}
 		\put(11.5,0){\makebox(0,0)[c]{\tiny$y_{i}$}}
 		\put(14,1.8){\makebox(0,0)[c]{\tiny$x_{i+1} $}}
 		\put(14,0){\makebox(0,0)[c]{\tiny$y_{i+1}$}}
 	\end{picture}
 \end{center}
 modulo the relations $x_iy_i=y_{i+1}x_{i+1},~x_{i+1}x_{i} =y_{i}y_{i+1}=0,$
  for all $i\in \Z$. The grading is given by $\deg(x_i)=\deg(y_i)=1.$
 \end{ex}

 We denote by  $\gmofA$ the category of graded finite-dimensional unital  right $A$-modules with homogenous homomorphisms of degree zero. Also, we denote by $\mathbb F^A$ the functor from $\gmofA$ to $\mofA$ forgetting the grading.  A object $\dot X\in \gmofA$ is called a {\em graded lift} of  $X\in \mofA$ (respectively $X\in \mc O$)  provided that $\mathbb F^A(\dot X)\cong X$ (respectively $\mathbb F^A(\dot X)\cong T(X)$). Let $\langle 1 \rangle$ denote the degree shift functor defined on graded modules, namely,  for a given graded module $M$, $M\langle 1 \rangle$  has the same module structure but $(M\langle 1 \rangle)_{n}:=M_{n-1}$, for any homogenous subspace $M_n$. For $m\in \Z$, we set $\langle m \rangle:= \langle 1\rangle^m.$

 Due to positivity of the grading, for $\la \in \Lambda$,  we can choose a graded lift $\dot L(\la)$ of $L(\la)$,
 a graded lift $\dot M(\la)$ of $M(\la)$, a graded lift $\dot M(\la)^\vee$ of $M(\la)^\vee$, a graded lift $\dot P(\la)$ of $P(\la)$
 and a graded lift $\dot I(\la)$ of $I(\la)$, such that the canonical maps
 \begin{align*}
 	&\dot P(\la) \onto \dot M(\la) \onto \dot L(\la),\\
 	&\dot L(\la) \hookrightarrow \dot M(\la)^\vee \onto \dot I(\la)
 \end{align*} are  homogenous of degree zero.

 \subsection{Graded category $\OI$ for $\gl(m|n)$}
 Since $B$ is a positively graded algebra, we can denote by   $$\dotoo := \gmofB$$ the category of graded finite-dimensional unital   right $B$-modules
 whose morphisms are all homogenous homomorphisms of degree zero. We consider $\dot{\ov{\mc O}}$ as a graded version of $\ov{\mc O}$.

 Let $\mathbb F^B: \gmofB \rightarrow \mofB$ be the functor which forgets the grading. A module $X$ in $\mofB$ (respectively $X \in \ov{\mc O}$) is said to have a graded lift $\dot X\in \dotoo$ if $\mathbb F^B(\dot X)\cong X$ (respectively $T^B(X)\cong \mathbb F^B(\dot X)$).

 \begin{lem} \label{lem::117} Let $\dot{I_\nu}$ be the Serre subcategory of $\gmofA$ generated by $\{T(L(\la))\langle i\rangle|~\la\in \Lnua,~i\in \Z\}$. Then, the functor $$\dot \pi: \gmofA \rightarrow \dotoo,~ M\mapsto \bigoplus_{\la\in \Lnua} M1_\la,$$ gives rise to an equivalence  $\gmofA/I_\nu\cong\dotoo.$
 \end{lem}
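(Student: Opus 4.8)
The plan is to run the argument of Lemma~\ref{lem::116} one categorical level down, in the graded setting, the only extra work being to keep track of the grading. Recall from the previous subsection that $A$ carries a positive (standard Koszul) grading in which every idempotent $1_\la$ is homogeneous of degree zero; hence the subalgebra $B=\bigoplus_{\la,\mu\in\Lnua}1_\la A1_\mu$ and the bimodule $A':=\bigoplus_{\la\in\Lnua}1_\la A$ inherit gradings, and the functor $\dot\pi$ of the statement is precisely $M\mapsto \bigoplus_{\la\in\Lnua}M1_\la$, which is exact and grading-preserving.

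First I would pin down $\dot I_\nu$. Since the grading on $A$ is positive and $A$ is locally finite-dimensional, every graded-simple object of $\gmofA$ is a grading shift of some $T(L(\la))$; consequently a graded finite-dimensional $A$-module $M$ lies in $\dot I_\nu$ precisely when $M1_\mu=0$ for all $\mu\in\Lnua$, equivalently when no composition factor of $M$ is a shift of $T(L(\mu))$ with $\mu\in\Lnua$. In particular every generator $T(L(\la))\langle i\rangle$ of $\dot I_\nu$ satisfies $T(L(\la))1_\mu=\Hom_{\g}(P(\mu),L(\la))=0$ for all $\mu\in\Lnua$, so $\dot\pi$ annihilates $\dot I_\nu$. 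By the universal property of Serre quotient categories (Lemma~\ref{lem::unipro}, which applies verbatim to $\gmofA$ with its Serre subcategory $\dot I_\nu$), $\dot\pi$ then factors as $\dot\pi=\widetilde{\dot\pi}\circ\dot\pi^A$ for a unique exact functor $\widetilde{\dot\pi}\colon\gmofA/\dot I_\nu\to\dotoo$, where $\dot\pi^A\colon\gmofA\to\gmofA/\dot I_\nu$ is the canonical quotient functor.

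It remains to prove that $\widetilde{\dot\pi}$ is an equivalence, and here I would invoke the graded version of \cite[Lemma A.2.1]{CP19} exactly as in the proof of Lemma~\ref{lem::116}. The essential ingredient is that the grading-preserving functor $\Psi\colon\dotoo\to\gmofA$, $N\mapsto N\otimes_B A'$, is a one-sided inverse of $\dot\pi$, i.e.\ $\dot\pi\circ\Psi\cong\mathrm{id}_{\dotoo}$ (because $\dot\pi(A')\cong B$ as graded $B$-bimodules); this already forces $\widetilde{\dot\pi}$ to be essentially surjective and full. Faithfulness follows from the same computation as in loc.~cit.: every object of $\gmofA/\dot I_\nu$ is represented by a module admitting a presentation by the graded projectives $\dot P(\la)$, $\la\in\Lnua$, and on $\Hom$-spaces between such projectives $\dot\pi$ realizes the tautological graded isomorphism $1_\mu A1_\la\cong\Hom_{\gmofA}(\dot P(\mu),\dot P(\la))\xrightarrow{\ \sim\ }\Hom_{\dotoo}(\dot\pi\dot P(\mu),\dot\pi\dot P(\la))$.

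I do not expect a genuine obstacle here, since this is merely the graded refinement of an already established lemma; the only real content is the bookkeeping of degrees — that the Koszul grading restricts to $B$, that $A'$ is a graded $(B,A)$-bimodule with the relevant local units in degree zero, and that $\gmofA$ retains finite graded composition series, the Krull--Schmidt property and enough graded projectives, so that the Serre-quotient formalism and \cite[Lemma A.2.1]{CP19} apply unchanged. All of this is immediate from the positivity of the grading on $A$ recalled above; if anything, the point needing the most care is the correct identification of the graded-simple objects of $\gmofA$ used in describing $\dot I_\nu$.
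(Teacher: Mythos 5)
Your proof is correct and takes essentially the same route as the paper, whose argument is a one-line reference back to the ungraded Lemma \ref{lem::116}: both factor $\dot\pi$ through the Serre quotient by the universal property and then invoke (the graded form of) \cite[Lemma A.2.1]{CP19}, with $N\mapsto N\otimes_B A'$ serving as the one-sided inverse. The only discrepancy is notational — the stated generating set $\{T(L(\la))\langle i\rangle : \la\in\Lnua,\ i\in\Z\}$ for $\dot I_\nu$ must be a typo for $\la\in\Lambda\setminus\Lnua$ (with the stated set one has $T(L(\la))1_\la\ne 0$, so $\dot\pi$ would not annihilate $\dot I_\nu$), and you have tacitly used the corrected version, consistently with $I_\nu=T(\mc I^\nu)$ in the proof of Lemma \ref{lem::116}.
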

 \begin{proof}
 	The proof is similar to the proof of Lemma \ref{lem::116}.
 \end{proof}

  Let  $\la \in \Lnua$. We recall modules $S(\la), D(\la)$ and $Q(\la)$ from Section \ref{sect::81}; see also Proposition \ref{prop::45}. We note that  $\pi(M(\la)^\vee)\cong \ov{\nb}(\la)$.  

 \begin{lem} \label{lem::18} Let $\la \in \Lnua$. Then the modules $S(\la)$, $D(\la)$ and $Q(\la)$ from Proposition \ref{prop::45}, and $M(\la)^\vee$ have graded lifts in $\gmofA$.
 \end{lem}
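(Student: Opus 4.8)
The plan is to exhibit, for each of $S(\la)$, $D(\la)$ and $Q(\la)$ from Proposition~\ref{prop::45}, a graded submodule of the graded lift $\dot P(\la)$ whose quotient forgets (via the Morita functor $T$) to the module in question; for $M(\la)^\vee$ the graded lift $\dot M(\la)^\vee$ has already been fixed in the paragraph preceding the statement, so nothing is left to do there.

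The key observation I would record first is this: since the grading on $A$ is positive with $A_0$ semisimple (as $A$ is standard Koszul) and all $1_\mu$ are homogeneous of degree $0$, for graded modules $\dot X,\dot Y$ every $A$-module homomorphism $\mathbb F^A(\dot X)\to\mathbb F^A(\dot Y)$ is a finite sum of maps that are homogeneous of pure degree, i.e. of maps of the form $\dot X\langle i\rangle\to\dot Y$ with $i\in\Z$ (composed with the appropriate shift). Hence, if $\dot Y$ is a graded lift of a module $Y$ and $N\subseteq Y$ is the submodule generated by the images of all homomorphisms from modules in a collection $\{\mathbb F^A(\dot P(\mu))\mid\mu\in\Lnua\}$ or any subcollection thereof, then $N=\mathbb F^A(\dot N)$, where $\dot N\subseteq\dot Y$ is the graded submodule generated by the images of all degree-$0$ graded maps $\dot P(\mu)\langle i\rangle\to\dot Y$; consequently $\dot Y/\dot N$ is a graded lift of $Y/N$. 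Applied with the admissible projectives, this says precisely that $\text{Tr}_\nu(Y)$ is homogeneous whenever $Y$ has a graded lift.

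I would then feed the three required modules into this. For $S(\la)=P(\la)/\text{Tr}_\nu(\rad P(\la))$ from \eqref{eq::81}: positivity of the grading gives $\rad P(\la)=\mathbb F^A\big(\bigoplus_{i\ge 1}\dot P(\la)_i\big)$, so $\rad P(\la)$ has an evident graded lift, whence by the observation $\text{Tr}_\nu(\rad P(\la))$ is a graded submodule of $\dot P(\la)$ and $S(\la)$ lifts. For $D(\la)=P(\la)/\text{Tr}_\nu(A(\la))$ from \eqref{eq::82}, with $A(\la)=\ker(P(\la)\onto M(\la))$: the fixed epimorphism $\dot P(\la)\onto\dot M(\la)$ is homogeneous of degree $0$, so $A(\la)$ has the graded lift $\ker(\dot P(\la)\to\dot M(\la))$, and again the observation yields a graded lift of $D(\la)$. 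For $Q(\la)$, defined as $P(\la)$ modulo the sum of images of all $P(\mu)\to P(\la)$ with $\mu\in\Lnua$, $\mu>\la$: here the observation is applied directly with $\dot Y=\dot P(\la)$ and the subcollection $\{\dot P(\mu)\mid\mu\in\Lnua,\ \mu>\la\}$, producing a graded submodule of $\dot P(\la)$ with quotient lifting $Q(\la)$.

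Since all of $S(\la)$, $D(\la)$, $Q(\la)$ and $M(\la)^\vee$ are indecomposable (each of the first three has simple top $L(\la)$, being a quotient of $P(\la)$ by a submodule of $\rad P(\la)$, and $M(\la)^\vee$ has simple socle), the graded lifts so obtained are unique up to an overall shift $\langle m\rangle$, and one normalizes them compatibly with $\dot L(\la)$ along the canonical surjections, exactly as was done for $\dot M(\la)$ and $\dot M(\la)^\vee$. There is no serious obstacle here: the entire content is the homogeneity of trace-type submodules, which rests only on positivity of the grading and semisimplicity of $A_0$; the rest is bookkeeping with the already-constructed graded lifts of $P(\la)$ and $M(\la)$.
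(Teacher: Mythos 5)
Your proof is correct. The paper itself only says ``The proof is similar to the proof of [BLW, Lemma 5.5]'' and gives no details, so the comparison is really with what that citation is standing in for; your argument supplies exactly the standard content: over a positively graded locally unital algebra with semisimple degree-zero part, $\Hom_A$ between finite-dimensional graded modules is graded, so trace-type submodules of a graded module (sums of images of maps from the $\dot P(\mu)$) are homogeneous, and hence the quotients $S(\la)$, $D(\la)$, $Q(\la)$ of $\dot P(\la)$ by $\mathrm{Tr}_\nu(\rad P(\la))$, $\mathrm{Tr}_\nu(A(\la))$, and the corresponding sum of images all inherit graded lifts (with $\dot M(\la)^\vee$ already fixed in the paragraph preceding the lemma, nothing further is needed there). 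Your remarks on indecomposability (each of $S(\la),D(\la),Q(\la)$ having simple top $L(\la)$ since the submodule removed lies in $\rad P(\la)$) and the resulting uniqueness of the lift up to overall shift are also accurate. This is the same approach, just written out.
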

 \begin{proof}
 The proof is similar to the proof of \cite[Lemma 5.5]{BLW}.
 \end{proof}
 For a given $\la\in \Lnua,$ we let $\dot Q(\la)$ be the quotient of $\dot P(\la)$  by the submodule generated by the image of all homomorphisms $\dot P(\mu)\langle m \rangle\rightarrow \dot P(\la)$, for $m\in \Z$ and $\mu \in \Lnua$.
 From Lemma \ref{lem::18}, it is easy to see that $\dot Q(\la)$ is a graded lift  of  $Q(\la)$. 

 Let $\dot{X} \in \gmofA$ be a graded lift of $X\in \mofA$. We may observe that $\dot{\pi}(\dot{X})$ is a graded lift of $\pi'(X)\in \mofB$ since $$\mathbb F^B(\dot \pi (\dot X)) \cong \mathbb F^B(\bigoplus_{\la\in\Lnua} \dot X1_\la) \cong \bigoplus_{\la\in \Lnua} X1_\la \cong \pi'(X).$$ Therefore, for any $\la\in \Lnua$, \begin{align*}
 	&\dot P^B(\la):=\dot \pi(\dot{P}(\la)),~\dot\Delta^B(\la):=\dot \pi(\dot{Q}(\la)), \\
 	&\dot{\ov \nb}^B(\la):=\dot \pi(\dot{M}(\la)^\vee),~\dot{L}^B(\la):=\dot \pi(\dot{L}(\la)),	
 \end{align*} are the graded lifts of $\pi(P(\la)), \Delta(\la), \ov \nb(\la)\text{ and }\pi(L(\la))$, respectively. In particular, we have the following graded BGG reciprocity.

 \begin{lem}[Graded BGG reciprocity]
 	Let $\la,\mu\in \Lnua$. Then $\dot \pi(\dot{P}(\la))$ has a $\dot\Delta$-flag.  Furthermore, for any  $m\in \Z$, we have
 	\begin{align*}
 		&(\dot P^B(\la): \dot{\Delta}^B(\mu)\langle m \rangle) = [ \dot{\ov{\nb}}^B(\mu)\langle m \rangle: \dot{L}^B(\la)].
 	\end{align*}
 \end{lem}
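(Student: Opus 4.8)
The final statement is the graded BGG reciprocity for $\dot{\ov{\mc O}}$ in the $\gl(m|n)$ case, and the proof should run exactly parallel to the ungraded statement Proposition~\ref{prop::BGG}, now keeping track of the internal grading. The plan is as follows. First I would show that $\dot P^B(\la) = \dot\pi(\dot P(\la))$ admits a $\dot\Delta$-flag. By Lemma~\ref{lem::117} the functor $\dot\pi$ is exact (it is the composition of the exact Serre quotient functor and an equivalence), so it suffices to produce, inside $\gmofA$, a filtration of $\dot P(\la)$ whose $\dot\pi$-images are the graded lifts $\dot\Delta^B(\mu)\langle m\rangle$. Recall from Section~\ref{sect::5} that, ungraded, $\Ind P_0(\la)$ has a filtration with subquotients $K(P_0(\la+\gamma))$ for $\gamma$ a weight of $\wedge(\g_1)$, and each $\pi_0(P_0(\la+\gamma))$ has a $\Delta_0$-flag; applying $K(-)$ and then $\pi$ one gets the $\Delta$-flag on $\pi(P(\la))$. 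Since $A$ is positively graded and the canonical maps $\dot P(\la)\onto \dot M(\la)\onto \dot L(\la)$ are homogeneous of degree zero, the whole construction lifts: the relevant projective, Kac and standard modules have graded lifts (Lemma~\ref{lem::18} and the definition of $\dot Q(\la)$), and the filtration can be chosen homogeneous with subquotients $\dot K(\dot P_0(\la+\gamma))\langle m_\gamma\rangle$ for appropriate internal shifts $m_\gamma$ determined by the degrees in which the generators appear. Pushing forward by $\dot\pi$ yields the desired $\dot\Delta$-flag of $\dot P^B(\la)$.

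Next I would establish the multiplicity formula. The key input is a graded Ext-orthogonality statement refining Lemma~\ref{lem3}, namely
$$
\Ext^d_{\dot{\ov{\mc O}}}\bigl(\dot\Delta^B(\la),\dot{\ov\nb}^B(\mu)\langle m\rangle\bigr)
=\begin{cases}\C,&d=0,\ \mu=\la,\ m=0;\\ 0,&\text{otherwise.}\end{cases}
$$
This follows from the graded analogue of the adjunction/Frobenius-extension argument in the proof of Lemma~\ref{lem3}, reducing to the corresponding statement over $\dot{\ov{\mc O}}_\oa$, which is the classical graded properly-stratified fact for $\mf l_\nu$-parabolic category $\mc O_\oa$ (cf.\ \cite{MaSt04, BLW}); alternatively one can deduce it from standard Koszulity/standard-Koszulity of $A$ and hence of $B$. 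Given this, the usual counting argument applies: if $\dot P^B(\la)$ has a $\dot\Delta$-flag, then because $\dot{\ov\nb}^B(\mu)$ has no higher self-extensions against $\dot\Delta$'s and a one-dimensional Hom only in internal degree matching, graded Euler-characteristic (or direct dévissage along the flag) gives
$$
(\dot P^B(\la):\dot\Delta^B(\mu)\langle m\rangle)
=\dim\Hom_{\dot{\ov{\mc O}}}\bigl(\dot P^B(\la),\dot{\ov\nb}^B(\mu)\langle m\rangle\bigr)
=[\dot{\ov\nb}^B(\mu)\langle m\rangle:\dot L^B(\la)],
$$
the last equality because $\dot P^B(\la)$ is a projective cover of $\dot L^B(\la)$ in $\gmofB$ (inherited from projectivity of $\dot P(\la)$ and the equivalence of Lemma~\ref{lem::117}) and homomorphisms from a graded projective cover into a module compute graded composition multiplicities of its head. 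This is exactly the asserted identity.

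The main obstacle is bookkeeping of the internal degree shifts: one must check that the graded lifts $\dot\Delta^B(\mu)$, $\dot{\ov\nb}^B(\mu)$, $\dot L^B(\la)$, $\dot P^B(\la)$ introduced before the lemma are the ``right'' normalizations (heads/socles in degree zero, compatible with the homogeneous canonical maps), so that no spurious shift sneaks into the formula, and that the $\dot\Delta$-flag on $\dot P^B(\la)$ is genuinely homogeneous rather than merely filtered after forgetting the grading. All of this is handled by invoking positivity of the grading on $A$ (and hence on $B$) together with the standard-Koszul package for category $\mc O$ of $\gl(m|n)$ from \cite{Br, BLW}, exactly as in the proof of \cite[Lemma~5.5]{BLW} cited for Lemma~\ref{lem::18}; once the lifts are pinned down, the degree-$0$ part of the graded $\Ext$-orthogonality forces the shift $m$ to be carried uniformly through the flag and the proof concludes as above.
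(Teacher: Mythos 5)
Your proposal is correct, but it is not the paper's argument, and in one place it does noticeably more work than needed. The main difference is in how you obtain the graded Ext-orthogonality
$$\Ext^d_{\dot{\ov{\mc O}}}\bigl(\dot\Delta^B(\la),\dot{\ov\nb}^B(\mu)\langle m\rangle\bigr)=0\quad (d\ge 1).$$
You propose either to redo the Frobenius-extension/adjunction argument (Lemma~\ref{lem3}) in the graded setting, or to invoke (standard) Koszulity of $A$ and $B$. The paper avoids both: it simply uses the elementary fact that for a graded algebra the ungraded Ext decomposes as a direct sum of internal-degree components,
$$\bigoplus_{i\in\Z}\Ext^1_{\dotoo}\bigl(\dot{\Delta}^B(\la)\langle i\rangle,\dot{\ov\nb}^B(\mu)\bigr)\cong\Ext^1_{\ov{\mc O}}(\Delta(\la),\ov\nb(\mu)),$$
so the ungraded vanishing (Theorem~\ref{thm5}) immediately kills every graded component — no re-proof of adjunctions and no Koszulity needed. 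The same decomposition for $\Hom$ gives a one-dimensional total Hom exactly when $\la=\mu$, and the paper then pins this in internal degree $0$ by noting that $\dot\Delta^B(\la)$ surjects onto $\dot L^B(\la)$ in degree $0$ by the normalization of the lifts. Your Koszulity route would also carry this through (and you correctly flag the normalization issue), but the ``forget-the-grading'' observation is the crisp engine the paper runs on, and it is worth knowing because it is portable to many such graded-lift arguments. For the $\dot\Delta$-flag on $\dot P^B(\la)$ you take the constructive route (lift the Kac-type filtration homogeneously), while the paper instead cites Lemma~\ref{lem3} and the argument of \cite[Lemma~8.6]{MaSt04}, i.e.\ uses the graded Ext-criterion characterization of $\mc F(\dot\Delta)$. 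Both work; the Ext-criterion version avoids the bookkeeping about which internal shifts appear in the lifted filtration, which you rightly identify as the delicate step in your version.
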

 \begin{proof}
 	
 	The first assertion is proved using Lemma \ref{lem3} and an argument similar to the proof of \cite[Lemma 8.6]{MaSt04}. For the proof of the graded BGG reciprocity, we note the following:
 	\begin{align*}
 		&\bigoplus_{i\in \Z}\Ext^1_{\dotoo}(\dot{\Delta}^B(\la)\langle i\rangle, \dot{\ov \nb}^B(\mu))
 		\cong \Ext^1_{\mofB}(T^B{\Delta}(\la), T^B{\ov \nb}(\mu))\cong  \Ext^1_{\ov{\mc O}}({\Delta}(\la), {\ov \nb}(\mu)),
 	\end{align*}  which is zero by Theorem \ref{thm5}.
 	Similarly, we calculate
 	\begin{align*}
 		&\bigoplus_{i\in \Z}\Hom_{\dotoo }(\dot{\Delta}^B(\la)\langle i\rangle, \dot{\ov \nb}^B(\mu))
 		\cong \Hom_{\mofB}(T^B{\Delta}(\la), T^B{\ov \nb}(\mu))
 		\cong  \Hom_{\ov{\mc O}}({\Delta}(\la), {\ov \nb}(\mu)),
 	\end{align*}   which is zero, for $\la\neq \mu$, and is isomorphic to $\C$, for $\la=\mu$, by Lemma \ref{lem3}. Since $\dot \Delta^B(\la)$ has a quotient isomorphic to $\dot L^B(\la)$, it follows that $$\Hom_{\dotoo}(\dot{\Delta}^B(\la), \dot{\ov \nb}^B(\la)\cong \C.$$
 	The claim of the lemma follows now by a standard argument; see, e.g., \cite[Section~3.11]{Hu08}.
 \end{proof}

 For $\la,\mu \in \Lnua$, we define $$[\dot{\ov \nb}^B(\la):  \dot L^B(\mu)]_q:= \sum_{m\geq 0} [\dot{\ov \nb}^B(\la): \dot L^B(\mu)\langle m \rangle] q^m.$$ Similarly, we define $(\dot P^B(\la): \dot \Delta^B(\mu))_q$. We have the following graded version of Corollary \ref{cor:duality1}.
 \begin{prop}
 	For $\la,\mu\in\Lambda(\nu)$, we have
 	\begin{align*}
 		&(\dot P^B(\mu): \dot \Delta^B(\la))_q=[\dot{\ov \nb}^B(\la):\dot L^B(\mu)]_q =\texttt{t}_{-f_\la\cdot w_0^\nu,-f_\mu\cdot w_0^\nu}(q).
 	\end{align*}
 \end{prop}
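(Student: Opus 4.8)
The plan is to treat the two equalities separately. The first, $(\dot P^B(\mu):\dot\Delta^B(\la))_q=[\dot{\ov\nb}^B(\la):\dot L^B(\mu)]_q$, is precisely the graded BGG reciprocity established in the lemma immediately preceding the statement, so I would simply quote it. The real content is the identification of this common polynomial with $\texttt{t}_{-f_\la\cdot w_0^\nu,-f_\mu\cdot w_0^\nu}(q)$, which I would obtain as the graded avatar of the computation behind Corollary~\ref{cor:duality1}, organised as a chain of graded multiplicity identities.

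First I would pass from $\OI$ back to $\gmofA$. By the description of the graded lifts recorded after Lemma~\ref{lem::18}, for $\la,\mu\in\Lnua$ the functor $\dot\pi$ sends $\dot M(\la)^\vee$ to $\dot{\ov\nb}^B(\la)$ and $\dot L(\mu)$ to $\dot L^B(\mu)$, while it annihilates $\dot L(\gamma)$ for $\gamma\notin\Lnua$ (Lemma~\ref{lem::117}); since $\dot\pi$ is exact this gives $[\dot{\ov\nb}^B(\la):\dot L^B(\mu)]_q=[\dot M(\la)^\vee:\dot L(\mu)]_q$. Because the algebra $A$ is standard Koszul for $\gl(m|n)$ (\cite{Br,BLW}), graded BGG reciprocity holds in $\gmofA$, so this equals $(\dot P(\mu):\dot M(\la))_q$. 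Next I would invoke the graded Ringel duality for $\mc O$ — the degree-keeping refinement of the identity $(T(\kappa):M(\gamma))=(P(-\kappa-2\rho):M(-\gamma-2\rho))$ used in the proof of Corollary~\ref{cor:duality1}, valid in the Koszul setting by \cite{Br04,BLW} — to rewrite $(\dot P(\mu):\dot M(\la))_q=(\dot T(-\mu-2\rho):\dot M(-\la-2\rho))_q$, and then the graded form of the Brundan--Kazhdan--Lusztig conjecture, i.e.\ the graded upgrade of Theorem~\ref{thm:BKL:conj} from \cite{BLW}, which reads $(\dot T(\kappa):\dot M(\gamma))_q=t_{f_\gamma,f_\kappa}(q)$, to conclude $(\dot P(\mu):\dot M(\la))_q=t_{f_{-\la-2\rho},f_{-\mu-2\rho}}(q)=t_{-f_\la,-f_\mu}(q)$, the last step being the elementary identity $f_{-\kappa-2\rho}=-f_\kappa$ that comes from $f_\kappa(i)=(\kappa+\rho,\ep_i)$. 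Finally, since the longest element $w_0^\nu=w_0^\zeta$ of the parabolic subgroup $W_\nu$ is an involution acting on $\Z^{m|n}$ by permuting coordinates, $t_{-f_\la,-f_\mu}(q)=t_{(-f_\la\cdot w_0^\nu)\cdot w_0^\nu,(-f_\mu\cdot w_0^\nu)\cdot w_0^\nu}(q)$, which by Proposition~\ref{pi:can:basis} equals $\texttt{t}_{-f_\la\cdot w_0^\nu,-f_\mu\cdot w_0^\nu}(q)$; at this point one also checks that $-f_\la\cdot w_0^\nu$ and $-f_\mu\cdot w_0^\nu$ are $W_\zeta$-anti-dominant, which follows from $\la,\mu\in\Lnua$.

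The main obstacle I anticipate is bookkeeping rather than conceptual. One must fix the graded lifts of $\dot\pi$, of the categorification and twisting functors, and of the Ringel duality so that all degree shifts cancel on the nose and no spurious $\langle k\rangle$ survives along the chain; and one must verify that the weight-to-tuple dictionary $\la\leftrightarrow f_\la$ (with its $\rho$ versus $\rho_\oa$ discrepancy) is compatible with $W_\nu$-anti-dominance, so that every index occurring above genuinely lies in the set $\Z^{m|n}_{\zeta-}$ on which $\texttt{t}$ is defined. Once these normalisations are pinned down following \cite{Br,BLW}, each individual step in the chain is routine.
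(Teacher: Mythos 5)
Your proposal is correct but follows a genuinely different route through the middle of the argument. Both you and the paper begin the same way: pass through $\dot\pi$ to reduce $[\dot{\ov\nb}^B(\la):\dot L^B(\mu)]_q$ to $[\dot M(\la)^\vee:\dot L(\mu)]_q$, and both end the same way, by converting $t_{-f_\la,-f_\mu}(q)$ into $\texttt{t}_{-f_\la\cdot w_0^\nu,-f_\mu\cdot w_0^\nu}(q)$ via Proposition~\ref{pi:can:basis}. The divergence is in how one gets from $[\dot M(\la)^\vee:\dot L(\mu)]_q$ to $t_{-f_\la,-f_\mu}(q)$. You pass through graded BGG reciprocity in $\gmofA$ to $(\dot P(\mu):\dot M(\la))_q$, then graded Ringel duality to $(\dot T(-\mu-2\rho):\dot M(-\la-2\rho))_q$, then the graded BKL tilting character formula and the identity $f_{-\kappa-2\rho}=-f_\kappa$; this mirrors the proof of Corollary~\ref{cor:duality1}. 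The paper instead applies the simple-preserving duality to reduce to the graded decomposition number $[\dot M(\la):\dot L(\mu)]_q$, observes that these are the entries of the matrix inverse to $\bigl(\ell_{f_\mu,f_\la}(q^{-1})\bigr)$ (citing \cite[Section 5.9]{BLW} and \cite[Theorem 3.6]{Br}), and then cites \cite[Corollary 2.24]{Br1} to identify that inverse with $\bigl(t_{-f_\la,-f_\mu}(q)\bigr)$. The paper's route is shorter, entirely bypasses graded Ringel duality and graded tilting theory, and concentrates the nontrivial combinatorics into a single citation; your route buys conceptual parallelism with the ungraded statement but, as you rightly anticipate, carries a heavier burden of pinning down graded shifts (in BGG reciprocity, Ringel duality, and the BKL tilting formula) and verifying that $-f_\la\cdot w_0^\nu$, $-f_\mu\cdot w_0^\nu$ lie in $\Z^{m|n}_{\zeta-}$.
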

 \begin{proof}
 	We calculate
 	\begin{align*}
 		&\sum_{m\ge 0}[\dot{\ov \nb}^B(\la)\langle m\rangle: \dot L^B(\mu)]q^m \\
 		&=\sum_{m\ge 0}[\dot \pi(\dot{M}(\la)^\vee): \dot \pi( \dot L(\mu)\langle -m\rangle)]q^m\\
 		&=\sum_{m\ge 0}[\dot M(\la)^\vee: \dot L(\mu)\langle -m\rangle]q^m \\
 		&=\sum_{m\ge 0}[\dot M(\la): \dot L(\mu)\langle m\rangle]q^m,
 	\end{align*}
 which is equal to the matrix coefficient of the matrix inverse to $\left(\ell_{f_{\mu},f_\la}(q^{-1})\right)$ by \cite[Section 5.9]{BLW}; see also \cite[Theorem 3.6]{Br}. Now, by \cite[Corollary 2.24]{Br1} this inverse matrix is precisely $\left( t_{-f_{\la},-f_{\mu}}(q)\right)$. By Proposition \ref{pi:can:basis}, the polynomial $t_{-f_{\la},-f_{\mu}}(q)$ is equal to $\texttt{t}_{-f_\la\cdot w_0^\nu,-f_\mu\cdot w_0^\nu}(q)$, which proves the claim.
 \end{proof}



\vspace{2mm}




\end{document}